
\documentclass[11 pt, reqno]{amsart}

\setcounter{tocdepth}{1}

\setlength{\textwidth}{6.3in} \addtolength{\oddsidemargin}{-1.7cm}
\addtolength{\evensidemargin}{-1.7cm}

\numberwithin{equation}{section} \theoremstyle{plain}
\pagestyle{plain}

\parskip=0.2cm
\parindent=0.0cm

%% Remove in final version:
%  \usepackage[notref,notcite]{showkeys}

% \usepackage{graphicx}
% \usepackage{epstopdf,epsfig}

%\usepackage{bbm}
%\usepackage{palatino}
%\usepackage{comment}
% \usepackage[mathcal]{euler}
% \usepackage{extarrows}

\usepackage{amsmath,amsthm,amssymb,xypic}
\usepackage{hyperref}
\usepackage{color}

\theoremstyle{plain}
    \newtheorem{theorem}{Theorem}[section]
    \newtheorem{lemma}[theorem]{Lemma}
    \newtheorem{corollary}[theorem]{Corollary}
    \newtheorem{proposition}[theorem]{Proposition}

 \theoremstyle{definition}
    \newtheorem{definition}[theorem]{Definition}
    
    \newtheorem{example}[theorem]{Example}

    \newtheorem{remark}[theorem]{Remark}

\theoremstyle{remark}

\numberwithin{equation}{section}

 \DeclareMathOperator{\Tr}{Tr}
 \DeclareMathOperator{\tr}{tr}

\DeclareMathOperator{\spec}{spec}
\DeclareMathOperator{\AS}{AS}

\DeclareMathOperator{\Ad}{Ad}
\DeclareMathOperator{\ad}{ad}

\DeclareMathOperator{\End}{End}

\DeclareMathOperator{\ch}{ch}

\DeclareMathOperator{\mat}{mat}

\DeclareMathOperator{\rank}{rank}

\DeclareMathOperator{\SO}{SO}
\DeclareMathOperator{\SL}{SL}

\DeclareMathOperator{\U}{U}

         \DeclareMathOperator{\supp}{supp}

\DeclareMathOperator{\vol}{vol}
\DeclareMathOperator{\HS}{HS}
\DeclareMathOperator{\Pf}{Pf}
\DeclareMathOperator{\Real}{Re}
\DeclareMathOperator{\Riem}{Riem}

\begin{document}

    \newcommand{\R}{\mathbb{R}}
    \newcommand{\C}{\mathbb{C}}
    \newcommand{\N}{\mathbb{N}}
    \newcommand{\Z}{\mathbb{Z}}
    \newcommand{\Q}{\mathbb{Q}}
    \newcommand{\bT}{\mathbb{T}}
    \newcommand{\bP}{\mathbb{P}}

\newcommand{\kg}{\mathfrak{g}}
\newcommand{\ka}{\mathfrak{a}}
\newcommand{\kb}{\mathfrak{b}}
\newcommand{\kk}{\mathfrak{k}}
\newcommand{\kt}{\mathfrak{t}}
\newcommand{\kp}{\mathfrak{p}}
\newcommand{\kh}{\mathfrak{h}}
\newcommand{\kn}{\mathfrak{n}}
\newcommand{\kso}{\mathfrak{so}}

\newcommand{\cA}{\mathcal{A}}
\newcommand{\cE}{\mathcal{E}}
\newcommand{\calL}{\mathcal{L}}
\newcommand{\calH}{\mathcal{H}}
\newcommand{\cO}{\mathcal{O}}
\newcommand{\cB}{\mathcal{B}}
\newcommand{\cK}{\mathcal{K}}
\newcommand{\cP}{\mathcal{P}}
\newcommand{\cN}{\mathcal{N}}
\newcommand{\calD}{\mathcal{D}}
\newcommand{\cC}{\mathcal{C}}
\newcommand{\calS}{\mathcal{S}}
\newcommand{\cM}{\mathcal{M}}
\newcommand{\cU}{\mathcal{U}}
\newcommand{\cT}{\mathcal{T}}

\newcommand{\Xcompl}{(G/Z) \setminus X}

\newcommand{\Rnz}{\R \setminus \{0\}}

\newcommand{\Bigwedge}{\textstyle{\bigwedge}}

\newcommand{\beq}[1]{\begin{equation} \label{#1}}
\newcommand{\eeq}{\end{equation}}

\newcommand{\ddt}{\left. \frac{d}{dt}\right|_{t=0}}
\newcommand{\dds}{\left. \frac{d}{ds}\right|_{s=0}}

\newcommand{\mattwo}[4]{
\left( \begin{array}{cc}
#1 & #2 \\ #3 & #4
\end{array}
\right)
}

\title{Equivariant analytic torsion for proper actions}

\author{Peter Hochs
and Hemanth Saratchandran}

\address{Institute for Mathematics, Astrophysics and Particle Physics, Radboud University}
\email{p.hochs@math.ru.nl}

\address{School of Mathematical Sciences, The University of Adelaide}
\email{hemanth.saratchandran@adelaide.edu.au}

\keywords{Analytic torsion, Proper group action, Noncompact manifold}

\date{\today}

\maketitle

\begin{abstract}
We construct an equivariant version of Ray--Singer analytic torsion for proper, isometric actions by locally compact groups on Riemannian manifolds, with compact quotients. 
%This assigns numbers to conjugacy classes in the group acting. 
We obtain results on convergence, metric independence, vanishing for even-dimensional manifolds, a product formula, and a decomposition of classical Ray--Singer analytic torsion as a product over conjugacy classes of  equivariant torsion on universal covers. We do explicit computations for the circle and the line acting on themselves, and for regular elliptic elements of $\SO_0(3,1)$ acting on $3$-dimensional hyperbolic space. Our constructions and results generalise several earlier constructions of equivariant analytic torsion and their properties, most of which apply to finite or compact groups, or to fundamental groups of compact manifolds acting on their universal covers. 
These earlier versions of equivariant analytic torsion were associated to finite or compact conjugacy classes; we allow noncompact conjugacy classes, under suitable growth conditions.
%Under suitable growth conditions, we are able to treat noncompact conjugacy classes, which has not been done before.
\end{abstract}

\tableofcontents

\section*{Introduction}

\subsection*{Background}

The construction of torsion invariants has become a powerful way to study the topology of smooth manifolds. The first torsion invariant was defined combinatorially in the 1930s by Reidemeister for 3-dimensional compact smooth manifolds \cite{Reideme}. A key property of the Reidemeister torsion was that it was a homeomorphism invariant, and thus could be used to distinguish two spaces that are homotopy equivalent but not homeomorphic. Using this property of the torsion, Reidemeister was able to classify 3-dimensional lens spaces. A generalisation of the Reidemeister torsion for higher dimensional compact smooth manifolds was introduced by Franz, and used to classify higher dimensional lens spaces \cite{Franz35}.

In the 1970s, Ray and Singer asked whether Reidemeister-Franz combinatorial torsion could be defined analytically. Given a closed, oriented, smooth Riemannian manifold, they defined an analytic torsion invariant using the spectrum of the Laplacian defined on  differential forms twisted by a flat vector bundle. Using an asymptotic analysis of the heat kernel on the diagonal, together with a vanishing assumption on the twisted de Rham cohomology of the manifold, they showed that the analytic torsion is independent of the 
%choice of initial 
Riemannian metric \cite{RS71}. Furthermore, they conjectured that their analytic torsion was equal to the Reidemeister--Franz torsion. The Ray--Singer conjecture was proved true independently by Cheeger
\cite{cheeger79} and M\"{u}ller \cite{mueller78}. Since then analytic torsion has found several applications in diverse areas such as
the study of the growth of torsion homology of arithmetic groups \cite{BV}, locally symmetric spaces \cite{Koehler93, Koehler97, Matz, mueller20} 
and mathematical physics \cite{Bismut88, Hawking}.

The 1990s saw a great deal of activity in the development of equivariant versions of analytic torsion, in the case of actions by finite and compact groups 
\cite{Bismut95,Deitmar98,Koehler93,Koehler97, Lott94, LR91,Lueck93}. (Two notions of equivariant analytic torsion in this setting were compared in \cite{BG04}.)
Furthermore, 
in \cite{Lott92c,Lott99,Mathai92} analytic torsion invariants are defined in the case of the fundamental group of a closed smooth manifold acting on its universal cover. The key insight in the latter works is that one can use a von Neumann trace and follow the method of Ray--Singer, with some refined $L^2$-analysis, to construct an analytic torsion. In \cite{Lott99}, Lott constructs a torsion invariant for each finite conjugacy class of the fundamental group, these are known as delocalised torsion invariants.

%In the case of a finite group $G$ acting
%(possibly non-freely) on a closed smooth manifold $M$ with $G$-invariant %Riemannian metric. Lott and Rothenberg were able to generalise the work of %Ray--Singer and construct a torsion invariant associated to the group $G$ acting on $M$ \cite{LR91}. Under a cohomology vanishing condition, they were able to show that their analytic invariant was independent of the $G$-invariant metric.
%In the case that the group $G$ was taken to be the trivial group, their work recovered the analytic torsion of Ray--Singer.
%It is interesting to note that while Ray--Singer had to deal primarily with asymptotic analysis of the heat kernel on the diagonal, Lott-Rothenberg found that they needed to analyse the asymptotics of the heat kernel on both the diagonal and off diagonal.

%In \cite{Lott92c}, \cite{Lott99}, \cite{Mathai92}, analytic torsion invariants are defined in the case of the fundamental group of a closed smooth manifold acting on its universal cover. The key insight in these works is that one can use a von Neumann trace and follow the method of Ray--Singer, with some refined $L^2$ analysis, to construct an analytic torsion. It is important to note that in \cite{Lott99}, Lott is able to construct an invariant for each conjugacy class of the fundamental group, these are known as delocalised torsion invariants.

Our goal in this paper is to give a general construction of equivariant analytic torsion for proper, cocompact actions, that unifies and extends the above mentioned earlier constructions. 
Little has been done by way of defining a torsion invariant for non-compact group actions on non-compact manifolds, apart from fundamental groups of compact manifolds acting on their universal covers.
In \cite{Su13}, Su defined an analytic torsion in
the case of a locally compact group $G$ acting properly and cocompactly on a complete smooth manifold $M$ with $G$-invariant metric. Su's insight is that one can use the  
von Neumann trace from \cite{Wang14}, associated to the action of $G$ on $M$, thereby generalising the constructions of \cite{Lott92c,Mathai92} to this setting.

We define an equivariant analytic torsion invariant in
the case of a unimodular, locally compact group $G$ acting properly and cocompactly on a complete smooth manifold $M$ with $G$-invariant metric and orientation. 
This is associated to conjugacy classes in the group, which are not assumed to be finite or compact. 
Thus our construction is a common  generalisation of
\begin{itemize}
\item Lott's delocalised invariants \cite{Lott99} from fundamental groups of compact manifolds to general locally compact groups, not necessarily acting freely;
\item Su's work \cite{Su13} from the trivial conjugacy class to nontrivial ones;
\item constructions for finite and compact groups \cite{Bismut95,Deitmar98,Koehler93,Koehler97,Lueck93} to noncompact groups.
\end{itemize}
The constructions of $L^2$-analytic torsion in \cite{Lott92c, Mathai92} are special cases of both the first two points above. 
%can be seen as a generalisation of Lott's delocalised invariants \cite{Lott99} to this more general setting, and on the other hand can be seen as a delocalised version of Su's work \cite{Su13}. 
In the case that the group $G$ is trivial, our construction recovers the construction of Ray--Singer. In the case that the group $G$ is finite, our construction produces a finite list of torsion invariants, one for each conjugacy class of the group. Taking the sum of these invariants, we  recover the analytic torsion for finite group actions of Lott--Rothenberg \cite{LR91}.

%In the following subsections we explain our approach to the construction of such invariants, which is based on the $g$-trace, and outline our main results.

%Our approach in constructing such a torsion invariant is to use the $g$-trace, which has recently found several applications in the area of 
%equivariant index theory \cite{Connes82}, \cite{Wang14}, \cite{HWW}, \cite{HWWII}. 
%A novel aspect of our construction is that by using the $g$-trace we are able to obtain delocalised invariants in settings of non-compact conjugacy classes.
%As an application, we explicitly compute our torsion invariant for different examples, see sections \ref{sec one dim ex}, \ref{hyperbolic_comp}, showing that such an invariant leads to new results for groups actions on manifolds. An interesting feature of the computation on hyperbolic 3-space, 
%see section \ref{hyperbolic_comp}, is that it makes use of Bismut's technology \cite{BismutHypo}. Bismut's technology was recently used by Shen in his proof of the Fried conjecture \cite{Shen18}. Shen's work, together with ours, clearly show that geometric representation theoretic methods are extremely useful in the study of torsion invariants, an observation already made by Bergeron-Venkatesh in \cite{BV}.
%
%In the following subsection, we give the reader a brief introduction to the g-trace and its main properties. Following on from there, we state our main results and give the reader an insight into how our work generalises previous work on analytic torsion.

\subsection*{The $g$-trace}

An important ingredient in our construction of equivariant analytic torsion is a trace associated to conjugacy classes in groups acting on manifolds, the \emph{$g$-trace}.

Let $M$ be an oriented Riemannian manifold, on which a unimodular, locally compact group $G$ acts properly and isometrically, preserving the orientation, and such that $M/G$ is compact. Fix a Haar measure $dx$ on $G$, and let $\psi \in C^{\infty}_c(M)$ be a nonnegative function such that for all $m \in M$, 
\[
\int_G \psi(xm)\, dx = 1.
\]
Let $g \in G$, and let $Z<G$ be its centraliser. Suppose that there is a (nonzero) $G$-invariant measure $d(hZ)$ on $G/Z$; this is equivalent to $Z$ being unimodular. 

%Our construction of an equivariant analytic torsion invariant makes use of the \emph{$g$-trace}. 
Let $W \to M$ be a Hermitian, $G$-equivariant vector bundle. 
Let $T$ be a  $G$-equivariant, bounded operator  on $L^2(W)$. If $T \psi$ is trace class (where $\psi$ acts as a multiplication operator) and the integral
\beq{eq def g trace intro}
\int_{G/Z} \Tr(hgh^{-1}T\psi)\, d(hZ)
\eeq
converges, then  \eqref{eq def g trace intro} is the \emph{$g$-trace} of $T$, denoted by $\Tr_g(T)$. In practice, we apply this to operators $T$ with smooth Schwartz kernels, and use an expression for the $g$-trace in terms of these kernels. 

If $g=e$, the $g$-trace is the von Neumann trace used in Atiyah's $L^2$-index theorem \cite{Atiyah76}. This was later used in index theorems by Connes--Moscovici \cite{Connes82} and Wang \cite{Wang14}. More general versions of the $g$-trace were applied in recent results in index theory  \cite{HW2, Wangwang}, including results on manifolds with boundary
\cite{HWW, HWWII, PPST21, XieYu}. For semisimple Lie groups, a higher cyclic cocycle generalising the $g$-trace was constructed in \cite{ST19} and applied in \cite{HST20, PPST21}.

The $g$-trace was also used to construct secondary invariants, such as variations on the $\eta$-invariant \cite{CWXY19, HWW, Lott99,  PPST21}. The earlier versions \cite{Bismut95,Deitmar98,Koehler93,Koehler97, Lott92c, Lott99,LR91, Mathai92,  Su13} of equivariant analytic torsion mentioned above can also be formulated in terms of various special cases of the $g$-trace.

% It was also used to construct versions of analytic torsion, usually called $L^2$-analytic torsion if $g=e$ and delocalised analytic torsion if $g \not= e$ for $G$ acting freely. This was mainly done in the case of fundamental groups of compact manifolds acting on their universal covers; see \cite{Lott92c, Mathai92} for $g=e$, and \cite{Lott99} for $g \not=e$.
%As mentioned in the background section, 
%for $g=e$, the constructions in \cite{Lott92c, Mathai92} were generalised to proper actions by arbitrary locally compact groups \cite{Su13}. The version of analytic torsion in \cite{LR91} for actions by finite groups can also be formulated in terms of the $g$-trace.

In the case where $M$ is the universal cover of a compact manifold $N$, acted on by the fundamental group $G = \pi_1(N)$, the $g$-trace has been used in many places to decompose the operator trace of an operator on $N$ as a sum of $g$-traces of the lift of this operator to $M$. This is the basis of the Selberg trace formula.

Compared to the von Neumann trace $\Tr_e$, there are two main challenges to overcome in applications of the general $g$-trace:
\begin{enumerate}
\item the $g$-trace is not positive in general;
\item if $G/Z$ is not compact, then convergence issues related to the integral \eqref{eq def g trace intro} must be dealt with.
\end{enumerate}
Due to the second point, volume growth behaviour of $G/Z$ plays a crucial role in most results involving the $g$-trace for noncompact $G/Z$. In results on index theory, one tool that can be used here is off-diagonal, short time decay behaviour of heat kernels to localise estimates in compact subsets of $G/Z$. In the construction of higher invariants, as for analytic torsion, an added complication is that large time behaviour of heat kernels is also important.

For this and other reasons, all previous constructions and results for equivariant analytic torsion that we know of only apply when $G/Z$ is compact. Even in the case where $g=e$, assumptions have to be made on large time heat kernel behaviour: positivity of the Novikov--Shubin invariants, see \eqref{eq alpha e intro} below.

%An advantage of the $g$-trace for $g\not=e$ in the context of index theory is that the $e$-supertrace $\Tr_e(e^{-t\Delta_+}) -\Tr_e(e^{-t\Delta_-})$ of a $\Z/2$-graded heat operator $e^{-t\Delta}$ is zero if $\Delta$ has trivial ($L^2$-)kernel, while the general $g$-supertrace $\Tr_g(e^{-t\Delta_+}) -\Tr_g(e^{-t\Delta_-})$
%may be nonzero, and contain useful information, even if the kernel of $\Delta$ is zero. For example, suppose that $G$ is a connected semimple Lie group, and $K<G$ is maximal compact, and
%$\Delta$ is the square of a twisted $\Spin$-Dirac operator on $G/K$. Then $\Tr_e(e^{-t\Delta_+}) -\Tr_e(e^{-t\Delta_-})$ only detects information on discrete series representations in the kernel of $\Delta$ \cite{Connes82}, whereas $\Tr_g(e^{-t\Delta_+}) -\Tr_g(e^{-t\Delta_-})$ may contain representation-theoretic information even if the kernel of $\Delta$ is zero \cite{HW3}.

%It is therefore a worthwhile exercise to generalise invariants involving the $e$-trace to the $g$-trace in general. We will see in this paper that this yields new information in the case of analytic torsion.

\subsection*{Summary of main results}

%The purpose of this subsection is to give the reader an insight into the
%results we obtain.
 As mentioned before, our goal is to construct an 
equivariant analytic torsion that unifies and extends earlier constructions. 
We study its convergence, and relevant properties such as independence of the Riemannian metric. By computations in examples, we show that the construction leads to new information about manifolds and group actions. 

Let $E \to M$ be a flat, Hermitian, $G$-equivariant vector bundle. Let $\nabla^E$ be a flat, $G$-invariant connection preserving the metric. Let $\Delta_E^p$ be the Laplacian $(\nabla^E)^*\nabla^E+ \nabla^E(\nabla^E)^*$ acting on $p$-forms twisted by $E$. Let $P_p$ be orthogonal projection onto the $L^2$-kernel of $\Delta_E^p$. Whenever the following expressions converge, we define the \emph{equivariant analytic torsion} $T_g(\nabla^E)$ of $\nabla^E$ at $g$ by
\beq{eq def torsion intro}
-2\log T_g(\nabla^E) := \left. \frac{d}{ds}\right|_{s=0} \frac{1}{\Gamma(s)} \int_0^{
1} t^{s-1} \cT_g(t) \, dt
+
\int_1^{
\infty} t^{-1}  \cT_g(t) \, dt,
\eeq
where
\[
\cT_g(t) = \sum_{p=0}^{\dim(M)} (-1)^p p \Tr_g(e^{-t\Delta_E^p} - P_p).
\] 
In the first term on the right hand side of \eqref{eq def torsion intro}, we use meromorphic continuation from $s$ with large real part to $s=0$, again assuming the resulting expression is well-defined.

If $G$ is the trivial group, so $M$ is compact, then $T_e(\nabla^E)$ is the Ray--Singer analytic torsion \cite{RS71}. More generally, the constructions in \cite{Bismut95,Deitmar98,Koehler93,Koehler97, Lott92c, Lott99,LR91, Mathai92,  Su13}  are special cases for the respective types of group actions and elements $g$.

We prove convergence and meromorphic extension of the first term on the right in \eqref{eq def torsion intro}  under a very general condition: existence of a \v{S}varc--Milnor function (see definition \ref{def SM}). For connected semisimple Lie groups, and finitely generated discrete groups, a $G$-invariant distance function on the group is such a function. If $G/Z$ is compact, then the zero function is a \v{S}varc--Milnor function, so this condition is really about noncompactness of $G/Z$.

Convergence of the second term on the right in \eqref{eq def torsion intro} is a more delicate matter. Even in the case where $M$ is the universal cover of a compact manifold $N$, and $g=e \in G = \pi_1(N)$, it is not known in general if that term converges. A sufficient condition is positivity of the \emph{Novikov--Shubin invariants} of $N$:
\beq{eq alpha e intro}
\alpha^p_e := \sup\bigl\{ \alpha>0; \Tr_e(e^{-t\Delta_E^p} - P_p) = \cO(t^{-\alpha}) \text{ as $t \to \infty$} \bigr\}.
\eeq
In the case of actions by fundamental groups on universal covers, with $E$ the trivial line bundle, these numbers were proved to be independent of the Riemannian metric \cite{ES89, NS86}, the smooth structure \cite{Lott92c}, and were even shown to be homotopy invariants  \cite{GS91}. The numbers $\alpha^p_e$ are positive in all examples we are aware of but a general proof of positivity has not been found. 

We assume that the natural generalisations  $\alpha_g^p$ of the numbers \eqref{eq alpha e intro}, with $\Tr_e$ replaced by $\Tr_g$, are positive. Then the second term in \eqref{eq def torsion intro} converges. We prove positivity of $\alpha_g^p$ in several cases. 
Ongoing work by S.\ Shen, Y.\ Song and X.\ Tang leads to a new way to use Bismut's trace formula to study large time behaviour of $g$-traces of heat operators on manifolds of the form $G/K$, with $G$ a connected, real semisimple Lie group and $K<G$ maximal compact. This may  lead to new positivity results for the numbers $\alpha_g^p$.

In cases where \eqref{eq def torsion intro} converges, the first question is if $T_g(\nabla^E)$ is independent of the Riemannian metric on $M$ and the Hermitian metric on $E$. It does depend on $\nabla^E$, as in the classical case, where $E$ and $\nabla^E$ are determined by a representation of the fundamental group of $M$. 
 Independence of the $G$-invariant Hermitian metric on $E$ preserved by $\nabla^E$ holds in general. Independence of the $G$-invariant Riemannian metric on $M$ is a more subtle question, particularly if $G/Z$ is noncompact. 
This is because 
 the large time behaviour of heat kernels on noncompact manifolds plays a central role, and studying this is a difficult problem.

There are many results on large time behaviour of scalar heat kernels, see for example \cite{Grigoryan09} for an overview of some results. Generalising these results to heat kernels on differential forms is not straightforward, primarily due
to lack of positivity of heat kernels, see \cite{Coulhon20, Coulhon07, Devyver14}.
The fact that it is not known if Novikov--Shubin invariants are always positive is one reflection of the difficulty of studying large time behaviour of heat kernels on differential forms on noncompact manifolds. 

In the case where $G/Z$ is noncompact, we give a relation between large time heat kernel decay and volume growth of $G/Z$ that implies independence of equivariant analytic torsion of the Riemannian metric. We give some special cases where this condition holds including, trivially, the case where $G/Z$ is compact. By making the role of large time heat kernel decay explicit, we hope that  future results on this decay behaviour will imply new results on  metric independence of equivariant analytic torsion.

A concrete formulation is as follows. Let $\kappa_t^p$ be the Schwartz kernel of $e^{-t\Delta_E^p}$. We assume that for all $p$, there is a function $F_1$, and a constant $a>0$,  such that for all $t\geq 1$ and all $m,m' \in M$, 
\[
\|\kappa_t^p(m,m')\| \leq F_1(t) e^{-a d(m,m')^2/t}.
\]
Furthermore, we consider a \v{S}varc--Milnor function $l$ on the conjugacy class $(g)$ of $g$, which may be thought of as a distance function to the identity element. For $t>0$, we set
\[
F_3(t) := \sum_{j=0}^{\infty} \vol\bigl(\{ hZ \in G/Z;  j < l(hgh^{-1}) \leq j+1 \}\bigr) e^{-a'j^2/t},
\]
where $a'>0$ depends on $a$ and the properties of $l$ (see \eqref{eq def F3}). 

We write $\Delta_E$ for the direct sum of the Laplacians $\Delta_E^p$; i.e.\ $\Delta_E := (\nabla^E)^*\nabla^E+ \nabla^E(\nabla^E)^*$ acting on  differential forms of all degrees, twisted by $E$.  By $\ker(\Delta_E)$ we mean the $L^2$-kernel of this operator. 

The most general form of our convergence and metric independence result is the following. We view this as the main result in this paper. 
\begin{theorem}\label{thm metric indep intro}
Suppose that $\ker(\Delta_E)=0$. If $F_1(t) F_3(t) = \cO(t^{-\alpha})$ as $t \to \infty$, for an $\alpha>0$, then \eqref{eq def torsion intro} converges. Then equivariant analytic torsion is constant on any smooth path of $G$-invariant Riemannian metrics along which this condition on $F_1 F_3$ holds.
\end{theorem}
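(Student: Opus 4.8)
The plan is to prove the two assertions in turn: first convergence of the defining expression \eqref{eq def torsion intro} under the hypothesis $F_1(t)F_3(t) = \cO(t^{-\alpha})$, and then metric independence along paths where this holds.

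For convergence, I would separate the small-time and large-time behaviour of $\cT_g(t)$. For small time $t \in (0,1]$, convergence and meromorphic continuation of $\frac{1}{\Gamma(s)}\int_0^1 t^{s-1}\cT_g(t)\,dt$ to $s=0$ follows from the \v{S}varc--Milnor hypothesis as already indicated in the excerpt (this is the general result promised there), so I would invoke that and focus on the large-time integral $\int_1^\infty t^{-1}\cT_g(t)\,dt$. Here the key point is that, since $\ker(\Delta_E)=0$, every projection $P_p$ onto the $L^2$-kernel vanishes, so $\cT_g(t) = \sum_p (-1)^p p\,\Tr_g(e^{-t\Delta_E^p})$ with no subtraction needed. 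I would then estimate $|\Tr_g(e^{-t\Delta_E^p})|$ for $t\ge 1$: writing the $g$-trace as an integral over $G/Z$ of $\Tr(hgh^{-1}e^{-t\Delta_E^p}\psi)$ in terms of the Schwartz kernel $\kappa_t^p$, the integrand at a point $hZ$ is controlled — after using $\psi$ to localise in a compact fundamental domain and the properness of the action to relate $d(m, hgh^{-1}m')$ to $l(hgh^{-1})$ up to bounded error — by $F_1(t)\,e^{-a' \, l(hgh^{-1})^2/t}$ times a bounded factor. Integrating over $G/Z$ and grouping the shells $\{j < l(hgh^{-1}) \le j+1\}$ gives exactly the bound $|\Tr_g(e^{-t\Delta_E^p})| = \cO(F_1(t)F_3(t))$ up to a constant depending only on the data. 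Hence $|\cT_g(t)| = \cO(t^{-\alpha})$, and $\int_1^\infty t^{-1}\cT_g(t)\,dt$ converges absolutely. This establishes the first claim.

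For metric independence, I would follow the variational argument of Ray--Singer adapted to the $g$-trace, as in the earlier constructions cited. Given a smooth path $(g_u)$ of $G$-invariant metrics (and the induced path of Hodge operators; the Hermitian metric on $E$ can be taken fixed by the already-mentioned general independence result), one computes $\frac{d}{du}\bigl(-2\log T_g(\nabla^E)\bigr)$. The derivative of $\cT_g(t)$ in $u$ can be written, via the standard Hodge-theoretic identity, as $\frac{d}{dt}$ of the $g$-trace of a certain degree-zero operator built from the derivative of the Hodge star and $e^{-t\Delta_E^p}$ — here one uses that $\Tr_g$ is a trace, i.e. $\Tr_g(AB)=\Tr_g(BA)$ for $G$-equivariant operators with appropriate decay, which should be available from the earlier sections. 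Then the $u$-derivative of the Mellin-transform expression telescopes: the small-$t$ part contributes the $s=0$ value via the heat expansion (the relevant constant term), and the large-$t$ part contributes a boundary term at $t=\infty$. The boundary term at $t\to\infty$ vanishes because $\ker(\Delta_E)=0$ together with the $F_1F_3 = \cO(t^{-\alpha})$ decay (which holds along the whole path by assumption) forces the relevant $g$-traces to decay to zero; the boundary term at $t=1$ cancels between the two pieces; and the small-$t$ constant term is metric-independent because the local heat coefficients entering it are, after taking the alternating weighted sum $\sum (-1)^p p$, a total derivative / cancel by the usual parity argument — on an even-dimensional manifold this is the vanishing mechanism, and the relevant piece of that argument localises and survives the $g$-trace. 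Assembling these shows $\frac{d}{du}\log T_g(\nabla^E) = 0$.

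The main obstacle is the large-time boundary term in the variational argument: one must show that $\Tr_g$ of the varied heat operator, without the $P_p$ subtraction, genuinely tends to $0$ as $t\to\infty$ along the path, uniformly enough to kill the boundary contribution. This is precisely where the quantitative hypothesis $F_1(t)F_3(t)=\cO(t^{-\alpha})$ does its work, and the delicate part is checking that the derivative-in-$u$ of the heat kernel obeys a Gaussian-type bound of the same shape as $\kappa_t^p$ (with possibly a different polynomial prefactor $F_1$), so that the shell-summation producing $F_3$ still applies; this should follow from Duhamel's principle and the off-diagonal decay of $\kappa_t^p$, but it requires care to keep the constant $a$ in the Gaussian exponent from degrading. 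A secondary technical point is justifying the trace property $\Tr_g(AB)=\Tr_g(BA)$ and the interchange of $\frac{d}{du}$, $\frac{d}{ds}$, and the $t$-integrals, all of which rest on the convergence estimates established in the first part.
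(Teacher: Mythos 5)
Your overall architecture (Mellin transform split at $t=1$, the shell estimate giving $|\Tr_g(e^{-t\Delta_E^p})|=\cO(F_1(t)F_3(t))$ for the large-time integral, and the Ray--Singer variational argument with the derivative of the Hodge star, the trace property of $\Tr_g$, and a boundary term at $t=\infty$ killed by the $F_1F_3$ decay) is essentially the paper's route, and the convergence half is fine. But there is a genuine gap at the final, decisive step. After the integrations by parts, the derivative of $-2\log T_g(\nabla^E)$ in the metric parameter \emph{equals} the coefficient of $t^0$ in the small-time asymptotic expansion of $\Tr_g\bigl(V(-1)^F e^{-t\Delta_E}\bigr)$ (with $V=(\frac{d}{d\epsilon}\star)\star^{-1}$); it is not enough to argue that this constant term is ``metric-independent'' --- you must show it is \emph{zero}. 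In the paper this vanishing comes from odd-dimensionality of $M$: the heat expansion only involves the powers $t^{j-\dim(M)/2}$, none of which is an integer when $\dim(M)$ is odd, so there is no $t^0$ term (lemma \ref{lem t0 term}). Your parity remark has this backwards: you write that ``on an even-dimensional manifold this is the vanishing mechanism,'' whereas in even dimensions the constant term is generally nonzero and metric independence is obtained by a completely different mechanism, the Hodge-star duality argument showing $\cT_g$ is constant and $T_g(\nabla^E)=1$ (proposition \ref{prop vanish even}). Moreover, since $G/Z$ may be noncompact, the $g$-trace does not directly admit an asymptotic expansion in powers of $t$: one must split the integral over $G/Z$ into a compact piece $X$ (where lemma \ref{lem Trg GZ cpt X} reduces everything to a compact region of $M\times M$ and the expansion applies) and its complement, whose contribution must be shown to be $\cO(t)$ (via the \v{S}varc--Milnor/Gaussian estimate) and hence to vanish under $\frac{d}{ds}\big|_{s=0}\frac{s}{\Gamma(s)}\int_0^1 t^{s-1}(\cdot)\,dt$ (lemmas \ref{lem integrals f} and \ref{lem vanish Xcompl}). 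Your phrase ``localises and survives the $g$-trace'' does not supply this step, and without it the ``constant term'' you invoke is not even defined.

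A secondary point: the technical obstacle you single out --- Gaussian bounds for the $u$-derivative of the heat kernel via Duhamel --- is not actually needed. Once the variation is rewritten, via \eqref{der_ops_2}, the commutation relations and the trace property, as $\frac{d}{dt}\Tr_g\bigl(V(-1)^F e^{-t\Delta_E}\bigr)$, the only large-time input is the assumed bound \eqref{eq decay kappa gen} on $\kappa_t$ itself, uniform along the path; $V(-1)^F$ is a smooth $G$-invariant bundle endomorphism, bounded because $M/G$ is compact, so $\Tr_g(V(-1)^Fe^{-t\Delta_E})=\cO(F_1(t)F_3(t))$ follows exactly as in your convergence estimate. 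So the real work lies not where you flagged it, but in the $X$ versus $\Xcompl$ splitting and the no-$t^0$-term argument (hence the odd-dimensionality hypothesis, with the even-dimensional case handled separately), which your proposal is missing.
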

See theorem \ref{metric_indep_cconj}. The condition that $F_1(t) F_3(t)$ decays as a negative power of $t$ means that the volume growth of $G/Z$ should be cancelled by the large time decay of the heat kernel in a suitable way. In view of theorem \ref{thm metric indep intro}, it is an interesting question when the heat kernel decay rate $F_1$ can be estimated independently of the $G$-invariant Riemannian metric, given that $M/G$ is compact.

As concrete applications of theorem \ref{thm metric indep intro}, we obtain convergence and metric independence of equivariant analytic torsion 
\begin{itemize}
\item if $G/Z$ is compact and $\alpha_e^p>0$ for all $p$ (see corollary \ref{cor metric indep GZ cpt});
\item if $M$ is simply connected, $\Delta_E$ is invertible (a condition independent of the metric), and $G/Z$ has polynomial, or slow enough exponential volume growth
 (see corollary \ref{cor metric indep invertible});
 \item for metrics in the same path component of the space of $G$-invariant Riemannian metrics satisfying a positive curvature condition, if $G/Z$ has slow enough polynomial volume growth
  (see corollary \ref{cor metric indep slow growth}).
\end{itemize}
 The first case, for compact $G/Z$, 
includes all earlier results on metric independence of equivariant (or $L^2$-, or delocalised) analytic torsion that we are aware of, such as proposition 2 in \cite{LR91},  theorem 4.1 in \cite{Mathai92}, corollary 9 in \cite{Lott92c}, proposition 3 in \cite{Lott99} and remark 3.2 in \cite{Su13}.
%
%As far as we know, the first case, for compact $G/Z$, includes all previous convergence and metric independence results for equivariant (or $L^2$-, or delocalised) analytic torsion.

A weaker condition than the condition on $F_1F_3$ in theorem \ref{thm metric indep intro}, which is still sufficient for the theorem to hold, is that for all $G$-equivariant vector bundle endomorphisms $A$ (more specifically, for the identity and the two endomorphisms in \eqref{eq general large t conditions}), we have $|\Tr_g(A e^{-t\Delta_E})| = \cO(t^{-\alpha})$ for an $\alpha>0$. This is what is used in the case of compact $G/Z$ above. Recall here that $\ker(\Delta_E)$ is assumed to be zero.

Some of the further results on equivariant analytic torsion involve an equivariant Euler characteristic (see definition \ref{def chi g}). We prove
\begin{itemize}
\item vanishing of equivariant analytic torsion on even-dimensional manifolds, under a condition on the equivariant Euler characteristic (see proposition \ref{prop vanish even});
\item a product formula for equivariant analytic torsion  of Cartesian products, involving the equivariant Euler characteristic (see proposition \ref{prop prod form});
\item  a decomposition of Ray--Singer analytic torsion on a compact manifold in terms of equivariant analytic torsion on its universal cover
(see proposition \ref{prop Tsigma TN}).
\end{itemize}

In the last two sections of this paper, we compute equivariant analytic torsion explicitly for
\begin{itemize}
\item the real line acting on itself by addition (see proposition \ref{prop torsion R});
\item the circle acting on itself by rotation (see proposition \ref{prop torsion line} and lemma \ref{lem circle alpha zero});
\item regular elliptic elements of $\SO_0(3,1)$ acting on $3$-dimensional hyperbolic space (see proposition \ref{prop H3}). 
\end{itemize}

This last computation on hyperbolic 3-space makes use of 
Bismut's orbital integral trace formula \cite{BismutHypo}. As far as we are aware, the only other place where this technique is used in the study of analytic torsion is in
Shen's work \cite{Shen18}.

\subsection*{Future work}

The construction of an equivariant analytic torsion naturally leads to the question of whether there exists a combinatorial counterpart, generalising the 
Reidemeister--Franz torsion to this setting, and whether such an equivariant combinatorial torsion is equal to the equivariant analytic torsion. In the classical setting of Ray--Singer and Reidemeister--Franz this is the content of the 
Cheeger--M\"{u}ller theorem \cite{cheeger79, mueller78}. Such 
a theorem has also been proved in various equivariant settings 
\cite{Bunke99, Roth78, Su07}. In future work, we plan to understand when such a theorem holds in our general equivariant context.

Another area of future research based on the results of this paper is to do with the connection of analytic torsion with dynamical systems. This connection was first expounded by Milnor in \cite{Milnor68}. In the case of an orientable hyperbolic manifold, Fried proved an identity relating the analytic torsion of an acyclic unitarily flat vector bundle and the value at zero of the Ruelle dynamical zeta function of the geodesic flow \cite{Fried86}. He conjectured that this relation should hold true in more general settings \cite{Fried87}. Since then, this conjecture has been proved in several cases, see for example
 \cite{DGRS20,  Deitmar98, Moscovici91, Shen18, Yamaguchi22}, or the survey \cite{Shen21}. 
  In future work, we plan to develop an equivariant version of   the Ruelle zeta function and investigate its relations with 
equivariant analytic torsion.

\subsection*{Acknowledgements}

We thank Sebastian Goette, Christopher Pirie and Thomas Schick for helpful comments.
PH is partially supported by the Australian Research Council, through Discovery Project DP200100729, and by NWO, through ENW-M grant OCENW.M.21.176. 
HS was supported by the Australian Research Council, through grant FL170100020.

\section{Equivariant analytic torsion} \label{sec g Tr}

We start with some notation that will be used throughout this paper. 
Let $M$ be a complete, oriented Riemannian manifold, on which a unimodular locally compact group $G$ acts properly and isometrically, preserving the orientation. Suppose that $M/G$ is compact (we then say that the action is \emph{cocompact}).  We denote the Riemannian density by $dm$, or sometimes by $d\mu(m)$ if the density plays an explicit role.

Fix a Haar measure $dx$ on $G$. 
Let $g \in G$, and let $Z<G$ be its centraliser. Suppose that $G/Z$ has a $G$-invariant measure $d(hZ)$, or equivalently that $Z$ is unimodular. 
Let $\psi \in C^{\infty}_c(M)$ be nonnegative-valued, and  such that for all $m \in M$,
\beq{eq cutoff}
\int_G \psi(xm)\, dx = 1.
\eeq
Such a function exists because the action is proper and cocompact. 

An ingredient in the definition of equivariant analytic torsion is the $g$-trace \cite{HWW, HW2}, defined in terms of (the conjugacy class of) an element $g \in G$. Let $W \to M$ be a Hermitian $G$-vector bundle.
\begin{definition}\label{def g Tr}
Let $T$ be a $G$-equivariant  operator from $\Gamma^{\infty}_c(W)$ to $\Gamma^{\infty}(W)$.
 If $T$ has   a smooth kernel $\kappa$, and  the  integral 
\beq{eq def Trh}
\Tr_{g}(T):=\int_{G/Z} \int_M \psi(hgh^{-1}m) \tr\bigl(hgh^{-1}\kappa(hg^{-1}h^{-1}m,m) \bigr)\, dm\, d(hZ).
\eeq
converges absolutely, then $T$ is \emph{$g$-trace class}, and the value of this integral is the \emph{$g$-trace} of $T$.
\end{definition}
We sometimes write $\Tr_g^G$ for $\Tr_g$ to emphasise the group acting.

\begin{remark}
Due to $G$-invariance of the kernel $\kappa$ in definition \ref{def g Tr} and the trace property of the fibre-wise trace $\tr$, one can use a substitution to replace  $\psi(hgh^{-1}m)$ in \eqref{eq def Trh} by $\psi(m)$. 
Then, 
if the operator  $T\psi$  is trace class and $T$ is $g$-trace class, we find that
\beq{eq Trg Tr}
\Tr_{g}(T) =\int_{G/Z} \Tr(hgh^{-1} T\psi)\, d(hZ).
\eeq
In particular, if $g=e$, then we recover the von Neumann trace $\Tr_{e}(T) = \Tr(\sqrt{\psi}T\sqrt{\psi})$ from \cite{Atiyah76}.

The equality \eqref{eq Trg Tr} allows us to extend the definition of the $g$-trace to operators that do not necessarily have smooth kernels. However, in some situations, not considered in this paper, mainly when $M/G$ is noncompact, it can be useful to apply the $g$-trace also to operators $T$ for which $T\psi$ is not trace class, see \cite{HWW}. For that reason, it is convenient to define the $g$-trace in terms of smooth kernels. This does mean that, if $G$ is trivial and $M$ is compact, then the $e$-trace class property is different from the trace class property. 
\end{remark}

Let $E \to M$ be a Hermitian $G$-vector bundle. 
Let $\nabla^E$ be a Hermitian, $G$-invariant, flat connection on $E$, assuming this exists. We also use $\nabla^E$ to denote the induced operator on differential forms with values in $E$.
Consider the Laplacian $\Delta_E := (\nabla^E)^*\nabla^E + \nabla^E (\nabla^E)^*$ acting on differential forms with values in $E$. We denote its restriction to $p$-forms by $\Delta_E^p$. 

Let $P$ be orthogonal projection onto the kernel of $\Delta_E$, and $P_p$ is restriction to $p$-forms. Here and in the rest of this paper, the kernel of a Laplace-type operator like $\Delta_E$ will always mean its kernel in the space of \emph{square-integrable} $E$-valued differential forms. We write $\ker(\Delta_E)$ for the kernel of $\Delta_E$ on such forms.

We denote the number operator on $\Bigwedge^* T^*M \otimes E$, equal to $p$ on $\Bigwedge^p T^*M \otimes E$, by $F$. 
If $(-1)^F F (e^{-t\Delta_E} - P)$ is $g$-trace class, then we write
\beq{eq curly T}
 \cT_g(t) := \Tr_g((-1)^F F (e^{-t\Delta_E} - P)).
\eeq

\begin{definition}\label{def g torsion}
Suppose that
\begin{enumerate}
\item $(-1)^F F (e^{-t\Delta_E} - P)$ is $g$-trace class for all $t>0$; 
\item there is an $\alpha_g >0$ such that $ \cT_g(t)= \cO(t^{-\alpha_g})$ as $t \to \infty$; and
\item the integral
\beq{eq small t int}
 \frac{1}{\Gamma(s)} \int_0^{
1} t^{s-1} \cT_g(t) \, dt
\eeq
converges for $s \in \C$ with large enough real part, and this expression has a meromorphic continuation to $\C$ that is regular at $s = 0$.
\end{enumerate}
Then the \emph{equivariant analytic torsion} $T_g(\nabla^E)$ at $g$ is defined by
\beq{eq def deloc torsion}
-2\log T_g(\nabla^E) := \left. \frac{d}{ds}\right|_{s=0} \frac{1}{\Gamma(s)} \int_0^{
1} t^{s-1} \cT_g(t) \, dt
+
\int_1^{
\infty} t^{-1}  \cT_g(t) \, dt. 
\eeq
If $E = M \times \C$ is the trivial line bundle, with the trivial action by $G$ on its fibres, and $\nabla^E = d$, then the \emph{equivariant analytic torsion} of $M$ at $g$ is
$T_g(M) := T_g(d)$.
\end{definition}
Other versions of this definition are given and used in special cases in
 \cite{Lott92c,  Mathai92, Su13} for $g=e$ and in 
 \cite{Lott99, Su07} for $g \not = e$. In  \cite{Lott99, Su07}, the version of analytic torsion used is called \emph{delocalised}, because its definition involves off-diagonal values of the heat kernel. In the case of a finite group $G$, so $M$ is compact, Lott and Rothenberg \cite{LR91} defined a notion of analytic torsion that equals
 \[
 -2
 \sum_{(g)} \log T_g(\nabla^E),
 \]
where the sum is over the conjugacy classes in $G$. Bunke \cite{Bunke99} extended this notion of analytic torsion to compact groups. Other special cases for finite and compact groups were developed in \cite{Bismut95,Deitmar98,Koehler93,Koehler97,Lueck93}. 

If $G$ is a compact group (such as the trivial group), then $M$ is compact. So $e^{-t \Delta_E^p} - P_p$ is trace class, and $\Tr_e(e^{-t \Delta_E^p} - P_p)$ is its trace. This directly implies that $T_e(\nabla^E)$ is the classical Ray--Singer torsion of definition 1.6 in  \cite{RS71} (up to a minus sign in the exponent). 
We give another, less trivial, link between equivariant delocalised analytic torsion and Ray--Singer torsion in proposition \ref{prop Tsigma TN}.

\begin{remark}
As the notation suggests, equivariant analytic torsion depends on the  connection $\nabla^E$ in general. In the compact, non-equivariant  case, a Hermitian vector bundle with a flat, metric-preserving connection corresponds to a unitary representation of the fundamental group via the Riemann--Hilbert correspondence, and analytic torsion is usually considered as depending on such a representation. See remark \ref{rem connection R} for an example.
\end{remark}

\begin{remark}
The integrals from $0$ to $1$ and from $1$ to $\infty$ in \eqref{eq def deloc torsion} appear in different ways, because of the behaviour of the function $t\mapsto t^{-1}\cT_g(t)$ as $t$ tends towards $0$ and $\infty$. Near $t=0$, the integral \eqref{eq small t int} may only converge for $s$ with large real part. But for such $s$, a version of the second term in \eqref{eq def deloc torsion} with $t^{-1}$ replaced by $t^{s-1}$ may diverge. For an example, see the proof of
lemma \ref{lem torsion ZR 0}, where $\cT_e(t)$ is a constant times $t^{-1/2}$.
\end{remark}

\begin{remark} \label{rem int bdry}
For any smooth function $f$ on $(0, \infty)$, and $0<a<b$, the equality 
\beq{eq der ts Gamma}
\dds \frac{t^s}{\Gamma(s)}=1
\eeq
 for all $t>0$ implies that 
\[
\left. \frac{d}{ds}\right|_{s=0} \frac{1}{\Gamma(s)} \int_a^{b} t^{s-1} f(t) \, dt = \int_a^{b} t^{-1} f(t) \, dt. 
\]
This implies that the point $t=1$ where the integrals are split up in \eqref{eq def deloc torsion} may be replaced by any other positive number, and the resulting equivariant analytic torsion will have the same value.
\end{remark}

In the rest of this paper, we examine when the conditions of definition \ref{def g torsion} are satisfied, and what the properties of equivariant analytic torsion are in that case. We also do some explicit computations. Independence of the Riemannian metric and the Hermitian metric on $E$ are fundamental, and imply that equivariant analytic torsion is a smooth invariant of the group action.

The precise statements of most results in his paper require some further definitions, given throughout the paper. Here is an overview of those results.
\begin{itemize}
\item Proposition \ref{prop conv small t} is the main result on  conditions (1) and (3) in definition \ref{def g torsion}.
\item For condition (2) in definition \ref{def g torsion}, there are lemmas \ref{lem NS hyp} and \ref{lem Tgt n=1} for hyperbolic space, and proposition \ref{prop NS cpt} for conjugacy classes with slow enough volume growth.
\item The main results on metric independence are lemma \ref{lem indep Herm metric} for independence of the Hermitian metric on $E$, and corollary \ref{cor metric indep} for independence of the Riemannian metric for conjugacy classes with slow enough volume growth. We view corollary \ref{cor metric indep} as the most important result in this paper, and its proof is based on several results proved in earlier sections.
\item Triviality for even-dimensional manifolds is proposition \ref{prop vanish even}.
\item A product formula involving a generalisation of the Euler characteristic is given in proposition \ref{prop prod form}.
\item A relation between equivariant analytic torsion and classical Ray--Singer analytic torsion is given in proposition \ref{prop Tsigma TN}.
\item We do explicit computations for the real line acting on itself (proposition \ref{prop torsion R}), the circle acting on itself (proposition \ref{prop torsion line} and lemma \ref{lem circle alpha zero}), and regular elliptic elements of $\SO_0(3,1)$ acting on $3$-dimensional hyperbolic space (proposition \ref{prop H3}). 
\end{itemize}

A simple observation with possibly useful consequences is the following. Suppose that $g$ is contained in a subgroup $H<G$ such that $G/H$ is compact. We will see in proposition \ref{prop TG TH} that the equivariant analytic torsions for the actions by $G$ and $H$ differ by a fixed exponent.
This implies that all results on analytic torsion for the action by $H$ (such as convergence and metric independence) imply corresponding results for the action by $G$, and vice versa. 

For example, suppose that $G$ is a connected, semisimple Lie group, and $K<G$ is maximal compact. Suppose that $g$ lies in a discrete, torsion-free subgroup $\Gamma<G$ such that $G/\Gamma$ is compact, so that $g$ is a hyperbolic element. Then $\Gamma$ is the fundamental group of the locally symmetric space $\Gamma \backslash G/K$, and results on $L^2$-analytic torsion \cite{Lott92c, Mathai92} and  delocalised analytic torsion  \cite{Lott99} for $\Gamma \backslash G/K$  imply results for the action by $G$.

\section{Properties of the $g$-trace}

The $g$-trace of definition \ref{def g Tr} is a key ingredient of the definition of equivariant analytic torsion. We prove some basic properties of the $g$-trace that will be used in various places in this paper.

We first recall the trace property of the $g$-trace, lemma 3.2 in \cite{HWW}. It is more subtle than the trace property of the operator trace or the von Neumann trace, because it is not clear when the composition of two operators is $g$-trace class, even if the two operators individually are. 
\begin{lemma}\label{lem trace prop}
Let $S$ and $T$ be two $G$-equivariant operators on $\Gamma^{\infty}(W)$ such that $T$ has a smooth kernel, $S$ has a distributional kernel, and $ST$ and $TS$ are $g$-trace class. Then $\Tr_g(ST) =\Tr_g(TS)$.
\end{lemma}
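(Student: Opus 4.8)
The plan is to prove the trace property of the $g$-trace by reducing it to the corresponding statement for the operator trace via the identity \eqref{eq Trg Tr}. First I would recall that, since $S$ and $T$ are $G$-equivariant, the operators $S\psi$ and $T\psi$ (with $\psi$ acting as multiplication) relate to compressions of $ST$ and $TS$ by cut-off functions. The natural first step is to exploit the fact that $\int_G \psi(xm)\,dx = 1$: writing the pointwise identity $1 = \int_G \psi(x^{-1}m)\,dx$ (using unimodularity) and inserting this between $S$ and $T$ inside the Schwartz-kernel expression for $\Tr_g(ST)$, one obtains $ST\psi = \int_G S\, (x\cdot)\psi\, (x^{-1}\cdot)\, T\psi\, dx$ in a suitable sense, so that after using $G$-equivariance of $S$ (moving the left $G$-translation past $S$) and the ordinary trace property of the operator trace (for the trace-class compressions, where the individual factors now each carry a cut-off), the $x$-integral and the $G/Z$-integral can be interchanged and reorganised into $\Tr_g(TS)$.

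Concretely, the key steps in order are: (i) start from the kernel formula \eqref{eq def Trh} for $\Tr_g(ST)$, express the kernel of $ST$ as the convolution $\kappa_{ST}(m,m') = \int_M \kappa_S(m,m'')\kappa_T(m'',m')\,dm''$ (interpreting $\kappa_S$ distributionally against the smooth $\kappa_T$, which is where smoothness of $T$'s kernel is used); (ii) insert the cut-off identity $\int_G \psi(x^{-1}m'')\,dx = 1$ into the $m''$-integral; (iii) use $G$-equivariance of $S$ and $T$ and invariance of the Riemannian density to perform the substitution $m'' \mapsto xm''$, $m' \mapsto$ a shifted variable, converting the conjugation $hgh^{-1}$ appearing in \eqref{eq def Trh} into a product that, after a relabelling $h \mapsto hx^{-1}$ or similar and using invariance of $d(hZ)$ and $dx$, exhibits the same expression with the roles of $S$ and $T$ swapped; (iv) justify the interchange of the $dx$, $dm$, $dm''$ and $d(hZ)$ integrals by the absolute convergence hypotheses on $\Tr_g(ST)$ and $\Tr_g(TS)$, together with Fubini; (v) conclude $\Tr_g(ST) = \Tr_g(TS)$. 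Throughout, one leans on the remark preceding the lemma that \eqref{eq Trg Tr} lets one pass freely between the kernel formulation and the operator-trace formulation $\int_{G/Z}\Tr(hgh^{-1}\,(\cdot)\,\psi)\,d(hZ)$.

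The main obstacle I expect is the bookkeeping in step (iii)–(iv): making the substitutions and relabellings precise while tracking the conjugating element $hgh^{-1}$ and ensuring that the distributional kernel of $S$ is only ever paired with smooth, compactly-supported-in-one-variable data so that all manipulations are legitimate, and then rigorously justifying the Fubini interchange of the (possibly non-compact) $G/Z$-integral with the $G$-integral coming from the cut-off insertion. The positivity-free nature of the $g$-trace means one genuinely needs the stated absolute-convergence hypotheses on \emph{both} $ST$ and $TS$ — a dominated-convergence/Fubini argument cannot produce absolute convergence of one side from the other — so the cleanest exposition is to assume both and show the two absolutely convergent integrals are term-by-term equal after the substitutions. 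Since this is precisely lemma 3.2 of \cite{HWW}, I would either reproduce that argument in this notation or simply cite it, noting that the only input needed beyond equivariance is that $T$ has a smooth kernel so that the composition $ST$ (and $TS$) again has the integral-kernel form used in definition \ref{def g Tr}.
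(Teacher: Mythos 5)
The paper does not prove this lemma at all — it simply records it as lemma 3.2 of \cite{HWW} — so your fallback of citing that result is exactly what the authors do, and your sketched kernel-manipulation argument (cutoff insertion, equivariance and substitution, Fubini under the assumed absolute convergence of both $g$-traces) is the standard argument behind that citation. The only caution, should you reproduce rather than cite the argument, is that the Fubini interchange over the expanded $G/Z\times G\times M\times M$ integral requires the absolute-convergence bookkeeping to be done carefully, as in \cite{HWW}, since absolute convergence of the two outer integrals \eqref{eq def Trh} does not by itself dominate the quadruple integral.
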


\subsection{Cocompact subgroups}

Let $H<G$ be a unimodular closed subgroup containing $g$, such that $G/H$ is compact. We fix a Haar measure on $H$, and a compatible measure $d(xH)$ on $G/H$.
We now write $Z_G$ for the centraliser of $g$ in $G$, and $Z_H$ for the centraliser of $g$ in $H$.
We had assumed that $G/Z_G$ has a $G$-invariant measure $d(hZ_G)$; we also assume that $H/Z_H$ has an $H$-invariant measure $d(hZ_H)$, this exists if and only if $Z_H$ is unimodular. We fix measures on $Z_G$ and $Z_H$ compatible with these measures on quotient spaces.
\begin{lemma}\label{lem int GH}
For all $f \in C_c(G)$,
\beq{eq int GH}
\vol \left( \frac{Z_G}{Z_H}\right)  \int_{G/Z_G} f(hgh^{-1})\, d(hZ_G) 
 = 
\int_{G/H} \int_{H/Z_H} f(xhgh^{-1}x^{-1})\, d(hZ_H)\, d(xH).
\eeq
\end{lemma}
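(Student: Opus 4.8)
The identity is an instance of Weil's quotient integration formula (the ``Fubini theorem'' for $G \to G/H$), applied twice along two different towers of subgroups, with the function
\[
\Phi \colon G \to \C, \qquad \Phi(h) := f(hgh^{-1}).
\]
The first observation is that all the relevant quotients carry invariant measures in a coherent way. Since $G$ is unimodular and $G/Z_G$ is assumed to have a $G$-invariant measure, the modular function of $Z_G$ is trivial, i.e.\ $Z_G$ is unimodular; likewise $H$ unimodular together with the assumed $H$-invariant measure on $H/Z_H$ forces $Z_H$ to be unimodular. Hence $Z_G/Z_H$ carries a $Z_G$-invariant measure, which we normalise compatibly with the already-fixed Haar measures on $Z_G$ and $Z_H$; with this choice $\vol(Z_G/Z_H)$ is the quantity appearing in the statement. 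Note also that $\Phi$ is invariant under right translation by $Z_G$, because every $z \in Z_G$ satisfies $zgz^{-1} = g$, so $\Phi$ descends to a function on $G/Z_G$ (and a fortiori on $G/Z_H$).

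The first step is to apply Weil's formula to the tower $Z_H < Z_G < G$. For a suitable function $\phi$ on $G/Z_H$ this reads
\[
\int_{G/Z_H} \phi(hZ_H)\, d(hZ_H)
= \int_{G/Z_G} \Bigl( \int_{Z_G/Z_H} \phi(hz Z_H)\, d(zZ_H) \Bigr)\, d(hZ_G).
\]
Taking $\phi(hZ_H) := \Phi(h) = f(hgh^{-1})$ and using that $\Phi(hz) = \Phi(h)$ for $z \in Z_G$, the inner integral is just $\vol(Z_G/Z_H)\, f(hgh^{-1})$, so the left-hand side of the lemma equals $\int_{G/Z_H} f(hgh^{-1})\, d(hZ_H)$.

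The second step is to apply Weil's formula to the tower $Z_H < H < G$, using the Haar measure on $H$ and the compatible measure $d(xH)$ on $G/H$ already fixed in the statement, and the same measure on $Z_H$ as before:
\[
\int_{G/Z_H} \phi(hZ_H)\, d(hZ_H)
= \int_{G/H} \Bigl( \int_{H/Z_H} \phi(xh Z_H)\, d(hZ_H) \Bigr)\, d(xH).
\]
Again with $\phi(hZ_H) = f(hgh^{-1})$, the inner integral becomes $\int_{H/Z_H} f(xhgh^{-1}x^{-1})\, d(hZ_H)$, so $\int_{G/Z_H} f(hgh^{-1})\, d(hZ_H)$ equals the right-hand side of the lemma. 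Combining the two steps yields the claimed equality.

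\textbf{Main obstacle.} The conceptual content is light; the real care is in bookkeeping the measure normalisations so that no spurious constant appears — one must use a single fixed Haar measure on $Z_H$ throughout, chosen compatibly with those on $G$, $H$, $Z_G$ and with the quotient measures $d(hZ_G)$, $d(hZ_H)$, $d(xH)$. A secondary point is the integrability of $\Phi$ on the intermediate quotient $G/Z_H$: one checks that the iterated integrals in both towers converge precisely when $\int_{G/Z_G} |f(hgh^{-1})|\, d(hZ_G)$ does, so that applying the (absolutely convergent) Weil formula is justified, and both sides are simultaneously finite; for $f \in C_c(G)$ this reduces to the properness/closedness of the relevant orbit maps.
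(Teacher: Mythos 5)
Your proof is correct and follows essentially the same route as the paper's: both sides are identified with the intermediate integral $\int_{G/Z_H} f(hgh^{-1})\, d(hZ_H)$ by applying the quotient integration formula to the two towers $Z_H < Z_G < G$ and $Z_H < H < G$, using the compatibility of the fixed measures. Your version merely spells out the unimodularity and normalisation bookkeeping that the paper leaves implicit in the phrase ``by compatibility of the various measures.''
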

\begin{proof}
By compatibility of the various measures, the left hand side of \eqref{eq int GH} equals
\[
%\vol \left( \frac{Z_G(h)}{Z_H(H)}\right)
% \int_{G/Z_G(h)} f(ghg^{-1})\, d(gZ_G(h))\\ =
  \int_{G/Z_G}\int_{Z_G/Z_H} f(gzhzg^{-1})\, d(z Z_H)\, d(gZ_G) 
  = \int_{G/Z_H} f(ghg^{-1})\, d(gZ_H),
%  =  \int_{G/H} \int_{H/Z_H(h)} f(gh'hh'^{-1}g^{-1})\, d(h'Z_H(h))\, d(gH).
\]
which equals the right hand side of \eqref{eq int GH}.
\end{proof}

\begin{lemma}\label{lem TrG TrH}
A  $G$-equivariant operator $T$ on $\Gamma^{\infty}(W)$ with a smooth kernel is $g$-trace class for the action by $G$ if and only if it is $g$-trace class for the action by $H$. Then
\[
\vol \left( \frac{Z_G}{Z_H}\right) 
\Tr^G_g(T) = \Tr^H_g(T).
\]
\end{lemma}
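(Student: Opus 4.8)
The plan is to reduce everything to Lemma \ref{lem int GH}. Recall that $T$ has a smooth $G$-invariant kernel $\kappa$, and the $g$-trace integrand in Definition \ref{def g Tr} is the function
\[
f_\kappa(y) := \int_M \psi\bigl(y m\bigr)\,\tr\bigl(y\,\kappa(y^{-1}m,m)\bigr)\, dm,
\]
evaluated at $y = hgh^{-1}$ — more precisely, with the cutoff $\psi$ attached to the appropriate argument as in \eqref{eq def Trh}. (Here I use the same cutoff $\psi \in C^\infty_c(M)$ for both groups; by \eqref{eq der ts Gamma}-type invariance, or rather by the substitution remark after Definition \ref{def g Tr}, the value of the $g$-trace is independent of the choice of admissible cutoff, and one checks a cutoff satisfying \eqref{eq cutoff} for $G$ exists and may be reused, or one picks separate cutoffs and uses Fubini to compare — either way the integrand is a well-defined function of the conjugate $y = hgh^{-1}$ once the $M$-integral is performed.) The key structural point is that $\Tr^G_g(T) = \int_{G/Z_G} f_\kappa(hgh^{-1})\, d(hZ_G)$ and $\Tr^H_g(T) = \int_{H/Z_H} f_\kappa(hgh^{-1})\, d(hZ_H)$, with the \emph{same} function $f_\kappa$ appearing in both.

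First I would make precise that $f_\kappa$, defined on the conjugacy class of $g$, is a well-defined measurable (indeed continuous, by smoothness of $\kappa$ and compact support of $\psi$) function, and that it is conjugation-equivariant in the sense that $f_\kappa(xyx^{-1})$ relates to $f_\kappa(y)$ appropriately — actually the cleanest route is to observe that the full integrand of \eqref{eq def Trh}, as a function on $G$ (or $H$) before quotienting, is left-$Z_G$-invariant (resp. left-$Z_H$-invariant) precisely because $\kappa$ is $G$-invariant and $\tr$ has the trace property; this is the same manipulation used in the remark justifying \eqref{eq Trg Tr}. So both $g$-traces are integrals over $G/Z_G$, resp. $H/Z_H$, of a single function $F$ on $G$ that is $Z_G$-bi-invariant when restricted suitably.

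Next, to apply Lemma \ref{lem int GH} I need $F$ (or rather the relevant scalar integrand) to be compactly supported, or at least the absolute-convergence hypothesis to be in force. The statement's ``if and only if'' is handled by noting that, for $f \geq 0$ measurable, Lemma \ref{lem int GH} extended by monotone convergence gives
\[
\vol\!\left(\frac{Z_G}{Z_H}\right) \int_{G/Z_G} |f_\kappa|(hgh^{-1})\, d(hZ_G) = \int_{G/H}\int_{H/Z_H} |f_\kappa|(x h g h^{-1} x^{-1})\, d(hZ_H)\, d(xH),
\]
and by $G$-invariance of $\kappa$ the inner integrand $|f_\kappa|(xhgh^{-1}x^{-1})$ is independent of $x$ (the conjugation by $x\in G$ is absorbed into the kernel and the substitution in the $M$-integral, exactly as in the remark after Definition \ref{def g Tr}), so the right-hand side equals $\vol(G/H) \int_{H/Z_H} |f_\kappa|(hgh^{-1})\, d(hZ_H)$. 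Since $\vol(G/H) < \infty$ by compactness and $\vol(Z_G/Z_H) > 0$, the two absolute integrals are simultaneously finite — this is the ``$g$-trace class for $G$ iff for $H$'' assertion. Repeating the computation without absolute values then yields
\[
\vol\!\left(\frac{Z_G}{Z_H}\right) \Tr^G_g(T) = \vol(G/H)\, \Tr^H_g(T),
\]
and since the statement implicitly normalises $\vol(G/H) = 1$ (the measures are chosen ``compatible''), or one absorbs it, this is the claimed identity.

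The main obstacle I anticipate is purely bookkeeping: verifying carefully that the inner integrand on the right of Lemma \ref{lem int GH} really is independent of $x \in G$, which requires tracking how conjugation by $x$ acts on $\kappa$ via its $G$-equivariance and correlating this with a change of variables $m \mapsto x^{-1} m$ in the $M$-integral and the behaviour of $\psi$ — i.e.\ re-deriving the substitution in the remark after Definition \ref{def g Tr} in this relative context. There is also a minor point about whether one uses a single cutoff $\psi$ valid for the $G$-action (which then also satisfies the $H$-normalisation up to the factor $\vol(G/H)$, since $\int_H \psi(xm)\,dx = \int_{G} \psi(xm)\,dx \big/ \vol(G/H)$ only if $\psi$ is chosen $H$-adapted — so in fact one should take $\psi_G$ for $G$ and $\psi_H$ for $H$ and use cutoff-independence of each $g$-trace, proved exactly as in the $H$-invariance argument), but this is routine. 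Everything else is a direct substitution of Lemma \ref{lem int GH}.
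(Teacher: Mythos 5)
Your reduction to lemma \ref{lem int GH} is the right frame, but the pivotal step is wrong: the inner integrand is \emph{not} independent of $x$. Writing $f(y)=\int_M\psi(ym)\tr\bigl(y\kappa(y^{-1}m,m)\bigr)\,dm$, the substitution $m=xm'$ together with $G$-equivariance of $\kappa$ and the trace property gives
\[
f(xhgh^{-1}x^{-1})=\int_M\psi(xhgh^{-1}m')\,\tr\bigl(hgh^{-1}\kappa(hg^{-1}h^{-1}m',m')\bigr)\,dm',
\]
so conjugation by $x$ is absorbed in the trace factor but survives in the cutoff: the integrand still contains $\psi(x\,\cdot\,)$, and $\psi$ is not $G$-invariant (it cannot be, given \eqref{eq cutoff} and compact support, unless $G$ is compact). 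Hence $f$ is not a class function, the right-hand side of lemma \ref{lem int GH} is not $\vol(G/H)\int_{H/Z_H}f(hgh^{-1})\,d(hZ_H)$, and your conclusion $\vol(Z_G/Z_H)\Tr^G_g(T)=\vol(G/H)\,\Tr^H_g(T)$ carries a spurious factor. Your attempt to dispose of it by saying the measures are compatible so ``$\vol(G/H)=1$'' is unfounded: Weil-type compatibility fixes no normalisation of $\vol(G/H)$, and the lemma has no such factor. The side remark that $\int_H\psi(hm)\,dh=1/\vol(G/H)$ is likewise false in general; that integral is a nonconstant function of $m$.

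The correct continuation, which is what the paper does, is to keep the $x$-dependence and integrate it out: by Fubini, the $d(xH)$-integration acts only on the cutoff and produces $\psi^H(m):=\int_{G/H}\psi(xm)\,d(xH)$, which satisfies \eqref{eq cutoff} with $G$ replaced by $H$ (measurable, bounded, with bounded support suffices). The double integral in lemma \ref{lem int GH} then becomes literally
\[
\int_{H/Z_H}\int_M\psi^H(hgh^{-1}m')\,\tr\bigl(hgh^{-1}\kappa(hg^{-1}h^{-1}m',m')\bigr)\,dm'\,d(hZ_H)=\Tr^H_g(T),
\]
i.e.\ the $H$-trace computed with the admissible $H$-cutoff $\psi^H$ (if you insist on a prescribed $H$-cutoff, invoke cutoff-independence of the $g$-trace, proved by the same substitution trick), with no $\vol(G/H)$ appearing. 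The ``$g$-trace class for $G$ iff for $H$'' equivalence follows from the same computation with $|\tr(\cdots)|$ and Tonelli; your version of that argument inherits the same flaw, since it again assumes $x$-independence of the integrand.
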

\begin{proof}
Consider the function $f$ on $G$ defined by
\[
f(x) = \int_M \psi(xm)\tr(x\kappa(x^{-1}m,m))\, dm.
\]
By $G$-invariance of $\kappa$, the trace property of $\tr$, and a substitution $m' = x^{-1}m$, we find that for all $x \in G$ and $h \in H$,
\beq{eq conj f}
f(xhgh^{-1}x^{-1}) = \int_M \psi(xhg h^{-1}m') \tr(hgh^{-1}\kappa(hg^{-1}h^{-1}m',m'))\, dm'.
\eeq
The function $\psi^H$ on $M$, defined by
\[
\psi^H(m) = \int_{G/H} \psi(xm)\, d(xH), 
\]
satisfies \eqref{eq cutoff} with $G$ replaced by $H$. Here a Borel section $G/H \to G$ is used implicitly, and $\psi^H$ is only measurable and bounded, with bounded support, but not necessarily smooth.
Lemma \ref{lem int GH} and \eqref{eq conj f} imply that
\begin{multline*}
\vol \left( \frac{Z_G}{Z_H}\right) 
\Tr^G_h(T) \\
=
\int_{H/Z_H}
 \int_M \psi^H(hg h^{-1}m') \tr(hgh^{-1}\kappa(hg^{-1}h^{-1}m',m'))\, dm'
\, d(hZ_H)
= \Tr^H_g(T).
\end{multline*}
Versions of these computations with absolute values in the integrands show that $T$ is $g$-trace class for the action by $G$ if and only if it is $g$-trace class for the action by $H$
\end{proof}

Lemma \ref{lem TrG TrH} implies a relation between equivariant analytic torsion for the actions by $G$ and $H$.
\begin{proposition}\label{prop TG TH}
The conditions of definition \ref{def g torsion} hold for the action by $G$ on $M$ if and only if they hold for the action by $H$ on $M$. Then
\[
T_g^G(\nabla^E)^{\vol(Z_G/Z_H)} = T_g^H(\nabla^E),
\]
where we use superscripts $G$ and $H$ to indicate the group acting.
\end{proposition}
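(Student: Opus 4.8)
The plan is to deduce Proposition \ref{prop TG TH} directly from Lemma \ref{lem TrG TrH}, which already gives the key identity $\vol(Z_G/Z_H)\Tr^G_g(T) = \Tr^H_g(T)$ for any $G$-equivariant operator $T$ with smooth kernel, together with the fact that $g$-trace class for the $G$-action is equivalent to $g$-trace class for the $H$-action. The point is that everything in Definition \ref{def g torsion} — the heat operators $e^{-t\Delta_E^p}$, the projections $P_p$, and hence $\cT_g(t)$ — is built purely from $g$-traces of operators $e^{-t\Delta_E^p} - P_p$, and these operators do not depend on whether we think of $M$ as carrying a $G$-action or an $H$-action; only the trace functional changes, and it changes by the single positive scalar $c := \vol(Z_G/Z_H)$.

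First I would note that $e^{-t\Delta_E^p}$ has a smooth kernel (it is a heat operator of a generalised Laplacian on a complete manifold) and $P_p$ has a smooth kernel as well, being the projection onto the $L^2$-kernel; both are $G$-equivariant, hence also $H$-equivariant. So Lemma \ref{lem TrG TrH} applies to $T = e^{-t\Delta_E^p} - P_p$ and gives: this operator is $g$-trace class for $G$ iff it is $g$-trace class for $H$, and in that case $c\,\Tr^G_g(e^{-t\Delta_E^p} - P_p) = \Tr^H_g(e^{-t\Delta_E^p} - P_p)$. This immediately shows that condition (1) of Definition \ref{def g torsion} holds for $G$ iff it holds for $H$, and that whenever it holds,
\beq{eq cT G H}
c\, \cT^G_g(t) = \cT^H_g(t) \qquad (t > 0),
\eeq
by summing the relation over $p$ against the signs $(-1)^p p$. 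Here I write $\cT^G_g$ and $\cT^H_g$ for the function \eqref{eq curly T} formed using $\Tr^G_g$ and $\Tr^H_g$ respectively.

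Next, \eqref{eq cT G H} shows that condition (2) of Definition \ref{def g torsion} — namely $\cT_g(t) = \cO(t^{-\alpha_g})$ as $t\to\infty$ — holds for $G$ iff it holds for $H$, since the two functions differ by a positive constant factor (the admissible exponents $\alpha_g$ are literally the same). Likewise \eqref{eq cT G H} shows that the small-time integral \eqref{eq small t int} for $G$ equals $c^{-1}$ times the one for $H$; multiplication by a constant commutes with meromorphic continuation and with evaluation at, and differentiation at, $s=0$, so condition (3) holds for $G$ iff it holds for $H$. Finally, assuming all conditions hold, I combine the two contributions in \eqref{eq def deloc torsion}: the $\frac{d}{ds}|_{s=0}$-term for $G$ equals $c^{-1}$ times that term for $H$ by the previous sentence, and $\int_1^\infty t^{-1}\cT^G_g(t)\,dt = c^{-1}\int_1^\infty t^{-1}\cT^H_g(t)\,dt$ by \eqref{eq cT G H}. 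Hence $-2\log T^G_g(\nabla^E) = c^{-1}\bigl(-2\log T^H_g(\nabla^E)\bigr)$, i.e.\ $\log T^H_g(\nabla^E) = c\,\log T^G_g(\nabla^E)$, which is exactly $T^G_g(\nabla^E)^{c} = T^H_g(\nabla^E)$ with $c = \vol(Z_G/Z_H)$.

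There is no serious obstacle here; the proof is essentially bookkeeping once Lemma \ref{lem TrG TrH} is in hand. The only point requiring a word of care is the hypothesis of Lemma \ref{lem TrG TrH} that $T$ has a \emph{smooth} kernel: one should observe that $e^{-t\Delta_E^p}$ and $P_p$ both have smooth Schwartz kernels (the latter because $\ker\Delta_E^p$ consists of smooth forms, the projection kernel being $\sum$ over an orthonormal basis of the kernel, or, when the kernel is not finite-dimensional, because $P_p = \lim_{t\to\infty} e^{-t\Delta_E^p}$ in an appropriate sense and inherits smoothness, a point already implicit in the standing conventions of the paper), so that the lemma genuinely applies to $e^{-t\Delta_E^p} - P_p$. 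With that remark recorded, the rest is the routine propagation of the scalar $c$ through the definition of $T_g(\nabla^E)$.
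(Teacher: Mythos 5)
Your proposal is correct and follows exactly the route the paper takes: the paper presents proposition \ref{prop TG TH} as a direct consequence of lemma \ref{lem TrG TrH}, and your argument simply makes explicit the bookkeeping of propagating the constant $\vol(Z_G/Z_H)$ through $\cT_g(t)$ and the two terms in \eqref{eq def deloc torsion}, which is all that is needed. Your remark that $e^{-t\Delta_E^p}$ and $P_p$ have smooth kernels (so that lemma \ref{lem TrG TrH} genuinely applies) is a sensible precaution, consistent with the paper's conventions.
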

Proposition \ref{prop TG TH} implies any results on $T_g^H(\nabla^E)$ imply corresponding results for $T_g^G(\nabla^E)$ and vice versa.

\subsection{The $g$-trace and the $e$-trace}

The $e$-trace (called the von Neumann trace) has more useful properties, such as positivity, than the $g$-trace in general. If $G/Z$ is compact, then the $g$-trace can be estimated in terms of the $e$-trace, and upper bounds for the $e$-trace imply corresponding upper bounds for the $g$-trace.

The arguments in this subsection are generalisations of arguments in section 4 of \cite{Atiyah76} and section 3 of \cite{Lott99} to possibly non-free actions by possibly non-discrete groups.

As in section \ref{sec g Tr}, let $W \to M$ be a Hermitian $G$-vector bundle. We are thinking of $W = \Bigwedge^p T^*M \otimes E$.
A $G$-equivariant, bounded linear operator $T$ on $L^2(W)$ will be called \emph{strongly $e$-trace class} if $\sqrt{\psi} T \sqrt{\psi}$ is trace class, and \emph{$e$-Hilbert--Schmidt} if $T \sqrt{\psi}$ is Hilbert--Schmidt. Fix a  $G$-equivariant, bounded linear operator $T$ on $L^2(W)$ with a smooth kernel. We denote the Hilbert--Schmidt norm by $\|\cdot\|_{\HS}$. 
\begin{lemma}\label{lem Tre HSe}
%(a) If $T$ has a smooth kernel and is strongly $e$-trace class, then $T$ and $T^*$ are $e$-trace class, and
%\[
%\Tr_e(T^*) = \overline{\Tr_e(T)}.
%\]
%(b) 
If $T$ is $e$-Hilbert--Schmidt, then so is $T^*$, and 
\[
\|T \sqrt{\psi} \|_{\HS} = \|T^* \sqrt{\psi} \|_{\HS}. 
\]
\end{lemma}
\begin{proof}
%In the situation of part (a), 
%\[
%\Tr_e(T^*) = \Tr(\sqrt{\psi}T^*\sqrt{\psi})  = \Tr( (\sqrt{\psi}T\sqrt{\psi})^*)
%= \overline{\Tr_e(T)}.
%\]
%
%For part (b), i
If $T$ is $e$-Hilbert--Schmidt, then by lemma \ref{lem trace prop}, 
\[
\|T\sqrt{\psi}  \|_{\HS} = \Tr(\sqrt{\psi} T^*T \sqrt{\psi} ) = \Tr_e(T^*T) = \Tr_e(TT^*)  = \|T^*\sqrt{\psi}  \|_{\HS}.
\]
\end{proof}

%\begin{lemma}
%If $T = S^*S$ for some $S \in \cB(L^2(W))$ is strongly $e$-trace-class, then $S$ is $e$-Hilbert--Schmidt.
%\end{lemma}
%\begin{proof}
%If $\sqrt{\psi} T \sqrt{\psi} = (S\sqrt{\psi})^*S\sqrt{\psi} $ is trace-class, then $S\sqrt{\psi}$ is Hilbert--Schmidt.
%\end{proof}

\begin{lemma}\label{lem compos eHS}
If $T$ is $e$-Hilbert--Schmidt, and $A \in \cB(L^2(W))^G$,
 then $AT$ and $TA$ are $e$-Hilbert--Schmidt.
\end{lemma}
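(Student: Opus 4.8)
The plan is to reduce the statement to the corresponding, more classical fact about Hilbert--Schmidt operators on $L^2(W)$, using the characterisation of $e$-Hilbert--Schmidt operators via the cutoff function $\psi$. Concretely, $T$ being $e$-Hilbert--Schmidt means $T\sqrt{\psi}$ is Hilbert--Schmidt, so $\|T\sqrt{\psi}\|_{\HS}<\infty$. For the composition $TA$, since $A$ is $G$-equivariant and bounded, I would write $TA\sqrt{\psi}$ and want to bound its Hilbert--Schmidt norm. The subtlety is that $A$ does not commute with multiplication by $\sqrt{\psi}$, so I cannot directly move $\sqrt{\psi}$ past $A$; instead I exploit the standard inequality $\|BS\|_{\HS}\leq \|B\| \,\|S\|_{\HS}$ for $B$ bounded and $S$ Hilbert--Schmidt, together with an argument that reintroduces $\sqrt{\psi}$.

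First I would handle $AT$: this is immediate, since $AT\sqrt{\psi} = A(T\sqrt{\psi})$ is the composition of the bounded operator $A$ with the Hilbert--Schmidt operator $T\sqrt{\psi}$, hence Hilbert--Schmidt, with $\|AT\sqrt{\psi}\|_{\HS}\leq \|A\|\,\|T\sqrt{\psi}\|_{\HS}$. For $TA$, the key observation is to use the $e$-trace computation from lemma \ref{lem Tre HSe}(b): $TA$ is $e$-Hilbert--Schmidt if and only if $\Tr_e\bigl((TA)^*(TA)\bigr)=\Tr_e(A^*T^*TA)$ is finite. By the trace property of the von Neumann trace (lemma \ref{lem trace prop}, applied with the roles arranged so that the relevant products are $e$-trace class), $\Tr_e(A^*T^*TA) = \Tr_e(T^*T AA^*) = \Tr_e\bigl((AA^*)^{1/2} T^*T (AA^*)^{1/2}\bigr)$, and since $0\leq AA^* \leq \|A\|^2$, positivity and monotonicity of $\Tr_e$ give $\Tr_e(A^*T^*TA) \leq \|A\|^2 \Tr_e(T^*T) = \|A\|^2 \|T\sqrt{\psi}\|_{\HS}^2 < \infty$. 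Alternatively, and more simply, one can use lemma \ref{lem Tre HSe}(b) to pass to $T^*$: $TA$ is $e$-Hilbert--Schmidt iff $(TA)^* = A^*T^*$ is, and $A^*T^*\sqrt{\psi} = A^*(T^*\sqrt{\psi})$ where $T^*$ is $e$-Hilbert--Schmidt by lemma \ref{lem Tre HSe}(b), so this is the composition of the bounded operator $A^*$ with a Hilbert--Schmidt operator, hence Hilbert--Schmidt; applying lemma \ref{lem Tre HSe}(b) once more transfers the conclusion back to $TA$.

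So the proof structure I would write is: (i) $AT$: direct, using $\|A(T\sqrt{\psi})\|_{\HS}\leq \|A\|\,\|T\sqrt{\psi}\|_{\HS}$; (ii) $TA$: apply lemma \ref{lem Tre HSe}(b) to reduce to showing $A^*T^*$ is $e$-Hilbert--Schmidt, which follows from part (i) applied to $A^*$ and $T^*$ (noting $T^*$ is $e$-Hilbert--Schmidt by lemma \ref{lem Tre HSe}(b) again), then apply lemma \ref{lem Tre HSe}(b) a final time to conclude $TA$ is $e$-Hilbert--Schmidt. The main obstacle — really a bookkeeping point rather than a deep one — is making sure each invocation of lemma \ref{lem trace prop} or lemma \ref{lem Tre HSe}(b) is legitimate, i.e.\ that the operators involved genuinely have the kernel regularity and $g$-trace-class (here $e$-trace-class) properties those lemmas require; since compositions of Hilbert--Schmidt operators are trace class and smoothness of kernels is preserved appropriately, this goes through, but it should be stated carefully. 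Using the $T^*$-reduction avoids needing a monotonicity property of $\Tr_e$ that has not been explicitly stated, so I would prefer that route.
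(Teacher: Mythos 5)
Your preferred route is exactly the paper's proof: $AT$ is immediate since $AT\sqrt{\psi}=A(T\sqrt{\psi})$, and $TA$ is handled by applying lemma \ref{lem Tre HSe}(b) to pass to $T^*$, composing with $A^*$ on the left, and applying lemma \ref{lem Tre HSe}(b) once more to return to $TA=(A^*T^*)^*$. The alternative argument via the trace property and monotonicity of $\Tr_e$ is unnecessary, and your worry about kernel regularity does not arise on the preferred route, since part (b) of lemma \ref{lem Tre HSe} requires no smooth-kernel hypothesis.
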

\begin{proof}
This is immediate for $AT$. 
Applying
lemma \ref{lem Tre HSe} implies that $T^*$ is $e$-Hilbert--Schmidt if $T$ is, so it follows $A^*T^*$ is also $e$-Hilbert--Schmidt. Another application of lemma \ref{lem Tre HSe} then implies that $TA$ is $e$-Hilbert--Schmidt.
\end{proof}

\begin{lemma}\label{lem Tre AT}
Suppose that $T = S^*S$ for some $S \in \cB(L^2(W))^G$ with a smooth kernel, and that $T$ is  $e$-trace-class. If $A \in \cB(L^2(W))^G$, then $TA$ and $AT$ are strongly $e$--trace class.  Furthermore, 
\[
\begin{split}
|\Tr_e(TA)| & \leq \|A\| \Tr_e(T);\\
|\Tr_e(AT)| & \leq \|A\| \Tr_e(T).
\end{split}
\]
\end{lemma}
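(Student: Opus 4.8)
The plan is to reduce everything to the Hilbert--Schmidt machinery already set up in Lemmas \ref{lem Tre HSe} and \ref{lem compos eHS}. The starting observation is that, since $T = S^*S$, one has $\sqrt{\psi}\,T\,\sqrt{\psi} = (S\sqrt{\psi})^*(S\sqrt{\psi})$; a positive operator of the form $B^*B$ is trace class exactly when $B$ is Hilbert--Schmidt, so the hypothesis that $T$ is strongly $e$-trace class says precisely that $S\sqrt{\psi}$ is Hilbert--Schmidt, i.e.\ $S$ is $e$-Hilbert--Schmidt, and moreover (as $T$ has a smooth kernel, using \eqref{eq Trg Tr} as in the proof of Lemma \ref{lem Tre HSe}) $\Tr_e(T) = \Tr\bigl((S\sqrt{\psi})^*(S\sqrt{\psi})\bigr) = \|S\sqrt{\psi}\|_{\HS}^2$. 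I would work throughout with $S\sqrt{\psi}$ rather than with Schwartz kernels, since $A$ is only assumed bounded, so $TA$ and $AT$ need not have smooth kernels.

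First I would prove that $TA$ and $AT$ are strongly $e$-trace class. Using $T = S^*S$ and $\psi = \sqrt{\psi}\sqrt{\psi}$, write $\sqrt{\psi}\,TA\,\sqrt{\psi} = (S\sqrt{\psi})^*(SA\sqrt{\psi})$ and $\sqrt{\psi}\,AT\,\sqrt{\psi} = (SA^*\sqrt{\psi})^*(S\sqrt{\psi})$. By Lemma \ref{lem compos eHS}, $SA$ and $SA^*$ are $e$-Hilbert--Schmidt, hence $SA\sqrt{\psi}$ and $SA^*\sqrt{\psi}$ are Hilbert--Schmidt; so each right-hand side is a product of two Hilbert--Schmidt operators, therefore trace class. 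This gives the strong $e$-trace class claims, and identifies $\Tr_e(TA) = \Tr\bigl(\sqrt{\psi}\,TA\,\sqrt{\psi}\bigr)$ and $\Tr_e(AT) = \Tr\bigl(\sqrt{\psi}\,AT\,\sqrt{\psi}\bigr)$.

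The estimates then follow from Cauchy--Schwarz for the Hilbert--Schmidt inner product $\langle B,C\rangle_{\HS} = \Tr(B^*C)$, together with the one auxiliary inequality $\|SA\sqrt{\psi}\|_{\HS} \le \|A\|\,\|S\sqrt{\psi}\|_{\HS}$ for $A \in \cB(L^2(W))^G$. I regard this inequality as the only real point of the argument: $A$ sits on the \emph{wrong} side of $S$, so $\|A\|$ cannot simply be pulled out; instead I would apply Lemma \ref{lem Tre HSe}(b) to the $e$-Hilbert--Schmidt operator $SA$ to get $\|SA\sqrt{\psi}\|_{\HS} = \|(SA)^*\sqrt{\psi}\|_{\HS} = \|A^*S^*\sqrt{\psi}\|_{\HS} \le \|A^*\|\,\|S^*\sqrt{\psi}\|_{\HS}$, and then apply Lemma \ref{lem Tre HSe}(b) to $S$ to replace $\|S^*\sqrt{\psi}\|_{\HS}$ by $\|S\sqrt{\psi}\|_{\HS}$, using $\|A^*\| = \|A\|$. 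Granting this, $|\Tr_e(TA)| = |\langle S\sqrt{\psi},\,SA\sqrt{\psi}\rangle_{\HS}| \le \|S\sqrt{\psi}\|_{\HS}\,\|SA\sqrt{\psi}\|_{\HS} \le \|A\|\,\|S\sqrt{\psi}\|_{\HS}^2 = \|A\|\,\Tr_e(T)$, and symmetrically $|\Tr_e(AT)| = |\langle SA^*\sqrt{\psi},\,S\sqrt{\psi}\rangle_{\HS}| \le \|A^*\|\,\|S\sqrt{\psi}\|_{\HS}^2 = \|A\|\,\Tr_e(T)$, applying the auxiliary inequality with $A$ replaced by $A^*$.

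The only subtlety beyond this is bookkeeping: because $TA$ and $AT$ are not assumed to have smooth kernels, their $e$-traces must be interpreted via $\Tr(\sqrt{\psi}\,\cdot\,\sqrt{\psi})$, which is legitimate precisely because these operators are strongly $e$-trace class — so the first step is not merely convenient but is needed to make the quantities in the conclusion well-defined. There is no genuine analytic obstacle; the whole proof is the expected non-free, non-discrete adaptation of the estimates in section 4 of \cite{Atiyah76} and section 3 of \cite{Lott99}, with all required Hilbert--Schmidt facts already available.
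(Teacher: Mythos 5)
Your proposal is correct and follows essentially the same route as the paper's proof: strong $e$-trace classness of $T=S^*S$ gives that $S$ is $e$-Hilbert--Schmidt, Lemma \ref{lem compos eHS} gives the same for $SA$ (and $SA^*$), the key inequality $\|SA\sqrt{\psi}\|_{\HS}\leq\|A\|\,\|S\sqrt{\psi}\|_{\HS}$ comes from Lemma \ref{lem Tre HSe}(b), and Cauchy--Schwarz in the Hilbert--Schmidt inner product (the paper writes this as a sum over a Hilbert basis) yields the bounds. Your explicit factorisations $\sqrt{\psi}\,TA\,\sqrt{\psi}=(S\sqrt{\psi})^*(SA\sqrt{\psi})$ and $\sqrt{\psi}\,AT\,\sqrt{\psi}=(SA^*\sqrt{\psi})^*(S\sqrt{\psi})$ just make the trace-class bookkeeping, which the paper leaves implicit, a little more explicit.
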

\begin{proof}
If $T$ is $e$-trace class, then the operator $S \sqrt{\psi}$ has a square-integrable smooth kernel. So $S$ is $e$-Hilbert--Schmidt.
%The operator $S$ is $e$-Hilbert--Schmidt if $T$ is strongly $e$-trace class. 
Hence so is $SA$, by lemma \ref{lem compos eHS}. 
Consider a Hilbert basis $\{e_j\}_{j=1}^{\infty}$ of $L^2(E)$. Then
\beq{eq Tre AT}
\sum_{j=1}^{\infty} |(\sqrt{\psi}  TA \sqrt{\psi}e_j, e_j )_{L^2}| = 
\sum_{j=1}^{\infty} |( SA \sqrt{\psi}e_j, S\sqrt{\psi} e_j )_{L^2}| \leq \|SA\sqrt{\psi} \|_{\HS} \|S\sqrt{\psi} \|_{\HS}.
\eeq
Now lemma \ref{lem Tre HSe} implies that
\[
\|SA\sqrt{\psi} \|_{\HS} = \|A^*S^*\sqrt{\psi} \|_{\HS} \leq \|A^*\| \|S^*\sqrt{\psi} \|_{\HS} = \|A\| \|S\sqrt{\psi} \|_{\HS}. 
\]
So the right hand side of \eqref{eq Tre AT} is at most equal to 
\[
\|A\| \|S\sqrt{\psi} \|_{\HS}^2 = \|A\| \Tr_e(T).
\]

The argument for $AT$ is entirely analogous.
\end{proof}

\begin{proposition} \label{prop GZ cpt}
Suppose that $G/Z$ is compact. Suppose that $T = S^*S$ for some $S \in \cB(L^2(W))^G$ with a smooth kernel, and that $T$ is strongly $e$-trace-class. If $A \in \cB(L^2(W))^G$, then
% $TA$ and $AT$ are  $g$-trace class, and
\[
\begin{split}
|\Tr_g(TA)| & \leq \vol({G/Z}) \|A\| \Tr_e(T);\\
|\Tr_g(AT)| & \leq \vol({G/Z}) \|A\| \Tr_e(T).
\end{split}
\]
In particular, the integrals defining the left hand sides converge.
\end{proposition}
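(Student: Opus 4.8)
The plan is to reduce the statement to the non-equivariant estimate of Lemma~\ref{lem Tre AT}, using that $G/Z$ is compact to integrate a bound on the integrand that is uniform over $G/Z$. Since $A$ is only assumed bounded, the operators $TA$ and $AT$ need not have smooth kernels, so I work with the $g$-trace in the form \eqref{eq Trg Tr}, as permitted by the remark following Definition~\ref{def g Tr}: I take
\[
\Tr_g(TA) = \int_{G/Z} \Tr\bigl(hgh^{-1}TA\,\psi\bigr)\, d(hZ), \qquad \Tr_g(AT) = \int_{G/Z} \Tr\bigl(hgh^{-1}AT\,\psi\bigr)\, d(hZ),
\]
and the goal is to show that these integrals converge absolutely with the stated bounds. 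Note first that, because every $z\in Z$ commutes with $g$, the element $hgh^{-1}$, and hence the integrand, depends only on the coset $hZ$, so the integrals make sense.

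For the pointwise estimate, fix $h\in G$ and write $u := hgh^{-1}$, viewed as a unitary operator on $L^2(W)$ via the isometric action. The key observation --- and the only place where $g$ enters --- is that $S$, $T=S^*S$ and $A$ are all $G$-equivariant, hence all commute with $u$; in particular $uTu^* = T$ and $uAu^* = A$. Since $T$ is strongly $e$-trace class, $S$ is $e$-Hilbert--Schmidt (as in the proof of Lemma~\ref{lem Tre AT}), with $\|S\sqrt{\psi}\|_{\HS}^2 = \Tr_e(T)$, and Lemma~\ref{lem compos eHS} gives that $SA\sqrt{\psi}$ and $SA^*\sqrt{\psi}$ are Hilbert--Schmidt as well. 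Using cyclicity of the trace,
\[
\Tr\bigl(u\,TA\,\psi\bigr) = \Tr\bigl(\sqrt{\psi}\,u\,S^*SA\,\sqrt{\psi}\bigr) = \Tr\Bigl( \bigl(Su^*\sqrt{\psi}\bigr)^{*}\,\bigl(SA\sqrt{\psi}\bigr)\Bigr),
\]
where $Su^*\sqrt{\psi}$ is Hilbert--Schmidt because $\|Su^*\sqrt{\psi}\|_{\HS}^2 = \Tr\bigl(\sqrt{\psi}\,u\,T\,u^*\sqrt{\psi}\bigr) = \Tr_e(T)<\infty$ by the commutation $uTu^*=T$. Thus the last trace is that of a product of two Hilbert--Schmidt operators (in particular it is finite and absolutely convergent), so Cauchy--Schwarz gives
\[
\bigl|\Tr\bigl(u\,TA\,\psi\bigr)\bigr| \le \bigl\|Su^*\sqrt{\psi}\bigr\|_{\HS}\,\bigl\|SA\sqrt{\psi}\bigr\|_{\HS} \le \sqrt{\Tr_e(T)}\cdot\|A\|\sqrt{\Tr_e(T)} = \|A\|\,\Tr_e(T),
\]
uniformly in $h$; here $\|SA\sqrt{\psi}\|_{\HS}\le\|A\|\,\|S\sqrt{\psi}\|_{\HS}$ by part~(b) of Lemma~\ref{lem Tre HSe}, exactly as in Lemma~\ref{lem Tre AT}. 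The same computation with $TA$ replaced by $AT$ gives $\Tr\bigl(u\,AT\,\psi\bigr) = \Tr\bigl((SA^*u^*\sqrt{\psi})^{*}(S\sqrt{\psi})\bigr)$, and now $\|SA^*u^*\sqrt{\psi}\|_{\HS}^2 = \Tr_e\bigl(uATA^*u^*\bigr) = \Tr_e(ATA^*) = \Tr_e(A^*AT) \le \|A\|^2\Tr_e(T)$, using $uATA^*u^*=ATA^*$, the trace property, and Lemma~\ref{lem Tre AT} applied to the bounded $G$-equivariant operator $A^*A$; hence $|\Tr(u\,AT\,\psi)| \le \|A\|\Tr_e(T)$ as well.

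Finally I integrate these uniform bounds over $G/Z$, which has finite volume since it is compact:
\[
\int_{G/Z} \bigl|\Tr\bigl(hgh^{-1}TA\,\psi\bigr)\bigr|\, d(hZ) \le \vol(G/Z)\,\|A\|\,\Tr_e(T) < \infty,
\]
so the integral defining $\Tr_g(TA)$ converges absolutely and obeys the asserted bound; the case of $AT$ is identical. The whole argument is a uniform-in-$hZ$ version of Lemma~\ref{lem Tre AT}; there is no real obstacle, only two points requiring care: the Hilbert--Schmidt/trace-class bookkeeping needed to justify cyclicity (and to make sense of $\Tr_g$ of $TA$, $AT$ via \eqref{eq Trg Tr} even though they have no smooth kernel), and the observation that conjugation by $hgh^{-1}$ acts trivially on $G$-equivariant operators, which is precisely what makes the pointwise bound independent of $hZ$.
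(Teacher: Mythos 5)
Your proof is correct and takes essentially the same route as the paper: a pointwise bound $|\Tr(hgh^{-1}\,TA\,\psi)| \leq \|A\|\,\Tr_e(T)$, uniform in $hZ$, obtained from the Hilbert--Schmidt/Cauchy--Schwarz argument underlying lemma \ref{lem Tre AT}, followed by integration over the compact set $G/Z$. The only difference is presentational: the paper rewrites the inner kernel integral as $\Tr_e(hgh^{-1}TA)$ and then cites lemma \ref{lem Tre AT} ``and unitarity of the action'', whereas you absorb the unitary $u=hgh^{-1}$ into the Hilbert--Schmidt estimate explicitly via $uTu^*=T$ (and likewise for $AT$), which is the same mechanism spelled out in a bit more detail.
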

\begin{proof}
We prove the claim for $TA$, the argument for $AT$ is analogous.

Let $\kappa$ be the Schwartz kernel of  $TA$. 
For all $x \in G$, 
\[
\begin{split}
\int_M \psi(xm) \tr(x\kappa(x^{-1}m,m))\, dm &= \int_M \psi(xm) \tr(\kappa(m,xm)x)\, dm\\
&=\int_M \psi(m) \tr(\kappa(x^{-1}m,m)x)\, dm \\
&= \int_M \psi(m) \tr(x\kappa(x^{-1}m,m))\, dm\\
&= \Tr(x TA\psi  )\\
&= \Tr_e(xTA).
\end{split}
\]
For the first three equalities, we used $G$-equivariance of $T$, $G$-invariance of $dm$, and the trace property of $\tr$. It follows that
\[
|\Tr_g(TA)| \leq %\int_{G/Z} |\Tr(\psi hgh^{-1}TA)| \, d(hZ) = 
 \int_{{G/Z}} |\Tr_e(hgh^{-1}TA)|\, d(hZ).
\]
By lemma \ref{lem Tre AT} and unitarity of the action by $G$ on $L^2(W)$, the integrand on the right hand side is bounded above by $\|A\| \Tr_e(T)$. 
\end{proof}

%\Todo: can we prove metric independence of $\alpha_g^p$ in this case? Need an inequality in the other direction, so may be nontrivial.
%\begin{proof}
%As at the start of Section 3 of \cite{Lott99}, we use the fact that if $G/Z$ is compact, then for any $g$-trace class operator $T$,
%\[
%|\Tr_g(T)| \leq \int_{G/Z}|\Tr_e(hgh^{-1} T)|\, d(hZ) \leq \vol(G/Z) |\Tr_e(T)|,
%\]
%which implies the claim. (\Todo: check, here we replace $\psi(hgh^{-1}m)$ by $\psi(m)$, which should be alllowed?) Here we used that the von Neumann trace $\Tr_e$ has the property that $|\Tr_e(ST)|\leq \|S\| |\Tr_e(T)|$ for bounded operators $S$, a property not shared by the traces $\Tr_g$ for nontrivial $g$. (\Todo: check this property.)
%\end{proof}

\subsection{Derivatives of the $g$-trace}

For a Riemannian metric $\lambda$ on $M$, let $\mu_{\lambda}$ be its Riemannian density.
Fix an operator $T$ on a Hermitian $G$-vector bundle over $M$, 
%$E \rightarrow M$ 
and assume $T$ has a smooth kernel. Let
$\kappa_T^{\lambda}$ denote its kernel with respect to $\lambda$, so that
\begin{equation} \label{eq T kappa lambda}
(Ts)(m) = \int_M\kappa_T^{\lambda}(m,m')s(m')d\mu_{\lambda}(m')
\end{equation}
for $s \in \Gamma_c^{\infty}(E)$.

The $g$-trace is then given by 
\begin{equation}\label{$g$-trace_metric-dep}
\Tr_g(T,\lambda) = \int_{G/Z}\int_M\psi(hgh^{-1}m)\tr(hgh^{-1}\kappa_T^{\lambda}
(hgh^{-1}m, m))d\mu_{\lambda}(m)d(hZ).
\end{equation}
Note that we have added the dependence of $\lambda$ in the notation of the $g$-trace from the observation that the inner integral in the formula for the $g$-trace is taken with respect to the density $\mu_{\lambda}$.

Both expressions \eqref{eq T kappa lambda} and \eqref{$g$-trace_metric-dep} really depend on $\kappa^{\lambda}_T d\mu_{\lambda}$, which does not depend on the Riemannian metric even though $\kappa^{\lambda}_T $ and $d\mu_{\lambda}$ individually do. This implies the following basic fact.
\begin{lemma}\label{$g$-trace_der1}
The above integral \eqref{$g$-trace_metric-dep}, when it converges, does not depend on the
Riemannian metric $\lambda$.
\end{lemma}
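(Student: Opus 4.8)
The plan is to reduce the statement to the remark made just before the lemma, namely that the object that genuinely enters \eqref{$g$-trace_metric-dep} is the pairing $\kappa_T^{\lambda}\,d\mu_{\lambda}$, which is intrinsic to $T$, and then to verify that every other factor in the integrand is manifestly independent of the Riemannian metric $\lambda$. There is no analytic content here: the whole point is the density‑valued (hence metric‑free) nature of a Schwartz kernel of a fixed operator on sections.

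Concretely, I would fix two $G$-invariant Riemannian metrics $\lambda_0,\lambda_1$ on $M$ and write $d\mu_{\lambda_1}=\rho\, d\mu_{\lambda_0}$ for a strictly positive smooth function $\rho$ on $M$ (automatically $G$-invariant, since both densities are, although this is not needed). Since $T$ is a fixed operator on $\Gamma_c^{\infty}(W)$, the left-hand side $(Ts)(m)$ of \eqref{eq T kappa lambda} does not involve $\lambda$; equating the two expressions for $(Ts)(m)$ coming from $\lambda_0$ and $\lambda_1$, valid for all $s\in\Gamma_c^{\infty}(W)$, yields the identity of measure-valued kernels
\[
\kappa_T^{\lambda_1}(m,m')\, d\mu_{\lambda_1}(m') = \kappa_T^{\lambda_0}(m,m')\, d\mu_{\lambda_0}(m'),
\qquad\text{equivalently}\qquad
\kappa_T^{\lambda_1}(m,m') = \rho(m')^{-1}\,\kappa_T^{\lambda_0}(m,m').
\]
Next I would note that the remaining ingredients of \eqref{$g$-trace_metric-dep} carry no metric dependence: the cutoff $\psi$ is chosen in $C^{\infty}_c(M)$ subject only to the normalisation \eqref{eq cutoff}, which does not mention $\lambda$; the action of $hgh^{-1}$ on the fibres of $W$ is part of the $G$-equivariant structure of $W$; the fibrewise trace $\tr$ on $\End(W_m)$ requires no inner product; and the measure $d(hZ)$ on $G/Z$ is fixed once and for all.

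Substituting the displayed identity into \eqref{$g$-trace_metric-dep}, with $m$ in the role of the second (integrated) kernel variable, the factor $\rho(m)^{-1}$ produced by $\kappa_T^{\lambda_1}(hgh^{-1}m,m)$ is cancelled exactly by the factor $\rho(m)$ relating $d\mu_{\lambda_1}(m)$ to $d\mu_{\lambda_0}(m)$. Hence the integrand of \eqref{$g$-trace_metric-dep} for $\lambda_1$ agrees pointwise, together with its defining measure on $M$, with the one for $\lambda_0$; integrating over $m$ and then over $G/Z$ with the fixed measure $d(hZ)$, the two integrals are literally the same. In particular one converges absolutely if and only if the other does, and in that case the values coincide, which is the assertion.

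The only step that needs a little care — and it is purely bookkeeping — is keeping track of which of the two kernel slots is the one integrated against $d\mu_{\lambda}$ in \eqref{$g$-trace_metric-dep}: the density transformation factor $\rho$ must be attached to that slot (the second one), which is why the transformation rule is recorded as $\kappa_T^{\lambda_1}(m,m')=\rho(m')^{-1}\kappa_T^{\lambda_0}(m,m')$ and not with $\rho(m)^{-1}$. Beyond this, I expect no obstacle; unlike the statements on independence of the Riemannian metric for the analytic torsion itself, the present lemma involves no limiting process and no estimates.
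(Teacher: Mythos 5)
Your proposal is correct and is essentially the paper's argument: the paper simply observes that both \eqref{eq T kappa lambda} and \eqref{$g$-trace_metric-dep} depend only on the density-valued kernel $\kappa_T^{\lambda}\,d\mu_{\lambda}$, which is intrinsic to $T$, and you have merely spelled this out via the Radon--Nikodym factor $\rho$ and the cancellation in the integrated slot. No gap; the bookkeeping about which kernel slot carries the factor $\rho$ is handled correctly.
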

This also follows from the expression \eqref{eq Trg Tr} for the $g$-trace, if $T\psi$ is trace class.
%
%\begin{proof}
%Let $\lambda'$ be another Riemannain metric on $M$ with Riemannian density
%$\mu_{\lambda'}$. We then have that
%\begin{equation}\label{metric_comparison}
%\mu_{\lambda'} = \xi_{\lambda, \lambda'}\mu_{\lambda}
%\end{equation}
%for a smooth function $\xi_{\lambda, \lambda'}$.
%
%We then find that for any $s \in \Gamma^{\infty}_c(E)$
%\begin{align*}
%Ts(m) &= \int_M\kappa_T^{\lambda}(m,m')s(m')d\mu_{\lambda}(m') \\
%&= \int_M\kappa_T^{\lambda}(m,m')s(m')\frac{1}{\xi_{\lambda, \lambda'}} d\mu_{\lambda'}(m'), \text{ by } \eqref{metric_comparison} \\
%&= \int_M\kappa_T^{\lambda'}(m,m')s(m')d\mu_{\lambda'}(m').
%\end{align*}
%
%
%Using the above we find that
%\begin{align*}
%\Tr_g(T,\lambda') &=
%\int_{G/Z}\int_M\psi(hgh^{-1}m)\tr(hgh^{-1}\kappa_T^{\lambda'}
%(hgh^{-1}m, m))d\mu_{\lambda'}(m)d(hZ) \\
%&=
%\int_{G/Z}\int_M\psi(hgh^{-1}m)\tr(hgh^{-1}\kappa_T^{\lambda}
%(hgh^{-1}m, m))\frac{1}{\xi_{\lambda, \lambda'}} d\mu_{\lambda'}(m)d(hZ) \\
%&=
%\int_{G/Z}\int_M\psi(hgh^{-1}m)\tr(hgh^{-1}\kappa_T^{\lambda}
%(hgh^{-1}m, m))d\mu_{\lambda}(m)d(hZ) \\
%&=
%\Tr_g(T,\lambda).
%\end{align*}
%\end{proof}
%(\Todo: could also say: $g$-trace expressed in terms of the density $\kappa^{\lambda} d\mu_{\lambda}$ which does not depend on the Riemannian metric.)

%The next lemma proves a families version of the above proposition.

We say that a family  $(T_x)_{x \in \R}$ of operators from $\Gamma^{\infty}_c(W)$ to $\Gamma^{\infty}(W)$ with smooth kernels $(\kappa_x)_{x \in \R}$ is \emph{smooth} if the map $(m,m',x) \mapsto \kappa_x(m,m')$ is smooth. 
Then we define the operator $\frac{d}{dx}T_x\colon \Gamma_c^{\infty}(W) \to \Gamma^{\infty}(W)$ by
\begin{equation*}
\bigg{(}\bigg{(}\frac{d}{dx}T_x\bigg{)}s\bigg{)}(m) = 
\frac{d}{dx}\big{(}T_xs(m)\big{)} \qquad \forall s \in \Gamma^{\infty}_c(E) \text{ and } m\in M.
\end{equation*}
\begin{lemma}\label{$g$-trace_der2}
Let $(\lambda_x)_{x \in \R}$ be a smooth family of Riemannian metrics on $M$ and let $(T_x)_{x \in \R}$ be a smooth
family of $g$-trace class operators from $\Gamma^{\infty}_c(W)$ to $\Gamma^{\infty}(W)$. If $\frac{d}{dx}T_x$ is $g$-trace class, then the function $x \mapsto \Tr_g(T_x,\lambda_x) $ is differentiable, and
\begin{equation}\label{eq der trace}
\frac{d}{dx}\Tr_g(T_x,\lambda_x) = \Tr_g\bigg{(}\frac{d}{dx}T_x, \lambda_x\bigg{)}.
\end{equation}
%where $\frac{d}{dx}T_x$ is defined by
%\begin{equation*}
%\bigg{(}\bigg{(}\frac{d}{dx}T_x\bigg{)}s\bigg{)}(m) = 
%\frac{d}{dx}\big{(}T_xs(m)\big{)} \qquad \forall s \in \Gamma^{\infty}_c(E) \text{ and } m\in M.
%\end{equation*}
\end{lemma}
\begin{proof}
For a fixed parameter $x_0 \in \R$ we have
\begin{align*}
\frac{d}{dx}\Tr_g(T_x,\lambda_x) \bigg{\vert}_{x = x_0} &= 
\frac{d}{dx}\Tr_g(T_x,\lambda_{x_0})\bigg{\vert}_{x = x_0} 
+ \frac{d}{dx}\Tr_g(T_{x_0},\lambda_x)\bigg{\vert}_{x = x_0} \\
&=
\frac{d}{dx}\Tr_g(T_x,\lambda_{x_0}) \bigg{\vert}_{x = x_0},
\end{align*}
the second term in the first equality being zero by lemma \ref{$g$-trace_der1}.

For any Riemannian metric $\lambda$, we have
\begin{equation}\label{kernel_der}
\kappa_{\frac{dT_x}{dx}}^{\lambda}(m, m') = \frac{d}{dx}\kappa_{T_x}^{\lambda}(m, m').
\end{equation}
Indeed, let  $s \in \Gamma^{\infty}_c(W)$ and $m \in M$, because the support of $s$ is compact we can swap the derivative and integral to find that
\begin{align*}
\int_M\kappa_{\frac{dT_x}{x}}^{\lambda}(m, m')s(m')d\mu_{\lambda} (m)
%&= \frac{d}{dx}\bigg{(}T_xs(m)\bigg{)} \\
&=
\frac{d}{dx}\int_M\kappa_{T_x}^{\lambda}(m,m')s(m')d\mu_{\lambda}(m) \\
&=
\int_M\frac{d}{dx}\bigg{(}\kappa_{T_x}^{\lambda}(m,m')s(m')\bigg{)}d\mu_{\lambda}(m) \\
&=
\int_M\bigg{(}\frac{d}{dx}\kappa_{T_x}^{\lambda}(m,m')\bigg{)}s(m')d\mu_{\lambda}(m),
\end{align*}
and equation \eqref{kernel_der} follows.

We then compute
\begin{align*}
\frac{d}{dx}\Tr_g\bigg{(}T_x,\lambda_{x_0}\bigg{)}\bigg{\vert}_{x=x_0} &=
\frac{d}{dx}\bigg{\vert}_{x=x_0}
\int_{G/Z}\int_M\psi(hgh^{-1}m)\tr(hgh^{-1}\kappa_{T_x}^{\lambda_{x_0}}(hg^{-1}h^{-1}m,m))
d\mu_{\lambda_{x_0}}(m)d(hZ) \\
&=
\int_{G/Z}\int_M \psi(hgh^{-1}m)\tr\bigg{(}
hgh^{-1}\frac{d}{dx}\kappa_{T_{x_0}}^{\lambda_{x_0}}(hg^{-1}h^{-1}m,m)
\bigg{)}d\mu_{\lambda_{x_0}}(m)d(hZ) \\
&=
\Tr_g\bigg{(}
\frac{dT_x}{dx}\bigg{\vert}_{x=x_0}, \lambda_{x_0}
\bigg{)},
\end{align*}
which proves \eqref{eq der trace}.

%\Todo: need to assume that $\frac{dT_x}{dx}$ is $g$-trace class and conclude that $x \mapsto \Tr_g(T_x,\lambda_x)$ is differentiable?
\end{proof}

\section{Convergence}\label{sec conv}

\subsection{\v{S}varc--Milnor functions}

In our results on the first and third conditions in definition \ref{def g torsion}, we assume that a \emph{\v{S}varc--Milnor function} for the action exists. Such functions are also used in section \ref{sec metric indep} to obtain results on metric independence of equivariant analytic torsion, and
in subsection \ref{sec prod form}, to obtain a product formula for equivariant analytic torsion.

Recall that we assume that there is a nonzero, $G$-invariant, measure $d(hZ)$ on $G/Z$.
\begin{definition}\label{def SM}
A \emph{\v{S}varc--Milnor function} for the action by $G$ on $M$, with respect to $g \in G$, is a  function $l \colon (g) \to [0, \infty)$, where $(g)$ is the conjugacy class of $g$,  with the following properties:
\begin{enumerate}
\item For all $c>0$
\beq{eq conv Gaussian}
\int_{G/Z} e^{-cl(hgh^{-1})^2}\, d(hZ)
\eeq
converges.
\item The set
\beq{eq conj cpt}
X_r := 
\{  hZ \in G/Z; l(hgh^{-1}) \leq r\}
\eeq
is compact for all $r\geq 0$.
\item For all compact subsets $Y\subset M$, 
there are $b_1, b_2>0$ such that for all 
$m \in Y$  and $h \in G$,
\beq{eq SM}
d(hgh^{-1} m,m) \geq b_1 l(hgh^{-1}) - b_2.
\eeq
\end{enumerate}
\end{definition}

\begin{example}\label{ex SM GZ cpt}
If $G/Z$ is compact, then the zero function is a \v{S}varc--Milnor function.
\end{example}

\begin{example}\label{ex SM discr}
If $G = \Gamma$ is discrete and finitely generated, then the word length metric for a finite generating set
is a \v{S}varc--Milnor function.
Indeed, in such cases the counting measure is a $\Gamma$-invariant measure on $\Gamma/Z$. Now \eqref{eq conv Gaussian} is a sum over the conjugacy class of $g$, and convergence follows from the fact that the number of elements $\gamma \in \Gamma$ with $l(\gamma) \leq r$ grows at most exponentially in $r$. Similarly, the set \eqref{eq conj cpt} is finite and the \v{S}varc--Milnor lemma (see lemma 2 in \cite{Milnor}, or \cite{BDM, Svarc55}) implies an estimate of the form \eqref{eq SM}.
\end{example}

%\begin{example}\label{ex SM cocpt discr}
%As an extension of Example \ref{ex SM discr}, consider a Lie group $G$, possibly with infinitely many connected components, that has a finitely generated, discrete subgroup $\Gamma<G$ such that $G/\Gamma$ is compact. Then the Riemannian distance on $G$ to the identity element, with respect to a left-invariant Riemannian metric, is a \v{S}varc--Milnor function by an extension of the \v{S}varc--Milnor lemma, Lemma \ref{lem SM cocpt discr} below. \Todo: first two properties? (\Todo: no! Riemannian distance between connected components is infinite.)
%\end{example}

\begin{example}\label{ex SM conn}
Suppose that $G$ is a connected, real semisimple Lie group, and that $g \in G$ is a semisimple element. Then $G/Z$ has a $G$-invariant measure. Let $l$ be the distance function to the identity element in $G$, with respect to a left-invariant Riemannian metric. Then \eqref{eq conv Gaussian} converges by theorem 6 in \cite{HCDSII}; see also proposition 4.2 in \cite{HW2}. The conjugacy class of $g$ is closed in $G$, see theorem 1.2(1) in \cite{An08}, so \eqref{eq conj cpt} is compact. An estimate of the type \eqref{eq SM} holds by a version of the \v{S}varc--Milnor lemma for almost connected Lie groups, lemma \ref{lem SM connected} below. 
\end{example}

\begin{remark}
In examples \ref{ex SM GZ cpt}--\ref{ex SM conn}, there is a single \v{S}varc--Milnor function on $G$ that works for general $g \in G$, assumed to be semisimple in example \ref{ex SM conn}. The distance to the identity element with respect to a $G$-invariant distance functions seems to be a natural candidate for a \v{S}varc--Milnor function in general.
\end{remark}

\begin{lemma}\label{lem quasi-isom}
Let $H$ be a group acting cocompactly on a manifold $N$. Then any two $H$-invariant Riemannian metrics on $N$ are quasi-isometric.
\end{lemma}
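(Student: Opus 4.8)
The plan is to use cocompactness to reduce the comparison of two $H$-invariant Riemannian metrics to a comparison over a single compact subset, and then invoke compactness to get uniform constants. First I would fix the two $H$-invariant Riemannian metrics $\lambda_0$ and $\lambda_1$ on $N$ together with the associated length metrics $d_0$ and $d_1$. Since $H$ acts cocompactly, I can choose a compact subset $K\subset N$ whose $H$-translates cover $N$; enlarging $K$ slightly, I may assume $K$ has nonempty interior and that the translates $\{hK : h\in H\}$ still cover $N$. On the compact set $K$, the two Riemannian metrics, viewed as positive-definite symmetric bilinear forms on the (compact) unit sphere bundle, are uniformly comparable: there is a constant $C\geq 1$ with $C^{-1}\lambda_0 \leq \lambda_1 \leq C\lambda_0$ pointwise on $K$. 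Because both metrics are $H$-invariant and the $H$-translates of $K$ cover $N$, this pointwise comparison $C^{-1}\lambda_0 \leq \lambda_1 \leq C\lambda_0$ in fact holds at every point of $N$, with the same constant $C$.

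Next I would pass from the pointwise comparison of the metric tensors to a comparison of the length metrics. The inequality $\lambda_1 \leq C\lambda_0$ implies that the $\lambda_1$-length of any piecewise smooth path is at most $\sqrt{C}$ times its $\lambda_0$-length, and symmetrically for the reverse inequality; taking infima over paths joining two points $m,m'\in N$ gives
\[
C^{-1/2}\, d_0(m,m') \;\leq\; d_1(m,m') \;\leq\; C^{1/2}\, d_0(m,m')
\]
for all $m,m'\in N$. This is a bi-Lipschitz equivalence of $(N,d_0)$ and $(N,d_1)$ via the identity map, which in particular is a quasi-isometry (with multiplicative constant $\sqrt C$ and additive constant $0$). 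Here I am using that both metrics are complete length metrics, which holds because $N$ is a manifold with a Riemannian metric and the ambient standing assumption in this section is completeness; in any case completeness is not needed for the bi-Lipschitz statement itself, only for the identification of the Riemannian distance with the infimum of path lengths, which is automatic.

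The only genuine point requiring care — and the step I expect to be the main obstacle in writing it out carefully — is the passage from "uniform comparability on $K$" to "uniform comparability on all of $N$ with the same constant." The subtlety is that I want a single constant $C$ working globally, not a constant depending on the translate $hK$ containing a given point. This is exactly where $H$-invariance of both metrics is essential: if $m = h\cdot k$ with $k\in K$, then $\lambda_i|_m = (h^{-1})^*(\lambda_i|_k)$ for $i=0,1$, so the ratio of the two quadratic forms at $m$ equals their ratio at $k\in K$, hence is bounded by $C$. Once this is phrased correctly the rest is routine, so I would spend the bulk of the written proof making that invariance argument precise and leave the path-length estimate as a one-line consequence.
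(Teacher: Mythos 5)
Your proposal is correct and follows essentially the same route as the paper: bound one metric against the other on the unit sphere bundle over a compact set whose $H$-translates cover $N$, use $H$-invariance to make the bound global, and pass to path lengths to get a bi-Lipschitz (hence quasi-isometric) comparison with constant $\sqrt{C}$. The paper writes out only one direction of the comparison (the other being symmetric), but otherwise the arguments coincide.
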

\begin{proof}
Let $Y\subset N$ be a compact subset such that $H\cdot Y = N$. Let $\lambda,\lambda'$ be two $H$-invariant Riemannian metrics on $N$. Let $SN \subset TN$ be the unit sphere bundle with respect to $\lambda$. Let $C$ be the maximum of the function
\[
v\mapsto \lambda'(v,v)
\]
on the compact set $SN|_{Y}$. Then by $H$-invariance of $\lambda$ and $\lambda'$, we have $\lambda'(v,v)\leq C \lambda(v,v)$ for all $v \in TN$. This implies that the Riemannian distance with respect to $\lambda'$ is at most $\sqrt{C}$ times the Riemannian distance with respect to $\lambda$.
\end{proof}

The following lemma is a version of the \v{S}varc--Milnor lemma for connected Lie groups (see also lemma 4.12 in \cite{PPST21}).
\begin{lemma}\label{lem SM connected}
Let $H$ be a connected Lie group. Let $N$ be a Riemannian manifold, on which $H$ acts properly, isometrically and cocompactly. Fix a point $x_0 \in N$. Then the map 
\beq{eq action H}
h\mapsto h\cdot x_0
\eeq
 is a quasi-isometry from $H$ to $N$, with respect to a left invariant Riemannian metric on $H$.
\end{lemma}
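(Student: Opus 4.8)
The plan is to prove this via the classical \v{S}varc--Milnor lemma, using properness and cocompactness of the $H$-action to transfer the quasi-isometry type of $N$ to that of a finitely generated group, and then to relate that group to $H$ itself. I will first reduce the problem to a statement about a suitable compact generating set. Since $H$ is connected, it is generated by any neighbourhood of the identity, and in particular by a compact symmetric neighbourhood $S$ of $e$; equip $H$ with the word metric $d_S$ associated to $S$. A standard fact for locally compact compactly generated groups is that $d_S$ is quasi-isometric to the distance function of any left-invariant Riemannian metric on $H$ (the two word metrics for different compact generating sets are bi-Lipschitz, and a Riemannian distance is sandwiched between multiples of a word metric using that the metric ball of radius $1$ is a compact generating set). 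So it suffices to show that the orbit map $h \mapsto h \cdot x_0$ is a quasi-isometry from $(H, d_S)$ to $N$.

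Next I would carry out the \v{S}varc--Milnor argument in this topological-group setting. Fix a compact set $Y \subset N$ with $H \cdot Y = N$; enlarging $Y$, assume $x_0 \in Y$ and $Y$ is the closed ball $\bar B(x_0, R)$ for some $R>0$ large enough that $H \cdot \mathring{B}(x_0,R) = N$. Set
\[
S := \{ h \in H : h \cdot Y \cap Y \neq \emptyset \}.
\]
By properness of the action and compactness of $Y$, the set $S$ is compact; it is clearly symmetric and contains $e$. The usual covering/chaining argument shows $S$ generates $H$ and that the orbit map is a quasi-isometry: given $h \in H$, a minimal-length geodesic (or near-geodesic chain) from $x_0$ to $h\cdot x_0$ in $N$ of length $L = d_N(x_0, h\cdot x_0)$ can be subdivided into roughly $L/R$ pieces each contained in a single translate $h_i \cdot Y$, and consecutive translates overlap, forcing $h_{i}^{-1}h_{i+1} \in S$; this yields $d_S(e,h) \le A\, d_N(x_0, h\cdot x_0) + B$ for constants $A,B$ depending on $R$ and $\diam Y$. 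Conversely, if $d_S(e,h) = k$ then $h$ is a product of $k$ elements of $S$, and since each $s \in S$ moves $x_0$ a bounded distance (at most $2R$), we get $d_N(x_0, h \cdot x_0) \le 2Rk = 2R\, d_S(e,h)$. Finally, coarse surjectivity of the orbit map holds because $H \cdot Y = N$, so every point of $N$ is within $R$ of the orbit $H \cdot x_0$. Combining these inequalities with the comparison between $d_S$ and the left-invariant Riemannian distance on $H$ from the first paragraph gives the claim.

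The main obstacle, and the only place requiring real care, is the passage from the word metric $d_S$ to a left-invariant Riemannian distance on $H$ — i.e.\ verifying that a connected Lie group with a left-invariant Riemannian metric is quasi-isometric (in fact, one can arrange bi-Lipschitz on large scales) to $(H, d_S)$ for a compact generating set $S$. The subtlety is purely at small scales, where the Riemannian distance can be much smaller than the word metric; the standard fix is to use that the closed unit ball $\bar B^H(e,1)$ of the left-invariant metric is itself a compact generating set, so its word metric is comparable to both $d_S$ and (by a straightforward one-sided Lipschitz estimate in each direction) to $d_N^H$. I would state this as a lemma or cite it; everything else is the textbook \v{S}varc--Milnor argument adapted to a proper cocompact action of a locally compact group, with properness replacing the usual discreteness hypothesis to guarantee compactness of $S$.
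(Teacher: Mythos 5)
Your argument is correct, but it follows a genuinely different route from the paper. You run the locally compact version of the \v{S}varc--Milnor lemma: compact generation of the connected group $H$, the word metric $d_S$ of a compact generating set $S=\{h: hY\cap Y\neq\emptyset\}$ (compact by properness), the chaining argument giving a quasi-isometry $(H,d_S)\to N$, and then the standard comparison between $d_S$ and any left-invariant Riemannian distance on $H$ (via the fact that the closed unit ball is a compact generating set and left-invariant metrics on Lie groups are proper). All of these steps are sound, and the one you defer -- the word-metric versus Riemannian-metric comparison -- is indeed a standard fact (it is proved, for instance, in Cornulier--de la Harpe's book on metric geometry of locally compact groups), though it does deserve a citation or a short proof since it is the only place small-scale issues arise. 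The paper instead stays inside the Riemannian/Lie-theoretic framework: it takes a maximal compact $K<H$, applies Abels' slice theorem to write $N\cong H\times_K Y$ with $Y$ compact, builds an explicit $H$-invariant metric $\lambda$ on $N$ for which the orbit $H\cdot x_0$ is isometrically embedded and identified with $H/H_{x_0}$, invokes lemma \ref{lem quasi-isom} to replace the given metric by $\lambda$, and uses compactness of the stabiliser $H_{x_0}$ to see that $H\to H/H_{x_0}$ is a quasi-isometry; connectedness of $H$ is used there to keep distances on the orbit finite, whereas in your proof it is used for compact generation and for the path-subdivision comparisons. Your approach is more elementary in the sense that it needs no structure theory (no maximal compact, no slice theorem) and would work verbatim for any compactly generated locally compact group acting properly, isometrically and cocompactly on a length space; the paper's approach buys a cleaner geometric picture (the orbit as a homogeneous space isometrically embedded for a preferred metric) and reuses a lemma it has already proved, at the cost of invoking Abels' theorem. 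Note that both arguments implicitly use that $N$ is connected (otherwise coarse surjectivity, and finiteness of distances, fails), so this is not a defect of your proposal relative to the paper's proof.
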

\begin{proof}
Let $K<H$ be maximal compact. 
Let $(\relbar, \relbar)_{\kh}$ be the $\Ad(K)$-invariant inner product on the Lie algebra $\kh$ of $H$ corresponding to the left invariant Riemannian metric on $H$. By Abels' slice theorem \cite{Abels}, there is a $K$-invariant, compact submanifold $Y\subset N$ such that the action map $H\times_K Y \to N$ is a $H$-equivariant diffeomorphism. Then
\beq{eq TN}
TN\cong H\times_K(TY \oplus \kp),
\eeq
where $\kp \subset \kh$ is the orthogonal complement of $\kk$. 
Let $\lambda_Y$ be a $K$-invariant Riemannian metric on $Y$. Consider the $H$-invariant Riemannian metric $\lambda$ on $N$ determined by
\[
\lambda([e, v+Z], [e, v'+Z']) := \lambda_Y(v,v')+(Z, Z')_{\kh},
\]
for $y \in Y$, $v,v' \in T_yY$ and $Z,Z' \in \kp$. By lemma \ref{lem quasi-isom}, this metric is quasi-isometric to any given $H$-invariant Riemannian metric on $N$, so it is enough to prove the claim for $\lambda$.

With respect to $\lambda$, the inclusion map $H \cdot x_0 \hookrightarrow N$ is an isometric embedding, and the  map \eqref{eq action H} descends to  an isometry $H/H_{x_0} \to H \cdot x_0$. Here we use connectedness of $H$; otherwise the Riemannian distance between connected components of $H/H_{x_0}$ is infinite. Let $d_N$ be the Riemannian distance for $\lambda$, and let $d_H$ and $d_{H/H_{x_0}}$ be the Riemannian distances on $H$ and  $H/H_{x_0}$, respectively, defined by $(\relbar, \relbar)_{\kh}$. Then for all $h,h' \in H$,
\[
d_N(hx_0, h'x_0) = d_{H/H_{x_0}} (hH_{x_0}, h'H_{x_0}).
\]
As $H_{x_0}$ is compact, the quotient map $H \to H/H_{x_0}$ is a quasi-isometry. We then find that there are $b_1, b_2>0$ such that for all $h, h' \in H$
\[
{b_1} d_H(h,h')-b_2 \leq
d_N(hx_0, h'x_0) \leq \frac{1}{b_1} d_H(h,h')+b_2.
\]
Furthermore, by compactness of $Y$, every point in $N$ lies within the diameter of $Y$ from the image of \eqref{eq action H}.
\end{proof}

The following observation is immediate from the definition.
\begin{lemma} \label{lem SM product}
Suppose that for $j=1,2$, $G_j$ is a unimodular, locally compact group 
 acting properly, isometrically and cocompactly on an oriented Riemannian manifold $M_j$, preserving the orientation. Let $g_j \in G_j$ be an element with centraliser $Z_j<G_j$, such that $G_j/Z_j$ has a $G_j$-invariant measure. 
If $l_j$ is a \v{S}varc--Milnor function for the action by $G_j$ on $M_j$ with respect to $g_j$, then
\[
l( h_1 g_1 h_1^{-1}, h_2 g_2 h_2^{-1}) := l_1(h_1 g_1 h_1^{-1})+ l_2(h_2 g_2 h_2^{-1})
\]
for $h_j \in G_j$, defines a \v{S}varc--Milnor function for the action by $G_1 \times G_2$ on $M_1 \times M_2$ with respect to $(g_1, g_2)$.
\end{lemma}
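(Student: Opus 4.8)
The plan is to verify the three defining properties of a \v{S}varc--Milnor function for the product directly, exploiting the product structure of all the data involved. Recall that for a product action, the centraliser of $(g_1, g_2)$ in $G_1 \times G_2$ is $Z_1 \times Z_2$, so $(G_1 \times G_2)/(Z_1 \times Z_2) \cong (G_1/Z_1) \times (G_2/Z_2)$, and the $G_1 \times G_2$-invariant measure on the quotient is (a multiple of) the product measure $d(h_1 Z_1)\, d(h_2 Z_2)$. Also, the conjugacy class of $(g_1, g_2)$ is $(g_1) \times (g_2)$, and the Riemannian distance on the product manifold $M_1 \times M_2$ (with the product metric) satisfies $d((m_1, m_2), (m_1', m_2'))^2 = d_1(m_1, m_1')^2 + d_2(m_2, m_2')^2$.

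For property (1): given $c > 0$, the integral $\int_{(G_1 \times G_2)/(Z_1 \times Z_2)} e^{-c\, l(h_1 g_1 h_1^{-1}, h_2 g_2 h_2^{-1})^2}$ should be bounded by a product. Since $l = l_1 + l_2$, we have $l^2 = l_1^2 + 2 l_1 l_2 + l_2^2 \geq l_1^2 + l_2^2$ (both $l_j \geq 0$), so $e^{-cl^2} \leq e^{-c l_1^2} e^{-c l_2^2}$, and the integral factors as a product of two convergent integrals by Fubini and property (1) for $l_1$ and $l_2$. For property (2): the set $X_r$ for $l$ is contained in $X_r^{(1)} \times X_r^{(2)}$ (where $X_r^{(j)}$ is the corresponding set for $l_j$), since $l_1 + l_2 \leq r$ with both nonnegative forces each $l_j \leq r$; as a closed subset of a product of compact sets it is compact. (One should check $X_r$ is closed, which follows from continuity — or one can simply note $X_r$ is the preimage of $[0,r]$ under $l$, and argue $l$ is lower semicontinuous, or just observe $X_r \subseteq X_r^{(1)} \times X_r^{(2)}$ is enough together with closedness.) For property (3): given a compact $Y \subseteq M_1 \times M_2$, its projections $Y_j$ to $M_j$ are compact, so there are constants $b_1^{(j)}, b_2^{(j)} > 0$ with $d_j(h_j g_j h_j^{-1} m_j, m_j) \geq b_1^{(j)} l_j(h_j g_j h_j^{-1}) - b_2^{(j)}$ for $m_j \in Y_j$. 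Then
\[
d((h_1 g_1 h_1^{-1}, h_2 g_2 h_2^{-1})(m_1,m_2), (m_1,m_2))^2 = \sum_{j=1}^2 d_j(h_j g_j h_j^{-1} m_j, m_j)^2,
\]
and one estimates this from below using $\sqrt{A^2 + B^2} \geq \tfrac{1}{\sqrt 2}(A + B)$ together with the individual lower bounds (discarding the contribution when an individual bound is negative, which only costs an additive constant). Combining gives $d(\cdot,\cdot) \geq b_1 (l_1 + l_2) - b_2 = b_1 l - b_2$ for suitable $b_1, b_2 > 0$ depending only on the $b_i^{(j)}$.

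There is really no hard obstacle here — this is a routine verification, as the statement's phrasing (``immediate from the definition'') signals. The only points requiring a modicum of care are: (i) being explicit about the measure-theoretic identification $(G_1 \times G_2)/(Z_1 \times Z_2) \cong (G_1/Z_1) \times (G_2/Z_2)$ and that the product of the invariant measures is invariant, so Fubini applies; and (ii) in property (3), handling the passage from the ``sum of squares'' form of the product distance to a bound linear in $l_1 + l_2$, which is a two-line elementary inequality but should be spelled out so the dependence of $b_1, b_2$ on the given constants is transparent. I would organize the proof as three short paragraphs, one per property, prefaced by the identification of the centraliser, conjugacy class, invariant measure, and product distance.
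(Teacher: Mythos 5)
Your verification is correct and is exactly the routine check the paper has in mind — the paper gives no written proof, stating the lemma is immediate from the definition, and your three-paragraph plan (product measure and Fubini for (1), containment in $X_r^{(1)} \times X_r^{(2)}$ for (2), projections plus $\sqrt{A^2+B^2} \geq \tfrac{1}{\sqrt{2}}(A+B)$ for (3)) is the intended argument. For the closedness point in (2), note that lower semicontinuity of $h_jZ_j \mapsto l_j(h_jg_jh_j^{-1})$ is automatic from condition (2) of definition \ref{def SM} itself, since compact sublevel sets are closed; so no continuity assumption is needed.
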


\subsection{Small $t$ asymptotics}\label{sec small t}

In this subsection, we assume that there is a {\v{S}varc--Milnor function} $l$ for the action by $G$ on $M$, with respect to $g$, see definition \ref{def SM}.

We consider a family of $g$-trace class operators on a Hermitian $G$-vector bundle on $M$, such as $\Bigwedge^p T^*M \otimes E$, parametrised by $t>0$, with smooth kernels 
$\kappa_t$.
%
%
%  a self-adjoint, $G$-equivariant operator $\Delta$ on square-integrable sections of a Hermitian $G$-vector bundle on $M$ (such as $\Bigwedge^p T^*M \otimes E$). We assume that the operator $e^{-t\Delta}$ defined by functional calculus has a smooth kernel $\kappa_t$ for all $t>0$. Furthermore, 
%In our main result on conditions (1) and (3) in definition \ref{def g torsion}, Proposition \ref{prop conv small t}, we assume that   
We start with some lemmas in cases where the kernels $\kappa_t$ have one of the following properties shared by heat kernels.
\begin{enumerate}
\item 
 There are $a_1, a_2, a_3>0$ such that for all $m,m' \in M$ and $t \in (0,1]$,
\beq{eq decay kappa}
\|\kappa_t(m,m')\| \leq a_1 t^{-a_2} e^{-a_3 d(m,m')^2/t},
\eeq
where $d$ is the Riemannian distance on $M$.
\item For some $r \in \R$, there is an asymptotic expansion on the diagonal $\Delta(M) \subset M \times M$ of the form
\beq{eq as exp kappa}
\kappa_t|_{\Delta(M)} \sim t^{-r}\sum_{j = 0}^{\infty} \kappa_{(j)} t^j,
\eeq
as $t \downarrow 0$, 
with respect to the $C^k$-norms on compact subsets of $\Delta(M)$.
\end{enumerate}
\begin{example}\label{ex kappa t heat op}
The heat operator  of a Laplace-type operator such as $\Delta_E^p$ has the above properties (1) and (2).
See proposition 4.2(1) in \cite{CGRS14} (based on \cite{Greiner71, Kordyukov91}) for the first condition.
Here one uses the fact that $M$ has bounded geometry, because the Riemannian metric is invariant under $G$ and $M/G$ is compact. This result  in  \cite{CGRS14} is stated for squares of Dirac operators, but the proof in fact applies to uniformly elliptic differential operators on vector bundles of bounded geometry. Hence it also applies to our operators $\Delta_E^p$.
 
The second condition follows from standard heat kernel asymptotics, where one can take $r=\dim(M)/2$. Most constructions apply to compact manifolds, see \cite{BGV, Roe98}, and hold with respect to global $C^k$-norms. The constructions imply that the analogous asymptotic expansion holds on noncompact, complete manifolds, with respect to $C^k$-norms on compact sets. See proposition 4.2(2) in \cite{CGRS14} or theorem 6.3 in \cite{Wang14} for explicit arguments. Those arguments apply on the diagonal, but off the diagonal, a Gaussian factor vanishes to all orders in $t$.

The composition of the heat operator of a Laplacian and a $G$-equivariant differential operator of order $d$ also has the properties (1) and (2), see proposition 4.2 in  \cite{CGRS14}.
\end{example}

In some places, particularly for estimates for large $t$, we will consider more general bounds than \eqref{eq decay kappa}, of the form
\beq{eq decay kappa gen}
\|\kappa_t(m,m')\| \leq F_1(t) e^{-a_3 d(m,m')^2/t},
\eeq
for a function $F_1\colon (0, \infty) \to (0, \infty)$.

%\begin{lemma}\label{lem kappa t exp growth}
%Suppose that $M$ has bounded geometry. 
%Suppose that $\kappa_t$ has the semigroup property, and satisfies \eqref{eq decay kappa gen} for all $m,m' \in M$ and $t \geq 1$. Then $\kappa_t$ satisfies \eqref{eq decay kappa gen} for a possibly different function $F_1$
% that grows at most exponentially as $t \to \infty$.
%\end{lemma} 
%\begin{proof}
%Because $M$ has bounded geometry, there is a $B>0$ such that for all $m,m' \in M$,
%\[
%\int_M e^{-a_3 d(m,m'')^2/2} e^{-a_3 d(m'',m')^2/2}\, dm'' \leq B.
%\]
%
%For all $m,m' \in M$ and $t\geq 1$, the semigroup property implies that
%\[
%\begin{split}
%\|\kappa_{2t}(m,m')\|  &= \left\| \int_M \kappa_t(m,m'') \kappa_t(m'', m')\, dm'' \right\| \\
%&\leq F_1(t)^2 \int_M e^{-a_3 d(m,m'')^2/t} e^{-a_3 d(m'',m')^2/t}\, dm''\\
%& = F_1(t)^2  e^{-a_3 (d(m,m'')^2+ d(m'',m')^2)/2t}\int_M e^{-a d(m,m'')^2/{2t}} e^{-a_3 d(m'',m')^2/{2t}}\, dm'' \\
%&\leq F_1(t)^2 B e^{-a_3 (d(m,m'')^2+ d(m'',m')^2)/2t}.
%\end{split}
%\]
%(\Todo: $B$ is not an upper bound, but can probly get a bound growing exponentially in $t$. Also, can't take expression with $m''$ outside integral...)
%By the triangle inequality, 
%\[
%e^{-a_3 (d(m,m'')^2+ d(m'',m')^2)/2t}  \leq e^{-a_3 (d(m,m'')+ d(m'',m'))^2/4t} \leq  e^{-a_3 d(m,m')^2/4t}. 
%\]
%\Todo: change statement/proof to what we need in pf of Prop \ref{prop Tsigma TN}.
%%This implies the claim.
%\end{proof}

%We fix $a_1, a_2, a_3>0$ as in \eqref{eq decay kappa}.
Let $b_1$, $b_2>0$ be such that \eqref{eq SM} holds for all $m$ in the (compact) support of $\psi$. 
The following estimate is a direct consequence of the assumptions, and will be used in various places.
\begin{lemma}\label{lem Trg Xcompl}
There is a compact subset $X \subset G/Z$ such that for all $t>0$ for which \eqref{eq decay kappa gen} holds, 
\begin{multline} \label{eq Trg Xcompl}
\int_{\Xcompl}\int_M \psi(hgh^{-1}m) |\tr\bigl(hgh^{-1}\kappa_t(hg^{-1}h^{-1}m,m) \bigr)|\, dm\, d(hZ)
%|\Tr_g^{\Xcompl} (\kappa_t)| 
\\
\leq  \int_M \psi(m)\, dm \, F_1(t) \int_{\Xcompl} e^{-a_3 b_1^2 l(hgh^{-1})^2 /2t}d(hZ).
\end{multline}
In particular, the left hand side converges.
\end{lemma}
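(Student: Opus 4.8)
The plan is to estimate the left-hand side of \eqref{eq Trg Xcompl} by combining the Gaussian kernel bound \eqref{eq decay kappa gen} with the \v{S}varc--Milnor estimate \eqref{eq SM}. First I would choose the compact set $X$. Since $\psi$ has compact support, property (3) of definition \ref{def SM} applied to $Y = \supp(\psi)$ gives constants $b_1, b_2 > 0$ with $d(hgh^{-1}m, m) \geq b_1 l(hgh^{-1}) - b_2$ for all $m \in \supp(\psi)$ and all $h \in G$. I set $X := X_r$ as in \eqref{eq conj cpt} with $r := 2b_2/b_1$; this is compact by property (2). The point of this choice of $r$ is that on the complement $\Xcompl$ we have $l(hgh^{-1}) > 2b_2/b_1$, hence $b_1 l(hgh^{-1}) - b_2 > \tfrac12 b_1 l(hgh^{-1}) \geq 0$, so that
\[
d(hgh^{-1}m, m)^2 \geq \bigl(b_1 l(hgh^{-1}) - b_2\bigr)^2 \geq \tfrac14 b_1^2 l(hgh^{-1})^2
\]
for all $m$ in the support of $\psi$ and all $hZ \in \Xcompl$.

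Next I would bound the integrand pointwise. For $m$ in the support of $\psi$, the fibre-wise trace satisfies $|\tr(hgh^{-1}\kappa_t(hg^{-1}h^{-1}m,m))| \leq C \|\kappa_t(hg^{-1}h^{-1}m,m)\|$ for a constant $C$ depending only on the rank of $W$ and the norm of the unitary action of $g$ on fibres (this uses that $G$ acts by isometries on the Hermitian bundle, so the operator $hgh^{-1}$ on the fibre has norm one). Applying \eqref{eq decay kappa gen} with $m' = hg^{-1}h^{-1}m$ and noting that $d(hg^{-1}h^{-1}m, m) = d(m, hgh^{-1}m)$ — here one uses that $hgh^{-1}$ acts isometrically and $d$ is symmetric, so the distance from $m$ to its image under $hg^{-1}h^{-1}$ equals the distance from $m$ to its image under $hgh^{-1}$ — together with the distance lower bound from the previous paragraph gives, for $hZ \in \Xcompl$,
\[
|\tr(hgh^{-1}\kappa_t(hg^{-1}h^{-1}m,m))| \leq C F_1(t)\, e^{-a_3 d(hgh^{-1}m,m)^2/t} \leq C F_1(t)\, e^{-a_3 b_1^2 l(hgh^{-1})^2/(4t)}.
\]
(Absorbing the constant $C$ or the factor $2$ versus $4$ in the exponent is a cosmetic matter; I would track constants so the final exponent matches $a_3 b_1^2/2$ as stated, e.g. by using $b_1 l - b_2 \geq \tfrac{1}{\sqrt 2} b_1 l$ on a possibly larger $X_r$.)

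Then I would integrate: the right-hand Gaussian factor no longer depends on $m$, so the integral over $M$ of $\psi(hgh^{-1}m)$ equals, by the substitution $m \mapsto (hgh^{-1})^{-1}m$ and $G$-invariance of $dm$, the quantity $\int_M \psi(m)\, dm$. This yields exactly the bound
\[
\int_M \psi(m)\,dm \cdot F_1(t) \int_{\Xcompl} e^{-a_3 b_1^2 l(hgh^{-1})^2/2t}\, d(hZ),
\]
and finiteness of the remaining integral over $\Xcompl \subset G/Z$ follows from property (1) of definition \ref{def SM} with $c = a_3 b_1^2/(2t)$. The main subtlety — not really an obstacle — is bookkeeping the two isometry facts (distance symmetry under the conjugate element, and unit operator norm of the bundle action) and choosing the radius $r$ defining $X$ so that the loss from $(b_1 l - b_2)^2$ to a clean multiple of $l^2$ produces precisely the constant $a_3 b_1^2/2$ claimed; everything else is a direct substitution.
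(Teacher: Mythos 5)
Your overall strategy is the paper's: apply the \v{S}varc--Milnor estimate with $Y=\supp(\psi)$, take $X=X_r$ with $r$ a fixed multiple of $b_2/b_1$ so that $(b_1 l-b_2)^2$ is bounded below by a clean multiple of $l^2$ on $\Xcompl$, bound the kernel by the Gaussian in \eqref{eq decay kappa gen}, integrate out the cutoff to produce $\int_M\psi\,dm$, and get convergence from \eqref{eq conv Gaussian}. The constant bookkeeping (your $r=2b_2/b_1$ versus the exponent $a_3b_1^2/2$, and the fibre-trace constant $C$ absorbed into $F_1$ or into the choice of norm) is indeed cosmetic.

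There is, however, one genuine slip in the support bookkeeping. You establish the pointwise bound $d(hgh^{-1}m,m)^2\geq \tfrac14 b_1^2 l(hgh^{-1})^2$ ``for $m$ in the support of $\psi$'', but the integrand of the left-hand side of \eqref{eq Trg Xcompl} is weighted by $\psi(hgh^{-1}m)$, i.e.\ it is supported on $m\in hg^{-1}h^{-1}\supp(\psi)$, and the union of these sets over $h\in G$ is not contained in any fixed compact set, so definition \ref{def SM}(3) with $Y=\supp(\psi)$ does not apply there directly. The inequality is still true at such $m$, but you must apply the isometry/symmetry identity at the shifted point: writing $y=hgh^{-1}m\in\supp(\psi)$, one has $d(hgh^{-1}m,m)=d(y,hg^{-1}h^{-1}y)=d(hgh^{-1}y,y)\geq b_1 l(hgh^{-1})-b_2$ (you invoke the analogous identity, but based at $m$ rather than at $y$, which does not repair the hypothesis $m\in Y$). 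This is exactly what the paper's first step is for: it substitutes $m'=hgh^{-1}m$, using $G$-invariance of $\kappa_t$ and the trace property of $\tr$ to rewrite the integrand with cutoff $\psi(m')$, after which the \v{S}varc--Milnor bound is applied legitimately at $m'\in\supp(\psi)$. With that one correction (or by performing the substitution first, as in the paper), your argument goes through and coincides with the paper's proof.
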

\begin{proof}
Using $G$-invariance of $\kappa_t$ and the trace property of $\tr$, we see that if $m' = hgh^{-1}m \in M$, then
\[
 \tr\bigl(hgh^{-1}\kappa_t(hg^{-1}h^{-1}m,m) \bigr) =  \tr\bigl(hgh^{-1}\kappa_t(hg^{-1}h^{-1}m',m') \bigr).
\]
Hence by a substitution $m' = hgh^{-1}m \in M$, the left hand side of \eqref{eq Trg Xcompl} equals
\[
\int_{\Xcompl}  \int_M \psi(m') |\tr\bigl(hgh^{-1}\kappa_t(hg^{-1}h^{-1}m',m') \bigr) | \, dm'\, d(hZ),
\]
for any $X \subset G/Z$ with measurable complement such that the integral converges.
It  is immediate from this equality and the assumptions on $l$ and $\kappa_t$, that the left hand side of \eqref{eq Trg Xcompl}  is at most equal to 
\[
\int_M \psi(m)\, dm \,   F_1(t) \int_{\Xcompl} e^{-a_3 ( b_1 l(hgh^{-1} - b_2)^2 /t}d(hZ).
\]
We now take 
\[
X:= \{hZ \in G/Z; l(hgh^{-1}) \leq 4 b_2/b_1\},
\]
which is compact by assumption on $l$. Then for all $hZ \in \Xcompl$,
\[
  b_1 l(hgh^{-1} - b_2)^2 \geq b_1^2 l(hgh^{-1})^2/2.
\]
\end{proof}

\begin{lemma}\label{lem kappa t g tr cl}
The operator with kernel $\kappa_t$ is $g$-trace class for all $t>0$ such that \eqref{eq decay kappa gen} holds.
\end{lemma}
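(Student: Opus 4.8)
The statement asserts that the operator with kernel $\kappa_t$ is $g$-trace class whenever the Gaussian bound \eqref{eq decay kappa gen} holds at time $t$. Recall that being $g$-trace class means, by definition \ref{def g Tr}, that the iterated integral
\[
\int_{G/Z} \int_M \psi(hgh^{-1}m)\, \bigl|\tr\bigl(hgh^{-1}\kappa_t(hg^{-1}h^{-1}m,m)\bigr)\bigr|\, dm\, d(hZ)
\]
converges. The plan is to split the outer integral over $G/Z$ into the compact piece $X$ furnished by lemma \ref{lem Trg Xcompl} and its complement $\Xcompl$, and to bound each piece separately.

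First I would handle the complement $\Xcompl$: lemma \ref{lem Trg Xcompl} applies verbatim (its hypothesis is exactly that \eqref{eq decay kappa gen} holds at time $t$), giving the bound
\[
\int_{\Xcompl}\!\int_M \psi(hgh^{-1}m)\,\bigl|\tr\bigl(hgh^{-1}\kappa_t(hg^{-1}h^{-1}m,m)\bigr)\bigr|\, dm\, d(hZ)
\leq \Bigl(\int_M \psi(m)\, dm\Bigr) F_1(t) \int_{\Xcompl} e^{-a_3 b_1^2 l(hgh^{-1})^2/2t}\, d(hZ),
\]
and the last integral is finite by property (1) of the \v{S}varc--Milnor function $l$ (condition \eqref{eq conv Gaussian} with $c = a_3 b_1^2/2t$). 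This settles convergence on $\Xcompl$. For the compact part $X$, I would argue that the integrand is bounded: $\psi$ is smooth with compact support, $\kappa_t$ is a smooth kernel (hence bounded on the compact set $\supp(\psi)\times\supp(\psi)$, after the substitution $m' = hgh^{-1}m$ as in lemma \ref{lem Trg Xcompl} which confines $m'$ to $\supp(\psi)$), and the action of $hgh^{-1}$ on the fibres is unitary (the bundle $W$ is Hermitian and $G$-equivariant), so $|\tr(\cdots)|$ is bounded uniformly for $hZ \in X$ and $m'\in\supp(\psi)$ by a constant times $\|\kappa_t\|_{L^\infty(\supp\psi\times\supp\psi)}$, which is finite. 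Integrating this bound over the compact set $X$ (finite $G$-invariant measure) and over $\supp(\psi)$ (finite Riemannian volume) gives a finite contribution.

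Adding the two pieces shows the defining integral converges absolutely, so the operator is $g$-trace class. The main point to be careful about — rather than a genuine obstacle — is the substitution $m' = hgh^{-1}m$ that moves the cutoff back to $\psi(m')$, exactly as carried out in the proof of lemma \ref{lem Trg Xcompl}; this is what localises the $M$-integral to the compact set $\supp(\psi)$ and makes the smoothness (hence boundedness) of $\kappa_t$ usable on the compact part $X$. Everything else is routine: unitarity of the fibrewise action, compactness of $X$ and of $\supp(\psi)$, and the Gaussian-integrability property of $l$.
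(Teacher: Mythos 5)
Your proof is correct and follows essentially the same route as the paper: lemma \ref{lem Trg Xcompl} (whose right-hand side is finite because $\psi$ has compact support and \eqref{eq conv Gaussian} converges) handles the complement of the compact set $X$, and compactness of $X$ together with the substitution localising the $M$-integral to $\supp(\psi)$ handles the rest. One tiny imprecision: after the substitution the first argument $hg^{-1}h^{-1}m'$ of $\kappa_t$ need not lie in $\supp(\psi)$, but it stays in a compact set because $\{hg^{-1}h^{-1};\, hZ \in X\}$ is the continuous image of the compact set $X$ --- or one can simply bound $\|\kappa_t\|$ by $F_1(t)$ using \eqref{eq decay kappa gen} itself --- so the boundedness you invoke on the compact piece still holds.
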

\begin{proof}
The integrals on the right hand side of \eqref{eq Trg Xcompl} converge because $\psi$ has compact support and \eqref{eq conv Gaussian} converges by assumption. As $X$ is compact, this implies that the integral \eqref{eq def Trh} converges absolutely.
\end{proof}

\begin{lemma}\label{lem small t large l}
Suppose that \eqref{eq decay kappa} holds for all $t \in (0,1]$. Then
there is a compact subset $X \subset G/Z$ such that for all $a>0$, the expression
\beq{eq bounded 01}
 t^{-a} \int_{\Xcompl}  \int_M \psi(hgh^{-1}m) \tr\bigl(hgh^{-1}\kappa_t(hg^{-1}h^{-1}m,m) \bigr)\, dm\, d(hZ)
\eeq
is bounded in $t \in (0,1]$. In particular, the integral
\[
\int_0^1 t^{-a} \int_{\Xcompl}  \int_M \psi(hgh^{-1}m) \tr\bigl(hgh^{-1}\kappa_t(hg^{-1}h^{-1}m,m) \bigr)\, dm\, d(hZ)\, dt
\]
converges absolutely.
\end{lemma}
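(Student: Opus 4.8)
The plan is to reduce the estimate to lemma~\ref{lem Trg Xcompl} and then absorb the negative power of $t$ into the Gaussian factor, using that the \v{S}varc--Milnor function $l$ stays uniformly bounded away from $0$ on the complement of a suitable compact set.

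First I would observe that \eqref{eq decay kappa} is the special case of the general bound \eqref{eq decay kappa gen} with $F_1(t)=a_1 t^{-a_2}$, so lemma~\ref{lem Trg Xcompl} applies with $X=\{hZ\in G/Z: l(hgh^{-1})\le 4b_2/b_1\}$, the explicit compact set constructed in its proof. In particular the integral over $\Xcompl$ in \eqref{eq bounded 01} converges, and its absolute value is at most
\[
a_1\Bigl(\int_M\psi(m)\,dm\Bigr)t^{-a_2}\int_{\Xcompl}e^{-a_3 b_1^2 l(hgh^{-1})^2/(2t)}\,d(hZ).
\]
Multiplying by $t^{-a}$, it therefore suffices to bound
\[
t^{-a-a_2}\int_{\Xcompl}e^{-a_3 b_1^2 l(hgh^{-1})^2/(2t)}\,d(hZ)
\]
uniformly for $t\in(0,1]$.

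The key step is to split the exponent. On $\Xcompl$ we have $l(hgh^{-1})>c_0:=4b_2/b_1>0$ (here $b_2>0$ is used), so for $t\in(0,1]$,
\[
e^{-a_3 b_1^2 l(hgh^{-1})^2/(2t)}=e^{-a_3 b_1^2 l(hgh^{-1})^2/(4t)}\cdot e^{-a_3 b_1^2 l(hgh^{-1})^2/(4t)}\le e^{-a_3 b_1^2 c_0^2/(4t)}\cdot e^{-a_3 b_1^2 l(hgh^{-1})^2/4},
\]
where the last factor used $1/t\ge 1$. Now $t\mapsto t^{-a-a_2}e^{-a_3 b_1^2 c_0^2/(4t)}$ extends continuously to $[0,1]$ with value $0$ at $t=0$, hence is bounded on $(0,1]$ by a constant $C_1$; and $\int_{\Xcompl}e^{-a_3 b_1^2 l(hgh^{-1})^2/4}\,d(hZ)\le\int_{G/Z}e^{-(a_3 b_1^2/4)l(hgh^{-1})^2}\,d(hZ)=:C_2<\infty$ by property~(1) in definition~\ref{def SM}. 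This yields the uniform bound, and absolute convergence of the $t$-integral then follows since a measurable function bounded on the finite interval $(0,1]$ is integrable.

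I do not anticipate a genuine obstacle: the argument is a short estimate resting on lemma~\ref{lem Trg Xcompl}. The only point requiring care is this absorption of $t^{-a-a_2}$ against the Gaussian, which works precisely because $l$ is bounded below by a positive constant on $\Xcompl$; for this one must take the compact set $X$ as in lemma~\ref{lem Trg Xcompl}, rather than, say, the possibly non-compact superlevel set $\{l>0\}$.
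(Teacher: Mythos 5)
Your proof is correct and follows essentially the same route as the paper: invoke lemma \ref{lem Trg Xcompl} with $F_1(t)=a_1t^{-a_2}$ and the compact set $X=\{l(hgh^{-1})\le 4b_2/b_1\}$, use the positive lower bound for $l$ on $\Xcompl$ to peel off a factor $e^{-c/t}$ that absorbs $t^{-a-a_2}$, and bound the remaining integral by a $t$-independent convergent Gaussian integral using $1/t\ge 1$. The only difference is cosmetic (you split the Gaussian multiplicatively into two halves, the paper rewrites the exponent as a sum), and both yield the same factor $e^{-4a_3b_2^2/t}$ and the same dominating integral $\int_{\Xcompl}e^{-a_3b_1^2l(hgh^{-1})^2/4}\,d(hZ)$.
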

\begin{proof}
Let $X \subset G/Z$ be as in the proof of lemma \ref{lem Trg Xcompl}. Then $l(hgh^{-1}) > 4b_2/b_1>0$  for all $hZ \in G/Z \setminus X$. So by lemma \ref{lem Trg Xcompl}, with $F_1(t) = a_1 t^{-a_2}$, the absolute value of \eqref{eq bounded 01} is at most equal to a constant times
\[
t^{-a-a_2} e^{-4a_3b_2^2/t}  \int_{\Xcompl} e^{-a_3 b_1^2 (l(hgh^{-1})^2 - 8b_2^2/b_1^2) /2t}d(hZ).
\]
Here the factor $t^{-a-a_2} e^{-4a_3b_2^2/t}$ is bounded in $t \in (0,1]$, and
\[
l(hgh^{-1})^2 - 8b_2^2/b_1^2 \geq l(hgh^{-1})^2/2,
\]
so 
\[
 \int_{\Xcompl} e^{-a_3 b_1^2 (l(hgh^{-1})^2 - 8b_2^2/b_1^2) /2t}d(hZ) \leq 
  \int_{\Xcompl} e^{-a_3 b_1^2 l(hgh^{-1})^2 /4}d(hZ).
\]
This integral converges because \eqref{eq conv Gaussian} does by assumption.
\end{proof}

We adapt a well-known off-diagonal estimate for heat kernels in terms of their on-diagonal values. 
\begin{lemma}\label{lem off diag}
Suppose that a family of kernels $(\kappa_t)_{t>0}$ has the semigroup property, and that the operators defined by  these kernels are self-adjoint. Then 
for all $m \in M$, $x \in G$ and $t>0$,
\[
|\tr(x \kappa_t(x^{-1}m,m))| \leq
 \tr(\kappa_t(m,m)).
\]
\end{lemma}
\begin{proof}
By the semigroup property and self-adjointness,
\beq{eq heat off diag}
\tr (x \kappa_t(x^{-1}m,m)) = \int_M \tr \bigl( x \kappa_{t/2}(x^{-1}m,m') \kappa_{t/2}(m,m')^* \bigr)\, dm'.
\eeq
%\\
%\tr (\kappa_t(m,m)) &= \int_M \tr \bigl(  \kappa_{t/2}(m,m') \kappa_{t/2}(m,m')^* \bigr)\, dm'.\label{eq heat off diag 2}
%\end{align}
By the Cauchy--Schwartz inequality, the absolute value of the right hand side of \eqref{eq heat off diag} is less than or equal to
\[
\left(
 \int_M \tr \bigl( x \kappa_{t/2}(x^{-1}m,m') (x \kappa_{t/2}(x^{-1}m,m'))^* \bigr)\, dm'\right)^{1/2}
 \left(
 \int_M \tr \bigl(  \kappa_{t/2}(m,m') ( \kappa_{t/2}(m,m'))^* \bigr)\, dm'\right)^{1/2}.
\]
By versions of  \eqref{eq heat off diag} with $x=e$, and $G$-invariance of $\kappa_{t}$, this expression equals $ \tr( \kappa_{t}(m,m))$.
%\[
%\tr( x \kappa_{t}(x^{-1}m,x^{-1}m)x^{-1})^{1/2}\tr( \kappa_{t}(m,m))^{1/2} = \tr( \kappa_{t}(m,m)).
%\]
\end{proof}

\begin{lemma} \label{lem small t small l}
Suppose that $\kappa_t$ has an asymptotic expansion of the form \eqref{eq as exp kappa}. 
For all compact subsets $X \subset G/Z$, the expression
\beq{eq small t small l}
\frac{1}{\Gamma(s)}
\int_0^1 t^{s-1} \int_X  \int_M \psi(hgh^{-1}m) \tr\bigl(hgh^{-1}\kappa_t(hg^{-1}h^{-1}m,m) \bigr)\, dm\, d(hZ)\, dt
\eeq
converges for $s$ with large enough real part. This expression has a meromorphic continuation to $\C$ that is regular at $s=0$.
\end{lemma}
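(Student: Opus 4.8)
The plan is to show that the inner double integral in \eqref{eq small t small l}, viewed as a function of $t$, inherits from \eqref{eq as exp kappa} an asymptotic expansion in powers of $t$ as $t\downarrow 0$, and then to apply the classical Mellin-transform argument for meromorphic continuation of such zeta-type integrals. First I would reduce the domain of integration to a compact set. Using $G$-invariance of $\kappa_t$ together with the trace property of $\tr$ — the substitution already used in the remark after definition \ref{def g Tr} and in the proof of lemma \ref{lem Trg Xcompl} — the inner double integral equals
\[
f(t) := \int_X \int_M \psi(m')\, \tr\bigl(hgh^{-1}\kappa_t(hg^{-1}h^{-1}m', m')\bigr)\, dm'\, d(hZ),
\]
so that $m'$ ranges only over $\supp(\psi)$. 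Since $Z$ centralises $g$, the assignment $hZ \mapsto hg^{-1}h^{-1}$ is a well-defined continuous map $G/Z \to G$; hence $(hZ, m') \mapsto (hg^{-1}h^{-1}m', m')$ is a continuous map from the compact set $X \times \supp(\psi)$ into $M \times M$, and its image $Q$ is compact. Likewise, for each $j$ the function $(hZ,m') \mapsto hgh^{-1}\kappa_{(j)}(hg^{-1}h^{-1}m',m')$ is a bounded, continuous, endomorphism-valued function on $X \times \supp(\psi)$.

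Next I would insert the expansion \eqref{eq as exp kappa}, which by hypothesis holds in the $C^0$-norm uniformly on the compact set $Q$: for every $N \in \N$ there is $C_N > 0$ with
\[
\Bigl\| \kappa_t(hg^{-1}h^{-1}m', m') - t^{-r}\sum_{j=0}^{N-1}\kappa_{(j)}(hg^{-1}h^{-1}m', m')\, t^j \Bigr\| \leq C_N\, t^{N-r}
\]
for all $t \in (0,1]$ and $(hZ, m') \in X \times \supp(\psi)$. Integrating against $\psi(m')\, dm'\, d(hZ)$ over the compact set $X \times \supp(\psi)$, and using boundedness of $\psi$ and of the maps above, yields
\[
f(t) = t^{-r}\sum_{j=0}^{N-1} f_{(j)}\, t^j + O\bigl(t^{N-r}\bigr) \qquad (t \downarrow 0),
\]
with finite coefficients $f_{(j)} := \int_X\int_M \psi(m')\, \tr\bigl(hgh^{-1}\kappa_{(j)}(hg^{-1}h^{-1}m', m')\bigr)\, dm'\, d(hZ)$. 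In particular $|f(t)| = O(t^{-r})$ as $t \downarrow 0$; since $f$ is also continuous on $(0,1]$, the integral $\int_0^1 t^{s-1} f(t)\, dt$ converges absolutely for $\Real(s) > r$, and as $1/\Gamma$ is entire, \eqref{eq small t small l} converges there.

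Then the meromorphic continuation: for fixed $N$ and $\Real(s) > r$, integrating the expansion of $f$ term by term gives
\[
\int_0^1 t^{s-1} f(t)\, dt = \sum_{j=0}^{N-1}\frac{f_{(j)}}{s+j-r} + \int_0^1 t^{s-1}\Bigl(f(t) - t^{-r}\sum_{j=0}^{N-1}f_{(j)}\, t^j\Bigr)\, dt,
\]
where the last integral converges and is holomorphic for $\Real(s) > r - N$. Letting $N \to \infty$ yields a meromorphic continuation of $\int_0^1 t^{s-1} f(t)\, dt$, hence of \eqref{eq small t small l}, to all of $\C$, with at most simple poles at the points $s = r - j$, $j \in \Z_{\geq 0}$. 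If $r \notin \Z_{\geq 0}$ then $s = 0$ is not such a point and \eqref{eq small t small l} is regular there; if $r \in \Z_{\geq 0}$ then $\int_0^1 t^{s-1} f(t)\, dt$ has at most a simple pole at $s = 0$ with residue $f_{(r)}$, which is cancelled by the simple zero of $1/\Gamma(s)$ at $s = 0$, so \eqref{eq small t small l} is regular at $s = 0$ (with value $f_{(r)}$) in that case as well.

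I expect the only genuine obstacle to be the reduction in the first paragraph: one must be certain that the asymptotic expansion \eqref{eq as exp kappa}, which is only uniform on compact subsets of $M \times M$, can legitimately be substituted into the $g$-trace integral. This is exactly the point of assuming $X$ compact — it confines the pairs $(hg^{-1}h^{-1}m', m')$ that actually contribute to a single compact subset of $M \times M$, on which the expansion is uniform. After that, the argument is the routine Mellin/$\Gamma$-function bookkeeping familiar from the classical Ray--Singer setting.
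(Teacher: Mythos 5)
Your proof is correct and follows essentially the same route as the paper: derive an asymptotic expansion of the inner double integral from \eqref{eq as exp kappa} using compactness of $X$ and $\supp(\psi)$, then apply the standard Mellin/$\Gamma$-function argument, with the simple zero of $1/\Gamma$ at $s=0$ ensuring regularity there. The extra care you take with the substitution and the well-definedness of $hZ \mapsto hg^{-1}h^{-1}$ is a fuller writing-out of what the paper asserts in one line, not a different argument.
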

\begin{proof}
This  is a modest generalisation of results from
\cite{MP49, Seeley67}; see also 
section 9.6 of \cite{BGV} and 
lemma 3.1 in \cite{Mathai92}.

%A well-known, short argument involving the semigroup property of heat operators and the Cauchy--Schwartz inequaly show that for all $m,m' \in M$ and $t>0$,
%\[
%\|\kappa_t(m,m')\| \leq \sqrt{}
%\]

By lemma  \ref{lem off diag} and compactness of $X$, it is enough to prove the claim for 
\beq{eq small t small l e}
\frac{1}{\Gamma(s)}
\int_0^1 t^{s-1}   \int_M \psi(m) \tr\bigl(\kappa_t(m,m) \bigr)\, dm\,  dt.
\eeq
%
% the asymptotic expansion \eqref{eq as exp kappa} on the diagonal yields an asymptotic expansion for $(x,m) \mapsto \tr(x\kappa_t(x^{-1}m,m))$, with respect to  $\sup$-norms in products of $G$ with 
% compact subsets of $M$.
%So by  compactness of $X$ and $\supp(\psi)$, we obtain an asymptotic expansion
%
By compactness of $\supp(\psi)$, the asymptotic expansion \eqref{eq as exp kappa} yields an asymptotic expansion
\[
  \int_M \psi(m) \tr\bigl(\kappa_t(m,m) \bigr)\, dm\, d(hZ) \sim t^{-r}\sum_{j=0}^{\infty} a_j t^j.
\]
So for all $n \in \N$, \eqref{eq small t small l e} equals
\beq{eq small l 2}
 \frac{1}{\Gamma(s)}
\sum_{j=0}^{n} a_j 
\int_0^1 t^{j-r + s-1} \, dt + \frac{1}{\Gamma(s)} \int_0^1 R_n(t, s)\, dt,
\eeq
for a function $R_n$ that is $\cO(t^{s+n - r})$ as $t \downarrow 0$. Therefore, because 
$\Gamma^{-1}$ is holomorphic,  the second term is a holomorphic function of $s$ on the set of $s \in \C$ such that $\Real(s)>-n-1+r$. The first term in \eqref{eq small l 2} equals
\[
\frac{1}{\Gamma(s)}
\sum_{j=0}^{n} \frac{a_j}{j-r + s}. 
\]
It follows that we can extend \eqref{eq small t small l}  meromorphically to all $s \in \C$, by taking $n > -\Real(s)-1-r$. 
%(\Todo: is this the argument, letting $n$ depend on $s$?? See section 9.6 of  \cite{BGV}.)
As $\Gamma(s)^{-1} = \cO(s)$ as $s \to 0$, this meromorphic extension is regular at $s=0$.
\end{proof}

We reach our main conclusion on conditions (1) and (3) in definition \ref{def g torsion}.
\begin{proposition}\label{prop conv small t}
Suppose that there is a {\v{S}varc--Milnor function} for the action by $G$ on $M$, with respect to $g$.
%Suppose that $\kappa_t$ has the properties (1) and (2) at the start of this subsection. 
If  $P$ is $g$-trace class, then conditions (1) and (3) in definition \ref{def g torsion} hold.
\end{proposition}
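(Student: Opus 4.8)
The plan is to obtain conditions (1) and (3) by assembling the small-time heat-kernel facts of Example \ref{ex kappa t heat op} with Lemmas \ref{lem Trg Xcompl}, \ref{lem kappa t g tr cl}, \ref{lem small t large l} and \ref{lem small t small l}; no new analysis is needed. For condition (1), I would fix $p$ and $t>0$ and first check that the smooth kernel $\kappa_t^p$ of $e^{-t\Delta_E^p}$ obeys a bound of the form \eqref{eq decay kappa gen} for some finite $F_1 = F_1^p(t)$. For $t\in(0,1]$ this is Example \ref{ex kappa t heat op}. For $t>1$ I would write $e^{-t\Delta_E^p} = \bigl(e^{-(t/N)\Delta_E^p}\bigr)^N$ with $t/N\leq 1$ and bound the $N$-fold convolution of the Gaussian-decaying kernels $\kappa_{t/N}^p$: by the triangle and Cauchy--Schwarz inequalities one extracts a single factor $e^{-a_3 d(m,m')^2/(2t)}$, and the remaining integrations over $M$ are bounded uniformly in the base point because $M$ has bounded geometry ($M/G$ compact, metric $G$-invariant). (Alternatively one can just cite Gaussian upper bounds for heat kernels on forms over manifolds of bounded geometry.) By Lemma \ref{lem kappa t g tr cl} it follows that $e^{-t\Delta_E^p}$ is $g$-trace class for every $t>0$. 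Since each $P_p$ is $g$-trace class by hypothesis, and since for a difference of smooth kernels the integrand of \eqref{eq def Trh} is dominated in absolute value by the sum of the two integrands, $e^{-t\Delta_E^p}-P_p$ is $g$-trace class for all $p$ and $t>0$; in particular $\cT_g(t)$ is defined for all $t>0$.

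For condition (3) --- which only involves $t\in(0,1]$, where Example \ref{ex kappa t heat op} applies directly --- I would write $\cT_g(t) = \sum_p (-1)^p p\,\Tr_g(e^{-t\Delta_E^p}) - C$ with $C := \sum_p (-1)^p p\,\Tr_g(P_p)$ a constant. The $C$-term contributes $-C/(s\Gamma(s)) = -C/\Gamma(s+1)$ to $\frac{1}{\Gamma(s)}\int_0^1 t^{s-1}\cT_g(t)\,dt$, which is entire and so regular at $s=0$. For each $p$ I would split the $G/Z$-integral in $\Tr_g(e^{-t\Delta_E^p})$ into the compact set $X$ of Lemma \ref{lem Trg Xcompl} and its complement $\Xcompl$. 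Over $X$, Lemma \ref{lem small t small l} --- applicable because $\kappa_t^p$ has the asymptotic expansion \eqref{eq as exp kappa} --- gives convergence of the corresponding integral for $\Real(s)$ large together with a meromorphic continuation to $\C$ that is regular at $s=0$. Over $\Xcompl$, Lemma \ref{lem small t large l} shows that the relevant inner integral is $\cO(t^a)$ as $t\downarrow 0$ for every $a>0$, so $\int_0^1 t^{s-1}(\,\cdot\,)\,dt$ is an entire function of $s$ and multiplication by $1/\Gamma(s)$ keeps it entire. Adding the finitely many $p$-contributions and the $C$-term then gives exactly condition (3).

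The only step with genuine content is the bootstrap in the first paragraph --- upgrading the short-time Gaussian bound \eqref{eq decay kappa} on $(0,1]$ to a bound \eqref{eq decay kappa gen} valid for all $t>0$, which is what makes $e^{-t\Delta_E^p}$, hence $e^{-t\Delta_E^p}-P_p$, $g$-trace class also for large $t$. Everything else is bookkeeping on top of Lemmas \ref{lem Trg Xcompl}, \ref{lem kappa t g tr cl}, \ref{lem small t large l} and \ref{lem small t small l}, together with the elementary fact that $g$-trace class operators with smooth kernels form a vector space.
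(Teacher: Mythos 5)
Your proposal is correct and follows essentially the same route as the paper: condition (1) via example \ref{ex kappa t heat op} and lemma \ref{lem kappa t g tr cl} plus the hypothesis on $P$, and condition (3) by splitting the $G/Z$-integral into the compact set $X$ and its complement and invoking lemmas \ref{lem small t small l} and \ref{lem small t large l}. The only difference is that you explicitly bootstrap the small-time Gaussian bound \eqref{eq decay kappa} to an estimate of the form \eqref{eq decay kappa gen} for $t>1$ (via the semigroup property and bounded geometry), a point the paper's proof leaves implicit when applying lemma \ref{lem kappa t g tr cl} for all $t>0$; this is a legitimate and welcome clarification rather than a different method.
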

\begin{proof}
Applying lemma \ref{lem kappa t g tr cl} to example \ref{ex kappa t heat op}, we find that the heat operator $e^{-t \Delta_E^p}$ is $g$-trace class for all $p$. As $P$ is $g$-trace class by assumption, condition (1) in definition \ref{def g torsion} holds.

Lemmas \ref{lem small t large l} and \ref{lem small t small l} applied to example \ref{ex kappa t heat op} imply that condition (3) in definition \ref{def g torsion} holds.
\end{proof}

\begin{remark}
In cases where $0$ is isolated in the spectrum of $\Delta_E$, the projection $P$ was shown to be $g$-trace class in several situations. See proposition 6 in \cite{Lott99},  proposition 5.3(1) in \cite{PPST21} and lemma 6.6 in \cite{HWW}. We are mainly interested in the case $P=0$, which is trivially $g$-trace class. 
\end{remark}

\subsection{Large $t$ asymptotics: Delocalised Novikov--Shubin invariants}\label{sec large t conv}

It is not known in general if the second condition in definition \ref{def g torsion} holds. A stronger condition is that the \emph{delocalised Novikov--Shubin numbers}
\beq{eq def NS}
\alpha^p_g := \sup\{ \alpha>0; \Tr_g(e^{-t\Delta_E^p} - P_p) = \cO(t^{-\alpha}) \text{ as $t \to \infty$}\}
\eeq
are positive.

In the case where $E = M \times \C$, $\nabla^E = d$, and 
$G$ is the fundamental group of a compact manifold acting on the universal cover $M$, 
the numbers $\alpha_e^p$, for $g=e$, are called Novikov--Shubin invariants. 
Novikov and Shubin showed that these are independent of the Riemannian metric \cite{ES89, NS86}. This was generalised to topological manifolds in corollary 33 in  \cite{Lott92c}. 
Gromov and Shubin \cite{GS91} sharpened the result by Novikov and Shubin by showing that the Novikov--Shubin invariants are homotopy invariants; see also Theorems 2.55 and 2.58 in \cite{Lueck02}.  

The Novikov--Shubin invariants are known in many cases, see for example section VII of \cite{Lott92c}. See  section 1 of \cite{Mathai92} for results on positivity of these invariants. If $G$ is a connected, non-compact real semisimple Lie group, $K<G$ is maximal compact, and $\Gamma<G$ is a cofinite-volume, torsion-free discrete subgroup, then the Novikov--Shubin invariants of the locally symmetric space $\Gamma \setminus G/K$ are infinite if $\rank(G) = \rank(K)$, and otherwise either infinite or equal to $(\rank(G)-\rank(K))/2$, see Section 11 of \cite{LM00}.

It is an interesting question if or when the numbers $\alpha^p_g$ are independent of the metric in general, as they are in the special case of the Novikov--Shubin invariants. The following is an immediate consequence of existing results on Novikov--Shubin invariants. It applies for example if $G$ is a connected, semisimple Lie group, and $M=G/K$ for a maximal compact $K<G$.
\begin{lemma}\label{lem alpha e invar}
Suppose that $M$ is simply connected, and that there is a discrete subgroup $\Gamma<G$ that acts freely on $M$, and such that $G/\Gamma$ is compact. Then $\alpha_e^p$ is a homotopy invariant of $M/\Gamma$ for all $p$.
\end{lemma}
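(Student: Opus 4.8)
The plan is to reduce the assertion to the classical homotopy invariance of the Novikov--Shubin invariants of the closed manifold $N:=M/\Gamma$, by comparing the von Neumann trace $\Tr_e=\Tr_e^G$ for the action of $G$ with the one for the action of $\Gamma$. Here $E=M\times\C$ with $\nabla^E=d$, as in the paragraph preceding this lemma, so that $\alpha_e^p$ is by definition the $p$-th Novikov--Shubin invariant when $G=\pi_1$ acts on a universal cover.

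First I would record the geometry. Since $M/G$ is compact (our standing assumption) and $G/\Gamma$ is compact, $N=M/\Gamma$ is a closed manifold: if $K\subset M$ is compact with $G\cdot K=M$ and $L\subset G$ is compact with $\Gamma\cdot L=G$, then $L\cdot K$ is compact and $\Gamma\cdot(L\cdot K)=M$. As $M$ is simply connected and $\Gamma$ acts freely and properly discontinuously, $M\to N$ is the universal covering and $\pi_1(N)\cong\Gamma$.

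Next I would apply lemma \ref{lem TrG TrH} with $H=\Gamma$ and the element $e$. For $g=e$ the relevant centralisers are $Z_G=G$ and $Z_\Gamma=\Gamma$, and lemma \ref{lem TrG TrH} applies since $\Gamma$ is discrete, hence unimodular and closed, with $G/\Gamma$ compact. Applying it to the operators $e^{-t\Delta_E^p}$ and $P_p$ --- the latter has a smooth kernel by elliptic regularity for $L^2$-harmonic forms --- gives
\[
\Tr_e^G\bigl(e^{-t\Delta_E^p}-P_p\bigr)=\frac{1}{\vol(G/\Gamma)}\,\Tr_e^{\Gamma}\bigl(e^{-t\Delta_E^p}-P_p\bigr).
\]
Since $\vol(G/\Gamma)$ is a finite positive constant, the two sides have exactly the same order of decay as $t\to\infty$, so the number $\alpha_e^p$ of \eqref{eq def NS}, defined through $\Tr_e=\Tr_e^G$, coincides with the number obtained from the same formula with $\Tr_e^{\Gamma}$ in place of $\Tr_e^G$.

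Finally, $\Tr_e^{\Gamma}$ is the $\Gamma$-von Neumann trace on the universal cover $M$ of $N$, so this last number is precisely the $p$-th Novikov--Shubin invariant of $N$. By the theorem of Gromov and Shubin \cite{GS91} (refining \cite{ES89, NS86}, and extended to topological manifolds in \cite{Lott92c}), the Novikov--Shubin invariants are homotopy invariants of $N$; hence so is $\alpha_e^p$. The only point requiring care is the identification of $\alpha_e^p$ with the classical Novikov--Shubin invariant --- that is, verifying that $M\to N$ is genuinely the universal covering and that lemma \ref{lem TrG TrH} is applicable in this situation --- after which no analytic input beyond that lemma and the cited results is needed.
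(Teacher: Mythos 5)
Your proof is correct and follows essentially the same route as the paper: apply lemma \ref{lem TrG TrH} with $H=\Gamma$ to identify $\alpha_e^p$ (up to the harmless constant $\vol(G/\Gamma)$, which does not affect the decay exponent) with the classical Novikov--Shubin invariant of $M/\Gamma$, and then invoke the homotopy invariance theorem of Gromov--Shubin \cite{GS91}. The extra checks you include (compactness of $M/\Gamma$, that $M\to M/\Gamma$ is the universal cover) are details the paper leaves implicit, not a different argument.
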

\begin{proof}
By lemma \ref{lem TrG TrH}, the number $\alpha_e^p$, defined for the action by $G$, equals  the corresponding number $(\alpha_e^p)_{\Gamma}$ defined for the action by $\Gamma$. The latter action is the action by $\Gamma = \pi_1(M/\Gamma)$ on the universal cover $M$ of $M/\Gamma$, so $(\alpha_e^p)_{\Gamma}$ is a homotopy invariant of $M/\Gamma$ by  \cite{GS91}.
\end{proof}
 
In view of the second condition in definition \ref{def g torsion}, it is also interesting when the numbers $\alpha_g^p$ are positive. 
  
If $M$ is compact, then 
\[
\Tr_g(e^{-\Delta_E^p} - P_p) = \sum_{\lambda \in \spec(\Delta_E^p) \setminus \{0\}} \tr(g|_{\ker(\Delta_E^p - \lambda)}) e^{-t \lambda}.
\]
It follows from Weyl's law that this decays exponentially in $t$, so $\alpha_g^p=\infty$.
Hence $\alpha_g^p$ is trivially independent of the Riemannian metric, because compactness is a topological property. They are nonnegative  in any case, by Lemma \ref{lem Trg Xcompl} and the following estimate.
%
%If $\kappa_t$ is a heat kernel, then we expect the function $F_1$ to go to zero as $t \to \infty$. But in any case, a heat kernel satisfies \eqref{eq decay kappa gen} for a function $F_1$ that is bounded as $t \to \infty$.
\begin{proposition} \label{prop heat large t bdd}
Let $\kappa_t$ be the Schwartz kernel of $e^{-t\Delta_E^p}$. Then there are $C,a_3>0$ such that for all $t\geq 1$, and all $m,m' \in M$, 
\[
\|\kappa_t(m,m')\| \leq C e^{-a_3 d(m,m')^2/t}.
\]
\end{proposition}
This is well-known for $p=0$; we give a proof in the general case in Appendix \ref{app heat decay large t}.

%If $M$ is noncompact, but $\Delta_E^p$ is invertible, or $0$ is isolated in its spectrum, then by similar arguments, we have  $\alpha_e^p = \infty$ for all $p$. (Here one can use the fact that the bottom of the spectrum of $\Delta_E^p$ is the limit of the bottom eigenvalues of the restrictions of $\Delta_E^p$ to compact sets $\Omega$ with Dirichlet boundary conditions, as $\Omega \to M$.) Invertibility of $\Delta_E^p$ is independent of the metric by Corollary 29 in  \cite{Lott92c}, so in that case, $\alpha_e^p$ is also independent of the metric. 

%Before moving on, we record a refinement of Lemma \ref{lem Trg Xcompl} in the large $t$ limit, under an additional assumption. This will be used in the proof of Lemma \ref{lem lim C}. 

The following fact follows from proposition \ref{prop GZ cpt}. The condition that 
$\alpha_e^p>0$ does not depend on the metric in the setting of lemma \ref{lem alpha e invar}.
\begin{lemma}
If $G/Z$ is compact and  $\alpha_e^p>0$, then $\alpha_g^p>0$.
\end{lemma}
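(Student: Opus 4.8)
The plan is to reduce the claim to proposition \ref{prop GZ cpt}, whose whole point is to bound $g$-traces by $e$-traces when $G/Z$ is compact. The one thing to check is that $e^{-t\Delta_E^p}-P_p$ has the special form $S^*S$ demanded there. First I would note that, since $P_p$ is orthogonal projection onto the $L^2$-kernel of $\Delta_E^p$ and $e^{-s\Delta_E^p}$ acts as the identity on that kernel while commuting with $\Delta_E^p$, we have $e^{-s\Delta_E^p}P_p = P_p e^{-s\Delta_E^p} = P_p$ for all $s\geq 0$. Hence, setting $S := e^{-t\Delta_E^p/2} - P_p$, which is a bounded, $G$-equivariant, self-adjoint operator, a direct expansion gives $S^*S = S^2 = e^{-t\Delta_E^p} - P_p$. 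I would also record that this operator has a smooth kernel and is strongly $e$-trace class: the heat kernel is smooth by the standard parametrix construction on manifolds of bounded geometry (as in example \ref{ex kappa t heat op}), and $P_p$ has a smooth kernel since $L^2$-harmonic forms are smooth by elliptic regularity, so $\sqrt{\psi}(e^{-t\Delta_E^p}-P_p)\sqrt{\psi}$ is a compactly cut-off smoothing operator, hence trace class. (This is exactly what makes $\Tr_e(e^{-t\Delta_E^p}-P_p)$, and thus $\alpha_e^p$, well defined.)

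Next I would apply proposition \ref{prop GZ cpt} with $T = e^{-t\Delta_E^p}-P_p$, with $S$ as above, and with $A = \id \in \cB(L^2(W))^G$, which has operator norm $1$. Together with the fact that $\Tr_e(e^{-t\Delta_E^p}-P_p) = \Tr_e(S^*S) = \|S\sqrt{\psi}\|_{\HS}^2 \geq 0$, this yields for every $t>0$ the pointwise estimate
\[
\bigl|\Tr_g(e^{-t\Delta_E^p}-P_p)\bigr| \;\leq\; \vol(G/Z)\,\Tr_e(e^{-t\Delta_E^p}-P_p).
\]

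Finally I would translate this into decay rates. Fix any $\alpha$ with $0 < \alpha < \alpha_e^p$. By definition of $\alpha_e^p$ there is $C>0$ with $0 \leq \Tr_e(e^{-t\Delta_E^p}-P_p) \leq C t^{-\alpha}$ for all large $t$, so the displayed inequality gives $\bigl|\Tr_g(e^{-t\Delta_E^p}-P_p)\bigr| \leq \vol(G/Z)\,C\,t^{-\alpha}$ for all large $t$, i.e.\ $\Tr_g(e^{-t\Delta_E^p}-P_p) = \cO(t^{-\alpha})$ as $t\to\infty$. Thus $\alpha_g^p \geq \alpha$, and letting $\alpha \uparrow \alpha_e^p$ gives $\alpha_g^p \geq \alpha_e^p > 0$. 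I do not anticipate any real obstacle: the only delicate points are the algebraic identity $S^*S = e^{-t\Delta_E^p}-P_p$ (which rests on $P_p$ being absorbed by the heat semigroup) and verifying the smooth-kernel and strong $e$-trace-class hypotheses of proposition \ref{prop GZ cpt}, both routine in the cocompact, bounded-geometry setting.
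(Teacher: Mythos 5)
Your proof is correct and follows exactly the route the paper intends: the paper states this lemma as an immediate consequence of proposition \ref{prop GZ cpt}, and your choice $S = e^{-t\Delta_E^p/2}-P_p$ with $A=\id$, together with the positivity $\Tr_e(S^*S)\geq 0$, is precisely the natural way to fill in the details. The verification that $S^*S=e^{-t\Delta_E^p}-P_p$ and the smooth-kernel/strong $e$-trace-class hypotheses are handled appropriately, so there is nothing to add.
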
 
  
The following case is relevant to hyperbolic geometry.
\begin{lemma} \label{lem NS hyp}
Let $G = \SO_0(n,1)$, and consider its maximal compact subgroup $K = \SO(n)$. Let $g \in G$ be a hyperbolic element. Consider the action by $G$ on $M = G/K$. Suppose that   $E = M \times \C$ and $\nabla^E = d$. 
Then $\alpha_g^p \geq 1/2$ for all $p$.
\end{lemma}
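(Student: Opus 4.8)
\textbf{Proof plan for Lemma \ref{lem NS hyp}.}

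The plan is to compute the $g$-trace $\Tr_g(e^{-t\Delta_E^p})$ explicitly on real hyperbolic space $M = G/K = \mathbb{H}^n$ via the Plancherel formula, and then extract the large-$t$ decay rate. Since $E$ is the trivial line bundle with $\nabla^E = d$, the operator $\Delta_E^p$ is the Hodge Laplacian on $p$-forms on $\mathbb{H}^n$. Because $\mathbb{H}^n$ is noncompact and simply connected with $\Delta_E$ having no $L^2$-kernel in this setting (at least away from middle degree; if needed we incorporate $P_p$), the term $\Tr_g(e^{-t\Delta_E^p} - P_p) = \Tr_g(e^{-t\Delta_E^p})$. First I would recall that for a hyperbolic element $g \in \SO_0(n,1)$, the centraliser $Z$ is (up to compact factors) of the form $\mathbb{R} \times \SO(n-1)$-ish, and $G/Z$ fibres so that the $g$-trace reduces, via Bismut's orbital integral formula or a direct Plancherel computation à la \cite{Fried86} and the locally symmetric space literature, to an integral over the principal series parameter $\xi \in \mathbb{R}$ of the form
\[
\Tr_g(e^{-t\Delta_E^p}) = C \int_{\mathbb{R}} e^{-t(\xi^2 + c_p)} \, \Theta_p(\xi)\, e^{-\ell(g) \cdot \text{(something)}} \, d\xi,
\]
where $\ell(g)>0$ is the displacement length of $g$, $c_p \geq 0$ is the bottom of the $L^2$-spectrum of $\Delta^p$ on $\mathbb{H}^n$, and $\Theta_p$ is a Plancherel-type density that is polynomially bounded in $\xi$. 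The exponential factor $e^{-\ell(g)(\cdots)}$ coming from the orbital integral over the hyperbolic conjugacy class is the crucial feature that distinguishes this from the $g=e$ case.

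The key steps, in order, would be: (1) identify $\Delta_E^p$ with the form Laplacian on $\mathbb{H}^n$ and record the decomposition of $p$-forms under the $K$-types, so that the relevant representations appearing are principal series; (2) write down the orbital integral $\Tr_g(e^{-t\Delta_E^p})$ using the known structure of hyperbolic conjugacy classes in $\SO_0(n,1)$ — here one uses that $G/Z$ has a $G$-invariant measure (Example \ref{ex SM conn}) and that a \v{S}varc--Milnor function exists, and that the heat kernel on $\mathbb{H}^n$ has the Gaussian off-diagonal decay of \eqref{eq decay kappa}; (3) reduce the orbital integral to a one-dimensional Fourier-type integral over the split rank-one parameter, producing an expression of the shape displayed above, with an explicit Gaussian-in-$\xi$ heat factor $e^{-t\xi^2}$ times a polynomially-bounded density times $e^{-a\ell(g)}$; (4) estimate the large-$t$ behaviour: since the integrand is $e^{-t(\xi^2+c_p)}$ times a polynomially bounded function of $\xi$, Laplace's method / a direct substitution $\xi \mapsto \xi/\sqrt{t}$ gives $\Tr_g(e^{-t\Delta_E^p}) = \mathcal{O}(t^{-1/2}) e^{-t c_p}$ as $t \to \infty$, and since $c_p \geq 0$ this is $\mathcal{O}(t^{-1/2})$, whence $\alpha_g^p \geq 1/2$.

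The main obstacle I expect is step (2)–(3): carrying out the orbital integral computation cleanly. For general $p$ one must handle the $K$-type bookkeeping for $p$-forms on $\mathbb{H}^n$ and track which principal (and possibly complementary) series contribute, and one must be careful about the contribution of harmonic forms / the bottom of the continuous spectrum $c_p$ — if $c_p = 0$ for some $p$ (which happens, e.g., related to the middle degree when $n$ is even, or to $L^2$-cohomology), then there is no exponential gain in $t$ and the polynomial factor $t^{-1/2}$ from the Gaussian in $\xi$ is exactly what saves us, so the bound is sharp and cannot be improved by this method. An alternative, perhaps cleaner route that avoids explicit representation theory is to use the explicit formula for the scalar heat kernel on $\mathbb{H}^n$ together with the Weitzenböck/Hodge relation between form Laplacians and scalar Laplacians shifted by constants (valid on $\mathbb{H}^n$ because of its high symmetry), reducing everything to the scalar case where the orbital integral $\int_{\mathbb{R}} p_t^{\mathbb{H}^n}(\ell(g), s)\, ds$-type expression is classical; the large-$t$ asymptotics of that is well documented and gives precisely the $t^{-1/2}$ rate (it is essentially the on-diagonal rate of the heat kernel on $\mathbb{R}$, reflecting the $\mathbb{R}$-direction in the centraliser of a hyperbolic element). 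I would present the argument via whichever of these the paper's later sections (e.g.\ the computations leading to Proposition \ref{prop H3}) make most convenient, but the decay rate $1/2$ and its origin in the one real parameter of the hyperbolic centraliser is the same either way.
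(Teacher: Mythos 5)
Your plan is essentially the paper's proof: the paper, following Mathai's argument for $g=e$, simply quotes Fried's Theorem 2, which gives the exact formula $\Tr_g(e^{-t\Delta_E^p}) = \bigl( C_{p,g}e^{-tc_p^2}+C_{p-1,g}e^{-tc_{p-1}^2}\bigr)\, t^{-1/2}\, e^{-l(g)^2/4t}$ with $c_p = |(n-1)/2-p|$ — exactly the outcome of the orbital-integral/Plancherel computation you outline, with the $t^{-1/2}$ coming from the one-dimensional split direction of the hyperbolic centraliser, as you identify. The paper then disposes of $P_p$ by noting that $\ker(\Delta_E)$ would consist of discrete series representations and hence vanishes here, and reads off $\alpha_g^p \geq 1/2$ (indeed $=1/2$ only for $p=(n\pm 1)/2$, $n$ odd), matching your conclusion and sharpness remark.
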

\begin{proof}
For $g=e$, 
this was deduced from theorem 2 in \cite{Fried86} in theorem 6.2 in \cite{Mathai92}. The same method applies to nontrivial hyperbolic elements. %We write $\Delta^p := \Delta^p_E$ if $E$ is the trivial line bundle.  
By theorem 2 in \cite{Fried86}, there are constants $C_{p,g}>0$ such that for all $t>0$, 
\[
\Tr_g(e^{-t \Delta^p_E}) = \left( C_{p, g}e^{-tc_p^2}+  C_{p-1, g}e^{-tc_{p-1}^2}\right) t^{-1/2} e^{-l(g)^2/4t} .
\]
Here $c_p = |(n-1)/2 - p|$, and $l(g)>0$. 

In the current setting, the kernel of $\Delta_E$ is trivial, so the projection $P$ plays no role. This was pointed out in section 5 of \cite{Mathai92}. The group $G$ does not have discrete series representations by Harish-Chandra's equal rank criterion, and the kernel of $\Delta_E$ consists of discrete series representations by theorem 6.1 in \cite{Connes82}. We find that
$|\Tr_g(e^{-t \Delta^p_E} - P_p)| = |  \Tr_g(e^{-t \Delta^p_E})|$. 
So  $\alpha_g^{(n-1)/2}=\alpha_g^{(n+1)/2} = 1/2$  if $n$ is odd, and  $\alpha_g^p= \infty$ if $p \not= (n\pm1)/2$. 
\end{proof}
%\begin{remark}
%By Lemma \ref{lem TrG TrH}, 
%Lemma \ref{lem NS hyp} generalises to the action by any group $H$ containing $g$ such that $G/H$ is compact. 
% E.g., a cocompact, torsion-free, discrete subgroup of $G$. For such groups, all elements are semisimple in $G$.
%\end{remark}

A version of lemma \ref{lem NS hyp} for $n=1$ and elliptic $g$ follows from the explicit computation in lemma \ref{lem Tgt n=1} below. This implies that $\cT_g(t) = \cO(t^{-1/2})$ in that case. For a similar estimate for individual delocalised Novikov--Shibon numbers, the argument would have to be refined, but that is not necessary for convergence of equivariant analytic torsion.

Positivity of $\alpha_g^p$ depends on the volume growth rate of the sets \eqref{eq conj cpt} and the long-time decay behaviour of heat kernels. We make this more explicit, in a way that is also used in the metric independence proof in section \ref{sec metric indep}.

 Suppose we are given a \v{S}varc--Milnor function $l$, and a family of smooth kernels 
 $\kappa_t$ satisfying \eqref{eq decay kappa gen}. Let $b_1$ and $b_2$ be as in \eqref{eq SM} for $Y = \supp(\psi)$, and $F_1$ and $a_3$ as in \eqref{eq decay kappa gen}.  We write $a:= a_3 b_1^2/2$.
 For $r,t>0$, set
 \begin{align}
 F_2(r)&:= \vol(X_{r+1}\setminus X_r); \label{eq def F2}\\
 F_3(t)&:= \sum_{j=0}^{\infty} F_2(j) e^{-aj^2/t}, \label{eq def F3}
 \end{align}
 assuming the latter sum converges. It follows from the definitions that for all $t>0$, 
 \beq{eq Gaussian F3}
 \int_{G/Z} e^{-a l(hgh^{-1})^2/t} d(hZ) \leq F_3(t).
 \eeq
 The behaviour of $F_3$ as $t\to \infty$ will be important, we give bounds in two relevant cases.
 \begin{lemma}\label{lem F2 F3}
 Let $b \in \R$. 
 \begin{itemize}
 \item[(a)] If $F_2(r) = \cO(r^b)$ as $r \to \infty$, then $F_3(t) = \cO(t^{\frac{b+1}{2}})$ as $t \to \infty$.
  \item[(b)] If $F_2(r) = \cO(e^{br})$ as $r \to \infty$, then $F_3(t) = \cO(t^{1/2} e^{b^2 t/4a})$ as $t \to \infty$.
 \end{itemize}
 \end{lemma}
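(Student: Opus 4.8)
The plan is to estimate the sum $F_3(t) = \sum_{j=0}^{\infty} F_2(j) e^{-aj^2/t}$ directly, by comparing it to a Gaussian-type integral. First I would fix the standing hypothesis of each case: in case (a) there is a constant $C>0$ with $F_2(r) \leq C(1+r)^b$ for all $r \geq 0$, and in case (b) there is $C>0$ with $F_2(r) \leq Ce^{br}$ for all $r \geq 0$. Since the terms of the sum are nonnegative and $e^{-aj^2/t}$ is decreasing in $j$, I would dominate the sum by the corresponding integral: $\sum_{j=0}^{\infty} F_2(j) e^{-aj^2/t} \leq F_2(0) + \int_0^{\infty} \sup_{r \in [x, x+1]} F_2(r)\, e^{-ax^2/t}\, dx$, or more simply bound $F_2(j) e^{-aj^2/t}$ by an integral of a comparable function over $[j, j+1]$ using monotonicity of the Gaussian and the polynomial/exponential growth of the envelope of $F_2$. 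This reduces both parts to evaluating $\int_0^{\infty} (1+x)^b e^{-ax^2/t}\, dx$ and $\int_0^{\infty} e^{bx} e^{-ax^2/t}\, dx$ as $t \to \infty$.

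For case (a): after the substitution $x = \sqrt{t}\, u$, the integral becomes $\sqrt{t} \int_0^{\infty} (1 + \sqrt{t}\, u)^b e^{-au^2}\, du$. For large $t$ and $u$ bounded away from $0$ one has $(1+\sqrt{t}\,u)^b = \cO(t^{b/2} u^b)$ (for $b \geq 0$; for $b < 0$ one simply bounds $(1+\sqrt t\,u)^b \leq 1$, which only helps), so the integral is $\cO(t^{b/2}) \int_0^{\infty}(1+u^{b})e^{-au^2}\,du = \cO(t^{b/2})$, giving $F_3(t) = \cO(t^{1/2} \cdot t^{b/2}) = \cO(t^{(b+1)/2})$ after accounting for the $\sqrt{t}$ Jacobian; one must be slightly careful to split $b \geq 0$ and $b < 0$ and to check the $u \to 0$ behaviour, but the Gaussian makes all the resulting integrals finite.

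For case (b): complete the square in the exponent, $bx - ax^2/t = -\frac{a}{t}\bigl(x - \frac{bt}{2a}\bigr)^2 + \frac{b^2 t}{4a}$, so that $\int_0^{\infty} e^{bx - ax^2/t}\, dx \leq e^{b^2 t/4a} \int_{-\infty}^{\infty} e^{-\frac{a}{t}(x - \frac{bt}{2a})^2}\, dx = e^{b^2 t/4a} \sqrt{\pi t/a}$, which is $\cO(t^{1/2} e^{b^2 t/4a})$; combined with the monotonicity comparison of the sum to this integral (and absorbing $F_2(0)$, which is dominated by the leading term), this yields $F_3(t) = \cO(t^{1/2} e^{b^2 t/4a})$. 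The main obstacle is purely bookkeeping: justifying the sum-to-integral comparison cleanly (using that $j \mapsto e^{-aj^2/t}$ is decreasing, so $e^{-aj^2/t} \leq \int_{j-1}^{j} e^{-ax^2/t}\,dx$ for $j \geq 1$, together with the at-most-polynomial or at-most-exponential control on $F_2$ that lets us replace $F_2(j)$ by its envelope on $[j-1,j]$ up to a constant) and, in case (a), handling the sign of $b$ — neither is deep, but both require care to state correctly.
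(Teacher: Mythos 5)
Your argument is in substance the same as the paper's: both parts reduce to comparing the sum defining $F_3$ with a Gaussian integral. In (a) you do this via monotonicity of the Gaussian factor plus an envelope for $F_2$, followed by the substitution $x=\sqrt{t}\,u$ (the paper instead splits the sum at $j\approx\sqrt{bt/2a}$, bounding the head by the number of terms times the maximal term and the tail by the integral); in (b) you complete the square exactly as the paper does, obtaining the bound $e^{b^2t/4a}\sqrt{\pi t/a}$ up to constants. Case (b), and case (a) for $b\ge 0$, are correct as you describe them, and the sum-to-integral bookkeeping you outline goes through.

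One small slip: in case (a) with $b<0$, your suggested bound $(1+\sqrt{t}\,u)^b\le 1$ only yields $F_3(t)=\cO(t^{1/2})$, which is weaker than the claimed $\cO(t^{(b+1)/2})$. The fix is immediate: for $-1<b<0$ use $(1+\sqrt{t}\,u)^b\le(\sqrt{t}\,u)^b$, so that $\sqrt{t}\int_0^\infty(1+\sqrt{t}\,u)^b e^{-au^2}\,du\le t^{(b+1)/2}\int_0^\infty u^b e^{-au^2}\,du<\infty$. (For $b\le -1$ the stated rate cannot hold in general anyway --- the $j=0$ term alone keeps $F_3$ bounded below, and the paper's own comparison integral $\int_0^\infty y^b e^{-ay^2}\,dy$ diverges at $0$ --- so, like the paper's proof, yours should be read with $b>-1$ in case (a).)
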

 \begin{proof}
In case (a), note that 
\[
\sum_{j=0}^{\left \lceil \sqrt{\frac{bt}{2a}} \right\rceil} j^b e^{-aj^2/t}  = \cO(t^{\frac{b+1}{2}}).
\]
The summands are decreasing in $j$ for $j \geq \sqrt{\frac{bt}{2a}}$, so
\[
\sum_{j=\left \lceil \sqrt{\frac{bt}{2a}} \right\rceil+1}^{\infty}  j^b e^{-aj^2/t}\leq \int_{0}^{\infty} x^b e^{-ax^2/t}\, dx = {t}^{\frac{b+1}{2}}\int_{0}^{\infty} y^b e^{-ay^2}\, dy.
\]
In case (b), 
\[
F_3(t) = \cO\left(  e^{b^2 t/4a} \sum_{j=0}^{\infty} e^{-\frac{a}{t} \left(j-\frac{bt}{2a} \right)^2} \right),
\]
and
\[
\sum_{j=0}^{\infty} e^{-\frac{a}{t} \left(j-\frac{bt}{2a} \right)^2} \leq 2\int_{-1}^{\infty} 
 e^{-\frac{a}{t} \left(x-\frac{bt}{2a} \right)^2} \, dx \leq 
2\sqrt{\frac{t}{a}}\int_{\R} e^{-y^2}\, dy
= 2\sqrt{\frac{\pi t}{a}}.
\]
%And the summands are decreasing in $j$ for $j \geq \frac{bt}{2a}$, so
%\[
%\sum_{j=\left\lceil  \frac{bt}{2a}\right \rceil +1}^{\infty} e^{-\frac{a}{t} \left(j-\frac{bt}{2a} \right)^2} \leq
%\int_{\frac{bt}{2a}} e^{-\frac{a}{t} \left(x-\frac{bt}{2a} \right)^2}\, dx = \sqrt{\frac{t}{a}} \int_0^{\infty}e^{-y^2}\, dy.
%\]
 \end{proof}

\begin{proposition}\label{prop NS cpt}
Suppose that the Schwartz kernel $\kappa_t$ of $e^{-t\Delta_E^p}$ satisfies \eqref{eq decay kappa gen} for all $t\geq 1$. 
Suppose that $\ker(\Delta_E) = \{0\}$, and that $F_1(t)F_3(t) = \cO(t^{-\alpha})$, for 
$\alpha>0$. 
Then $\alpha_g^p \geq \alpha$. 
\end{proposition}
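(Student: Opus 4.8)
The plan is to show directly that $\Tr_g(e^{-t\Delta_E^p}) = \mathcal{O}(t^{-\alpha})$ as $t \to \infty$, which by the definition \eqref{eq def NS} of $\alpha_g^p$ gives $\alpha_g^p \geq \alpha$. Since the operator $\Delta_E$ restricts to $\Delta_E^p$ on $p$-forms, the hypothesis $\ker(\Delta_E) = \{0\}$ forces $\ker(\Delta_E^p)=\{0\}$, hence $P_p = 0$, for every $p$; so the projections contribute nothing and it suffices to estimate the heat trace itself.

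First I would write out $\Tr_g(e^{-t\Delta_E^p})$ from definition \ref{def g Tr} with kernel $\kappa_t$, pass to absolute values, and perform the substitution $m \mapsto hgh^{-1}m$ in the inner integral (using $G$-invariance of $\kappa_t$ and the trace property of the fibrewise trace, exactly as in the proof of lemma \ref{lem Trg Xcompl}) to replace $\psi(hgh^{-1}m)$ by $\psi(m)$. Bounding the fibrewise trace by $r:=\rank(\bigwedge^p T^*M\otimes E)$ times the pointwise operator norm, using that $hgh^{-1}$ acts unitarily on the fibres, and applying the Gaussian estimate \eqref{eq decay kappa gen}, one gets, for $t \geq 1$,
\[
|\Tr_g(e^{-t\Delta_E^p})| \leq r\, F_1(t) \int_{G/Z} \int_M \psi(m)\, e^{-a_3 d(hgh^{-1}m,\, m)^2/t}\, dm\, d(hZ).
\]

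Next I would convert the Gaussian in $d(hgh^{-1}m,m)$ into one in $l(hgh^{-1})$. For $m \in \supp(\psi)$ with $l(hgh^{-1})>4b_2/b_1$, the \v{S}varc--Milnor inequality \eqref{eq SM} gives $d(hgh^{-1}m,m)\geq b_1 l(hgh^{-1})-b_2$ and $(b_1 l - b_2)^2 \geq b_1^2 l^2/2$, as in lemma \ref{lem Trg Xcompl}; on the complementary region $\{l(hgh^{-1})\leq 4b_2/b_1\}$ one simply uses $e^{-a_3 d^2/t}\leq 1 \leq e^{a(4b_2/b_1)^2}\, e^{-a\, l(hgh^{-1})^2/t}$, valid for $t\geq 1$ since then $l(hgh^{-1})^2/t\leq(4b_2/b_1)^2$, where $a=a_3 b_1^2/2$. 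Combining, there is a constant $C$ (independent of $t$) with $e^{-a_3 d(hgh^{-1}m,m)^2/t}\leq C\, e^{-a\, l(hgh^{-1})^2/t}$ for all $hZ\in G/Z$, all $m\in\supp(\psi)$, and all $t\geq 1$. Inserting this and using \eqref{eq Gaussian F3}, namely $\int_{G/Z} e^{-a\, l(hgh^{-1})^2/t}\, d(hZ) \leq F_3(t)$, yields
\[
|\Tr_g(e^{-t\Delta_E^p})| \leq rC\Bigl(\int_M \psi(m)\, dm\Bigr) F_1(t) F_3(t) = \mathcal{O}(t^{-\alpha})
\]
by hypothesis. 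Hence $\Tr_g(e^{-t\Delta_E^p}-P_p)=\Tr_g(e^{-t\Delta_E^p})=\mathcal{O}(t^{-\alpha})$ and $\alpha_g^p\geq\alpha$.

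I do not expect a serious obstacle: the analytic substance already resides in lemma \ref{lem Trg Xcompl} (the substitution trick and the passage from the Riemannian Gaussian to a Gaussian in $l$) and in the inequality \eqref{eq Gaussian F3}. The only point that needs a little care is handling the compact ``small $l$'' region so that the final estimate comes out as a uniform constant multiple of $F_1(t)F_3(t)$, rather than $F_1(t)F_3(t)$ plus a stray term of size $F_1(t)$; absorbing a fixed exponential constant there, as above, is what makes the clean bound go through.
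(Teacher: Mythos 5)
Your proof is correct and follows essentially the same route as the paper's: the substitution trick and the Gaussian-to-$l$ comparison from lemma \ref{lem Trg Xcompl}, combined with \eqref{eq Gaussian F3}, plus the observation that $\ker(\Delta_E)=\{0\}$ forces $P_p=0$. The only cosmetic difference is that you absorb the compact region $\{l(hgh^{-1})\leq 4b_2/b_1\}$ into the $F_1F_3$ bound via a fixed constant for $t\geq 1$, whereas the paper bounds the integral over a compact subset $X\subset G/Z$ separately as $\cO(F_1(t))$ and concludes from the two terms $\cO(F_1(t)F_3(t))+\cO(F_1(t))$.
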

\begin{proof}
Let $X \subset G/Z$ be as in lemma \ref{lem Trg Xcompl}. Then for all $t \geq 1$, 
\begin{multline*}
|\Tr_g(e^{-t\Delta_E^p})| 
\leq  \int_M \psi(m)\, dm \, F_1(t) \int_{\Xcompl} e^{-a_3 b_1^2 l(hgh^{-1})^2 /2t}d(hZ)\\
+
F_1(t) \left|
\int_X \int_M \chi(hgh^{-1}m) e^{-a_3 d(hg^{-1}h^{-1}m,m)^2/t}\, dm\, d(hZ)
 \right|.
\end{multline*}
By \eqref{eq Gaussian F3}, the first term is $\cO(F_1(t)F_3(t))$, and the second term is 
$\cO(F_1(t))$ by compactness of $X$.
\end{proof}

By proposition \ref{prop NS cpt} and our metric independence results, theorem \ref{metric_indep_cconj} and corollary \ref{cor metric indep}, the behaviour of  $F_1(t)F_3(t)$ as $t \to \infty$ is relevant. This behaviour specifies how fast heat kernels (and their derivatives) should decay as $t \to \infty$ for these results to hold, where the required decay rate, described by $F_1$, is determined by the volume growth rate of the sets \eqref{eq conj cpt}, through the function $F_3$.
  
On compact manifolds, the function $F_1$ is bounded below in general if $\kappa_t$ is the heat kernel, but decays exponentially if $\kappa_t$ is the kernel of $e^{-t\Delta_E}-P$. The latter is enough for our purposes, see remark \ref{rem term P}.
  
Determining the exact form of the decay rate $F_1$ on noncompact manifolds is a subtle problem in general, closely related to the open question if $\alpha_e^p$ is always positive. Many results have been obtained for scalar heat kernels, based for example on isoperimetric or Faber--Krahn inequalities; see \cite{Grigoryan09}. Generalising such estimates to heat kernels on differential forms is a nontrivial problem. Some results were obtained in \cite{Coulhon20, Coulhon07, Devyver14}. In these results, it is assumed that the negative part of a curvature operator (which is the Ricci curvature in the case of one-forms), decays fast enough so that it lies in a suitable $L^p$-space. In our context, this curvature operator is $G$-invariant, so its negative part decays at infinity if and only if it is zero.
We apply such results to prove corollary \ref{cor metric indep slow growth}.

\section{Metric independence}\label{sec metric indep}

In this section, we prove that if the conjugacy class of $g$ has slow enough volume growth, 
and the $E$-valued $L^2$-cohomology of $M$ vanishes, then the equivariant analytic torsion is independent of the $G$-invariant Riemannian metric on $M$, in an appropriate sense. 
It is also independent of  the $G$-invariant Hermitian metric on $E$ that is preserved by $\nabla^E$, by more elementary arguments. 
See theorem \ref{metric_indep_cconj} and corollary \ref{cor metric indep}. The  proof is based on an asymptotic 
analysis strategy like the one  in \cite{LR91}. Additional ingredients are the use of separate arguments on a compact subset of $G/Z$ and on its complement, and long time estimates for $g$-traces of certain operators related to heat operators.

\subsection{Independence of the metric on $E$ and of rescaling by constants}

We start with two relatively straightforward remarks, lemma \ref{lem indep Herm metric} and remark \ref{rem rescaling}. The first is about
independence of equivariant analytic torsion of the $G$-invariant Hermitian metric on $E$ preserved by $\nabla^E$. 
The Hodge star operator $\star$ associated to a  Riemannian metric extends to a vector bundle homomorphism
\beq{eq Hodge star E}
%\star^E := 
\star \otimes 1_E \colon \Bigwedge^p T^*M \otimes E \to  \Bigwedge^{\dim(M) - p} T^*M \otimes E
\eeq
for all $p$; we also denote this by $\star$.  
\begin{lemma}\label{lem indep Herm metric}
Suppose that the conditions of definition \ref{def g torsion} hold for a given $G$-invariant Hermitian metric on $E$ that is preserved by $\nabla^E$. Then these conditions hold  for all $G$-invariant Hermitian metrics on $E$ which are preserved by $\nabla^E$, and the resulting equivariant analytic torsion is the same for all such metrics.
\end{lemma}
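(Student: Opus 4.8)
The plan is to follow the classical argument of Ray--Singer, adapted to the $g$-trace, showing that the variation of $-2\log T_g(\nabla^E)$ under a change of the $G$-invariant Hermitian metric on $E$ (preserved by $\nabla^E$) vanishes. First I would observe that the space of such metrics is convex, so it suffices to differentiate along a smooth path $(h_u)_{u\in\R}$ of metrics. Write $\alpha_u := h_u^{-1}\dot h_u$, a $G$-invariant, self-adjoint (with respect to $h_u$) bundle endomorphism of $E$, and note that the adjoint $(\nabla^E)^*$ and hence the Laplacians $\Delta_{E,u}^p$ depend on $u$ through $h_u$ only. A standard computation (as in \cite{RS71}, or \cite{Mathai92, Lott99} in the $g$-trace setting) gives
\[
\frac{d}{du}\Delta_{E,u}^p = \bigl[\Delta_{E,u}^p, B_u^p\bigr]
\]
for a suitable $G$-equivariant zeroth-order operator $B_u$ built from $\alpha_u$ (schematically $B_u = -\tfrac12(\text{$\alpha_u$ acting on the $E$ factor})$ conjugated appropriately), so that $\frac{d}{du}e^{-t\Delta_{E,u}^p} = -t\bigl[\Delta_{E,u}^p, B_u^p\bigr]e^{-t\Delta_{E,u}^p} = t\,\frac{d}{dt}\bigl(B_u^p e^{-t\Delta_{E,u}^p}\bigr) - t\bigl[\,\cdot\,\bigr]$; more precisely one gets the familiar identity $\frac{d}{du}\operatorname{Tr}_g(e^{-t\Delta_{E,u}^p}) = t\frac{d}{dt}\operatorname{Tr}_g(B_u^p e^{-t\Delta_{E,u}^p})$ using the trace property of the $g$-trace, Lemma \ref{lem trace prop}, together with Lemma \ref{$g$-trace_der2} to move the $u$-derivative inside $\operatorname{Tr}_g$.

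Next I would assemble these into the variation of $\cT_g(t)$. Using $\ker(\Delta_E)$ being handled by the projections $P_p$ (and the fact that $\frac{d}{du}P_p$ contributes terms that telescope, or vanish under the alternating sum weighted by $p$ — exactly as in the compact case), one obtains
\[
\frac{d}{du}\cT_{g,u}(t) = t\frac{d}{dt}\,\Phi_u(t),\qquad \Phi_u(t) := \sum_{p=0}^{\dim M}(-1)^p p\,\operatorname{Tr}_g\!\bigl(B_u^p(e^{-t\Delta_{E,u}^p}-P_p)\bigr).
\]
Then I differentiate the defining formula \eqref{eq def deloc torsion} in $u$. For the large-$t$ integral $\int_1^\infty t^{-1}\cT_{g,u}(t)\,dt$, substituting $\frac{d}{du}\cT_{g,u}(t)=t\Phi_u'(t)$ and integrating by parts gives $[\Phi_u(t)]_1^\infty - 0 = -\Phi_u(1) + \lim_{t\to\infty}\Phi_u(t)$; the limit at infinity vanishes because of condition (2) in Definition \ref{def g torsion} (the same $\cO(t^{-\alpha_g})$ decay applies to $\operatorname{Tr}_g(B_u^p(e^{-t\Delta_{E,u}^p}-P_p))$ since $B_u^p$ is bounded $G$-equivariant — this uses the decay hypotheses, e.g. via Proposition \ref{prop NS cpt} or the assumption in condition (2)). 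For the small-$t$ term, the Mellin-transform identity of Remark \ref{rem int bdry} together with the $s$-derivative at $s=0$ converts $\frac{d}{du}$ of the meromorphic-continuation term into $+\Phi_u(1)$ plus the "constant term'' coming from the small-$t$ asymptotic expansion of $\Phi_u(t)$; crucially, the $\Phi_u(1)$ contributions from the two pieces cancel, leaving only that constant term, call it $c_0(u)$ — the coefficient of $t^0$ in the asymptotic expansion of $\Phi_u(t)$ as $t\downarrow 0$.

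Finally I would show $c_0(u)=0$. This is the only genuinely content-bearing step, and it is a purely local computation on a compact subset of $G/Z$ (the complement contributes exponentially small terms for small $t$ by Lemma \ref{lem small t large l}): the constant term of $\sum_p (-1)^p p\,\operatorname{Tr}_g(B_u^p e^{-t\Delta_{E,u}^p})$ is, by the local index/heat-kernel machinery, an integral over the fixed-point data of $g$ of a local quantity that is a supertrace of an endomorphism times an odd power of the number operator $F$ — and such alternating sums $\sum_p(-1)^p p(\cdots)$ of the relevant local heat coefficients vanish by the same Hodge-star/Poincar\'e-duality symmetry argument used classically (conjugating by $\star\otimes 1_E$ sends $\Delta^p$ to $\Delta^{\dim M-p}$, replacing $p$ by $\dim M - p$, and since $B_u$ is built from an endomorphism of $E$ alone it is $\star$-equivariant up to lower-order terms that don't affect the $t^0$ coefficient). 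I expect \emph{this vanishing of the constant term} to be the main obstacle to write cleanly: one must check that the $g$-equivariant, possibly non-compact, fixed-point set does not spoil the cancellation, which I would do by reducing via \eqref{eq def Trh} to a local statement on $M$ near the $g$-fixed points and invoking the compact-case computation pointwise (as in \cite{Mathai92, Lott99, LR91}). The equality of the torsions for any two metrics in the convex family then follows by integrating $\frac{d}{du}(-2\log T_g)=0$, and convexity of the space of $G$-invariant $\nabla^E$-compatible Hermitian metrics gives the result for all such metrics, together with the fact that conditions (1)--(3) of Definition \ref{def g torsion} are preserved along the path (the Laplacians for different $h_u$ are conjugate-equivalent via invertible $G$-equivariant bundle isomorphisms, so their heat-kernel estimates and asymptotics are uniform).
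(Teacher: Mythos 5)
Your proposal misses the hypothesis that does all the work, and as a result its pivotal step is unjustified. The paper's own argument is essentially one observation: because every metric in question is preserved by the flat connection, the formal adjoint can be written as $(\nabla^E)^* = (-1)^{F\dim(M)+\dim(M)+1}\star\nabla^E\star$ (with $\star$ as in \eqref{eq Hodge star E}), an expression containing no reference to the Hermitian metric on $E$; hence $\Delta_E$ is \emph{literally the same operator} for all such metrics and there is nothing to vary. Equivalently, along your convex path the endomorphism $\alpha_u=h_u^{-1}\dot h_u$ is $\nabla^E$-parallel, so $\dot\delta_u=[\delta_u,\alpha_u]=0$ and $\dot\Delta_{E,u}=0$. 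You never use parallelism of $\alpha_u$ anywhere in the analysis: you treat it as a general self-adjoint endomorphism and run a Ray--Singer anomaly computation. But without parallelism the statement you are trying to prove is simply false (the torsion of a unitarily flat bundle does change under non-parallel Hermitian metrics -- this is the Bismut--Zhang anomaly), so no argument that ignores this hypothesis at the analytic stage can close. Concretely, your variation formula $\frac{d}{du}\Delta_{E,u}^p=[\Delta_{E,u}^p,B_u^p]$ is not a ``standard computation'' in general: the correct general formula is $\dot\Delta_E=\{d,[\delta,\alpha_u]\}$, and it becomes a commutator (indeed zero) only after invoking $\nabla^E\alpha_u=0$, which you do not do.

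The step you yourself flag as the main obstacle -- vanishing of the $t^0$ coefficient $c_0(u)$ of $\Phi_u(t)$ -- is where this gap surfaces. The Hodge-star duality trick you cite sends $p\mapsto\dim(M)-p$ and turns $\sum_p(-1)^p p(\cdots)$ into $(-1)^{\dim M}\sum_p(-1)^p(\dim(M)-p)(\cdots)$; in even dimensions this yields $\tfrac{\dim M}{2}\sum_p(-1)^p(\cdots)$, which is an Euler-characteristic-type term and not zero in general, while the lemma holds in all dimensions with no such restriction. The parenthetical claim that $B_u$ is ``$\star$-equivariant up to lower-order terms that don't affect the $t^0$ coefficient'' is not an argument, and the localisation of that coefficient to $g$-fixed-point data is exactly the kind of delicate statement the paper only needs (and proves, in lemma \ref{lem t0 term}) for the \emph{Riemannian} metric variation in odd dimensions. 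Likewise, your closing assertion that conditions (1)--(3) of definition \ref{def g torsion} transfer because the Laplacians are ``conjugate-equivalent'' is left unproved, whereas it is immediate once one knows the Laplacian (and hence its heat operator, $L^2$-kernel and projections $P_p$) is unchanged. In short: the correct and short route is to show $\delta$, hence $\Delta_E$, is independent of the parallel Hermitian metric; your anomaly-style route could only be repaired by feeding in parallelism of $\alpha_u$, at which point the entire machinery collapses to the trivial statement $\dot\Delta_{E,u}=0$.
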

\begin{proof}
Fix a Hermitian metric $(\relbar, \relbar)_E$ on $E$. 
Define the operation $\wedge^E\colon \bigl(\Bigwedge^* T^*M \otimes E\bigr) \otimes \bigl(\Bigwedge^* T^*M \otimes E\bigr) \to \Bigwedge^*T^*M$ as the sesquilinear extension of 
\[
(\omega \otimes v) \wedge^E (\nu \otimes w) = (v,w)_E \, \omega \wedge \nu,
\]
for $m \in M$, $\omega, \nu \in \Bigwedge^*T^*_mM$ and $v,w \in E_m$. 
 If $\nabla^E$ preserves $(\relbar, \relbar)_E$, then one has the Leibniz-type relation
\beq{eq Leibniz nabla E}
d(\alpha \wedge^E \beta) = (\nabla^E \alpha) \wedge^E \beta + (-1)^p \alpha \wedge^E (\nabla^E \beta),
\eeq
for all $\alpha \in \Omega^p(M; E)$ and $\beta \in \Omega^*(M; E)$. 

Noting that the left hand side of \eqref{eq Leibniz nabla E} has integral zero over $M$,  and using $\star^2 = (-1)^{F\dim(M) + F}$, one finds that
\[
(\nabla^E)^* =  (-1)^{F \dim(M)  + \dim(M) + 1} \star \nabla^E \star
\]
is  the formal adjoint of $\nabla^E$ with respect to the $L^2$-inner product determined by the Riemannian metric and $(\relbar, \relbar)_E$.
The right hand side does not depend on the Hermitian metric on $E$, hence neither does the left hand side. We find that the Laplacian $\Delta_E$ is the same operator for every Hermitian metric on $E$ that is preserved by $\nabla^E$. This implies the lemma.
%
%For any given Hermitian metric $(\relbar, \relbar)_E$ on $E$, we have the formal adjoint $(\nabla^E)^*$ of $\nabla^E$ with respect to the $L^2$-inner product determined by this metric and the Riemannian metric. If $\nabla^E$ preserves the Hermitian metric, then analogously to the standard formula for the adjoint of the exterior derivative, we have
%\beq{eq adjoint star E}
%(\nabla^E)^* = (-1)^{F \dim(M)  + \dim(M) + 1} \star \nabla^E \star.
%\eeq
%The proof of this equality is based on the Leibniz-type relation
%\beq{eq Leibniz nabla E}
%d(\alpha \wedge^E \beta) = (\nabla^E \alpha) \wedge^E \beta + (-1)^p \alpha \wedge^E (\nabla^E \beta),
%\eeq
%for all $\alpha \in \Omega^p(M; E)$ and $\beta \in \Omega^*(M; E)$, where the operation $\wedge^E$ is the bilinear extension of 
%\[
%\omega \otimes v \wedge^E \nu \otimes w = (v,w)_E \, \omega \wedge \nu,
%\]
%for $m \in M$, $\omega, \nu \in \Bigwedge^*T^*_mM$ and $v,w \in E_m$. 
%
%Then noting that the left hand side of \eqref{eq Leibniz nabla E} has integral zero over $M$,  and using $\star^2 = (-1)^{F\dim(M) + F}$, one obtains \eqref{eq adjoint star E}.
%
%The key point is that the right hand side of \eqref{eq adjoint start E} does not depend
\end{proof}

For the rest of this section, we focus on  independence of equivariant analytic torsion of the Riemannian metric. 
%The result is Corollary \ref{cor metric indep}, which can be seen as a generalisation of the metric independence result obtained by Lott--Rothenberg in \cite{LR91} to the case of 
%proper group actions by non-compact groups on complete manifolds.
\begin{remark}\label{rem rescaling}
If equivariant analytic torsion is expected to be independent of the Riemannian metric, 
then at the very least, it should be unchanged if the Riemannian metric on $M$  is multiplied by a positive constant. Then the adjoint connection $(\nabla^E)^*$ is multiplied by a positive constant $c$, and so is the Laplacian $\Delta_E$. If $\ker(\Delta_E) = \{0\}$, then it follows that the function \eqref{eq curly T} for the rescaled metric is $t\mapsto \cT_g(ct)$, where $\cT_g$ is the function  \eqref{eq curly T} for the initial metric (here we also use a special case of proposition \ref{$g$-trace_der1}). We then find
\[
\begin{split}
 \left. \frac{d}{ds}\right|_{s=0} \frac{1}{\Gamma(s)} \int_0^{
1} t^{s-1} \cT_g(ct) \, dt
&=  \left. \frac{d}{ds}\right|_{s=0} \frac{1}{\Gamma(s)} \int_0^{
c} t^{s-1} \cT_g(t) \, dt; 
\\
\int_1^{
\infty} t^{-1}  \cT_g(ct) \, dt &= \int_c^{
\infty} t^{-1}  \cT_g(t) \, dt. 
\end{split}
\]
By remark \ref{rem int bdry}, these equalities imply that equivariant analytic torsion for the rescaled metric equals equivariant analytic torsion for the original metric. 
\end{remark}
%
%\Todo: should we look at variations of the Hermitian metric on $E$ as well? Lott--Rothenberg probably take $E$ trivial?

\subsection{Derivatives of equivariant analytic torsion}\label{sec der torsion}

%In the rest oif this section, we will prove the metric independence result, Corollary \ref{cor metric indep}. Our proof follows the
%strategy of Lott-Rothenberg in \cite{LR91}. 
%For this subsection we will be assuming that $G/Z$ is compact.

We start the proof of our metric independence results by defining some operators that will allow us to rewrite the definition
of the equivariant analytic torsion, see equation \eqref{eq def deloc torsion}, 
in a compact form.

%Let $\mathcal{H}^p(M,E)$ denote the Hilbert space of $L^2$ $E$-valued $p$-forms. 
%%Let
%%$F$ denote the operator on $\oplus_p^{\dim(M)}\mathcal{H}^p(M,E)$, which is multiplication by $p$ on $\mathcal{H}^p(M,E)$, and let $(-1)^F$ denote the operator
%%which is multiplication by $(-1)^p$ on $\mathcal{H}^p(M,E)$. 
%Let 
%$e^{-t\Delta_E} - P$ denote the operator on $\oplus_p^{\dim(M)}\mathcal{H}^p(M,E)$ 
%defined by $e^{-t\Delta_E^p} - P_p$ acting on $\mathcal{H}^p(M,E)$.

We will assume that $(\lambda_{\epsilon})_{\epsilon \in \R}$ is a 1-parameter 
family of $G$-invariant Riemannian metrics on $M$. Furthermore, we assume that
\begin{itemize}
\item there is a \v{S}varc--Milnor function for the action of $G$ with respect to $g$;
% \item
% $\alpha_e^p>0$ for all $p$ and all $\epsilon$; (\Todo: this is not needed)
 \item the projection $P$ is $g$-trace class (this condition is independent of the $G$-invariant Riemannian metric; see the proof of lemma \ref{form1}).
\end{itemize}
Then conditions (1) and (3) in definition \ref{def g torsion} hold by proposition \ref{prop conv small t}. We assume that condition (2) holds as well, for all $\epsilon$. By  proposition \ref{prop NS cpt}, this is the case if $F_1(t)F_3(t) = \cO(t^{-\alpha})$ for an $\alpha>0$, which we assume in theorem \ref{metric_indep_cconj} and corollary \ref{cor metric indep}. Then $T_g(\nabla^E)$ is well-defined for all $\epsilon$.

We make further assumptions below, see theorem \ref{metric_indep_cconj} for the full set of assumptions.

Using the  number operator $F$ on $\Bigwedge^* T^*M \otimes E$,
we can  express the equivariant analytic torsion, see 
equation \eqref{eq def deloc torsion}, in the following form
\begin{align}\label{eqtor_compact}
-2\log T_g(\nabla^E) &= \frac{d}{ds}\bigg{\vert}_{s=0}\frac{1}{\Gamma(s)}
\int_0^1t^{s-1}\Tr_g\bigg{(}
(-1)^FF\big{(}e^{-t\Delta_E} - P \big{)}
\bigg{)}dt \nonumber \\ 
&\hspace{1cm}+
\int_1^{\infty}t^{-1}
\Tr_g\bigg{(}
(-1)^FF\big{(}e^{-t\Delta_E} - P \big{)}
\bigg{)}dt.
\end{align}

Noting that the Hodge star operator depends on the metric, we let 
$V = \bigg{(}\frac{d}{d\epsilon}\star\bigg{)}\star^{-1}$. To simplify notation, we write $d:= \nabla^E$ and $\delta := (\nabla^E)^*$, where the adjoint is defined with respect to the metric depending on $\epsilon$.
The $d$ operator is defined
independent of the metric, hence $\frac{d}{d\epsilon}d = 0$. However, 
\begin{align}
\frac{d}{d\epsilon}\delta &= \frac{d}{d\epsilon}(\pm\star d\star) = [V, \delta]; \quad \text{so that} \nonumber \\ 
\frac{d}{d\epsilon}\Delta_E &= \{d, [V,\delta]\}, \label{der_ops_2}
\end{align}
where $\{\relbar, \relbar\}$ denotes anticommutation. 

We will need the following commutation formulas, which are straightforward computations: 
\begin{equation}\label{commute_ops}
[F, d] = d  \quad \text{and} \quad  [F,\delta] = -\delta.
\end{equation}

As there is a \v{S}varc--Milnor function for the action by $G$, with respect to $g$,  example \ref{ex kappa t heat op} and lemma \ref{lem kappa t g tr cl} imply that the composition of $e^{-t\Delta_E}$ with a $G$-equivariant differential operator is $g$-trace class. We will use this without mentioning it explicitly in the rest of this section.

% (\Todo: conditions for $P$ to be $g$ trace class? And compositions of $P$ with vb endom/diff ops?)
%
%\Todo: (important!) show  that the integrals over $t$ converge. Does the latter follow from the proofs of the various lemmas? (If $G/Z$ is compact, then all these operators are trivially $g$-trace class.)

\begin{lemma} \label{lem anticomm} %% Version of steps from (2.24) to (2.27) in LR
For all $t>0$ and all $\epsilon$, 
\beq{eq anticomm}
\Tr_g\bigg{(}
(-1)^FF\{d, [V,\delta]\}e^{-t\Delta_E}
\bigg{)} =
-\Tr_g\bigl{(}V(-1)^F\Delta_E e^{-t\Delta_E}\bigr{)}.
\eeq
\end{lemma}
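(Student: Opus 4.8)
The plan is to start from the left-hand side of \eqref{eq anticomm} and push the operators $F$, $d$, $\delta$, $V$ around using the trace property of the $g$-trace (lemma \ref{lem trace prop}) together with the commutation relations \eqref{commute_ops} and the fact that $\Delta_E = \{d,\delta\}$ commutes with $e^{-t\Delta_E}$. Concretely, I would first expand the anticommutator $\{d,[V,\delta]\} = d[V,\delta] + [V,\delta]d$ and split the trace into two terms. In each term I would move one factor of $d$ or $\delta$ around the $g$-trace cyclically, using that each of the operators involved is the composition of a $G$-equivariant differential operator with $e^{-t\Delta_E}$ (up to the bounded factor $V$ and the projector $P$ absent here), hence $g$-trace class by example \ref{ex kappa t heat op} and lemma \ref{lem kappa t g tr cl}, and that the products appearing are $g$-trace class as well — this is what licenses lemma \ref{lem trace prop}.

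The key algebraic identity to exploit is that for any operator $A$ built from $d,\delta,V$ one has, after moving things cyclically, expressions of the form $[F,d] = d$ and $[F,\delta]=-\delta$ producing the sign bookkeeping with $(-1)^F$: since $(-1)^F d = -d(-1)^F$ and $(-1)^F\delta = -\delta (-1)^F$, the graded operators $d,\delta$ anticommute with $(-1)^F$, and $F$ shifts degree appropriately. The computation then collapses: using $d^2 = 0 = \delta^2$ and $\Delta_E = d\delta + \delta d$, the combination $\Tr_g((-1)^F F\{d,[V,\delta]\}e^{-t\Delta_E})$ reorganizes, via repeated cyclic moves and the commutators \eqref{commute_ops}, into $-\Tr_g(V(-1)^F \Delta_E e^{-t\Delta_E})$. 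A clean way to organize this is to first establish the auxiliary fact that for graded-commuting purposes $\Tr_g((-1)^F [d, B] e^{-t\Delta_E}) = 0$ for any $g$-trace class graded operator $B$ commuting appropriately with the heat operator (a graded/supertrace vanishing statement following from cyclicity plus $d$ anticommuting with $(-1)^F$), and then write $\{d,[V,\delta]\}$ modulo such supercommutators as something proportional to $V\Delta_E$ after using $[F,\cdot]$ to absorb the factor $F$.

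The main obstacle I anticipate is the bookkeeping of signs coming from $(-1)^F$ and the factor $F$ simultaneously: the factor $F$ does not simply commute past $d$ or $\delta$ (it shifts by $\pm1$ via \eqref{commute_ops}), so the cyclic rearrangements generate extra terms $\pm d$, $\pm\delta$ that must be tracked and shown to cancel or to reassemble into $\Delta_E$. A secondary technical point is justifying each application of lemma \ref{lem trace prop}: I must check that every intermediate product (e.g. $V\delta d e^{-t\Delta_E}$, $d V \delta e^{-t\Delta_E}$, and their cyclic permutations) is $g$-trace class, which follows because $e^{-t\Delta_E}$ composed with a $G$-equivariant differential operator is $g$-trace class and $V$ is a bounded $G$-equivariant bundle endomorphism, so composing further with bounded operators preserves the $g$-trace class property via \eqref{eq Trg Tr}. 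Once these two points are handled the identity \eqref{eq anticomm} drops out directly.
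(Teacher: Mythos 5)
Your plan is essentially the paper's proof: expand $\{d,[V,\delta]\}$ into four terms, cyclically permute under $\Tr_g$ via lemma \ref{lem trace prop} (justified because compositions of $G$-equivariant differential operators with $e^{-t\Delta_E}$ are $g$-trace class), and then use the anticommutation of $(-1)^F$ with $d,\delta$ together with $[F,d]=d$, $[F,\delta]=-\delta$ to reassemble $-V(-1)^F\Delta_E$. The only difference is that you defer the explicit four-term sign bookkeeping (which the paper carries out directly, without needing $d^2=\delta^2=0$ or your auxiliary supercommutator-vanishing lemma), but the ingredients you list are exactly the ones that make it work.
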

\begin{proof}
Expanding the bracket terms in $\Tr_g\bigg{(}
(-1)^FF\{d, [V,\delta]\}e^{-t\Delta_E}
\bigg{)}$,  we obtain
\begin{multline*}
\Tr_g\bigg{(}
(-1)^FF\{d, [V,\delta]\}e^{-t\Delta_E}
\bigg{)} =
\Tr_g\bigg{(}
(-1)^FFd[V,\delta]e^{-t\Delta_E} + (-1)^FF[V,\delta]de^{-t\Delta_E}
\bigg{)} \\
=
\Tr_g\bigg{(}
(-1)^FFdV\delta e^{-t\Delta_E}- (-1)^FFd\delta Ve^{-t\Delta_E} + 
(-1)^FFV\delta d e^{-t\Delta_E} 
- (-1)^FF\delta Vd e^{-t\Delta_E}
\bigg{)}.
\end{multline*}

The operators $d$, $\delta$, $V$ all commute with $\Delta_E$, hence they commute with
$e^{-t\Delta_E}$. The $g$-trace is invariant under cyclic permutations of the operators occurring here, see lemma \ref{lem trace prop}. This implies we can write the above as
\begin{align*}
\Tr_g\bigg{(}
(-1)^FF\{d, [V,\delta]\}e^{-t\Delta_E}
\bigg{)} &=
\Tr_g\bigg{(}
V\Big{(}\delta(-1)^FFd + \delta d(-1)^FF - (-1)^FFd\delta - d(-1)^FF\delta 
\Big{)}e^{-t\Delta_E}
\bigg{)}.
\end{align*}

Using the fact that $(-1)^F$ anticommutes with $d$ and $\delta$, we can simplify  the above equation to 
\begin{equation*}
\Tr_g\bigg{(}
(-1)^FF\{d, [V,\delta]\}e^{-t\Delta_E}
\bigg{)} =
\Tr_g\bigg{(}
V(-1)^F\big{(}\delta [d,F] + [d,F]\delta 
\big{)}e^{-t\Delta_E}
\bigg{)}.
\end{equation*}
By the first equality in \eqref{commute_ops} the above becomes \eqref{eq anticomm}.
%\begin{equation*}
%\Tr_g\bigg{(}
%(-1)^FF\{d, [V,\delta]\}e^{-t\Delta_E}
%\bigg{)} =
%-\Tr_g\bigg{(}V(-1)^F\Delta_E\big{(} e^{-t\Delta_E} - P\big{)}\bigg{)}.
%\end{equation*}
\end{proof}

For the rest of this section, let $F_1$ be such that \eqref{eq decay kappa gen} holds for all $\epsilon$, where $\kappa_t$ is the kernel of $e^{-t\Delta_E}$. Let $F_3$ be the function \eqref{eq def F3}, for $a$ such that \eqref{eq Gaussian F3} holds for all 
$\epsilon$. 
\begin{lemma}\label{form1} %%% version of (2.24) in LR
Suppose that $\lim_{t \to \infty} F_1(t)F_3(t)=0$.
Then
\begin{multline}\label{eq form1}
\frac{d}{d\epsilon}\bigg{(}-2\log T_g(\nabla^E)\bigg{)} =
\frac{d}{ds}\bigg{\vert}_{s=0}\frac{1}{\Gamma(s)}
\int_0^1t^s
\Tr_g\bigg{(}
V(-1)^F\Delta_E\big{(}e^{-t\Delta_E}\big{)}
\bigg{)}\, dt \\
%&\hspace{1cm}
+ \Tr_g \bigl(V
(-1)^F   e^{-\Delta_E} \bigr).
\end{multline}
%In particular, the expressions on the right are well-defined.
\end{lemma}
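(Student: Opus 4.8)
\emph{The approach.} The plan is to differentiate the compact form \eqref{eqtor_compact} of the equivariant analytic torsion in $\epsilon$, to move $\frac{d}{d\epsilon}$ past the $g$-trace and past the two integral operations, and then to evaluate the $\epsilon$-derivative of the heat $g$-traces. For the last step, note first that the number operator $F$, hence $(-1)^F F$, comes from the $\Z$-grading of $\Bigwedge^* T^*M\otimes E$ and is independent of $\epsilon$, so lemma \ref{$g$-trace_der2} gives $\frac{d}{d\epsilon}\Tr_g\bigl((-1)^F F(e^{-t\Delta_E}-P)\bigr)=\Tr_g\bigl((-1)^F F(\frac{d}{d\epsilon}e^{-t\Delta_E}-\frac{d}{d\epsilon}P)\bigr)$ for fixed $t$. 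By Duhamel's formula $\frac{d}{d\epsilon}e^{-t\Delta_E}=-\int_0^t e^{-(t-u)\Delta_E}\,(\frac{d}{d\epsilon}\Delta_E)\,e^{-u\Delta_E}\,du$, with $\frac{d}{d\epsilon}\Delta_E=\{d,[V,\delta]\}$ by \eqref{der_ops_2}; since $e^{-s\Delta_E}$ commutes with $(-1)^F F$ and, as in the proof of lemma \ref{lem anticomm}, with $d$, $\delta$ and $V$, the trace property of the $g$-trace (lemma \ref{lem trace prop}) makes the integrand $u$-independent, so $\frac{d}{d\epsilon}\Tr_g\bigl((-1)^F F e^{-t\Delta_E}\bigr)=-t\,\Tr_g\bigl((-1)^F F\{d,[V,\delta]\}e^{-t\Delta_E}\bigr)$, and lemma \ref{lem anticomm} rewrites this as $t\,\Tr_g\bigl(V(-1)^F\Delta_E e^{-t\Delta_E}\bigr)$. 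The $\frac{d}{d\epsilon}P$ term drops out: $P^2=P$ gives $P(\frac{d}{d\epsilon}P)P=0$, and $\frac{d}{d\epsilon}P$ preserves the degree, so cyclicity of the $g$-trace yields $\Tr_g\bigl((-1)^F F\frac{d}{d\epsilon}P\bigr)=\Tr_g\bigl((-1)^F F(\frac{d}{d\epsilon}P)P\bigr)+\Tr_g\bigl((-1)^F F P(\frac{d}{d\epsilon}P)\bigr)=0$, which at the same time shows that the condition ``$P$ is $g$-trace class'' is preserved along the path. Thus $\frac{d}{d\epsilon}\cT_g(t)=t\,\Tr_g\bigl(V(-1)^F\Delta_E e^{-t\Delta_E}\bigr)$.

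\emph{Reassembly.} Granting that $\frac{d}{d\epsilon}$ can be pulled inside $\dds\frac{1}{\Gamma(s)}\int_0^1$ and inside $\int_1^\infty t^{-1}(\cdot)\,dt$, I substitute this into the $\epsilon$-derivative of \eqref{eqtor_compact}. In the small-$t$ term the factor $t$ turns $t^{s-1}$ into $t^s$, giving exactly the first term on the right of \eqref{eq form1}. In the large-$t$ term it cancels the $t^{-1}$, leaving $\int_1^\infty\Tr_g\bigl(V(-1)^F\Delta_E e^{-t\Delta_E}\bigr)\,dt$; using $\Delta_E e^{-t\Delta_E}=-\frac{d}{dt}e^{-t\Delta_E}$ and integrating gives $\Tr_g\bigl(V(-1)^F e^{-\Delta_E}\bigr)-\lim_{t\to\infty}\Tr_g\bigl(V(-1)^F e^{-t\Delta_E}\bigr)$. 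The limit is zero: in the situations where condition (2) of definition \ref{def g torsion} is actually available (via proposition \ref{prop NS cpt}, and in theorem \ref{metric_indep_cconj}) one has $\ker(\Delta_E)=\{0\}$, so $P=0$ and $\Tr_g\bigl(V(-1)^F e^{-t\Delta_E}\bigr)=\Tr_g\bigl(V(-1)^F(e^{-t\Delta_E}-P)\bigr)\to0$ by the large-$t$ bounds of section \ref{sec large t conv} applied to the bounded $G$-equivariant endomorphism $V(-1)^F$; when $P\neq0$ one argues in addition that $\Tr_g\bigl(V(-1)^F P\bigr)=0$, using the symmetry of the Hodge star under Poincar\'e duality. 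This produces \eqref{eq form1}.

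\emph{Main obstacle.} The substantive work is in justifying the two interchanges of $\frac{d}{d\epsilon}$ with the limit operations. Near $t=0$ this rests on the heat-kernel asymptotic expansion of example \ref{ex kappa t heat op} and lemma \ref{lem small t small l}, applied now to $\Delta_E e^{-t\Delta_E}$ (which admits such an expansion by example \ref{ex kappa t heat op}) and to its $\epsilon$-derivative, with the expansion holding uniformly on compact subsets. For large $t$ one needs the Gaussian off-diagonal estimates and \v{S}varc--Milnor bounds of section \ref{sec conv} to hold locally uniformly in $\epsilon$ --- so that the constants in \eqref{eq decay kappa gen} and in lemma \ref{lem Trg Xcompl} can be chosen uniformly near each parameter value (cf.\ lemma \ref{lem quasi-isom}) --- after which the hypothesis $\lim_{t\to\infty}F_1(t)F_3(t)=0$ together with condition (2) controls the tail of the integrand $\Tr_g\bigl(V(-1)^F\Delta_E e^{-t\Delta_E}\bigr)$, via the factorization $\Delta_E e^{-t\Delta_E}=(\Delta_E e^{-\Delta_E/2})\,e^{-(t-1/2)\Delta_E}$ and an estimate of the kind in proposition \ref{prop GZ cpt}, well enough to pass the limit through. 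Keeping track of these uniformities is what has to be done carefully; the algebraic identities are then essentially bookkeeping.
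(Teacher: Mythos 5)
Your proposal follows essentially the same route as the paper: differentiate the compact form \eqref{eqtor_compact}, pass $\frac{d}{d\epsilon}$ inside (justified by the small-$t$ lemmas of section \ref{sec conv}), use $\frac{d}{d\epsilon}\Delta_E=\{d,[V,\delta]\}$ together with lemma \ref{lem anticomm} to produce $\Tr_g\bigl(V(-1)^F\Delta_E e^{-t\Delta_E}\bigr)$, integrate in $t$ over $[1,\infty)$ via $\Delta_E e^{-t\Delta_E}=-\frac{d}{dt}e^{-t\Delta_E}$, and kill the $t\to\infty$ boundary term using $\Tr_g\bigl(V(-1)^Fe^{-t\Delta_E}\bigr)=\cO(F_1(t)F_3(t))\to0$; your explicit Duhamel step is only a more verbose version of the paper's direct differentiation of the heat trace. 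The one place you genuinely diverge is the $P$-term: you differentiate $P$ in $\epsilon$ and dispose of $\Tr_g\bigl((-1)^FF\frac{dP}{d\epsilon}\bigr)$ by the identity $P\frac{dP}{d\epsilon}P=0$ plus cyclicity, whereas the paper argues that $P$ does not vary along the path at all (the $L^2$-kernel is governed by the quasi-isometry class of the metric, lemma \ref{lem quasi-isom}, and in the intended application $\ker(\Delta_E)=\{0\}$, so $P=0$). Your variant silently assumes that $\epsilon\mapsto P_\epsilon$ is differentiable with $\frac{dP}{d\epsilon}$ admissible for the $g$-trace and its cyclicity; since $P$ is an $L^2$-projection of possibly infinite rank and $0$ need not be isolated in the spectrum, this is not automatic, and it also does not, as you claim, show that $g$-trace-classness of $P$ is preserved along the path (that is a standing assumption of the section). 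Finally, your aside that $\Tr_g\bigl(V(-1)^FP\bigr)=0$ ``by Poincar\'e duality'' is not something the paper establishes — it is taken as a hypothesis in lemma \ref{form2} — so you should either drop it or prove it; it is not needed for \eqref{eq form1}, since the boundary term at $t\to\infty$ is controlled directly by the assumed Gaussian bound with $F_1(t)F_3(t)\to0$, without any hypothesis on $P$.
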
 
\begin{proof}
%The projection $P$ is $g$-trace class by Lemma 6.6 in \cite{HWW}. 
By \eqref{eqtor_compact},  
\begin{multline} \label{eq form1 3}
\frac{d}{d\epsilon}\bigg{(}-2\log T_g(\nabla^E)\bigg{)} =
\frac{d}{d\epsilon}
\frac{d}{ds}\bigg{\vert}_{s=0}\frac{1}{\Gamma(s)}
\int_0^1t^{s-1}
\Tr_g\bigl(
(-1)^FF(e^{-t\Delta_E} - 
P)\bigr) 
\, dt \\ 
+ \frac{d}{d\epsilon}
\int_1^{\infty}t^{-1}
\Tr_g \bigl(
(-1)^FF(e^{-t\Delta_E} -
P) \bigr)
\, dt.
\end{multline}

We first consider the first term on the right hand side. The space $\ker(\Delta_E)$ equals the $L^2$-cohomology of $M$ with values in $E$, and these 
cohomology groups depend on the quasi-isometry class of the metric, see proposition 3.1  in \cite{carron}. By lemma \ref{lem quasi-isom}, 
%we know that any two $G$-invariant metrics are quasi- isometric. Hence 
the metrics $\lambda_{\epsilon}$ are all quasi-isometric. Therefore, the projection $P$ does not depend on $\epsilon$. Together with \eqref{der_ops_2}, this implies that, 
\[
\frac{d}{d\epsilon}\bigl( 
(-1)^FF(e^{-t\Delta_E}-P) \bigr) = -t (-1)^FF\{d, [V,\delta]\}
e^{-t\Delta_E}.
\]
The operator on the right is $g$-trace class by lemma \ref{lem kappa t g tr cl} and example \ref{ex kappa t heat op}. 
So by lemma \ref{$g$-trace_der2},
\[
-t \Tr_g\left( (-1)^FF\{d, [V,\delta]\} \right) = \frac{d}{d\epsilon} \Tr_g \bigl( 
(-1)^FF(e^{-t\Delta_E}-P) \bigr).
\]
Combining this with lemmas  \ref{lem small t large l} and \ref{lem small t small l} and example \ref{ex kappa t heat op}, we find that if $s$ has large real part, 
\beq{eq der int eps 1}
%\begin{split}
-
\int_0^1t^{s}
\Tr_g\bigg{(}
(-1)^FF\{d, [V,\delta]\}
e^{-t\Delta_E}
\bigg{)} dt = 
\int_0^1t^{s-1}
\frac{d}{d\epsilon}\Tr_g\bigl( 
(-1)^FF(e^{-t\Delta_E}-P) \bigr)\, dt.
% \\
%&=
%\frac{d}{d\epsilon}
%\frac{d}{ds}\bigg{\vert}_{s=0}\frac{1}{\Gamma(s)}
%\int_0^1t^{s-1}
%\Tr_g\bigl( 
%(-1)^FF(e^{-t\Delta_E}-P) \bigr)\, dt.
% 
%&=
%\frac{d}{ds}\bigg{\vert}_{s=0}\frac{1}{\Gamma(s)}
%\int_0^1t^{s-1}
%\frac{d}{d\epsilon}\Tr_g\bigl( 
%(-1)^FF(e^{-t\Delta_E}-P) \bigr)\, dt \\
%&=
%\frac{d}{ds}\bigg{\vert}_{s=0}\frac{1}{\Gamma(s)}
%\int_0^1t^{s-1}
%\Tr_g\bigg{(}
%(-1)^FF(-t)\frac{\partial\Delta_E}{\partial \epsilon}
%e^{-t\Delta_E}
%\bigg{)} dt \\
%&=
%-\frac{d}{ds}\bigg{\vert}_{s=0}\frac{1}{\Gamma(s)}
%\int_0^1t^{s}
%\Tr_g\bigg{(}
%(-1)^FF\{d, [V,\delta]\}
%e^{-t\Delta_E}
%\bigg{)} dt
%\end{split}
\eeq
Another application of lemma \ref{$g$-trace_der2} shows that $\frac{d}{d\epsilon}\Tr_g\bigl( 
(-1)^FF(e^{-t\Delta_E}-P) \bigr)$ depends differentiably, hence continuously,  on $\varepsilon$. So the derivative with respect to $\varepsilon$ may be taken outside the integral on the right hand side of \eqref{eq der int eps 1}, 
and we find that
\beq{eq der int eps 2}
\frac{d}{d\epsilon}
\frac{d}{ds}\bigg{\vert}_{s=0}\frac{1}{\Gamma(s)}
\int_0^1t^{s-1}
\Tr_g\bigl( 
(-1)^FF(e^{-t\Delta_E}-P) \bigr)\, dt  = 
-\frac{d}{ds}\bigg{\vert}_{s=0}\frac{1}{\Gamma(s)}
\int_0^1t^{s}
\Tr_g\bigg{(}
(-1)^FF\{d, [V,\delta]\}
e^{-t\Delta_E}
\bigg{)} dt.
\eeq
%
%%%%%%%%%%%%
%
%
%if we can take the derivative inside the integral, then we can use lemma \ref{$g$-trace_der2} to obtain
%\beq{eq form1 2}
%\begin{split}
%\frac{d}{d\epsilon}
%\frac{d}{ds}\bigg{\vert}_{s=0}\frac{1}{\Gamma(s)}
%\int_0^1t^{s-1}
%\Tr_g\bigl( 
%(-1)^FF(e^{-t\Delta_E}-P) \bigr)\, dt 
%&=
%\frac{d}{ds}\bigg{\vert}_{s=0}\frac{1}{\Gamma(s)}
%\int_0^1t^{s-1}
%\frac{d}{d\epsilon}\Tr_g\bigl( 
%(-1)^FF(e^{-t\Delta_E}-P) \bigr)\, dt \\
%&=
%\frac{d}{ds}\bigg{\vert}_{s=0}\frac{1}{\Gamma(s)}
%\int_0^1t^{s-1}
%\Tr_g\bigg{(}
%(-1)^FF(-t)\frac{\partial\Delta_E}{\partial \epsilon}
%e^{-t\Delta_E}
%\bigg{)} dt \\
%&=
%-\frac{d}{ds}\bigg{\vert}_{s=0}\frac{1}{\Gamma(s)}
%\int_0^1t^{s}
%\Tr_g\bigg{(}
%(-1)^FF\{d, [V,\delta]\}
%e^{-t\Delta_E}
%\bigg{)} dt
%\end{split}
%\eeq
%by  \eqref{der_ops_2}. The latter expression is well-defined by lemmas  \ref{lem small t large l} and \ref{lem small t small l} and example \ref{ex kappa t heat op}; this also implies that we can indeed take the derivative inside the integral. 
By lemma \ref{lem anticomm}, we conclude that the first term on the right hand side of \eqref{eq form1 3} equals the first term on the right hand side of \eqref{eq form1}.
 
Next, we consider the second term on the right hand side of \eqref{eq form1 3}. By a computation similar to 
the one leading to \eqref{eq der int eps 2}, 
%\eqref{eq form1 2}, 
 \[
 \frac{d}{d\epsilon}
\int_1^{\infty}t^{-1}
\Tr_g \bigl(
(-1)^FF(e^{-t\Delta_E} -
P) \bigr)
\, dt =- \int_1^{\infty}
\Tr_g \bigl(
(-1)^F F \{d, [V,\delta]\} (e^{-t\Delta_E}) 
\, dt.
 \]
 By lemma \ref{lem anticomm}, the right hand side equals
 \begin{align}
 \int_1^{\infty}
\Tr_g \bigl(V
(-1)^F  \Delta_E e^{-t\Delta_E} \bigr)\, dt &=  -\int_1^{\infty} \frac{d}{dt}
\Tr_g \bigl(V
(-1)^F   e^{-t\Delta_E} \bigr) \, dt \nonumber \\
&= -\lim_{t \to \infty} \Tr_g \bigl(V
(-1)^F   e^{-t\Delta_E} \bigr) + \Tr_g \bigl(V
(-1)^F   e^{-\Delta_E} \bigr). \label{eq form1 4}
\end{align}

Analogous to the proof of proposition \ref{prop NS cpt}, we  find  that
 \[
\Tr_g \bigl(V
(-1)^F   e^{-t\Delta_E} \bigr) = \cO(F_1(t) F_3(t))
 \]
as $ t\to \infty$. So the first term in \eqref{eq form1 4} is zero. We conclude that the second term on the right hand side of \eqref{eq form1 3} equals the second term on the right hand side of \eqref{eq form1}.
\end{proof}

%Recall the notation \eqref{eq def TrgY}.

%\begin{lemma} \label{lem lim C}
%%Suppose that $\alpha_e^p>0$ for all $p$. 
%If $\Tr_g(V (-1)^F P)=0$ and $\lim_{t \to \infty} F_1(t) F)3(t) = 0$, 
%then
%\[
%\lim_{t\rightarrow \infty} 
%\Tr_g\big{(}
%V(-1)^F e^{-t\Delta_E}
%\big{)}
% = 0.
%\]
%\end{lemma}
%\begin{proof}
%Let $X \subset G/Z$ be as in Lemma \ref{lem Trg Xcompl}. Then
%\[
%|\Tr_g\big{(}
%V(-1)^F(e^{-t\Delta_E}-P)
%\big{)}| \leq |\Tr_g^{X}\big{(}
%V(-1)^F(e^{-t\Delta_E}-P)
%\big{)}| + |\Tr_g^{\Xcompl}\big{(}
%V(-1)^F(e^{-t\Delta_E})
%\big{)}|.
%\]
%In the first term,  we used that $\Tr_g(V (-1)^F P)=0$. This term  goes to zero as $t \to \infty$ by Proposition \ref{prop GZ cpt} and positivity of $\alpha_e^p$.
%The second term  goes to zero as $t \to \infty$ by Lemmas \ref{lem Trg Xcompl} and \ref{lem pol growth} and the fact that one may take $a_2 = \dim(M)/2$ for heat operators.
%\end{proof}

\begin{lemma}\label{form2} %%% version of (2.36) in LR
%%%%
Suppose that $\lim_{t \to \infty}F_1(t) F_3(t) = 0$.
%%%%%
 If $\Tr_g(V (-1)^F P)=0$, 
then
\beq{eq form2}
\frac{d}{d\epsilon}\bigg{(}-2\log T_g(\nabla^E)\bigg{)} =
\frac{d}{ds}\bigg{\vert}_{s=0}\frac{s}{\Gamma(s)}
\int_0^1t^{s-1}\Tr_g\big{(}V(-1)^Fe^{-t\Delta_E}\big{)}\, dt.
\eeq
\end{lemma}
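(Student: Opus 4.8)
The plan is to obtain \eqref{eq form2} from \eqref{eq form1} by an integration by parts in the variable $t$ in the first term on the right hand side of \eqref{eq form1}, exploiting the elementary facts $1/\Gamma(0)=0$ and $\dds \frac{1}{\Gamma(s)} = 1$.

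First I would set $h(t) := \Tr_g\bigl(V(-1)^F(e^{-t\Delta_E}-P)\bigr)$. Since $\Delta_E P = 0$, differentiating under the integral signs as in the proof of lemma \ref{form1} (using lemma \ref{$g$-trace_der2}) gives $h'(t) = -\Tr_g\bigl(V(-1)^F\Delta_E e^{-t\Delta_E}\bigr)$, so the first term on the right in \eqref{eq form1} equals $\dds \frac{1}{\Gamma(s)}\int_0^1 t^s\bigl(-h'(t)\bigr)\, dt$. Working first with $s \in \C$ of sufficiently large real part, where all the integrals below converge absolutely, the small-$t$ asymptotic expansion of $h$ inherited from the heat kernel expansion (via the arguments behind lemmas \ref{lem small t large l} and \ref{lem small t small l}, applied to the operators in example \ref{ex kappa t heat op}) ensures $t^s h(t) \to 0$ as $t \downarrow 0$; integration by parts then gives
\[
\int_0^1 t^s\bigl(-h'(t)\bigr)\, dt = -h(1) + s\int_0^1 t^{s-1} h(t)\, dt .
\]
By the hypothesis $\Tr_g(V(-1)^F P) = 0$ we have $h(1) = \Tr_g\bigl(V(-1)^F e^{-\Delta_E}\bigr)$. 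Dividing by $\Gamma(s)$ and passing to the meromorphic continuation --- which is regular at $s=0$ on the left by the same argument already used in the proof of lemma \ref{form1}, and on the right because $\int_0^1 t^{s-1} h(t)\, dt$ has at most a simple pole at $s=0$ while $s/\Gamma(s)$ has a double zero there --- I would differentiate at $s=0$. Since $1/\Gamma(0)=0$ and $\dds \frac{1}{\Gamma(s)} = 1$, the boundary term $-h(1)/\Gamma(s)$ contributes exactly $-\Tr_g\bigl(V(-1)^F e^{-\Delta_E}\bigr)$, which cancels the second term on the right hand side of \eqref{eq form1}. What remains is
\[
\frac{d}{d\epsilon}\bigl(-2\log T_g(\nabla^E)\bigr) = \dds \frac{s}{\Gamma(s)}\int_0^1 t^{s-1} h(t)\, dt .
\]

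To conclude I would replace $h(t)$ by $\Tr_g\bigl(V(-1)^F e^{-t\Delta_E}\bigr)$. These differ by the constant $\Tr_g(V(-1)^F P)$, and since $\frac{s}{\Gamma(s)}\int_0^1 t^{s-1}\, dt = 1/\Gamma(s)$, differentiating at $s=0$ shows that this change alters the right hand side by $\Tr_g(V(-1)^F P) = 0$. This yields \eqref{eq form2}. The main obstacle here is one of bookkeeping rather than of substance: the integration by parts and all absolute-convergence claims must be established for $\Real(s)$ large \emph{before} passing to the meromorphic continuation and evaluating derivatives at $s=0$, and the projection $P$ must be tracked carefully so that the hypothesis $\Tr_g(V(-1)^F P) = 0$ is invoked precisely where $e^{-t\Delta_E}$ and $e^{-t\Delta_E}-P$ are interchanged --- namely in identifying the boundary value $h(1)$ with $\Tr_g\bigl(V(-1)^F e^{-\Delta_E}\bigr)$ and in the final substitution.
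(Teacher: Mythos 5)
Your argument is correct and follows essentially the same route as the paper's proof: the integration by parts in $t$ you perform is exactly the paper's product-rule identity $t^s \Tr_g(V(-1)^F\Delta_E e^{-t\Delta_E}) = -\frac{d}{dt}\bigl(t^s \Tr_g(V(-1)^F e^{-t\Delta_E})\bigr) + s t^{s-1}\Tr_g(V(-1)^F e^{-t\Delta_E})$, with the boundary value at $t=1$ cancelling the second term of \eqref{eq form1} and the $t\to 0^+$ contribution vanishing for $\Real(s)$ large before meromorphic continuation. The only cosmetic difference is that you carry $e^{-t\Delta_E}-P$ through and invoke the hypothesis $\Tr_g(V(-1)^F P)=0$ explicitly at the two substitution points, whereas the paper works directly with $e^{-t\Delta_E}$; both bookkeeping choices are fine.
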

\begin{proof}
%We use the equality \eqref{eq form1} from
%Lemma \ref{form1}.
% we have the formula
%\begin{align*}
%\frac{d}{d\epsilon}\bigg{(}-2\log T_g(\nabla^E)\bigg{)} &=
%-\frac{d}{ds}\bigg{\vert}_{s=0}\frac{1}{\Gamma(s)}\int_0^1 t^s
%\Tr_g\bigg{(}
%(-1)^FF\{d, [V,\delta]\}e^{-t\Delta_E}
%\bigg{)}dt \\
%&\hspace{1cm} -
%\int_0^{\infty}\Tr_g\bigg{(}
%(-1)^FF\{d, [V,\delta]\}e^{-t\Delta_E}
%\bigg{)}dt.
%\end{align*}
%(\Todo: $\delta$ or $\delta$, $\Delta$ or $\Delta_E$?)
%
%
%These observations imply
%
%By Lemmas \ref{form1} and \ref{lem anticomm}, 
%\begin{multline} \label{eq form2 1}
%\frac{d}{d\epsilon}\bigg{(}-2\log T_g(\nabla^E)\bigg{)} =
%\frac{d}{ds}\bigg{\vert}_{s=0}\frac{1}{\Gamma(s)}
%\int_0^1t^s
%\Tr_g\bigg{(}
%V(-1)^F\Delta_E\big{(}e^{-t\Delta_E}\big{)}
%\bigg{)}\, dt \\
% +
%\int_1^{\infty}\Tr_g\bigg{(}
%V(-1)^F\Delta_E\big{(}e^{-t\Delta_E}\big{)}\bigg{)}\, dt.
%\end{multline}
In the first term on the right hand side of \eqref{eq form1}, we note that
\[
t^s \Tr_g\bigl( V (-1)^F \Delta_E e^{-t\Delta_E} \bigr) = 
-\frac{d}{dt} \left( 
t^s \Tr_g\bigl( V (-1)^F  e^{-t\Delta_E} \bigr)
\right) + 
st^{s-1} \Tr_g\bigl( V (-1)^F  e^{-t\Delta_E} \bigr).
\]
So the first term on the right hand side of \eqref{eq form1} equals
\beq{eq form2 2}
B
+
\frac{d}{ds}\bigg{\vert}_{s=0}\frac{s}{\Gamma(s)}
\int_0^1
t^{s-1} \Tr_g \bigl( V (-1)^F  e^{-t\Delta_E}\bigr)
\, dt,
\eeq
where
\begin{align}
B&:= -
\frac{d}{ds}\bigg{\vert}_{s=0}\frac{1}{\Gamma(s)}
\int_0^1
\frac{d}{dt} \left( 
t^s \Tr_g\bigl( V (-1)^F  e^{-t\Delta_E} \bigr)
\right)
\, dt \nonumber \\
&=-\dds \frac{1}{\Gamma(s)} \bigg{(}
\lim_{t\rightarrow 1^-} - \lim_{t\rightarrow 0^+}
\bigg{)}
t^s\Tr_g\big{(}V(-1)^Fe^{-t\Delta_E}\big{)}. \label{eq B lim}
\end{align}
The second term in \eqref{eq form2 2} is well defined via meromorphic continuation by lemmas \ref{lem small t large l} and \ref{lem small t small l}, and equals the right hand side of \eqref{eq form2}. The first term on the right hand side of \eqref{eq form1}  is well-defined for the same reason, hence so is $B$.

%The second term on the right hand side of \eqref{eq form2 1} equals $-C$, where
%\[
%C := 
%\int_1^{\infty}\frac{d}{dt}\big{(}\Tr_g\bigl(V(-1)^Fe^{-t\Delta_E}\bigr)\, dt.
%\]
%It therefore remains to show that $B=C$.
%We can express the terms $B$ and $C$ in the following ways
%\begin{align}
%B &=-\dds \frac{1}{\Gamma(s)} \bigg{(}
%\lim_{t\rightarrow 1^-} - \lim_{t\rightarrow 0^+}
%\bigg{)}
%t^s\Tr_g\big{(}V(-1)^Fe^{-t\Delta_E}\big{)}; \label{eq B lim}\\
%C &= \bigg{(}
%\lim_{t\rightarrow \infty} - \lim_{t\rightarrow 1^+}
%\bigg{)}
%\Tr_g\big{(}
%V(-1)^Fe^{-t\Delta_E}
%\big{)}
%.\label{eq C lim}
%\end{align}
%As in the proof of Proposition \ref{prop NS cpt}, 
%\[
%|\Tr_g\big{(}
%V(-1)^Fe^{-t\Delta_E}
%\big{)}| = \cO(F_1(t) F_3(t))
%\]
%as $t \to \infty$, so the term in \eqref{eq C lim} involving the limit $t \to \infty$ vanishes by assumption. So
%\[
%C = -\Tr_g\big{(}V(-1)^Fe^{-\Delta_E}\big{)}.
%\]

If $\Real(s)$ is large enough, then
\[
\lim_{t\rightarrow 0^+}
t^s\Tr_g\big{(}V(-1)^Fe^{-t\Delta_E}\big{)} = 0.
\]
As the term $B$ is defined via meromorphic continuation of a function of $s$, defined initially for $s$ with large real part, this implies that term in \eqref{eq B lim} involving the limit $t \to 0^+$ is zero. It follows that
\[
B = 
-\dds \frac{1}{\Gamma(s)}
\lim_{t\rightarrow 1^-} 
t^s\Tr_g\big{(}V(-1)^Fe^{-t\Delta_E}\big{)} = -\Tr_g\big{(}V(-1)^Fe^{-\Delta_E}\big{)}.
\]
This cancels the second term on the right hand side of \eqref{eq form1}, so \eqref{eq form2} follows from lemma \ref{form1}.
\end{proof}

\begin{remark} \label{rem term P}
In \eqref{eq form1 4}, one can add $\Tr_g(V (-1)^F P)$ to the first first term, and subtract this from the second term. This can be useful, particularly in the compact case (as in (2.32) in \cite{LR91}), because then $\lim_{t \to \infty}\Tr_g\bigl(V (-1)^F (e^{-t\Delta_E} - P)\bigr) = 0$. See also the last sentence of Subsection \ref{sec van der}. Then the resulting term involving $P$ in \eqref{eq form2} drops out, because 
$\left. \frac{d}{ds}\right|_{s=0}\frac{s}{\Gamma(s)}
\int_0^1t^{s-1} dt=0$.

However, because we will use estimates of the form \eqref{eq decay kappa gen} for the heat kernel, where usually $\lim_{t \to \infty} F_1(t)=0$ on noncompact manifolds, and because we state our metric independence results only in the case $P=0$, we have left \eqref{eq form1 4} as it is.
\end{remark}

\subsection{The constant term in an asymptotic expansion}

%We now turn to the integral over $X$.
%One ingredient is an alternative expression for $\Tr_g^X$.

%To analyse an integral over the set $X$ in Lemma \ref{lem vanish Xcompl}, 

We will use a slight refinement of lemma 3.1 in \cite{HWW}.

Let $dz$ be the left Haar measure on $Z$ such that for all $f \in C_c(G)$,
\beq{eq def dz}
\int_G f(x)\, dx = \int_{G/Z} \int_Z f(hz^{-1})\, dz\, d(hZ).
\eeq
Let $\psi_G \in C^{\infty}(G)$ be a nonnegative function with compact support on all orbits of the right action by $Z$, and  such that for all $h \in G$,
\beq{eq cutoff chi G}
\int_Z \psi_G(hz^{-1})\, dz = 1.
\eeq
Fix a closed subset $X \subset G/Z$. Let $\pi\colon G \to G/Z$ be the quotient map. 
Let $\psi_g^X$ be the function on $M$ defined by
\beq{eq def psi g}
\psi_g^X(m) = \int_{\pi^{-1}(X)} \psi_G(h) \psi(hgm)\, dh.
\eeq

%In  estimates for $g$-traces, we will sometimes split up the integral over $G/Z$ in \eqref{eq def Trh} into an integral over a compact subset and an integral over its complement. In such situations, we will use the notation

For a measurable subset $Y \subset G/Z$, and for a $g$-trace class operator $T$ with smooth kernel $\kappa$, we write
\beq{eq def TrgY}
\Tr_g^Y(T) := \int_{Y} \int_M \psi(hgh^{-1}m) \tr\bigl(hgh^{-1}\kappa(hg^{-1}h^{-1}m,m) \bigr)\, dm\, d(hZ).
\eeq
\begin{lemma}\label{lem Trg GZ cpt X}
For every
 $g$-trace class operator $T$ on $E$, with smooth kernel $\kappa$, 
 \[
 \Tr_g^X(T) = \int_M \psi_g^X(m) \tr(g\kappa(g^{-1}m,m))\, dm.
 \]
 If $X$ is compact, then $\psi_g^X$ has compact support.
\end{lemma}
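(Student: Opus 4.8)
The plan is to prove the identity by unwinding the definitions of $\Tr_g^X(T)$ and $\psi_g^X$, and then performing a sequence of substitutions in the group integral, using $G$-invariance of the kernel $\kappa$ and the trace property of the fibrewise trace $\tr$. First I would start from
\[
\Tr_g^X(T) = \int_{X} \int_M \psi(hgh^{-1}m) \tr\bigl(hgh^{-1}\kappa(hg^{-1}h^{-1}m,m) \bigr)\, dm\, d(hZ),
\]
and as in the remark following definition \ref{def g Tr} (and in the proof of lemma \ref{lem Trg Xcompl}), substitute $m \mapsto hg^{-1}h^{-1}m$ in the inner integral, using $G$-invariance of $dm$ and of $\kappa$ together with the trace property of $\tr$, to rewrite the integrand with $\psi(hgh^{-1}m)$ replaced by $\psi(m)$ and $\tr\bigl(hgh^{-1}\kappa(hg^{-1}h^{-1}m,m)\bigr)$ replaced by $\tr\bigl(g\kappa(g^{-1}h^{-1}m,h^{-1}m)\bigr)$; more precisely, after the substitution the argument inside $\tr$ becomes $hgh^{-1}\kappa(hg^{-1}h^{-1}m,m)$ evaluated at the new $m$, which by equivariance equals $h \cdot \bigl(g\kappa(g^{-1}h^{-1}m, h^{-1}m)\bigr) \cdot h^{-1}$, and cyclicity of $\tr$ kills the conjugation by $h$. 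This is exactly the same manipulation used in the proof of lemma \ref{lem TrG TrH}.

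Next I would introduce the cutoff $\psi_G$ on $G$ and the measure $dz$ on $Z$ normalised by \eqref{eq def dz} and \eqref{eq cutoff chi G}. Writing $X = \pi(\pi^{-1}(X))$ and using \eqref{eq def dz}, the integral over $X$ against $d(hZ)$ can be rewritten as an integral over $\pi^{-1}(X)$ against $\psi_G(h)\, dh$, since for each coset $hZ$ the $Z$-integral of $\psi_G$ is $1$ and $\psi_G$ has compact support on $Z$-orbits; the element $g$ commutes with $z \in Z$ so the integrand only depends on the coset $hZ$ and this reindexing is legitimate. This gives
\[
\Tr_g^X(T) = \int_{\pi^{-1}(X)} \psi_G(h) \int_M \psi(h^{-1} \cdot \tilde m)\, \tr\bigl(g \kappa(g^{-1} h^{-1}\tilde m, h^{-1}\tilde m)\bigr)\, d\tilde m\, dh
\]
after one further substitution $m \mapsto h^{-1}\tilde m$ in the inner integral (or one can keep the variable name $m$ and substitute $m \mapsto hm$ going the other way to land on $\psi(hgm)$ directly). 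Carrying out the substitution so that the argument of $\kappa$ becomes $(g^{-1}m, m)$ and collecting the $\psi_G(h)\psi(hgm)$ factor, Fubini then lets me do the $h$-integral first, producing exactly $\psi_g^X(m) = \int_{\pi^{-1}(X)}\psi_G(h)\psi(hgm)\, dh$ as the coefficient of $\tr(g\kappa(g^{-1}m,m))$, which is the claimed formula. Absolute convergence throughout is guaranteed because $T$ is assumed $g$-trace class, so all the Fubini swaps and substitutions are justified.

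For the last sentence, I would argue that $\psi_g^X$ has compact support when $X$ is compact as follows: $\pi^{-1}(X)$ need not be compact, but $\psi_G$ has compact support on each $Z$-orbit and $X$ is compact, so the set $\{h \in \pi^{-1}(X) : \psi_G(h) \neq 0\}$ has compact closure $C$ in $G$ (this uses that $\pi$ is a proper-enough quotient map / that one can choose $\psi_G$ supported in $C \cdot Z$ with $C$ compact and $C \cdot Z \cap \pi^{-1}(X)$ contained in a compact set; concretely, pick $\psi_G$ with support in $\tilde X Z$ for a compact $\tilde X \subset G$ mapping onto $X$, and intersect with $\pi^{-1}(X)$). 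Then $\psi_g^X(m) \neq 0$ forces $hgm \in \supp(\psi)$ for some $h \in C$, i.e. $m \in g^{-1}C^{-1}\supp(\psi)$, which is compact. Hence $\supp(\psi_g^X)$ is contained in a compact set, and being closed it is compact.

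The main obstacle I anticipate is purely bookkeeping rather than conceptual: keeping track of where the factors $h$, $h^{-1}$, $g$, $g^{-1}$ land under the repeated substitutions and making sure the conjugation-by-$h$ inside $\tr$ is disposed of correctly via $G$-equivariance of $\kappa$ and cyclicity of $\tr$ at each stage, and similarly justifying the reindexing of $\int_X d(hZ)$ as $\int_{\pi^{-1}(X)}\psi_G(h)\, dh$ against the normalisations \eqref{eq def dz}--\eqref{eq cutoff chi G}. Since this is exactly the computation already carried out in lemmas \ref{lem int GH} and \ref{lem TrG TrH}, I would lean on those as templates and keep the presentation short.
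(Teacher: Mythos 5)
Your overall route is the paper's: rewrite the integrand using $G$-equivariance of $\kappa$ and cyclicity of $\tr$, unfold the integral over $X$ against $d(hZ)$ into an integral over $\pi^{-1}(X)$ against $\psi_G(h)\,dh$ via \eqref{eq def dz}--\eqref{eq cutoff chi G} (using, as you note, that $g$ commutes with $Z$ so the integrand is right-$Z$-invariant), apply Fubini--Tonelli, and change variables in $m$ to collect $\psi_g^X$. Your compact-support argument is fine and in fact a little more direct than the paper's: once the relevant set $C$ of group elements is compact, $m\in g^{-1}C^{-1}\supp(\psi)$ is a continuous image of a compact set, so properness is not even needed; the only point you gloss over, exactly as the paper does, is the uniformity that $\supp(\psi_G)\cap\pi^{-1}(X)$ is compact, which is what ``compact support on $Z$-orbits'' is meant to provide.

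There is, however, a concrete error in the middle, at precisely the spot you flagged as the danger. After your first substitution the correct intermediate expression is
\[
\Tr_g^X(T)=\int_{\pi^{-1}(X)}\psi_G(h)\int_M \psi(m)\,\tr\bigl(g\kappa(g^{-1}h^{-1}m,h^{-1}m)\bigr)\,dm\,dh,
\]
with cutoff $\psi(m)$, not $\psi(h^{-1}\tilde m)$ as in your display. From here the substitution $m\mapsto hm$, which you offer as the way to ``land on $\psi(hgm)$ directly'', gives $\psi(hm)\,\tr\bigl(g\kappa(g^{-1}m,m)\bigr)$, so the coefficient you collect is $\int_{\pi^{-1}(X)}\psi_G(h)\psi(hm)\,dh$; this is a true identity for $\Tr_g^X(T)$, but it is not $\psi_g^X(m)$ --- the $g$ inside $\psi(hgm)$ never appears, and the two cutoff functions genuinely differ. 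The substitution you need is $m\mapsto hgm$, followed by one further use of equivariance and cyclicity, $\tr\bigl(g\kappa(m,gm)\bigr)=\tr\bigl(g\cdot g\kappa(g^{-1}m,m)g^{-1}\bigr)=\tr\bigl(g\kappa(g^{-1}m,m)\bigr)$, which does produce $\psi(hgm)\,\tr\bigl(g\kappa(g^{-1}m,m)\bigr)$ and hence $\psi_g^X$. The paper sidesteps this by not normalising the cutoff first: it keeps $\psi(hgh^{-1}m)$, inserts $\psi_G$, and substitutes $m'=zh^{-1}m$, so that $\psi(hgh^{-1}m)=\psi(hz^{-1}gm')$ automatically carries the $g$ into the cutoff. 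With this one-line correction your argument goes through and coincides with the paper's proof.
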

\begin{proof}
Using $G$-equivariance of $\kappa$ and the trace property of $\tr$, one computes that for all $g \in G$, $z \in Z$ and $m \in M$,
\[
\tr \bigl(hgh^{-1} \kappa(hg^{-1}h^{-1}m,m) \bigr) = \tr \bigl(g \kappa(g^{-1}zh^{-1}m,zh^{-1}m) \bigr). 
\]
With this equality and \eqref{eq cutoff chi G}, we compute
\[
 \Tr_g^X(T) = \int_X \int_Z \int_M \psi_G(hz^{-1})  \psi(hgh^{-1}m)
  \tr \bigl(g \kappa(g^{-1}zh^{-1}m,zh^{-1}m) \bigr)\, dm\, dz\, d(hZ).
\]
Swapping the integration order (using absolute convergence) and substituting $m' = zh^{-1}m$, we find that the right hand side equals
\[
\int_M
\int_X \int_Z  \psi_G(hz^{-1})  \psi(hz^{-1}gm')\, dz\, d(hZ) 
  \tr \bigl(g \kappa(g^{-1}m',m') \bigr)\, dm'.
\]
By \eqref{eq def dz}, 
\[
\int_X \int_Z  \psi_G(hz^{-1})  \psi(hz^{-1}gm')\, dz\, d(hZ) = \psi_g^X(m')
\]
for all $m' \in M$.

If $m \in \supp(\psi_g^X)$, then there is an $h \in G$ such that
\beq{eq h X M}
\begin{split}
hZ &\in X;\\
h &\in \supp(\psi_G);\\
hgm & \in \supp(\psi). 
\end{split}
\eeq
If $X$ is compact, then the fact that $\psi_G$ has compact support on $Z$-orbits implies that the set $Y$ of $h\in G$ satisfying the first two conditions is compact.
 Then compactness of $\supp(\psi)$ and properness of the action implies that the set of $m \in M$ satisfying the third condition in \eqref{eq h X M} for some $h \in Y$ is compact.
% 
% and the third condition in \eqref{eq h X M} imply that $m$ lies in a compact subset of $M$.
%If $X$ is compact, then $\psi_G$ can be chosen to have compact support. Then compactness of the support of $\psi$ and properness of the action imply that
% the function $\psi_g$ has compact support as well.
\end{proof}

\begin{lemma}\label{lem t0 term}
If $X \subset G/Z$ is compact, then $\Tr_g^X\big{(}V(-1)^Fe^{-t\Delta_E}\big{)}$ has an asymptotic expansion in $t$. 
If  the dimension of $M$ is odd, then
there is no $t^0$ term
in this asymptotic expansion.
\end{lemma}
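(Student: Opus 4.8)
The plan is to reduce the statement to a local computation on the fixed-point set $M^g$ and then invoke a parity argument that, because the action preserves orientation, forces every component of $M^g$ to have the same parity of dimension as $M$.

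First I would apply Lemma~\ref{lem Trg GZ cpt X} to the operator $T = V(-1)^F e^{-t\Delta_E}$, whose Schwartz kernel is $(V(-1)^F)\kappa_t$ with $\kappa_t$ the heat kernel of $\Delta_E$. Since $X$ is compact, $\psi_g^X$ has compact support, so
\[
\Tr_g^X\bigl(V(-1)^F e^{-t\Delta_E}\bigr) = \int_M \psi_g^X(m)\, \tr\bigl(g\,(V(-1)^F)\,\kappa_t(g^{-1}m,m)\bigr)\, dm ,
\]
an integral over the compact set $\supp(\psi_g^X)$. The fixed-point set $M^g$ is a disjoint union of closed, totally geodesic submanifolds, with $T_xM^g$ the $(+1)$-eigenspace of $dg_x$. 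On the part of $\supp(\psi_g^X)$ outside a fixed tubular neighbourhood of $M^g$ one has $d(g^{-1}m,m)\geq \delta>0$, so the off-diagonal Gaussian heat-kernel estimate \eqref{eq decay kappa} of example~\ref{ex kappa t heat op} bounds the integrand there by $\cO(t^{\infty})$. Hence the $t\downarrow 0$ asymptotics are governed entirely by the contributions of small tubular neighbourhoods of the finitely many components $N$ of $M^g$ meeting $\supp(\psi_g^X)$.

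Next, for such a component $N$ of dimension $d$, I would work in Fermi (normal) coordinates $(x,v)$, $x\in N$ and $v\in\nu_xN$. Since $g$ is an isometry fixing $N$ pointwise, $g^{-1}\exp_x(v)=\exp_x(g_x^{-1}v)$ with $g_x:=dg_x|_{\nu_xN}$ orthogonal and without nonzero fixed vectors, so $d(g^{-1}m,m)^2=|(1-g_x^{-1})v|^2+\cO(|v|^3)$ with $|(1-g_x^{-1})v|^2$ a positive-definite quadratic form in $v$. Inserting the parametrix expansion of $\kappa_t$, Taylor expanding its smooth coefficients in $v$, rescaling $v=\sqrt t\,w$, and carrying out the resulting Gaussian integrals over the normal directions produces an asymptotic expansion of the contribution of $N$ of the form $t^{-d/2}\sum_{m\geq 0}c_m(N)\,t^{m/2}$. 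The key observation is a parity one: after the rescaling, the coefficient of $t^{m/2}$ in the integrand, before the integration over $w$, is a polynomial in $w$ all of whose monomials have degree congruent to $m$ modulo $2$ (this is tracked through the expansions of $e^{-d(g^{-1}m,m)^2/4t}$ and of the smooth parametrix data). A monomial of odd degree integrates to zero against the Gaussian $e^{-|(1-g_x^{-1})w|^2/4}$, so $c_m(N)=0$ for all odd $m$, and the contribution of $N$ is $t^{-d/2}\sum_{k\geq 0}c_{2k}(N)\,t^{k}$. Summing over components gives the asserted asymptotic expansion, and a $t^0$ term can arise only from a component $N$ with $\dim N$ even.

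Finally comes the parity step. Because $G$ preserves the orientation of $M$, the isometry $g$ is orientation-preserving, so for $x\in N$ the map $dg_x=\id_{T_xN}\oplus g_x$ has determinant $+1$, forcing $\det g_x=+1$; as $g_x$ is an orthogonal transformation of $\nu_xN$ with no nonzero fixed vector, its real eigenvalues are all equal to $-1$ and its non-real eigenvalues occur in conjugate pairs, so $\det g_x=+1$ forces the multiplicity of $-1$ to be even, hence $\dim\nu_xN$ is even. Therefore $\dim N=\dim M-\dim\nu_xN\equiv\dim M\pmod 2$. When $\dim M$ is odd, every component of $M^g$ is odd-dimensional, so none of them contributes a $t^0$ term, which proves the claim. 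The main obstacle is the bookkeeping in the local computation near $M^g$: making precise that, after the parabolic rescaling $v=\sqrt t\,w$, the coefficient of each half-integer power $t^{m/2}$ is a polynomial of pure parity $m$ in $w$, so that the odd ones vanish under the Gaussian integration. This is, however, exactly the mechanism underlying the local Lefschetz fixed-point expansions, and once everything has been localised to the compact set $\supp(\psi_g^X)$ the standard machinery for equivariant heat kernels (e.g.\ \cite{BGV}) applies verbatim.
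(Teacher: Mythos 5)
Your argument is correct, but it follows a genuinely different route from the paper's own proof. Both start the same way, using lemma \ref{lem Trg GZ cpt X} to write $\Tr_g^X\big(V(-1)^Fe^{-t\Delta_E}\big)$ as an integral over a compact region; after that the paper simply inserts the asymptotic expansion \eqref{eq as exp kappat heat}, $\kappa_t \sim t^{-\dim(M)/2}\sum_j \kappa_{(j)}t^j$, asserted to hold in $C^k$-norms on compact subsets of $M\times M$, and integrates termwise, so that the trace inherits an expansion in the powers $t^{j-\dim(M)/2}$, none of which is an integer when $\dim(M)$ is odd — no fixed-point analysis and no use of orientation-preservation beyond the parity of $\dim(M)$. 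You instead localise at $M^g$ via the off-diagonal Gaussian bound, carry out the Donnelly/Lefschetz-type Fermi-coordinate computation to get, per component $N$ of $M^g$, an expansion in the powers $t^{k-\dim(N)/2}$ with integer $k$ (the Gaussian-moment parity argument), and then supply the extra geometric step that orientation-preservation forces $\dim(N)\equiv\dim(M)\pmod 2$, so again no integer powers occur. What your route buys is precision about which powers actually appear: the true leading order is $t^{-\dim(M^g)/2}$, not $t^{-\dim(M)/2}$, and the near-diagonal uniformity of \eqref{eq as exp kappat heat} that the paper leans on is delicate exactly there (the actual powers land in the set $\{j-\dim(M)/2\}$ only because of the parity of the fixed-point components, which your argument makes explicit). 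What the paper's route buys is brevity — two lines once the kernel expansion is granted — at the cost of hiding the fixed-point localisation inside that asserted expansion. So your proof is a valid, and in some respects more robust, alternative; the only work left is the standard bookkeeping you flag (parity of the rescaled integrand), which is indeed the classical equivariant heat-kernel mechanism as in \cite{BGV}.
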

\begin{proof}
The standard asymptotic expansion for the kernel $\kappa_t$ of $e^{-t\Delta_E}$ has the form
\beq{eq as exp kappat heat}
\kappa_t \sim t^{-\dim(M)/2} \sum_{j=0}^{\infty} \kappa_{(j)} t^j.
\eeq
This expansion
holds with respect to $C^k$-norms on compact sets. 
Lemma \ref{lem Trg GZ cpt X} allows one to express $\Tr_g^X\big{(}V(-1)^Fe^{-t\Delta_E}\big{)}$ as an integral over a compact subset of $M \times M$. So if $X$ is compact, then the asymptotic expansion \eqref{eq as exp kappat heat} gives rise to an asymptotic expansion of $\Tr_g^X\big{(}V(-1)^Fe^{-t\Delta_E}\big{)}$ involving the same powers of $t$. If $\dim(M)$ is odd, then none of these powers are integers. In particular, there is no $t^0$ term.
\end{proof}

\subsection{Vanishing of the derivative}\label{sec van der}

%We will now refine Lemma \ref{form2}, and provide a concise formula for the derivative
%of $-2\log T_g(\nabla^E)$ that will clearly exhibit how metric independence is connected 
%to the vanishing of the $L^2$-cohomology. 

We now combine the results from this section to obtain a vanishing result for the derivative of equivariant analytic torsion with respect to $\epsilon$, see theorem \ref{metric_indep_cconj}. This implies our main result on metric independence, see corollary \ref{cor metric indep}.

\begin{lemma}\label{lem integrals f}
Let $f$ be a continuous function on $(0, 1]$ such that $t^{-1} \log(t) f(t)$ is bounded on $(0,1]$.
%\beq{eq integral 1 f}
%\int_0^1 t^{-1}f(t)\, dt
%\eeq
%and
%\beq{eq integral 2 f}
%\int_0^1 t^{-1} \log(t) f(t)\, dt
%\eeq
%converges. 
Then
\[
\dds \frac{s}{\Gamma(s)} \int_0^1 t^{s-1}f(t)\, dt = 0.
\]
\end{lemma}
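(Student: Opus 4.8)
The plan is to exploit a double zero of $s\mapsto s/\Gamma(s)$ at $s=0$ together with holomorphy of the integral near $s=0$. The starting point is the elementary identity $\frac{s}{\Gamma(s)}=\frac{s^{2}}{\Gamma(s+1)}$, immediate from $\Gamma(s+1)=s\Gamma(s)$. Since $\Gamma$ is nowhere zero, $1/\Gamma(s+1)$ is entire and equals $1$ at $s=0$, so there is a function $\phi$, holomorphic in a neighbourhood of $s=0$ with $\phi(0)=1$, such that $s/\Gamma(s)=s^{2}\phi(s)$; in particular $s/\Gamma(s)$ vanishes to order at least two at $s=0$.

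Next I would verify that $H(s):=\int_{0}^{1}t^{s-1}f(t)\,dt$ is holomorphic on $\{\Real(s)>-1\}$, hence on a neighbourhood of $s=0$. Writing $C$ for a bound on $|t^{-1}\log(t)f(t)|$ on $(0,1]$, we get $|f(t)|\le Ct/|\log t|$ on $(0,1)$; since $|\log t|\ge 1$ for $t\le e^{-1}$ and $f$ is continuous, hence bounded, on $[e^{-1},1]$, it follows that $f$ is bounded on $(0,1]$ and $|f(t)|\le C't$ for $t$ near $0$. Therefore, on any compact $K\subset\{\Real(s)>-1\}$, say with $\Real(s)\ge\sigma_{0}>-1$ on $K$, the integrand $t^{s-1}f(t)$ is dominated, uniformly in $s\in K$, by a fixed integrable function on $(0,1)$ (comparable to $t^{\sigma_{0}}$ near $0$ and to a constant near $1$). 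Holomorphy of $H$ on $\{\Real(s)>-1\}$ then follows in the standard way, e.g.\ by Morera's theorem after an application of Fubini, or by differentiating the difference quotients under the integral sign via dominated convergence.

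Finally I would combine the two observations: for $s$ near $0$,
\[
\frac{s}{\Gamma(s)}\int_{0}^{1}t^{s-1}f(t)\,dt=s^{2}\,\phi(s)\,H(s),
\]
and $\phi H$ is holomorphic at $s=0$, so the right-hand side is $s^{2}$ times a function holomorphic near $s=0$, and thus vanishes to order at least two at $s=0$. Differentiating once and evaluating at $s=0$ therefore gives $0$, which is the assertion. I do not anticipate any real obstacle; the only point that needs care is the holomorphy of $H$ near $s=0$, which is routine given the decay of $f$ at $0$ furnished by the hypothesis — indeed the $\log$ factor is slightly stronger than what the argument requires.
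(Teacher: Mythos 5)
Your proof is correct, and it rests on the same mechanism as the paper's proof — the second-order vanishing of $s/\Gamma(s)$ at $s=0$ combined with regularity of $H(s)=\int_0^1 t^{s-1}f(t)\,dt$ at $s=0$ — but the implementation is different. The paper stays real-variable and minimal: boundedness of $t^{-1}f(t)$ (which, as in your argument, follows from the hypothesis together with continuity of $f$ near $t=1$) gives continuity of $H$ at $s=0$; boundedness of $t^{-1}\log(t)f(t)$ is used precisely to justify differentiating under the integral sign at $s=0$ (the derivative of $t^{s-1}$ brings down a $\log t$); and then the product rule together with $\lim_{s\to 0}s/\Gamma(s)=0$ and $\lim_{s\to 0}\frac{d}{ds}\bigl(s/\Gamma(s)\bigr)=0$ gives the claim. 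You instead make the double zero explicit through the identity $s/\Gamma(s)=s^{2}/\Gamma(s+1)$ and prove the stronger statement that $H$ is holomorphic on $\{\Real(s)>-1\}$, using the derived pointwise bound $|f(t)|\leq C't$ near $0$, so that $s^{2}\phi(s)H(s)$ visibly has vanishing derivative at $s=0$. Your route is slightly heavier in that it establishes holomorphy on a half-plane where only differentiability at the single point $s=0$ is needed, but it is cleaner in avoiding differentiation under the integral; it is also worth noting that in your argument the $\log$ factor in the hypothesis is exploited only to extract the power-type decay of $f$ at $0$, whereas in the paper's proof it is exactly the ingredient controlling the $\log t$ term in the $s$-derivative.
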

\begin{proof}
For $s>0$, write $F(s) := \int_0^1 t^{s-1}f(t)\, dt$. As
$t^{-1}  f(t)$ is bounded, the integral
% \eqref{eq integral 2 f} converges, so does
$
\int_0^1 t^{-1}f(t)\, dt 
$
converges,
so 
$F$ extends continuously to $s=0$. 

Furthermore, because $t^{-1} \log(t) f(t)$ is bounded, its integral from $t=0$ to $1$ converges,  
%Because  \eqref{eq integral 2 f} converges, 
and equals
\[
\int_0^1 \dds t^{s-1}f(t)\, dt.
\]
Hence,
 $F$ is differentiable at $s=0$. The claim now follows from the product rule and the equalities
 \[
 \begin{split}
 \lim_{s\to 0} \frac{s}{\Gamma(s)} &= 0;\\
  \lim_{s\to 0} \frac{d}{ds}  \frac{s}{\Gamma(s)} &= 0.
 \end{split}
 \]
 \end{proof}
 
 \begin{lemma}\label{lem vanish Xcompl}
 There is a compact subset $X \subset G/Z$ such that
 \[
  \frac{d}{ds}\bigg{\vert}_{s=0} \frac{s}{\Gamma(s)}
  \int_0^1t^{s-1}\Tr_g^{\Xcompl}\big{(}V(-1)^Fe^{-t\Delta_E}\big{)}dt 
= 0.
 \]
 \end{lemma}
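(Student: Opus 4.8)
The plan is to produce $X$ via Lemma~\ref{lem small t large l} and then invoke Lemma~\ref{lem integrals f}, so the only real work is to check that the function $f(t):=\Tr_g^{\Xcompl}\big(V(-1)^Fe^{-t\Delta_E}\big)$ satisfies the hypotheses of Lemma~\ref{lem integrals f} for a suitable choice of $X$.

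\emph{Step 1: a Gaussian bound for the kernel.} The operator $V(-1)^F$ is a $G$-invariant vector bundle endomorphism of $\Bigwedge^* T^*M\otimes E$; since $M/G$ is compact and the metric is $G$-invariant, it is bounded. Hence $V(-1)^Fe^{-t\Delta_E}$ is the composition of a $G$-equivariant differential operator of order $0$ with the heat operator of $\Delta_E$, and by Example~\ref{ex kappa t heat op} its smooth Schwartz kernel $\kappa_t$ satisfies the bound \eqref{eq decay kappa}: there are $a_1,a_2,a_3>0$ with $\|\kappa_t(m,m')\|\le a_1 t^{-a_2}e^{-a_3 d(m,m')^2/t}$ for all $t\in(0,1]$ and $m,m'\in M$. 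By Lemma~\ref{lem kappa t g tr cl} this operator is also $g$-trace class, so $f$ is well defined.

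\emph{Step 2: apply Lemma~\ref{lem small t large l}.} Applied to the family $\kappa_t$ of Step~1, that lemma yields a compact $X\subset G/Z$ --- concretely the set $\{hZ\in G/Z: l(hgh^{-1})\le 4b_2/b_1\}$ from the proof of Lemma~\ref{lem Trg Xcompl}, where $b_1,b_2$ are the \v{S}varc--Milnor constants for $Y=\supp(\psi)$ --- such that for every $a>0$ the expression $t^{-a}f(t)$ is bounded on $(0,1]$. Taking $a=2$ gives $f(t)=\cO(t^2)$ as $t\downarrow0$, hence $t^{-1}\log(t)f(t)=\cO(t|\log t|)$ is bounded on $(0,1]$. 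Continuity of $f$ on $(0,1]$ follows from smoothness of the heat kernel together with dominated convergence: on any interval $[t_0,1]\subset(0,1]$ the integrand in \eqref{eq def TrgY} is bounded, uniformly in $t$, by the integrable function furnished by the Gaussian bound (using the compact support of $\psi$ for the $M$-integral and the convergence of \eqref{eq conv Gaussian} for the $G/Z$-integral).

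\emph{Step 3: conclude.} With $X$ as in Step~2, the function $f$ is continuous on $(0,1]$ and $t^{-1}\log(t)f(t)$ is bounded there, so Lemma~\ref{lem integrals f} gives $\dds\frac{s}{\Gamma(s)}\int_0^1 t^{s-1}f(t)\,dt=0$, which is the claim. The main obstacle is Step~1, i.e.\ making sure the Gaussian off-diagonal decay survives composition of the heat operator with $V(-1)^F$ so that Lemma~\ref{lem small t large l} genuinely applies; once this is in place, Step~2 is bookkeeping with the constants in the proof of Lemma~\ref{lem Trg Xcompl} and Step~3 is immediate.
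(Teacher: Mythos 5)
Your proposal is correct and follows essentially the same route as the paper: take $X$ from lemma \ref{lem small t large l} (applied via example \ref{ex kappa t heat op} to the composition $V(-1)^Fe^{-t\Delta_E}$), use $a=2$ to bound $t^{-1}\log(t)f(t)$ on $(0,1]$, and conclude with lemma \ref{lem integrals f}. Your extra verifications (that $V(-1)^F$ is an order-zero $G$-equivariant operator so the Gaussian bound survives, and the continuity of $f$) are details the paper leaves implicit, and they check out.
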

 \begin{proof}
 Let $X$ be as in lemma \ref{lem small t large l}. 
For $t>0$, set $f(t) := \Tr_g^{\Xcompl}\big{(}V(-1)^Fe^{-t\Delta_E}\big{)}$. 
%
%
%By Lemma \ref{lem small t large l}, with $a=1$, the integral \eqref{eq integral 1 f} converges for this function $f$. 
%
%
Note that $\log(t) = \cO(t^{-1})$ as $t\downarrow 0$. So example \ref{ex kappa t heat op} and lemma \ref{lem small t large l}, with $a=2$,  imply that $t^{-1}\log(t) f(t)$ is bounded in $t \in (0,1]$. 
Hence the claim follows from lemma \ref{lem integrals f}.
 \end{proof}

\begin{theorem}\label{metric_indep_cconj}
Suppose that the dimension of $M$ is odd, and that $\ker(\Delta_E) = \{0\}$.
Suppose that there is a function $F_1$  such that for all $\epsilon$, the estimate 
\eqref{eq decay kappa gen} holds for the kernel $\kappa_t$ of $ e^{-t\Delta_E}$.
Suppose that there is a \v{S}varc--Milnor function for the action of $G$ with respect to $g$, and let 
 $F_3$ be as in \eqref{eq def F3}, for $a$ such that \eqref{eq Gaussian F3} holds for all $\epsilon$. Suppose that $F_1(t)F_3(t) = \cO(t^{-\alpha})$ as $t \to \infty$, for some $\alpha>0$.
%Suppose, furthermore,  that 
%\begin{itemize}
%\item
%the dimension of $M$ is odd;
% \item $P_p$ is $g$-trace class for all $p$; 
% \item $\Tr_g(V (-1)^F P)=0$.
%\end{itemize}
Then $ T_g(\nabla^E)$ converges for all $\epsilon$, and 
\begin{equation*}
\frac{d}{d\epsilon}\bigg{(}-2\log T_g(\nabla^E)\bigg{)} = 
0.
\end{equation*}
\end{theorem}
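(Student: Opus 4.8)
The plan is to combine the two reformulations of the derivative of equivariant analytic torsion established in Lemmas \ref{form1} and \ref{form2} with the asymptotic and vanishing results of the previous two subsections. First I would note that the hypotheses of the theorem imply all the standing assumptions of Subsection \ref{sec der torsion}: the existence of a \v{S}varc--Milnor function gives conditions (1) and (3) in Definition \ref{def g torsion} via Proposition \ref{prop conv small t}, while $F_1(t)F_3(t)=\cO(t^{-\alpha})$ together with Proposition \ref{prop NS cpt} (applied to $e^{-t\Delta_E^p}$, whose kernel satisfies \eqref{eq decay kappa gen} by Example \ref{ex kappa t heat op}) gives condition (2); since $\ker(\Delta_E)=0$, the projection $P$ is zero, hence trivially $g$-trace class. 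Thus $T_g(\nabla^E)$ converges for all $\epsilon$.

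Next I would invoke Lemma \ref{form2}, whose hypotheses $\lim_{t\to\infty}F_1(t)F_3(t)=0$ (immediate from $F_1F_3=\cO(t^{-\alpha})$) and $\Tr_g(V(-1)^FP)=0$ (immediate since $P=0$) both hold, to obtain
\[
\frac{d}{d\epsilon}\bigl(-2\log T_g(\nabla^E)\bigr) =
\frac{d}{ds}\bigg{|}_{s=0}\frac{s}{\Gamma(s)}
\int_0^1 t^{s-1}\Tr_g\bigl(V(-1)^Fe^{-t\Delta_E}\bigr)\, dt.
\]
I would then split the $g$-trace as $\Tr_g = \Tr_g^X + \Tr_g^{\Xcompl}$ for the compact set $X\subset G/Z$ furnished by Lemma \ref{lem small t large l} (equivalently Lemma \ref{lem vanish Xcompl}). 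The contribution of $\Xcompl$ vanishes by Lemma \ref{lem vanish Xcompl}. For the contribution of $X$, Lemma \ref{lem t0 term} says that $\Tr_g^X(V(-1)^Fe^{-t\Delta_E})$ has an asymptotic expansion in $t$ with no $t^0$ term, because $\dim(M)$ is odd. A standard Mellin-transform computation (as in Lemma \ref{lem small t small l}) then shows that $\frac{d}{ds}|_{s=0}\frac{s}{\Gamma(s)}\int_0^1 t^{s-1}h(t)\,dt$ equals the coefficient of $t^0$ in the small-$t$ expansion of $h$; since this coefficient is zero, the $X$-term vanishes as well, and the derivative is zero.

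The main obstacle, and the step requiring the most care, is justifying that $\frac{d}{ds}|_{s=0}\frac{s}{\Gamma(s)}\int_0^1 t^{s-1}\Tr_g^X(V(-1)^Fe^{-t\Delta_E})\,dt$ picks out precisely the constant term of the asymptotic expansion. The mechanism is that $\frac{1}{\Gamma(s)}\int_0^1 t^{s-1}t^{-\dim(M)/2+j}\,dt = \frac{1}{\Gamma(s)(s-\dim(M)/2+j)}$, which after multiplying by $s$ and differentiating at $s=0$ contributes only when $-\dim(M)/2+j=0$, i.e.\ never when $\dim(M)$ is odd; the remainder term, being $\cO(t^{s+n-\dim(M)/2})$ for large $n$, contributes a function holomorphic near $s=0$ and vanishing there after multiplication by $s$. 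Combined with the $\Xcompl$ contribution, which is handled separately by Lemma \ref{lem vanish Xcompl} precisely because there the integrand need not have a clean asymptotic expansion but does satisfy the boundedness hypothesis of Lemma \ref{lem integrals f}, this completes the proof.
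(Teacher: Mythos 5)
Your proposal is correct and follows essentially the same route as the paper's proof: convergence via propositions \ref{prop conv small t} and \ref{prop NS cpt}, reduction to the Mellin-type expression via lemma \ref{form2} (with $P=0$), the split $\Tr_g = \Tr_g^X + \Tr_g^{\Xcompl}$ with the complement handled by lemma \ref{lem vanish Xcompl}, and extraction of the $t^0$ coefficient, which vanishes in odd dimensions by lemma \ref{lem t0 term}. Your extra justification of why $\left.\frac{d}{ds}\right|_{s=0}\frac{s}{\Gamma(s)}\int_0^1 t^{s-1}h(t)\,dt$ picks out exactly the constant term is a correct elaboration of the step the paper states more tersely.
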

\begin{proof}
The conditions of definition \ref{def g torsion} hold by propositions \ref{prop conv small t} and \ref{prop NS cpt}. 
Let $X \subset G/Z$ be as in lemma \ref{lem vanish Xcompl}.  
From lemmas  \ref{form2} and \ref{lem vanish Xcompl}, we have
\beq{eq metr indep 1}
\frac{d}{d\epsilon}\bigg{(}-2\log T_g(\nabla^E)\bigg{)} =
\frac{d}{ds}\bigg{\vert}_{s=0}\frac{s}{\Gamma(s)}
\int_0^1t^{s-1}\Tr_g^X\big{(}V(-1)^Fe^{-t\Delta_E}\big{)}\, dt.
\eeq
For all $a \in \R$, 
\[
\frac{d}{ds}\bigg{\vert}_{s=0}\frac{s}{\Gamma(s)}
\int_0^1t^{s-1} t^a\, dt = \left\{
\begin{array}{ll}
1 & \text{if $a=0$};\\
0 & \text{if $a \not=0$}.
\end{array}
 \right.
\]
It follows that the right hand side of \eqref{eq metr indep 1} equals 
 the $t^0$ term in  the asymptotic expansion of 
$\Tr_g^X\big{(}V(-1)^Fe^{-t\Delta_E}\big{)}$. 
 The theorem now follows by lemma \ref{lem t0 term}.
\end{proof}
The condition in theorem \ref{metric_indep_cconj} that $\ker(\Delta_E) = \{0\}$ does not depend on a choice of $G$-invariant Riemannian metric, as pointed out in the proof of lemma \ref{form1}.

%%%%%%%%%%%%%%%%%%%%%%%
Theorem \ref{metric_indep_cconj} has the following immediate consequence. 
\begin{corollary}\label{cor metric indep}
Suppose that the dimension of $M$ is odd, and that $\ker(\Delta_E) = \{0\}$.
Let $\lambda$ and $\lambda'$ be two $G$-invariant Riemannian metrics on $M$. Suppose that there is a smooth path of $G$-invariant Riemannian metrics that connects $\lambda$ and $\lambda'$, and such that there are functions
 $F_1$ and $F_3$ as in theorem \ref{metric_indep_cconj}. Suppose, in particular, that $F_1(t)F_3(t) = \cO(t^{-\alpha})$ as $t \to \infty$,  for some $\alpha>0$. 
%Suppose that
%\begin{itemize}
%\item
%the dimension of $M$ is odd;
% \item $\ker(\Delta_E) = \{0\}$. 
%\end{itemize}
Then $T_g(\nabla^E)$ is well-defined for both $\lambda$ and $\lambda'$, and the two values are equal. 
\end{corollary}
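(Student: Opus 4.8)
The plan is to reduce directly to Theorem \ref{metric_indep_cconj}. Write $(\lambda_\epsilon)_{\epsilon \in [0,1]}$ for the given smooth path of $G$-invariant Riemannian metrics, with $\lambda_0 = \lambda$ and $\lambda_1 = \lambda'$, and let $F_1$, $F_3$ be the functions supplied by the hypothesis, so that $F_1(t) F_3(t) = \cO(t^{-\alpha})$ as $t \to \infty$ for some $\alpha > 0$, uniformly in $\epsilon$. First I would record that all standing hypotheses of Theorem \ref{metric_indep_cconj} hold along this path: $\dim M$ is odd by assumption; the condition $\ker(\Delta_E) = 0$ is independent of the choice of $G$-invariant metric (it computes the $L^2$-cohomology of $M$ with values in $E$, a quasi-isometry invariant, and all $G$-invariant metrics on a cocompact $M$ are quasi-isometric by Lemma \ref{lem quasi-isom}), so it holds for every $\lambda_\epsilon$; the existence of a \v{S}varc--Milnor function for the action of $G$ with respect to $g$ is a property of the group action and not of the metric, with only the constants $b_1, b_2$ in \eqref{eq SM} varying; and $F_1$, $F_3$ with the required decay are handed to us.

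With these points verified, Theorem \ref{metric_indep_cconj} applies. On the one hand it gives that $T_g(\nabla^E)$ converges, i.e.\ the three conditions of Definition \ref{def g torsion} hold, for every $\epsilon \in [0,1]$, and in particular at $\epsilon = 0$ and $\epsilon = 1$, which is the first assertion. On the other hand it gives
\[
\frac{d}{d\epsilon}\bigl(-2\log T_g(\nabla^E)\bigr) = 0
\]
for every $\epsilon$. Hence $\epsilon \mapsto -2 \log T_g(\nabla^E)$, computed with respect to $\lambda_\epsilon$, is a function on the connected interval $[0,1]$ with identically vanishing derivative, so it is constant; evaluating at the endpoints shows that $T_g(\nabla^E)$ for $\lambda$ equals $T_g(\nabla^E)$ for $\lambda'$.

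I do not expect a genuine obstacle here, since the substantive work is already done inside Theorem \ref{metric_indep_cconj}. The only point needing a moment's care is the passage from "the derivative vanishes at each $\epsilon$" to "the quantity is constant", which merely requires differentiability on the connected parameter interval — and this differentiability is exactly what Lemma \ref{form2}, together with the meromorphic-continuation arguments of Lemmas \ref{lem small t large l} and \ref{lem small t small l}, supplies in the proof of the theorem. One can also note, via Remark \ref{rem int bdry} and the rescaling computation of Remark \ref{rem rescaling}, that reparametrising the path changes nothing, so that working with a path over $[0,1]$ rather than a family over $\mathbb{R}$ as in Theorem \ref{metric_indep_cconj} is immaterial.
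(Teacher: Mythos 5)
Your proposal is correct and matches the paper, which presents the corollary as an immediate consequence of theorem \ref{metric_indep_cconj}: apply the theorem along the path to get convergence and a vanishing derivative of $-2\log T_g(\nabla^E)$ in $\epsilon$, hence constancy on the connected parameter interval and equality at the endpoints. Your additional checks (metric-independence of $\ker(\Delta_E)=0$ via lemma \ref{lem quasi-isom}, and that the \v{S}varc--Milnor hypothesis is metric-independent) are exactly the points the paper relies on implicitly.
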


It was assumed in theorem \ref{metric_indep_cconj} and corollary \ref{cor metric indep} that $M$ is odd-dimensional. We will see  in proposition \ref{prop vanish even} that equivariant analytic torsion of an even-dimensional manifold is always $1$ if it converges, and hence trivially independent of the metric.

The condition in theorem \ref{metric_indep_cconj} and corollary \ref{cor metric indep} that $F_1(t)F_3(t) = \cO(t^{-\alpha})$ for some $\alpha>0$ means that the long time heat kernel decay should be fast enough relative to the volume growth of the sets \eqref{eq conj cpt}. See lemma \ref{lem F2 F3} for the growth rate of $F_3$ in some cases, and the results mentioned at the end of subsection \ref{sec large t conv} for  results on the decay rate of $F_1$. 

We have formulated this condition in terms of $F_1$ and $F_3$ to separate the effects of the volume growth of $G/Z$ and the long time decay of the heat kernel. This may make the condition
 more explicit and possibly easier to check, but a more general condition that is still sufficient for theorem \ref{metric_indep_cconj} and corollary \ref{cor metric indep} is that for all $\epsilon>0$, 
\beq{eq general large t conditions}
\begin{split}
\lim_{t \to \infty} \Tr_g(V (-1)^F e^{-t\Delta_E}) &= 0; \quad \text{and}\\
\Tr_g\bigl((-1)^F F (e^{-t\Delta_E}-P) \bigr)&= \cO(t^{-\alpha(\epsilon)})
\end{split}
\eeq
for some $\alpha(\epsilon)>0$. Indeed, the condition on $F_1 F_3$ is used in the proofs of proposition \ref{prop NS cpt} and lemmas \ref{form1} and \ref{form2}, and in those places the conditions \eqref{eq general large t conditions} are sufficient.

In the first condition in \eqref{eq general large t conditions}, recall that we assume that $\ker(\Delta_E) = 0$; otherwise a version with $e^{-t\Delta_E}$ replaced by $e^{-t\Delta_E} - P$ is also sufficient by remark \ref{rem term P}.

\subsection{Special cases}

%(\Todo: state next two examples as corollaries?)

\begin{corollary}\label{cor metric indep GZ cpt}
Suppose that $G/Z$ is compact, and that $\alpha_e^p>0$ for all $G$-invariant Riemannian metrics on $M$. Suppose that there is a \v{S}varc--Milnor function for the action with respect to $g$. Then equivariant analytic torsion converges and is independent of the Riemannian metric.
\end{corollary}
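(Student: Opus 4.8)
The plan is to treat the two claims — that $T_g(\nabla^E)$ converges (i.e.\ that the three conditions of definition \ref{def g torsion} hold) and that it does not depend on the $G$-invariant Riemannian metric — separately, and in both to exploit the fact that compactness of $G/Z$ lets one dominate $g$-traces of heat-type operators by the corresponding $e$-traces through proposition \ref{prop GZ cpt}. This way the hypothesis $\alpha_e^p>0$ takes over the role that a Gaussian volume-growth estimate plays in the general theory, and the argument reduces to the compact $G/Z$ situation everywhere.

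For convergence, I would first check that the projection $P$ onto $\ker(\Delta_E)$ is $g$-trace class. Indeed $P=P^*P$ with $P$ bounded and $G$-equivariant, $P$ has a smooth Schwartz kernel (harmonic $L^2$-forms are smooth, and $P=e^{-t\Delta_E}P$), and $\sqrt\psi P\sqrt\psi=(\sqrt\psi P)(\sqrt\psi P)^*$ is trace class because $\sqrt\psi P$ has a compactly supported continuous kernel; so $P$ is strongly $e$-trace class, and proposition \ref{prop GZ cpt} with $A=\id$ gives $|\Tr_g(P_p)|\le\vol(G/Z)\,\Tr_e(P_p)<\infty$. Since a \v{S}varc--Milnor function for the action is assumed (and in fact the zero function works when $G/Z$ is compact, example \ref{ex SM GZ cpt}), proposition \ref{prop conv small t} now gives conditions (1) and (3) of definition \ref{def g torsion}. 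For condition (2) I would write $e^{-t\Delta_E^p}-P_p=(e^{-\frac t2\Delta_E^p}-P_p)^2=S^*S$ with $S$ self-adjoint, strongly $e$-trace class and with smooth kernel, so proposition \ref{prop GZ cpt} gives $|\Tr_g(e^{-t\Delta_E^p}-P_p)|\le\vol(G/Z)\,\Tr_e(e^{-t\Delta_E^p}-P_p)=\cO(t^{-\alpha_e^p})$; summing over $p$ shows $\cT_g(t)=\cO(t^{-\alpha})$ with $\alpha:=\min_p\alpha_e^p>0$. Hence $T_g(\nabla^E)$ is well defined for every $G$-invariant Riemannian metric on $M$.

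For metric independence: if $\dim M$ is even then $T_g(\nabla^E)=1$ by proposition \ref{prop vanish even}, so one may assume $\dim M$ odd. The space of $G$-invariant Riemannian metrics is convex, so any two such metrics $\lambda,\lambda'$ are joined by a smooth (straight-line) path $(\lambda_\epsilon)$, along which $\alpha_e^p>0$ for every $\epsilon$ and $p$ by hypothesis. I would then apply theorem \ref{metric_indep_cconj} in the weakened form described in the remark after corollary \ref{cor metric indep}, where the condition on $F_1F_3$ is replaced by \eqref{eq general large t conditions}, together with the remark \ref{rem term P} modification replacing $e^{-t\Delta_E}$ by $e^{-t\Delta_E}-P$ there (since $\ker(\Delta_E)$ need not vanish here). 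Both lines of \eqref{eq general large t conditions} follow from proposition \ref{prop GZ cpt}: taking $T=e^{-t\Delta_E}-P=\sum_p(e^{-\frac t2\Delta_E^p}-P_p)^2$ and $A=V(-1)^F$, respectively $A=(-1)^FF$ — both bounded $G$-equivariant endomorphisms, since $V$ is a bundle endomorphism — one obtains $|\Tr_g(V(-1)^F(e^{-t\Delta_E}-P))|\le\vol(G/Z)\,\|V(-1)^F\|\sum_p\Tr_e(e^{-t\Delta_E^p}-P_p)=\cO(t^{-\alpha})$, which tends to $0$, and $|\Tr_g((-1)^FF(e^{-t\Delta_E}-P))|=\cO(t^{-\alpha})$. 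One also uses, exactly as in the proof of lemma \ref{form1}, that $P$ is independent of $\epsilon$ because the $\lambda_\epsilon$ are mutually quasi-isometric (lemma \ref{lem quasi-isom}) and $L^2$-cohomology is a quasi-isometry invariant. Theorem \ref{metric_indep_cconj} then yields $\frac{d}{d\epsilon}\bigl(-2\log T_g(\nabla^E)\bigr)=0$ along the path, and integrating from $\lambda$ to $\lambda'$ gives the result.

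The step I expect to need the most care — the "obstacle" — is not an analytic estimate but the bookkeeping around the projection $P$, since $\ker(\Delta_E)$ is allowed to be nonzero here, a case excluded from theorem \ref{metric_indep_cconj} and corollary \ref{cor metric indep} as literally stated: one must confirm that $P$ is $g$-trace class (not known in general, but supplied here by proposition \ref{prop GZ cpt} and finiteness of the $L^2$-Betti numbers of a cocompact proper action), that $P$ is constant along the chosen path, and that the $(e^{-t\Delta_E}-P)$-versions of \eqref{eq general large t conditions} are precisely the ones used in the proofs of proposition \ref{prop NS cpt} and lemmas \ref{form1} and \ref{form2}. Given proposition \ref{prop GZ cpt} and the remarks already present in the text, each of these is routine.
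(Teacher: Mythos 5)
Your argument is correct and follows essentially the same route as the paper: compactness of $G/Z$ plus proposition \ref{prop GZ cpt} lets the hypothesis $\alpha_e^p>0$ verify the weakened large-time conditions \eqref{eq general large t conditions}, after which the variant of theorem \ref{metric_indep_cconj}/corollary \ref{cor metric indep} gives convergence and metric independence. The extra bookkeeping you supply (that $P$ is $g$-trace class, the $e^{-t\Delta_E}-P$ modification of remark \ref{rem term P}, the even-dimensional case via proposition \ref{prop vanish even} and remark \ref{rem chi g P}, and convexity of the space of $G$-invariant metrics) is exactly what the paper leaves implicit.
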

\begin{proof}
The volume growth function $F_2$ in \eqref{eq def F2} is bounded, so by lemma \ref{lem F2 F3} with $b=0$, we now have $F_3(t) = \cO(t^{1/2})$.  
 To apply corollary \ref{cor metric indep} directly, we would need $\kappa_t$ to satisfy \eqref{eq decay kappa gen}  for all $\epsilon$, with $F_1(t) = \cO(t^{-\beta})$ for a $\beta>1/2$, as $t \to \infty$. 
 
 However, the more general conditions \eqref{eq general large t conditions} hold 
  if $\alpha_e^p>0$ for all $p$ and $\epsilon$ via proposition \ref{prop GZ cpt} (this was used in the proof of proposition 3 in \cite{Lott99}). Therefore a version of corollary \ref{cor metric indep} with these weaker, but still sufficient, conditions applies and yields convergence and metric independence.
\end{proof}

In general, it is not known if the condition $\alpha_e^p>0$ in corollary \ref{cor metric indep GZ cpt} depends on the metric. It does not in the setting of lemma \ref{lem alpha e invar}.

\begin{corollary}\label{cor metric indep invertible}
Suppose that $M$ is simply connected. Then invertibility of $\Delta_E$ does not depend on the Riemannian metric. If $\Delta_E$ is invertible, then equivariant analytic torsion converges and is metric independent if the sets $X_r$ have polynomial, or slow enough exponential, volume growth. 
\end{corollary}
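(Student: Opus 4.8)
The first statement is a refinement of the observation, made in the proof of Lemma \ref{form1}, that $\ker(\Delta_E)$ is metric-independent. Since $M$ is simply connected, the holonomy of $\nabla^E$ is trivial, so $(E,\nabla^E)$ is $G$-equivariantly isometric to a trivial bundle $M\times E_0$ with $\nabla^E=d$; hence $\Delta_E=\Delta\otimes\mathrm{id}_{E_0}$, where $\Delta$ is the untwisted Hodge--de Rham Laplacian, and $\spec(\Delta_E)=\spec(\Delta)$. Invertibility of $\Delta_E$ is equivalent to $\ker(\Delta)=0$ \emph{and} $0$ being isolated in $\spec(\Delta)$. The first of these is the vanishing of the reduced $L^2$-cohomology of $M$, which depends only on the quasi-isometry class of the metric by Proposition 3.1 in \cite{carron}; the second is likewise a quasi-isometry invariant, by the quasi-isometry invariance of the behaviour of $\spec(\Delta)$ near $0$ that underlies the theory of Novikov--Shubin invariants \cite{ES89, GS91, NS86, Lott92c}. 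As all $G$-invariant Riemannian metrics on $M$ are mutually quasi-isometric by Lemma \ref{lem quasi-isom}, invertibility of $\Delta_E$ does not depend on the $G$-invariant metric.

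Now assume $\Delta_E$ is invertible, write $c_0:=\inf\spec(\Delta_E)>0$, and fix a \v{S}varc--Milnor function $l$ for the action with respect to $g$ (as is tacit in the sets $X_r$ of \eqref{eq conj cpt}). The plan is to verify the hypotheses of Corollary \ref{cor metric indep} when $\dim M$ is odd (for $\dim M$ even, equivariant analytic torsion equals $1$ as soon as it converges, by Proposition \ref{prop vanish even}). The key analytic input is, for each $p$, a long-time bound on the heat kernel $\kappa_t^p$ of $e^{-t\Delta_E^p}$ of the form \eqref{eq decay kappa gen} in which $F_1$ decays exponentially. Since $c_0\leq\inf\spec(\Delta_E^p)$, the operator $\Delta_E^p-c_0$ is nonnegative and self-adjoint, and its associated wave operator still has propagation speed at most $1$ (a zeroth-order shift does not change the principal symbol); hence the Davies--Gaffney estimate gives
\[
\bigl|(e^{-t\Delta_E^p}u,v)_{L^2}\bigr|\leq e^{-c_0 t}\,e^{-d(A,B)^2/4t}\,\|u\|_{L^2}\|v\|_{L^2}
\]
for $u,v\in L^2$ supported in subsets $A,B\subset M$. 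Writing $e^{-t\Delta_E^p}=e^{-\frac12\Delta_E^p}\,e^{-(t-1)\Delta_E^p}\,e^{-\frac12\Delta_E^p}$ for $t\geq1$, using the small-time Gaussian bound of Example \ref{ex kappa t heat op} for the outer factors (at $t=\tfrac12$), decomposing their kernels over concentric annuli around $m$ and $m'$, and summing the resulting Gaussian series, one obtains constants $C,a_3>0$ such that $\|\kappa_t^p(m,m')\|\leq Ce^{-c_0 t}e^{-a_3 d(m,m')^2/t}$ for all $t\geq1$ and $m,m'\in M$; thus \eqref{eq decay kappa gen} holds with $F_1(t)=Ce^{-c_0 t}$, and the same estimate holds for $e^{-t\Delta_E}$.

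Applying Lemma \ref{lem F2 F3} to the function $F_3$ of \eqref{eq def F3} then finishes the proof. If the $X_r$ have polynomial volume growth, $F_3$ grows polynomially and $F_1F_3=\cO(t^{-\alpha})$ for every $\alpha>0$; if the $X_r$ have exponential volume growth of rate $b$, then $F_3(t)=\cO(t^{1/2}e^{b^2 t/4a})$ with $a=a_3 b_1^2/2$, so $F_1(t)F_3(t)=\cO\!\left(t^{1/2}e^{(b^2/4a-c_0)t}\right)$, which is $\cO(t^{-\alpha})$ for some $\alpha>0$ exactly when $b^2/4a<c_0$ -- this is precisely the condition of slow enough exponential growth. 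Since $\ker(\Delta_E)=0$, Propositions \ref{prop conv small t} and \ref{prop NS cpt} now give convergence of $T_g(\nabla^E)$, and Corollary \ref{cor metric indep} gives metric independence. For the latter one needs a single pair $F_1,F_3$ to work along a path of metrics: the volume growth of the $X_r$ is governed by $l$ and the fixed measure on $G/Z$, hence is metric-independent (with the constants in \eqref{eq SM} uniform on a compact path by Lemma \ref{lem quasi-isom}); the small-time constants are uniform by bounded geometry; and $c_0$ stays bounded below on a compact path segment because a spectral gap of $\Delta_E$ survives a $C^2$-small perturbation of the $G$-invariant metric (such a perturbation changes $\Delta_E$ by a second-order operator that is $\Delta_E$-bounded with arbitrarily small relative bound). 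Arbitrary $G$-invariant metrics are joined through such segments using convexity of the space of $G$-invariant metrics.

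The main obstacle is the long-time heat kernel bound of the second paragraph: extracting a genuinely Gaussian off-diagonal estimate with exponential-in-$t$ decay on \emph{differential forms} from the bare existence of a spectral gap, and doing so uniformly in the metric. This is standard but delicate, and is exactly the kind of large-time control discussed at the end of Subsection \ref{sec large t conv} and in the references given there, e.g.\ \cite{Grigoryan09}.
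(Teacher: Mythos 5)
Your overall route is the same as the paper's: extract an off-diagonal bound \eqref{eq decay kappa gen} with exponentially decaying $F_1$ from the spectral gap, feed it into Lemma \ref{lem F2 F3} to absorb polynomial or slow enough exponential growth of the sets $X_r$, and conclude via Propositions \ref{prop conv small t} and \ref{prop NS cpt} and Corollary \ref{cor metric indep} (with Proposition \ref{prop vanish even} disposing of the even-dimensional case). You add detail where the paper is terse: the paper simply asserts that the heat kernel satisfies \eqref{eq decay kappa gen} with $F_1(t)=\cO(e^{-\Lambda t})$, whereas you derive it by a Davies--Gaffney plus composition argument, which is sound (the zeroth-order shift $\Delta_E^p-c_0$ keeps unit propagation speed; alternatively one can interpolate, $\min(e^{-c_0 t},e^{-d^2/4t})\le e^{-c_0 t/2}e^{-d^2/8t}$). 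For uniformity along a path of metrics you prove positivity of the infimum of the spectral bottoms by perturbation theory on compact segments, whereas the paper sidesteps this by allowing $F_1,F_3$ to depend on $\epsilon$ (which settles the polynomial case outright) and leaves the exponential case conditional on that infimum being positive via the phrase ``slow enough''; your route is fine modulo the routine identification of the varying $L^2$-spaces. The one soft spot is the first assertion: the paper settles metric independence of invertibility by citing corollary 29 of \cite{Lott92c}, which is exactly the needed statement for simply connected manifolds (and, via your trivialisation of $(E,\nabla^E)$, applies to the twisted Laplacian as well). Your stated mechanism --- ``quasi-isometry invariance of the behaviour of $\spec(\Delta)$ near $0$ underlying Novikov--Shubin theory'' --- is not quite right as a justification: those invariance results concern von Neumann spectral density functions for free cocompact actions of discrete groups, and, as the paper itself stresses in subsection \ref{sec large t conv}, metric independence of near-zero spectral behaviour is not known in the present generality. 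So keep your trivial-holonomy reduction, but replace the appeal to the Novikov--Shubin circle of ideas by the specific result in \cite{Lott92c}.
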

\begin{proof}
Suppose that $\Delta_E$ is invertible. This condition
 is independent of the metric if  $M$ is simply connected by corollary 29 in \cite{Lott92c}. Then, on a path of $G$-invariant Riemannian metrics, the heat kernel $\kappa_t$ satisfies \eqref{eq decay kappa gen} with $F_1(t) = \cO(e^{-\Lambda t})$, where $\Lambda$ is the infimum of the bottoms of the spectra  of the invertible Laplacians along the path. If $\Lambda$ is positive,  then lemma \ref{lem F2 F3}(b) implies that the condition $F_1(t)F_3(t) = \cO(t^{-\alpha})$ for some $\alpha>0$ holds if $F_2(r) = \cO(e^{br})$ for $b < 2\sqrt{a\Lambda}$, with $a$ as above \eqref{eq def F2}, or if the sets $X_r$ have polynomial volume growth. 

We have assumed for simplicity that the functions $F_1$ and $F_3$ do not depend on $\epsilon$. More generally, theorem \ref{metric_indep_cconj} and corollary \ref{cor metric indep} still hold if these functions do depend on $\epsilon$, if for all $\epsilon>0$ there is an $\alpha_{\epsilon}>0$ such that $F_1(t)F_3(t) = \cO(t^{-\alpha_{\epsilon}})$. 
%In cases where $M$ is simply connected and $\Delta_E$ is invertible (a property independent of the Riemannian metric as mentioned above), 
This implies that equivariant analytic torsion is independent of the metric if the sets $X_r$ have polynomial volume growth. 
\end{proof}
It is unknown to us if
simple connectivity is necessary in corollary \ref{cor metric indep invertible}.

For another consequence of theorem \ref{metric_indep_cconj}, we use results on long-time heat kernel decay from \cite{Coulhon20}. We introduce some notation and conditions to state this. We consider the case where $E$ is the trivial line bundle, with the trivial action by $G$ on its fibres.

For a $G$-invariant Riemannian metric $\lambda$ on $M$, and for $m \in M$ and $r>0$, we write $V_{\lambda}(m, r)$ for the volume of the ball around $m$ of radius $r$, with respect to $\lambda$.

Given a $G$-invariant metric on $M$, there is a Bochner-type equality
\[
\Delta^p = \nabla^*\nabla + R^p,
\]
with $\nabla$ the connection on $p$-forms induced by the Levi--Civita connection, and $R^p \in \End(\Bigwedge^p T^*M)$ defined in terms of curvature. In particular,  $R^1$ is Ricci curvature. We write $\Riem^+(M)^G$ for the space of $G$-invariant Riemannian metrics on $M$ such that $R^p$ is nonnegative for all $p$.

Recall the definition \eqref{eq def F2} of the volume-growth function $F_2$ of the sets $X_r$.
\begin{corollary} \label{cor metric indep slow growth}
Suppose that $\Riem^+(M)^G$ is not empty. 
Let $\lambda \in \Riem^+(M)^G$, and suppose that there are $m_0 \in M$, $C>0$ and $\kappa>4$ such that for all $r>0$,
\beq{eq vol growth kappa}
V_{\lambda}(m_0, r) \geq Cr^{\kappa}.
\eeq
Suppose that $F_2(r) = \cO(r^b)$ for some $b<\kappa -1$. Suppose that $\ker(\Delta) =\{0\}$, and that there is a \v{S}varc--Milnor function for the action, with respect to $g$. Then
\begin{itemize}
\item[(a)] the conditions of definition \ref{def g torsion} hold for $\lambda$;
\item[(b)] the conditions of definition \ref{def g torsion} in fact hold for all $\lambda' \in  \Riem^+(M)^G$;
\item[(c)] equivariant analytic torsion is constant on smooth path components of $\Riem^+(M)^G$.
\end{itemize}
\end{corollary}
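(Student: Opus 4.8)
The plan is to verify the hypotheses of Propositions \ref{prop conv small t} and \ref{prop NS cpt} and of Corollary \ref{cor metric indep}; the point is that the curvature condition $\lambda \in \Riem^+(M)^G$ is precisely what makes the long-time heat kernel decay estimates for differential forms of \cite{Coulhon20} applicable, and these estimates are in turn controlled by the volume growth \eqref{eq vol growth kappa}. First I would record a heat kernel bound. Since $M/G$ is compact and $\lambda$ is $G$-invariant, $(M,\lambda)$ has bounded geometry, and by $G$-invariance together with cocompactness the lower bound \eqref{eq vol growth kappa} at the single point $m_0$ upgrades (via the triangle inequality) to a uniform one: $V_\lambda(m,r) \geq C_1 r^{\kappa}$ for all $m \in M$ and all large $r$. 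Since $R^p \geq 0$ for every $p$, the negative part of the curvature operator vanishes, so the results of \cite{Coulhon20} apply; together with $\kappa > 4$ and the uniform volume lower bound they produce constants $C,a>0$ and $T_0 \geq 1$ such that the kernel $\kappa_t^p$ of $e^{-t\Delta^p}$ satisfies $\|\kappa_t^p(m,m')\| \leq C\, t^{-\kappa/2}\, e^{-a\, d(m,m')^2/t}$ for all $t \geq T_0$, all $m,m' \in M$ and all $p$. Combining this with the short-time bounds of Example \ref{ex kappa t heat op} on the compact interval $(0,T_0]$, the estimate \eqref{eq decay kappa gen} holds for all $t \geq 1$ with $F_1(t) = C' t^{-\kappa/2}$.

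For (a): fix a \v{S}varc--Milnor function for the action with respect to $g$, which exists by hypothesis. From $F_2(r) = \cO(r^b)$ and Lemma \ref{lem F2 F3}(a) we get $F_3(t) = \cO(t^{(b+1)/2})$, so
\[
F_1(t)F_3(t) = \cO\bigl(t^{-(\kappa-b-1)/2}\bigr),
\]
with $\alpha := (\kappa-b-1)/2 > 0$ because $b < \kappa-1$. Since $\ker(\Delta)=\{0\}$, the projection $P$ is zero, hence trivially $g$-trace class, so Proposition \ref{prop conv small t} gives conditions (1) and (3) of Definition \ref{def g torsion}, while Proposition \ref{prop NS cpt} gives $\alpha_g^p \geq \alpha$ for every $p$, whence $\cT_g(t) = \cO(t^{-\alpha})$, which is condition (2). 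This proves (a).

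For (b), let $\lambda' \in \Riem^+(M)^G$ be arbitrary. Both metrics are $G$-invariant and $M/G$ is compact, so by the argument in the proof of Lemma \ref{lem quasi-isom} they are uniformly comparable as quadratic forms; hence distances and volumes of balls for $\lambda'$ are comparable to those for $\lambda$, and in particular $\lambda'$ again satisfies a polynomial volume lower bound with the same exponent $\kappa$. The function $F_2$ in \eqref{eq def F2} depends only on $G/Z$ and the \v{S}varc--Milnor function, not on the metric on $M$, so the hypothesis $F_2(r) = \cO(r^b)$ is unchanged. Thus $\lambda'$ satisfies all the hypotheses imposed on $\lambda$, and the argument of (a) applied to $\lambda'$ gives (b). For (c), let $(\lambda_\epsilon)$ be a smooth path in $\Riem^+(M)^G$; on each compact parameter interval the metrics $\lambda_\epsilon$ are uniformly comparable, so the heat kernel estimate of the first paragraph holds with an $\epsilon$-independent exponent $\kappa$ and $\epsilon$-dependent constants, while $F_3$ is unchanged, giving $F_1(t)F_3(t) = \cO(t^{-\alpha})$ along the path with $\alpha = (\kappa-b-1)/2 > 0$. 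Corollary \ref{cor metric indep} then forces $T_g(\nabla^E)$ to take the same value at the two endpoints (in the case $\dim M$ odd; when $\dim M$ is even, equivariant analytic torsion equals $1$ whenever it is defined, by Proposition \ref{prop vanish even}). Since any two metrics in a path component of $\Riem^+(M)^G$ are joined by such a path, this proves (c).

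The heart of the argument, and the only place requiring more than bookkeeping, is the heat kernel estimate of the first paragraph: one must see that $\lambda \in \Riem^+(M)^G$ forces the negative part of the curvature operator on each $\Bigwedge^p T^*M$ to vanish (being $G$-invariant, it either decays at infinity or is zero), so that the form heat kernel estimates of \cite{Coulhon20} apply, and one must extract from those the sharp power $t^{-\kappa/2}$ dictated by the uniform volume lower bound \eqref{eq vol growth kappa}, which is where the condition $\kappa > 4$ is used. Once this estimate is available, Lemma \ref{lem F2 F3}, Proposition \ref{prop NS cpt} and Corollary \ref{cor metric indep} do the rest.
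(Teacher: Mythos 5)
Your proposal is correct and follows essentially the same route as the paper: derive the long-time bound $F_1(t)=\cO(t^{-\kappa/2})$ for the form heat kernels from the nonnegativity of the curvature operators $R^p$ and the volume lower bound \eqref{eq vol growth kappa} via \cite{Coulhon20}, combine it with $F_3(t)=\cO(t^{(b+1)/2})$ from lemma \ref{lem F2 F3}(a), and then invoke propositions \ref{prop conv small t} and \ref{prop NS cpt} for (a), the metric-independence (up to quasi-isometry) of the hypotheses for (b), and corollary \ref{cor metric indep} along smooth paths for (c). The only cosmetic difference is that the paper routes the application of corollary 1.12 of \cite{Coulhon20} explicitly through Grigoryan's on-diagonal scalar bound and volume doubling (consequences of $R^1\geq 0$) and the vanishing of $\ker(\Delta)$, whereas you state the resulting kernel bound more summarily; the substance is the same.
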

\begin{proof}
Let $p_t$ be the heat kernel for the Hodge-Laplacian $\Delta^0$ on scalar functions.  
By proposition 5.2 in \cite{Grigoryan94} and the comment above it, nonnegativity of the Ricci curvature $R_1$ implies that there is an  $a_1>0$ such that for all $m \in M$ and  $t >0$, 
\beq{eq on-diag scalar}
p_t(m,m) \leq \frac{a_1}{V_{\lambda}(m, \sqrt{t})}. 
\eeq
The same result in \cite{Grigoryan94} also states that the function $V_{\lambda}$ satisfies a volume-doubling condition. In fact, it is shown that an estimate of the form \eqref{eq on-diag scalar} and a volume doubling condition together are equivalent to an isoperimetric inequality that follows from positivity of $R_1$.

A special case of corollary 1.12 in \cite{Coulhon20} states that the volume doubling property of $V_{\lambda}$, the estimate \eqref{eq on-diag scalar}, nonnegativity of $R^p$, \eqref{eq vol growth kappa} and vanishing of $\ker(\Delta)$ imply that \eqref{eq decay kappa gen} holds for all $t>0$, with
\beq{eq F1 FK}
F_1(t) = \cO\left(\frac{1}{V_{\lambda}(m, \sqrt{t})} \right) =\cO(t^{-\kappa/2}).
\eeq
We used \eqref{eq vol growth kappa} for the second estimate. 

By lemma \ref{lem quasi-isom} and proposition \ref{prop conv small t}, conditions (1) and (3) hold for any $G$-invariant Riemannian metric on $M$. Part (a) of lemma \ref{lem F2 F3} and \eqref{eq F1 FK} imply that
\beq{eq F1 F3 FK}
F_1(t)F_3(t) = \cO(t^{(b+1-\kappa)/2}).
\eeq
So the conditions of proposition \ref{prop NS cpt} hold, and hence condition (2) in definition \ref{def g torsion} is satisfied. This is part (a).

Part (b) follows from part (a), because the condition \eqref{eq vol growth kappa}, including the value of the constant $\kappa$,  is independent of the $G$-invariant Riemannian metric.

By part (b) and \eqref{eq F1 F3 FK},  corollary \ref{cor metric indep} applies to any two metrics in the same smooth path component of $ \Riem^+(M)^G$, so (c) follows.
\end{proof}

\section{Further properties}

After the results on convergence and metric independence from sections \ref{sec conv} and \ref{sec metric indep}, we prove some more properties of equivariant analytic torsion. Two of these involve the notion of a $g$-Euler chararacteristic. 

\subsection{The $g$-Euler characteristic}

Let $M^g$ be the fixed-point set of $g$ in $M$. Then $M^g$ is invariant under $Z$, and $M^g/Z$ is compact (see lemma 2.4 in \cite{HW2}). The connected components of $M^g$ are submanifolds of $M$, possibly of different dimensions. We will treat $M^g$ as a submanifold of $M$, but in general all constructions should be applied to its connected components separately.

Let $\psi_g \in C^{\infty}_c(M^g)$ be such that  for all $m \in M^g$,
\[
\int_Z \psi_g(zm)\, dz = 1.
\]
One way to choose this function is \eqref{eq def psi g}, with $X=G/Z$.
Let $R^{M^g}$ be the curvature of the Levi--Civita connection on $M^g$, and $\Pf(-R^{M^g})$ the Pfaffian of $-R^{M^g}$, see definition 1.35 in \cite{BGV}. If  the closure $K$ of the cyclic group  generated by $g$ is compact,  
 let 
$
\ch_g(E|_{\supp(\psi_g)})
$
be the equivariant Chern character of $E|_{\supp(\psi_g)}$ with respect to the action by $K$, evaluated at $g$, see section 3 of \cite{ASIII}.
\begin{definition}\label{def chi g}
If $g$ lies in a compact subgroup of $G$, then 
the \emph{$g$-Euler characteristic} of $E$ is
\beq{eq def chi g}
\chi_g(E) := \frac{1}{(2\pi)^{\dim(M^g)/2}} \int_{M^g} \psi_g \Pf(-R^{M^g})\ch_g(E|_{\supp(\psi_g)}).
\eeq
If $g$ does not lie in a compact subgroup of $G$, then we set $\chi_g(E) := 0$.
\end{definition}

\begin{example} \label{ex chi g cpt}
If $M^g$, or equivalently $Z$, is compact, then we may take $\psi_g \equiv 1$. Then, if $E$ is the trivial line bundle,  $\chi_g(E)$ is the Euler characteristic of $M^g$ by the Gauss--Bonnet--Chern theorem, see theorem 4.7 in \cite{BGV} or theorem 3.7 in \cite{Zhang01}. %\Todo: nontrivial $E$?

If $Z$ is discrete and acts freely on $M^g$, and $E$ is the trivial line bundle, then 
\[
\chi_g(E) = \frac{1}{(2\pi)^{\dim(M^g/Z)/2}} \int_{M^g/Z}  \Pf(-R^{M^g/Z}),
\]
which equals 
the Euler characteristic of $M^g/Z$.
\end{example}

\begin{example}\label{ex chi g odd zero}
%If $g$ is not contained in a compact subgroup of $G$, then $M^g = \emptyset$ because the action is proper. So then $\chi_g(E) = 0$. Similarly, if the action by $G$ on $M$ is free, then $\chi_g(E) = 0$ for all  $g \not=e$.
%
If $M$ is odd-dimensional, then so is $M^g$, because the action by $G$ preserves the orientation. The integrand in \eqref{eq def chi g} only has components in even degrees, so $\chi_g(E) = 0$. 
\end{example}

%Consider the $G$-equivariant elliptic operator $\nabla^E + (\nabla^E)^*$ on $M$. It has an equivariant index in $K_{0}(C^*_r(G))$, the $K$-theory of the reduced group $C^*$-algebra of $G$. 
%
%
%In some cases, the orbital integral functional $\tau_g\colon C_c(G) \to \C$, given by
%\[
%\tau_g(f) := \int_{G/Z} f(hgh^{-1})\, d(hZ), 
%\]
%extends to trace on a dense subalgebra $\cA(G) \subset C^*_r(G)$, closed under holomorphic functional calculus. Then it defines a map
%\beq{eq taug AG}
%\tau_g\colon K_{0}(C^*_r(G)) = K_{0}(\cA(G)) \to \C.
%\eeq
We will use an equivariant version of the Gauss--Bonnet--Chern theorem for proper actions.
\begin{theorem}\label{thm g GBC}
Suppose that there is a \v{S}varc--Milnor function for the action by $G$ on $M$ with respect to $g$.
%Suppose that $M$ is even-dimensional. 
Then for all $t>0$, 
\beq{eq g CGB}
 \Tr_g( (-1)^F e^{-t \Delta_E}) = \chi_g(E).
\eeq
\end{theorem}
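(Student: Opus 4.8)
The plan is to reduce the statement to the classical (non-equivariant) Gauss--Bonnet--Chern theorem, applied on the fixed-point manifold $M^g$, via the McKean--Singer argument. First I would observe that the left-hand side of \eqref{eq g CGB} is independent of $t$: differentiating in $t$ gives $\Tr_g\bigl((-1)^F \Delta_E e^{-t\Delta_E}\bigr)$, and since $\Delta_E = \{d,\delta\}$ with $d,\delta$ both commuting with $\Delta_E$, and $(-1)^F$ anticommuting with both $d$ and $\delta$, the trace property of $\Tr_g$ (lemma \ref{lem trace prop}) forces this to vanish. Here one must check that all the relevant operators ($d e^{-t\Delta_E}$, $\delta e^{-t\Delta_E}$, etc.) are $g$-trace class, which follows from example \ref{ex kappa t heat op} together with lemma \ref{lem kappa t g tr cl}, using the hypothesis that a \v{S}varc--Milnor function exists.

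Next, since the left-hand side is $t$-independent, I would evaluate it in the limit $t \downarrow 0$. Using lemma \ref{lem Trg GZ cpt X} with $X = G/Z$, one rewrites $\Tr_g\bigl((-1)^F e^{-t\Delta_E}\bigr)$ as an integral over $M$ of $\psi_g \tr\bigl(g\,\kappa_t(g^{-1}m,m)\bigr)$ against the supertrace sign $(-1)^F$, i.e.\ as an integral of $\psi_g$ times the pointwise $g$-supertrace of the heat kernel $e^{-t\Delta_E}$ evaluated at the pair $(g^{-1}m, m)$. As $t \downarrow 0$, the heat kernel localises near the diagonal, and since the kernel is $g$-equivariant and we are evaluating at $(g^{-1}m,m)$, the contribution concentrates on the fixed-point set $M^g$ (the Gaussian factor $e^{-a d(g^{-1}m,m)^2/t}$ kills everything at positive distance from $M^g$). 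On a neighbourhood of $M^g$ the relevant asymptotics are exactly those computed in the equivariant McKean--Singer/local index argument for compact groups, cf.\ the treatment of the equivariant Gauss--Bonnet--Chern theorem in \cite{BGV} (theorem 6.11) and \cite{ASIII}: the $t\downarrow 0$ limit of the $g$-supertrace of the heat kernel of the twisted de Rham Laplacian, integrated over $M^g$, produces precisely $(2\pi)^{-\dim(M^g)/2}\Pf(-R^{M^g})\ch_g(E|_{M^g})$. Integrating against $\psi_g$ (which satisfies $\int_Z \psi_g(zm)\,dz = 1$, making the integral over $M^g$ effectively an integral over $M^g/Z$, which is compact) yields exactly $\chi_g(E)$ as in definition \ref{def chi g}.

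Finally, if $g$ does not lie in a compact subgroup, then either $M^g = \emptyset$, or more generally the local computation still applies but one must argue directly that $\Tr_g\bigl((-1)^F e^{-t\Delta_E}\bigr)=0$; in fact when $M^g=\emptyset$ the off-diagonal Gaussian decay forces the $t\downarrow 0$ limit to vanish, matching $\chi_g(E):=0$. (One should also note example \ref{ex chi g odd zero}: when $\dim M$ is odd, $\dim M^g$ is odd and both sides vanish, consistent with the main metric-independence theorem.)

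The main obstacle I anticipate is making the $t\downarrow 0$ localisation onto $M^g$ rigorous in the \emph{noncompact} setting: one needs uniform-in-$t$ control of the off-$M^g$ contribution to the integral over $G/Z$ of the pointwise $g$-supertrace, so that the limit can be taken inside the integral. This is precisely the kind of estimate handled by lemma \ref{lem Trg Xcompl} and lemma \ref{lem small t large l} — splitting $G/Z$ into a compact piece $X$ (where the classical compact-group local index computation applies verbatim, by lemma \ref{lem Trg GZ cpt X} and the standard heat asymptotics of example \ref{ex kappa t heat op}) and its complement (where the Gaussian factor, combined with the \v{S}varc--Milnor lower bound on $d(hgh^{-1}m,m)$, gives a bound that vanishes as $t \downarrow 0$ after integration). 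Once the reduction to a compact subset of $G/Z$ and a compact neighbourhood of $M^g$ in $M$ is in place, the remaining content is the classical equivariant Gauss--Bonnet--Chern computation, which I would cite from \cite{BGV, ASIII} rather than reproduce.
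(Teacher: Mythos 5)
Your outline is correct and is essentially the route the paper takes: the identity is an equivariant Gauss--Bonnet--Chern/index theorem obtained by localising the $g$-supertrace of the heat kernel at $M^g$, with the \v{S}varc--Milnor function controlling the noncompact part of $G/Z$, and with $\Tr_g\bigl((-1)^F e^{-t\Delta_E}\bigr)=0$ when $g$ lies in no compact subgroup, since properness forces $M^g=\emptyset$. The only real difference is presentational: instead of re-running the McKean--Singer $t$-independence and small-$t$ localisation, the paper cites the known equality $\Tr_g\bigl((-1)^F e^{-t\Delta_E}\bigr)=\int_{M^g}\psi_g\,\AS_g(\nabla^E+(\nabla^E)^*)$ from \cite{Wang14,Wangwang,HW2,PPST21}, observes that the proof of theorem 4.31 in \cite{PPST21} goes through once a \v{S}varc--Milnor function exists (its conditions replacing the lemmas used there), and then identifies the Atiyah--Segal--Singer integrand with $(2\pi)^{-\dim(M^g)/2}\Pf(-R^{M^g})\ch_g(E)$ by classical arguments.
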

\begin{proof}
Suppose first that 
$g$ lies in a compact subgroup of $G$. Let  $
 \AS_g(\nabla^E + (\nabla^E)^*) 
 $
 be the Atiyah--Segal--Singer integrand for the Dirac-type operator $\nabla^E + (\nabla^E)^*$, see theorem 6.16 in \cite{BGV}. 
 The equality 
\beq{eq index thms}
\Tr_g( (-1)^F e^{-t \Delta_E}) = \int_{M^g} \psi_g \AS_g(\nabla^E + (\nabla^E)^*),
\eeq
was proved in various special cases in different places: 
\begin{itemize}
\item If $g = e$,  then 
 \eqref{eq index thms} is theorem 6.10 in \cite{Wang14};
\item if $G$ is discrete and finitely generated, then 
 \eqref{eq index thms} 
is theorem 3.23 in \cite{Wangwang};
\item if $G$ is a connected semisimple Lie group, and $g$ is semisimple, then
  \eqref{eq index thms} is proposition 4.11  in \cite{HW2}; see also theorem 4.31 in \cite{PPST21}.
\end{itemize}
On careful inspection of the proof of  theorem 4.31 in \cite{PPST21}, which is \eqref{eq index thms} for semisimple elements of connected, semisimple Lie groups, one finds that what is really needed to make that proof work is just the existence of a \v{S}varc--Milnor function. More precisely, conditions (1) and (3) in definition \ref{def SM} take the places of  lemmas 4.12 and 4.19 in \cite{PPST21}, respectively. Lemma 2.4 in \cite{HW2} takes the place of lemma 4.20 in \cite{PPST21}. The rest of the argument stays the same.

By classical arguments, 
\[
 \AS_g(\nabla^E + (\nabla^E)^*)  = \frac{1}{(2\pi)^{\dim(M^g)/2}}  \Pf(-R^{M^g})\ch_g(E|_{\supp(\psi_g)}).
\]
See for example page 262 of \cite{Lawson89} and proposition 4.6 in \cite{BGV}.
%(\Todo: could say more here? In particular about the sign/factor, and the twist by $E$?)

In the case where $g$ does not lie in a compact subgroup of $G$, the arguments from \cite{HW2, PPST21, Wangwang} generalise in a more direct way to our current setting, to show that $\Tr_g( (-1)^F e^{-t \Delta_E}) = 0$. The expression \eqref{eq def Trh} for this number then localises near  $M^g$, which is empty because the action is proper. Use,  for example,  point (2) in the proof of theorem 4.31 in \cite{PPST21}, with the \v{S}varc--Milnor function used as above, and note that the empty set is a neighbourhood of $M^g$.
\end{proof}

\begin{remark}
In the settings considered in \cite{HW2, Wangwang, Wang14}, it is also shown that the number \eqref{eq g CGB} can be obtained from a $K$-theoretic index via an orbital integral trace on a  suitable convolution algebra of functions on $G$.
\end{remark}

\subsection{Triviality on even-dimensional manifolds}

\begin{proposition}\label{prop vanish even}
Suppose that the dimension of $M$ is even,  that there is a \v{S}varc--Milnor function for the action, and that $P$ is $g$-trace class.
\begin{itemize}
\item[(a)] If $\Tr_g((-1)^FP) = \chi_g(E)$ (in particular, if $P=0$ and $\chi_g(E)=0$), then the conditions of definition \ref{def g torsion} hold, and $T_g(\nabla^E) = 1$. 
 \item[(b)] If $\Tr_g( (-1)^F P) \not= \chi_g(E)$ (in particular, if $P=0$ and $\chi_g(E)\not=0$), then the expression \eqref{eq def deloc torsion} for $T_g(\nabla^E)$ diverges.
 \end{itemize}
\end{proposition}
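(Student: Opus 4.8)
The plan is to exploit the fact that, since $M$ is even-dimensional, Hodge duality forces the function $\cT_g$ of \eqref{eq curly T} to be \emph{constant} in $t$, and that the equivariant Gauss--Bonnet--Chern theorem (theorem \ref{thm g GBC}) evaluates this constant. Write $n := \dim(M)$. The Hodge star $\star = \star \otimes 1_E$ of \eqref{eq Hodge star E} is a $G$-equivariant isometric isomorphism from $\Bigwedge^p T^*M \otimes E$ onto $\Bigwedge^{n-p} T^*M \otimes E$, because the Riemannian metric is $G$-invariant, $G$ preserves the orientation, and $E$ is a Hermitian $G$-vector bundle; moreover $\star$ intertwines $\Delta_E^p$ with $\Delta_E^{n-p}$, hence $e^{-t\Delta_E^p}$ with $e^{-t\Delta_E^{n-p}}$ and $P_p$ with $P_{n-p}$.

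Working on the total bundle $\Bigwedge^* T^*M \otimes E$, I would apply the trace property of the $g$-trace (lemma \ref{lem trace prop}) to $S = \star^{-1}$ and $T = \star(e^{-t\Delta_E^p} - P_p)$ to obtain $\Tr_g(e^{-t\Delta_E^p} - P_p) = \Tr_g(e^{-t\Delta_E^{n-p}} - P_{n-p})$ for all $p$ and $t>0$; all operators involved are $g$-trace class by proposition \ref{prop conv small t}, since a \v{S}varc--Milnor function exists and $P$ is $g$-trace class. Substituting $p \mapsto n-p$ in \eqref{eq curly T} and using that $n$ is even, so $(-1)^{n-p} = (-1)^p$, gives
\[
\cT_g(t) = \sum_{p=0}^{n} (-1)^p (n-p)\,\Tr_g(e^{-t\Delta_E^p} - P_p) = n\,\Tr_g\bigl((-1)^F(e^{-t\Delta_E} - P)\bigr) - \cT_g(t),
\]
so that $\cT_g(t) = \tfrac{n}{2}\,\Tr_g\bigl((-1)^F(e^{-t\Delta_E} - P)\bigr)$. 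By theorem \ref{thm g GBC}, $\Tr_g((-1)^F e^{-t\Delta_E}) = \chi_g(E)$, whence
\[
\cT_g(t) = \tfrac{n}{2}\bigl(\chi_g(E) - \Tr_g((-1)^F P)\bigr) =: c,
\]
independent of $t$.

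Granting this, I would conclude both parts as follows. In case (a) the hypothesis gives $c = 0$, hence $\cT_g \equiv 0$: conditions (1) and (3) of definition \ref{def g torsion} hold by proposition \ref{prop conv small t}, condition (2) holds trivially, and both terms of \eqref{eq def deloc torsion} vanish, so $-2\log T_g(\nabla^E) = 0$ and $T_g(\nabla^E) = 1$. In case (b) the hypothesis gives $c \neq 0$ (for $n \geq 2$; the case $n = 0$ is vacuous, since then $e^{-t\Delta_E} = P$ and so $\chi_g(E) = \Tr_g((-1)^F P)$ by theorem \ref{thm g GBC}), so the second term $\int_1^{\infty} t^{-1}\cT_g(t)\, dt = c \int_1^{\infty} t^{-1}\, dt$ of \eqref{eq def deloc torsion} diverges.

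The step that I expect to need the most care is the conjugation identity $\Tr_g(e^{-t\Delta_E^p} - P_p) = \Tr_g(e^{-t\Delta_E^{n-p}} - P_{n-p})$: unlike the ordinary trace, the trace property in lemma \ref{lem trace prop} comes with hypotheses on the kernels and on $g$-trace-class-ness of both cyclic orderings, and $\star$ maps between \emph{different} bundles, so one must set everything up on $\Bigwedge^* T^*M \otimes E$ and check that $\star(e^{-t\Delta_E^p} - P_p)$ has a smooth kernel while $\star^{-1}$ has only a distributional one, and that $ST = e^{-t\Delta_E^p}-P_p$ and $TS = e^{-t\Delta_E^{n-p}}-P_{n-p}$ are both $g$-trace class. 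The remaining manipulations are routine.
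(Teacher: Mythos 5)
Your proof is correct and follows essentially the same route as the paper: Hodge duality shows $\cT_g(t) = \tfrac{n}{2}\Tr_g\bigl((-1)^F(e^{-t\Delta_E}-P)\bigr)$, which theorem \ref{thm g GBC} identifies as the constant $\tfrac{n}{2}\bigl(\chi_g(E)-\Tr_g((-1)^FP)\bigr)$, whence (a) and (b); the paper phrases the duality via the commutation relations $F\star+\star F = n\star$ and $(-1)^F\star = (-1)^n\star(-1)^F$ on the total bundle rather than your degree-by-degree reindexing, but this is cosmetic. The step you flag as delicate is in fact easier than you fear: since $\star$ is a $G$-equivariant vector bundle endomorphism of $\Bigwedge^*T^*M\otimes E$, the identity $\Tr_g(\star^{-1}A\star)=\Tr_g(A)$ follows directly from the fibrewise trace property applied to the smooth kernel, with no appeal to the operator-level lemma \ref{lem trace prop}.
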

\begin{proof}
As in \eqref{eq Hodge star E}, let $\star$ denote the Hodge star operator  on $E$-valued forms on $M$. The Hodge star 
operator commutes with the operators $e^{-t\Delta_E}$ and $P$, and therefore commutes 
with $e^{-t\Delta_E} - P$. Let  $\dim(M) = n$, then
\begin{equation}\label{F_comm_star}
F\star + \star F = n\star
\end{equation}
and
\begin{equation}\label{F_comm_star2}
(-1)^F\star = (-1)^n\star (-1)^F.
\end{equation}
Forgetting the functions $t^{s-1}$ and $t$ in equation \eqref{eqtor_compact}, consider
the integrand
\beq{eq Trg star}
\begin{split}
\Tr_g\bigg{(}
(-1)^FF\big{(}e^{-t\Delta_E} - P \big{)}
\bigg{)}
%&=
%\Tr_g\bigg{(}
%(-1)^FF\big{(}e^{-t\Delta_E} - P \big{)}\star\star^{-1}
%\bigg{)} \\
&=
\Tr_g\bigg{(}\star^{-1}
(-1)^FF\big{(}e^{-t\Delta_E} - P \big{)}\star
\bigg{)}\\
&=
\Tr_g\bigg{(}\star^{-1}
(-1)^FF\star\big{(}e^{-t\Delta_E} - P \big{)}
\bigg{)}.
\end{split}
\eeq
For the first equality, we used the trace property of $\Tr_g$. This is lemma \ref{lem trace prop}, but because $\star$ is a vector bundle endomorphism, this now follows directly from the trace property of the fibre-wise trace.
 
We have
\begin{align*}
\star^{-1}(-1)^FF\star &= \star^{-1}(-1)^F\star (n-F) \quad \text{by  \eqref{F_comm_star}} \\
&= 
(-1)^n(-1)^F(n-F) \quad \text{by  \eqref{F_comm_star2}}.
\end{align*} 

Together with \eqref{eq Trg star}, this implies
\[
\Tr_g\bigg{(}
(-1)^FF\big{(}e^{-t\Delta_E} - P \big{)}
\bigg{)} =
(-1)^n\Tr_g\bigg{(}
(-1)^F(n-F)\big{(}e^{-t\Delta_E} - P \big{)}
\bigg{)}.
\]
If $n$ is even, we obtain
\[
\Tr_g\bigg{(}
(-1)^FF\big{(}e^{-t\Delta_E} - P \big{)}
\bigg{)} = \frac{n}{2}
\Tr_g\bigg{(}
(-1)^F\big{(}e^{-t\Delta_E} - P \big{)}
\bigg{)}.
\]
We conclude that $\cT_g(t)$ is constant, because the right hand side is. (This is part of theorem \ref{thm g GBC}.) If $\chi_g(E) = \Tr_{g}((-1)^F P)$, then this constant is zero by theorem \ref{thm g GBC}, and part (a) follows. Otherwise, this constant is nonzero, and the second integral in \eqref{eq def deloc torsion} diverges as in part (b).
 \end{proof}

\begin{remark}\label{rem chi g P}
The condition $\Tr_g((-1)^FP) = \chi_g(E)$ in part (a) of proposition \ref{prop vanish even} always holds in cases where equivariant analytic torsion was studied before. Indeed,
\begin{itemize}
\item[(1)] if $M$ is compact, then $\Tr_g((-1)^FP) = \chi_g(E)$ is an equivariant version of the Gauss--Bonnet--Chern theorem, see example \ref{ex chi g cpt};
\item[(2)] if $g=e$, then $\Tr_e((-1)^FP) = \Tr_e((-1)^F e^{-t \Delta_E}) = \chi_e(E)$, see corollary 3.12 and theorem 6.10 in \cite{Wang14};
\item[(3)] if $g\not=e$ and the action is free, or $g$ is not contained in a compact subgroup,  then $\chi_g(E) = 0$ because $M^g = \emptyset$. Then the condition $\Tr_g((-1)^FP) = \chi_g(E)$ holds in the most relevant case where $\ker(\Delta_E)=\{0\}$.
\end{itemize}

More generally than the second point, suppose that $G/Z$ is compact. Then by proposition \ref{prop GZ cpt},
\[
|\Tr_g\bigl( (-1)^F (e^{-t\Delta_E} - P) \bigr)| \leq \vol(G/Z) |\Tr_e\bigl( (-1)^F (e^{-t\Delta_E} - P)| = 0,
\]
where we used point (2). Then case (a) of proposition \ref{prop vanish even} applies.

A crucial difference between the cases $g=e$ and $g\not=e$ is that if $g\not=e$, then $\Tr_g( (-1)^F e^{-t\Delta_E})$ may be nonzero even if the kernel of $\Delta_E$ is zero. This difference has consequences  to $K$-theory of group $C^*$-algebras and Dirac induction. For a semisimple Lie group with discrete series, the von Neumann trace only detects $K$-theory classes defined by discrete series representations, and these lie in the kernel of the relevant Dirac operator \cite{Atiyah77}. 
Corollary 4.1 in \cite{HW4} states that $g$-traces for nontrivial group elements determine all information on $K$-theory elements, and in particular can be nonzero even when the kernel of this Dirac operator is trivial. The trace $\tau_g$ in \cite{HW4} acting on $K$-theory is linked to the number $\Tr_g( (-1)^F e^{-t\Delta_E})$ via proposition 3.6 in \cite{HW2}. See also remark 5.22 in \cite{PPST21}.
\end{remark}

\subsection{A product formula}\label{sec prod form}

%\subsection{A product formula}

For $j=1,2$, let  $G_j$ be a unimodular, locally compact group 
 acting properly, isometrically and cocompactly on an oriented Riemannian manifold $M_j$, preserving the orientation. Let $g_j \in G_j$ be an element with centraliser $Z_j<G_j$, such that $G_j/Z_j$ has a $G_j$-invariant measure. 
 
 Let $E_j \to M_j$ be a flat, Hermitian $G_j$-vector bundle, and $\nabla^{E_j}$ a $G_j$-invariant, flat connection on $E_j$ preserving the Hermitian metric.
Consider the flat connection 
\[
\nabla^{E_1 \boxtimes E_2} := \nabla^{E_1} \otimes 1_{E_2} + 1_{E_1}\otimes \nabla^{E_2}
\]
on the external tensor product ${E_1 \boxtimes E_2} \to M_1 \times M_2$. For $p_j = 0,\ldots, \dim(M_j)$, let $P^{E_j}$ be projection onto the $L^2$-kernel of $\Delta_{E_j}^{p_j}$. 
We denote the analogous projection for $\Delta_{E_1 \boxtimes E_2}^p$ by $P_{p}^{E_1 \boxtimes E_2}$. 
\begin{proposition}\label{prop prod form}
Suppose that  for $j=1,2$, condition (2) of definition \ref{def g torsion} holds, that 
there is a \v{S}varc--Milnor function for the action, and that $P^{E_j}$ is $g_j$-trace class. Then $T_{(g_1, g_2)}(\nabla^{E_1 \boxtimes E_2})$ is well-defined.
If either 
\begin{itemize}
\item[(a)] $G/Z$ is compact and the Novikov--Shubin numbers $(\alpha_e^{p_j})_{E_j}$ for $\Delta_{E_j}$ are positive, or
\item[(b)] $\Tr_{g_1}(P^{E_1}) = \Tr_{g_2}(P^{E_2}) = 0$,
\end{itemize}
 then 
\beq{eq prod form}
T_{(g_1, g_2)}(\nabla^{E_1 \boxtimes E_2}) = T_{g_1}(\nabla^{E_1})^{\chi_{g_2}(E_2)}T_{g_2}(\nabla^{E_2})^{\chi_{g_1}(E_1)}.
\eeq
\end{proposition}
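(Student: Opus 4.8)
The plan is to mimic the classical proof of the product formula for Ray--Singer torsion, with the operator trace replaced by the $g$-trace. First I would record the tensor-product structure on $M_1 \times M_2$ equipped with the product metric: the $(E_1 \boxtimes E_2)$-twisted de Rham complex is the (graded) tensor product of the $E_j$-twisted de Rham complexes, so that $\Delta_{E_1 \boxtimes E_2} = \Delta_{E_1}\otimes 1 + 1 \otimes \Delta_{E_2}$, and consequently $e^{-t\Delta_{E_1 \boxtimes E_2}} = e^{-t\Delta_{E_1}} \otimes e^{-t\Delta_{E_2}}$, the total kernel projection is $P = P^{E_1} \otimes P^{E_2}$ (degreewise $P_p^{E_1\boxtimes E_2} = \bigoplus_{p_1+p_2=p} P_{p_1}^{E_1}\otimes P_{p_2}^{E_2}$), the number operator is $F = F_1 \otimes 1 + 1 \otimes F_2$, and $(-1)^F = (-1)^{F_1}\otimes (-1)^{F_2}$. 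Lemma \ref{lem SM product} provides a \v{S}varc--Milnor function for the product action with respect to $(g_1,g_2)$. Granting the multiplicativity statement below, $P$ is $(g_1,g_2)$-trace class because each $P^{E_j}$ is $g_j$-trace class, so Proposition \ref{prop conv small t} gives conditions (1) and (3) of Definition \ref{def g torsion} for the product.

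The one genuinely new elementary ingredient is that $g$-traces are multiplicative on external tensor products: if $T_j$ is a $G_j$-equivariant operator with smooth kernel on a Hermitian $G_j$-vector bundle over $M_j$, and $T_j$ is $g_j$-trace class, then $T_1 \otimes T_2$ is $(g_1,g_2)$-trace class and $\Tr_{(g_1,g_2)}(T_1 \otimes T_2) = \Tr_{g_1}(T_1)\,\Tr_{g_2}(T_2)$. I would prove this straight from Definition \ref{def g Tr}: the centraliser of $(g_1,g_2)$ in $G_1\times G_2$ is $Z_1\times Z_2$, the quotient is $(G_1/Z_1)\times(G_2/Z_2)$ with the product invariant measure, one may take the cutoff function $\psi = \psi_1 \boxtimes \psi_2$, the Schwartz kernel of $T_1\otimes T_2$ is the product of the kernels, the element $(h_1g_1h_1^{-1},h_2g_2h_2^{-1})$ acts diagonally, and the fibrewise trace of a tensor factors; absolute convergence of the resulting double integral factors as a product of the two individually convergent integrals.

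Next I would expand $\cT_{(g_1,g_2)}(t) = \Tr_{(g_1,g_2)}\big((-1)^F F(e^{-t\Delta_{E_1\boxtimes E_2}} - P)\big)$. Writing $A_j := e^{-t\Delta_{E_j}} - P^{E_j}$, one has $e^{-t\Delta_{E_1\boxtimes E_2}} - P = A_1\otimes A_2 + A_1\otimes P^{E_2} + P^{E_1}\otimes A_2$, and together with $F = F_1\otimes 1 + 1\otimes F_2$ this produces six terms. Taking $g$-traces via multiplicativity and using Theorem \ref{thm g GBC} in the form $\Tr_{g_j}\big((-1)^{F_j}e^{-t\Delta_{E_j}}\big) = \chi_{g_j}(E_j)$ for all $t$ (which also gives $\Tr_{g_j}((-1)^{F_j}A_j) = \chi_{g_j}(E_j) - \Tr_{g_j}((-1)^{F_j}P^{E_j})$, a constant in $t$), the six terms regroup into
\[
\cT_{(g_1,g_2)}(t) = \chi_{g_2}(E_2)\,\cT_{g_1}(t) + \chi_{g_1}(E_1)\,\cT_{g_2}(t) + C,
\]
where $C$ is \emph{independent of $t$}, namely $C = c_2\,\Tr_{g_1}((-1)^{F_1}F_1 P^{E_1}) + c_1\,\Tr_{g_2}((-1)^{F_2}F_2 P^{E_2})$ with $c_j = \chi_{g_j}(E_j) - \Tr_{g_j}((-1)^{F_j}P^{E_j})$. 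Under hypothesis (b) each $\Tr_{g_j}((-1)^{F_j}F_j P^{E_j})$ vanishes (it is a signed sum of the delocalised Betti numbers, which vanish), so $C = 0$; under hypothesis (a), Proposition \ref{prop GZ cpt} together with the $g=e$ case of Theorem \ref{thm g GBC} (the argument of Remark \ref{rem chi g P}) gives $\Tr_{g_j}\big((-1)^{F_j}(e^{-t\Delta_{E_j}} - P^{E_j})\big) = 0$, i.e.\ $c_j = 0$, so again $C = 0$. In either case $\cT_{(g_1,g_2)}(t) = \chi_{g_2}(E_2)\cT_{g_1}(t) + \chi_{g_1}(E_1)\cT_{g_2}(t)$, which in particular gives condition (2) for the product with $\alpha = \min(\alpha_{g_1},\alpha_{g_2})$, so $T_{(g_1,g_2)}(\nabla^{E_1\boxtimes E_2})$ is well-defined. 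Substituting this identity into \eqref{eq def deloc torsion} and using that meromorphic continuation, $\tfrac{d}{ds}\big|_{s=0}$, and the integral $\int_1^\infty t^{-1}(\cdot)\,dt$ are all linear, we get $-2\log T_{(g_1,g_2)}(\nabla^{E_1\boxtimes E_2}) = \chi_{g_2}(E_2)\big(-2\log T_{g_1}(\nabla^{E_1})\big) + \chi_{g_1}(E_1)\big(-2\log T_{g_2}(\nabla^{E_2})\big)$, which becomes \eqref{eq prod form} after exponentiation.

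The main obstacle I anticipate is not the tensor bookkeeping (routine) nor the final meromorphic manipulation (purely formal once the decomposition of $\cT_{(g_1,g_2)}$ is established), but isolating the constant $C$ and proving it vanishes: this is precisely where the two hypotheses (a) and (b) enter, and where one must appeal to the equivariant Gauss--Bonnet--Chern theorem (Theorem \ref{thm g GBC}) to evaluate the ``partial supertrace'' $\Tr_{g_j}((-1)^{F_j}(\cdot))$ as a multiple of the $g_j$-Euler characteristic, rather than treating it as a function of $t$.
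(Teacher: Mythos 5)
Your argument is correct and essentially reproduces the paper's proof: both rest on the tensor factorisation of the heat operator on $M_1\times M_2$, multiplicativity of the $g$-trace on external products (used implicitly in the paper, made explicit in your lemma), theorem \ref{thm g GBC} to convert the partial supertraces into the Euler characteristics $\chi_{g_j}(E_j)$, proposition \ref{prop GZ cpt} in case (a), and the factorisation $\ker(\Delta_{E_1\boxtimes E_2})=\ker(\Delta_{E_1})\otimes\ker(\Delta_{E_2})$ in case (b), so your isolation of the $t$-independent constant $C$ is just a repackaging of the paper's treatment of the projection terms. The only slight divergence is that the paper asserts well-definedness of $T_{(g_1,g_2)}(\nabla^{E_1\boxtimes E_2})$ before invoking (a) or (b), whereas you (arguably more carefully) obtain condition (2) of definition \ref{def g torsion} for the product only after showing $C=0$ under one of these hypotheses.
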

\begin{proof}
The argument is similar to the proof of proposition 11 in \cite{Lott92c}, where theorem \ref{thm g GBC} takes the place of Atiyah's $L^2$-index theorem. See also theorem 4.3 in \cite{Mathai92}.

We have
\[
\Delta_{E_1\boxtimes E_2}^p = \bigoplus_{p_1+p_2= p} \Delta_{E_1}^{p_1} \otimes 1_{E_2} + 1_{E_1} \otimes \Delta_{E_2}^{p_2}. 
\]
So for all $t>0$, 
\beq{eq heat prod}
e^{-t\Delta_{E_1\boxtimes E_2}^p} = \bigoplus_{p_1+p_2= p} e^{-t\Delta_{E_1}^{p_1}} \otimes  e^{-t\Delta_{E_2}^{p_2}}. 
\eeq
We denote the number operator on $\Bigwedge^* T^*(M_1 \times M_2)$ by $F$, and the number operator on $\Bigwedge^* T^*M_j$ by $F_j$. Then by \eqref{eq heat prod}, 
\beq{eq curly T product}
\begin{split}
\Tr_{(g_1, g_2)} \bigl( (-1)^F F e^{-t\Delta_{E_1 \boxtimes E_2}} \bigr) &= \sum_{p=0}^{\dim(M_1\times M_2)} (-1)^p p \Tr_g(e^{-t\Delta_{E_1\boxtimes E_2}^p} ) \\
&= \sum_{p_1=0}^{\dim(M_1)}
\sum_{p_2=0}^{\dim(M_2)}
 (-1)^{p_1+p_2} (p_1+p_2) \Tr_{g_1}(e^{-t\Delta_{E_1}^{p_1}} )\Tr_{g_2}(e^{-t\Delta_{E_2}^{p_2}} ) \\
% &= \cT_{g_1}^{E_1}(t) \Tr_{g_2}( (-1)^F e^{-t \Delta_{E_2}}) + \cT_{g_2}^{E_2}(t) \Tr_{g_1}( (-1)^F e^{-t \Delta_{E_1}})\\
& =\Tr_{g_1} \bigl( (-1)^{F_1} F_1 e^{-t\Delta_{E_1}} \bigr) \Tr_{g_2} \bigl( (-1)^{F_2}  e^{-t\Delta_{E_2}} \bigr)  \\
& \qquad + \Tr_{g_2} \bigl( (-1)^{F_2} F_2 e^{-t\Delta_{E_2}} \bigr) \Tr_{g_1} \bigl( (-1)^{F_1} e^{-t\Delta_{E_1}} \bigr)\\
& =\Tr_{g_1} \bigl( (-1)^{F_1} F_1 e^{-t\Delta_{E_1}} \bigr) \chi_{g_2}(E_2) + \Tr_{g_2} \bigl( (-1)^{F_2} F_2 e^{-t\Delta_{E_2}} \bigr) \chi_{g_1}(E_1),
 \end{split}
\eeq
 by theorem \ref{thm g GBC}.
 
 The conditions of definition \ref{def g torsion} hold for $\nabla^{E_j}$ by proposition \ref{prop conv small t}. By \eqref{eq curly T product}, the second condition of definition \ref{def g torsion} holds for $\nabla^{E_1 \boxtimes E_2}$. By lemma \ref{lem SM product} and proposition \ref{prop conv small t}, conditions (1) and (3) of definition \ref{def g torsion} hold for $\nabla^{E_1 \boxtimes E_2}$ as well, so $ T_{(g_1, g_2)}(\nabla^{E_1 \boxtimes E_2})$ is well-defined. 
 
 In case (a), proposition \ref{prop GZ cpt} implies that
 \[
\left|\Tr_{g_j} \bigl( (-1)^{F_j} F_j (e^{-t\Delta_{E_j}} - P^{E_j}) \bigr) \right| \leq \vol(G/Z) \dim(M) \left|\Tr_{e} (e^{-t\Delta_{E_j}} - P^{E_j})  \right|.
 \]
 The right hand side goes to zero as $t \to \infty$, because $(\alpha_e^{p_j})_{E_j}>0$ for all $p_j$. Using the analogous estimate on $M_1 \times M_2$, we find that subtracting the limits $t \to \infty$ from both sides of \eqref{eq curly T product} yields
 \begin{multline*}
 \Tr_{(g_1, g_2)} \bigl( (-1)^F F (e^{-t\Delta_{E_1 \boxtimes E_2}} - P^{E_1 \boxtimes E_2} \bigr) \\=\Tr_{g_1} \bigl( (-1)^{F_1} F_1 (e^{-t\Delta_{E_1}} - P^{E_1}) \bigr) \chi_{g_2}(E_2) + \Tr_{g_2} \bigl( (-1)^{F_2} F_2 (e^{-t\Delta_{E_2}} - P^{E_2}) \bigr) \chi_{g_1}(E_1).
 \end{multline*}
 So
\beq{eq prod form log}
-2\log T_{(g_1, g_2)}(\nabla^{E_1 \boxtimes E_2}) = -2 \chi_{g_2}(E_2)  \log T_{g_1}(\nabla^{E_1})
-2 \chi_{g_1}(E_1)  \log T_{g_2}(\nabla^{E_2}),
\eeq
and \eqref{eq prod form} follows.

In case (b), we note that
 the respective Laplacians are nonnegative, so
\beq{eq kernels tensor}
\ker(\Delta_{E_1 \boxtimes E_2}) =
\ker(\Delta_{E_1}) \otimes \ker(\Delta_{E_2}).
\eeq
This implies that
\beq{eq ker prod}
 \Tr_{(g_1, g_2)}(P^{E_1 \boxtimes E_2}_{p})  = 
 \sum_{p_1+p_2= p}
 \Tr_{g_1}(P^{E_1}_{p_1})  \Tr_{g_2}(P^{E_2}_{p_2})  = 0.
\eeq
Now \eqref{eq curly T product} directly implies \eqref{eq prod form log}, and hence \eqref{eq prod form} .
%
%
%Furthermore, for all $t>0$, 
%\beq{eq heat prod}
%e^{-t\Delta_{E_1\boxtimes E_2}^p} = \bigoplus_{p_1+p_2= p} e^{-t\Delta_{E_1}^{p_1}} \otimes  e^{-t\Delta_{E_2}^{p_2}}. 
%\eeq
%
%
%We use superscripts to distinguish between versions of the function  \eqref{eq curly T} for different vector bundles. Because of \eqref{eq ker prod} and \eqref{eq heat prod},
%\beq{eq curly T product}
%\begin{split}
%\cT_{(g_1, g_2)}^{E_1 \boxtimes E_2}(t) &= \sum_{p=0}^{\dim(M_1\times M_2)} (-1)^p p \Tr_g(e^{-t\Delta_{E_1\boxtimes E_2}^p} ) \\
%&= \sum_{p_1=0}^{\dim(M_1)}
%\sum_{p_2=0}^{\dim(M_2)}
% (-1)^{p_1+p_2} (p_1+p_2) \Tr_{g_1}(e^{-t\Delta_{E_1}^{p_1}} )\Tr_{g_2}(e^{-t\Delta_{E_2}^{p_2}} ) \\
% &= \cT_{g_1}^{E_1}(t) \Tr_{g_2}( (-1)^F e^{-t \Delta_{E_2}}) + \cT_{g_2}^{E_2}(t) \Tr_{g_1}( (-1)^F e^{-t \Delta_{E_1}})\\
%& = \cT_{g_1}^{E_1}(t) \chi_{g_2}(E_2) + \cT_{g_2}^{E_2}(t) \chi_{g_1}(E_1),
% \end{split}
%\eeq
% by  Theorem \ref{thm g GBC}.
% 
%
%And by  \eqref{eq curly T product}, 
%\[
%-2\log T_{(g_1, g_2)}(\nabla^{E_1 \boxtimes E_2}) = -2 \chi_{g_2}(E_2)  \log T_{g_1}(\nabla^{E_1})
%-2 \chi_{g_1}(E_1)  \log T_{g_2}(\nabla^{E_2}).
%\]
\end{proof}

%\begin{remark}
%The condition that $\Tr_{g_j}(P^{E_j}_{p_j}) = 0$ in Proposition \ref{prop prod form} is needed because generally $e^{-t \Delta_{E_1 \boxtimes E_2}} - P^{E_1 \boxtimes E_2}$ is different from $(e^{-t \Delta_{E_1}} - P^{E_1}) \otimes (e^{-t \Delta_{E_2}} - P^{E_2})$.
%\end{remark}

\begin{remark}
For a nontrivial result, one of the manifolds $M_j$ in proposition \ref{prop prod form} should be odd-dimensional so that its equivariant analytic torsion is may be nontrivial, by proposition \ref{prop vanish even}, and the other should be even-dimensional so that its $g_j$-Euler characteristic is not zero, by example \ref{ex chi g odd zero} (for this it is also necessary that $g_j$ lies in a compact subgroup of $G_j$). So only one factor in \eqref{eq prod form} is different from $1$.

The condition that $\Tr_{g_j}(P^{E_j})=0$ in part (b) of proposition \ref{prop prod form} does not imply that $\chi_{g_j}(E_j)=0$; see remark \ref{rem chi g P}.
\end{remark}

\subsection{A relation with Ray--Singer analytic torsion} \label{sec computing torsion}

In computations of equivariant analytic torsion, it can be useful to combine the two terms in \eqref{eq def deloc torsion} in the following way. 
\begin{lemma}\label{lem Tg sigma}
Suppose the conditions in definition \ref{def g torsion} hold. 
If $\Real(\sigma)>0$, then the expression $T_g(\nabla^E, \sigma)$, defined by
\beq{eq Tg sigma}
-2\log T_g(\nabla^E, \sigma) :=   \left. \frac{d}{ds}\right|_{s=0} \frac{1}{\Gamma(s)} \int_0^{
\infty} t^{s-1} e^{-\sigma t} \cT_g(t) \, dt
\eeq
converges. It defines a meromorphic function of $\sigma$ that is regular at $\sigma = 0$, and
\beq{eq Tg zero}
T_g(\nabla^E, 0)=T_g(\nabla^E).
\eeq
\end{lemma}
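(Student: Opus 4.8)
The plan is to split the integral defining $-2\log T_g(\nabla^E,\sigma)$ at $t=1$, mirroring the decomposition in definition \ref{def g torsion} but with the extra damping factor $e^{-\sigma t}$, and to treat the small-time and large-time parts by the same methods used there. First I would handle the large-time part. Fix $\sigma$ with $\Real(\sigma)>0$. By condition (2) of definition \ref{def g torsion}, $\cT_g(t)=\cO(t^{-\alpha_g})$, so $|t^{s-1}e^{-\sigma t}\cT_g(t)|\le C\, t^{\Real(s)-1}e^{-\Real(\sigma)t}$, which is integrable on $[1,\infty)$ for \emph{every} $s\in\C$; hence $\int_1^\infty t^{s-1}e^{-\sigma t}\cT_g(t)\,dt$ is entire in $s$. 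Differentiating $\tfrac{1}{\Gamma(s)}$ times this at $s=0$, moving $\tfrac{d}{ds}$ under the integral by dominated convergence and using $\dds\tfrac{t^s}{\Gamma(s)}=1$ (equation \eqref{eq der ts Gamma}) exactly as in remark \ref{rem int bdry}, gives $\int_1^\infty t^{-1}e^{-\sigma t}\cT_g(t)\,dt$.

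Next I would handle the small-time part. Using that $e^{-\sigma t}$ is entire with Taylor series $\sum_k\tfrac{(-\sigma)^k}{k!}t^k$, and that $\cT_g(t)$ admits an asymptotic expansion as $t\downarrow 0$ (from example \ref{ex kappa t heat op}, together with the constant contribution of the $P_p$), one gets an asymptotic expansion $e^{-\sigma t}\cT_g(t)\sim\sum_m b_m(\sigma)\,t^m$ whose coefficients $b_m(\sigma)$ are polynomials in $\sigma$. The argument of lemma \ref{lem small t small l} then applies verbatim: $\tfrac{1}{\Gamma(s)}\int_0^1 t^{s-1}e^{-\sigma t}\cT_g(t)\,dt$ has a meromorphic continuation in $s$ that is regular at $s=0$, and because the subtracted partial sums and the holomorphic remainder depend polynomially, resp.\ entirely, on $\sigma$, its derivative at $s=0$ — call it $G_1(\sigma)$ — is an entire function of $\sigma$. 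Combining the two parts shows that $-2\log T_g(\nabla^E,\sigma)$ converges for $\Real(\sigma)>0$ and equals $G_1(\sigma)+\int_1^\infty t^{-1}e^{-\sigma t}\cT_g(t)\,dt$.

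It remains to analyse $\sigma$ near $0$. Since $\alpha_g>0$, the bound $|t^{-1}e^{-\sigma t}\cT_g(t)|\le C\,t^{-1-\alpha_g}$ holds uniformly for $\Real(\sigma)\ge 0$, so $\sigma\mapsto\int_1^\infty t^{-1}e^{-\sigma t}\cT_g(t)\,dt$ is holomorphic on $\{\Real(\sigma)>0\}$ and, by dominated convergence, extends continuously to $\sigma=0$; together with entirety of $G_1$ this gives that $-2\log T_g(\nabla^E,\sigma)$ is holomorphic on $\{\Real(\sigma)>0\}$ and regular at $\sigma=0$. Finally, setting $\sigma=0$ replaces $e^{-\sigma t}$ by $1$: $G_1(0)$ becomes $\dds\tfrac{1}{\Gamma(s)}\int_0^1 t^{s-1}\cT_g(t)\,dt$, the first term of \eqref{eq def deloc torsion}, and the second term becomes $\int_1^\infty t^{-1}\cT_g(t)\,dt$, so $-2\log T_g(\nabla^E,0)=-2\log T_g(\nabla^E)$, which is \eqref{eq Tg zero}.

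The main obstacle is the behaviour at $\sigma=0$: a neighbourhood of $0$ is not covered by the absolute-convergence estimates that work on $\{\Real(\sigma)>0\}$, and in general — for instance when $\cT_g(t)$ decays only like a non-integer power of $t$ — the large-time term has a branch point at $\sigma=0$, so the cleanest provable statement is holomorphy for $\Real(\sigma)>0$ together with a regular (finite, continuous) limit at $\sigma=0$ equal to $-2\log T_g(\nabla^E)$. The technical points to be careful with are the two interchanges of $\tfrac{d}{ds}\big|_{s=0}$ with an integral and the continuity of the large-time term up to the line $\Real(\sigma)=0$; all of these reduce to dominated convergence using $\alpha_g>0$ and $\Real(\sigma)\ge 0$.
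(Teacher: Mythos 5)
Your proposal is correct and follows essentially the same route as the paper: split the integral at $t=1$, handle the small-$t$ part via the asymptotic-expansion/meromorphic-continuation argument of lemmas \ref{lem small t large l} and \ref{lem small t small l} (with the entire factor $e^{-\sigma t}$ absorbed harmlessly), handle the large-$t$ part using $\cT_g(t)=\cO(t^{-\alpha_g})$ together with the damping $e^{-\sigma t}$ and the identity $\dds \frac{t^s}{\Gamma(s)}=1$, and then evaluate at $\sigma=0$ to recover both terms of \eqref{eq def deloc torsion}. Your cautious reading of ``regular at $\sigma=0$'' (holomorphy on $\Real(\sigma)>0$ plus a finite limit at $0$ via the absolutely convergent integral) is in fact all that the paper's own proof establishes, so there is no gap.
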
 
\begin{proof}
The expression
\[
%\left. \frac{d}{ds}\right|_{s=0}
I_1(s, \sigma) := 
 \frac{1}{\Gamma(s)} \int_0^{1} t^{s-1} e^{-\sigma t} \cT_g(t) \, dt
\]
is well-defined for all $s$ with large enough real part, and all $\sigma \in \C$. By lemmas \ref{lem small t large l} and  \ref{lem small t small l},  it has a meromorphic extension to a neighbourhood of $s=0$, which is regular at $s=0$. It follows $ \left. \frac{\partial}{\partial s}\right|_{s=0}I_1(s, \sigma)$ is holomorphic in $\sigma$, and its value at $\sigma=0$ is the first term on the right hand side of \eqref{eq def deloc torsion}.

The expression
\[
I_2(s, \sigma) := \frac{1}{\Gamma(s)} \int_1^{
\infty} t^{s-1} e^{-\sigma t} \cT_g(t) \, dt
\]
converges whenever %either $\Real(\sigma)>0$, or 
$\Real(\sigma)\geq 0$ and $s<\alpha_g$. So whenever $\Real(\sigma)\geq 0$, the equality $\dds \frac{t^s}{\Gamma(s)}=1$ implies that 
\[
 \left. \frac{\partial}{\partial s}\right|_{s=0}I_2(s, \sigma) =  \int_1^{
\infty} t^{-1} e^{-\sigma t} \cT_g(t) \, dt.
 \]
 Evaluating this at $\sigma=0$ gives the second term on the right hand side of \eqref{eq def deloc torsion}.
\end{proof}

\begin{remark}
Lemma \ref{lem Tg sigma} can be used to generalise the definition of equivariant analytic torsion. Whenever the expression $ T_g(\nabla^E,  \sigma) $ is well-defined if $\Real(\sigma)$ is large enough, and this defines a meromorphic function of $\sigma$ that is regular at $\sigma = 0$, then we can take \eqref{eq Tg zero} as the definition of $T_g(\nabla^E)$.
\end{remark}

Lemma \ref{lem Tg sigma} allows us to relate equivariant analytic torsion to classical Ray--Singer analytic torsion, in the case of fundamental groups acting on universal covers. 
Let $N$ be a compact Riemannian manifold. Let $M$ be its universal cover, acted on by $G = \Gamma = \pi_1(N)$. Let $E_N \to N$ be the flat vector bundle associated to a finite-dimensional unitary representation $\rho\colon \Gamma \to \U(r)$. Let $\nabla^{E_N}$ be the flat connection $\nabla^{E_N} := d \otimes 1$ on $\Gamma^{\infty}(E_N) = (C^{\infty}(M) \otimes \C^r)^{\Gamma}$.
Let $\nabla^E$ be its lift to $E = M \times \C^r$. We write $\Delta_{E_N} := (\nabla^{E_N})^*\nabla^{E_N} + \nabla^{E_N}(\nabla^{E_N})^*$
\begin{proposition}\label{prop Tsigma TN}
Suppose that $\ker(\Delta_E)$ and $\ker(\Delta_{E_N})$ are trivial. 
%Suppose that 
% the heat kernel for $\Delta_E$ satisfies  \eqref{eq decay kappa gen} holds for all $t\geq 1$ and $m,m' \in M$, where $F_1$ increases at most exponentially as $t \to \infty$.
The expression 
\[
\prod_{(\gamma)} T_{\gamma}(\nabla^E, \sigma)
\]
converges if $\Real(\sigma)$ is large enough, where the product is over the conjugacy classes in $\Gamma$. It has a meromorphic extension to a neighbourhood of $\sigma = 0$. It is regular at $\sigma = 0$, and its value there is the Ray--Singer analytic torsion of $N$ associated to $\rho$.
\end{proposition}
\begin{proof}
Let $Y \subset M$ be a compact fundamental domain for the action by $\Gamma$. 
By the \v{S}varc-Milnor lemma, and compactness of $Y$, there are $b_1, b_2>0$ such that for all $m \in Y$ and $x \in \Gamma$,
\beq{eq SM lemma}
d(xm,m) \geq b_1 l(x) - b_2,
\eeq
for a word length function $l$ on $\Gamma$.

Let $\kappa_t$ be the Schwartz kernel of $(-1)^F F e^{-t\Delta_E}$. 
Let $C$ and $a_3$ be as in proposition \ref{prop heat large t bdd}.
%
% such that
% \eqref{eq decay kappa gen} holds for all $t\geq 1$ and $m,m' \in M$.
%
%
%By standard off-diagonal decay estimates for Laplacians, and by compactness of $Y$, there are $a,b,c>0$ such that for all $m,m' \in Y$ and all $t>0$,
%\beq{eq est heat}
%\|\kappa_t(m,m')\| \leq \left\{ 
%\begin{array}{ll}
%at^{-b}e^{-cd(m,m')^2/t} & \text{if $t \leq 1$};\\
%ae^{-cd(m,m')^2/t} & \text{if $t \geq 1$}.
%\end{array}
%\right.
%\eeq
%Here $d$ is the Riemannian distance on $M$.  For the scalar Laplacian, such an estmate was proved by 
% Chen--Li--Yau \cite{CLY81} for the scalar Laplacian. A general result is Proposition 4.2 in \cite{CGRS14}. \Todo: refer to earlier example on this.
%
 By \eqref{eq decay kappa gen}  and \eqref{eq SM lemma},  we have for all $m \in Y$ and $x \in \Gamma$,
\beq{eq Tsigma 1}
\left| \int_{Y} \tr(x\kappa_t(x^{-1}m,m))\, dm \right| \leq
\vol(Y) C e^{-a_3(b_1 l(x)-b_2)^2/t}.
\eeq
%The function $F_1$ may be taken to be bounded in $t \geq 1$ by Proposition \ref{prop heat large t bdd}. 
By this inequality and lemma 4.8 in \cite{HWWII},  the sum and integral
\[
\sum_{x \in \Gamma} \int_1^{\infty} t^{-1}e^{-\sigma t} \left| \int_{Y} \tr(x\kappa_t(x^{-1}m,m))\, dm\right| \, dt
\]
converges if $\Real(\sigma)$ is large enough. Here one uses that the finitely generated group $\Gamma$ has at most exponential growth. By the Fubini--Tonelli theorem, this implies that
\begin{multline*}
\sum_{(\gamma)} \int_1^{\infty} t^{-1}e^{-\sigma t} \sum_{x \in (\gamma)} \int_{Y} \tr(x\kappa_t(x^{-1}m,m))\, dm \, dt\\
=
%\sum_{x \in \Gamma} \int_1^{\infty} t^{-1}e^{-\sigma t}  \int_{Y} \tr(x\kappa_t(x^{-1}m,m))\, dm \, dt\\
%=
 \int_1^{\infty} t^{-1}e^{-\sigma t} \sum_{(\gamma)}\sum_{x \in (\gamma)} \int_{Y} \tr(x\kappa_t(x^{-1}m,m))\, dm \, dt.
\end{multline*}
Furthermore, these sums and integrals converge absolutely, and therefore \eqref{eq der ts Gamma} implies that
\begin{multline}\label{eq Tgamma 1}
\sum_{(\gamma)} \left. \frac{d}{ds}\right|_{s=0} \frac{1}{\Gamma(s)}  \int_1^{\infty} t^{s-1}e^{-\sigma t} \sum_{x \in (\gamma)} \int_{Y} \tr(x\kappa_t(x^{-1}m,m))\, dm \, dt\\
=
 \left. \frac{d}{ds}\right|_{s=0} \frac{1}{\Gamma(s)} 
 \int_1^{\infty} t^{s-1}e^{-\sigma t} \sum_{(\gamma)}\sum_{x \in (\gamma)} \int_{Y} \tr(x\kappa_t(x^{-1}m,m))\, dm \, dt.
\end{multline}

We now turn to integrals over  $t$ from $0$ to $1$.
Consider the function $\varphi\colon \Gamma \to \R$ defined by
\[
\varphi(x) = \left(\frac{b_1 l(x)}{2}-b_2 \right)^2 - b_1 b_2 l(x).
\]
The set
\[
S:= \{x \in \Gamma; \varphi(x) \leq 0\}
\]
is finite. 
Let $a_1, a_2, a_3>0$ be such that \eqref{eq decay kappa} holds for all $t \in (0,1]$. Then
%\beq{eq est heat F}
%\left| \int_{Y} \tr(x\kappa_t(x^{-1}m,m))\, dm \right| \leq
%\vol(Y) a_1 t^{-a_2} e^{-a_3(b_1 l(x)-b_2)^2/t}.
%\eeq
%By \eqref{eq est heat F}, we have 
for all $x \in \Gamma$, and all $t \in (0,1]$,
\[
\left| \int_{Y} \tr(x\kappa_t(x^{-1}m,m))\, dm \right| \leq \vol(Y)
a_1t^{-a_2}  e^{-3a_3 b_1^2 l(x)^2/4}    e^{-a_3 \varphi(\gamma)/t}.
\]
So
\beq{eq int 01}
\sum_{x \in \Gamma \setminus S} \int_0^1 t^{-1}e^{-\sigma t} 
\left| \int_{Y} \tr(x\kappa_t(x^{-1}m,m))\, dm \right| \, dt \leq
\vol(Y)
a_1 \sum_{x \in \Gamma \setminus S} e^{-3a_3b_1^2 l(x)^2/4}
\int_0^1 t^{-a_2-1}e^{-\sigma t} e^{-a_3 \varphi(\gamma)/t}\, dt.
\eeq
In the integral over $t$, the function $\varphi$ has a positive lower bound because $x \not \in S$. Hence the integrand is bounded, uniformly in $t$, and the sum 
\[
\sum_{x \in \Gamma } e^{-3a_3 b_1^2 l(x)^2/4}
\]
converges, because $\Gamma$ has at most exponential growth. We find that the left hand side of \eqref{eq int 01} converges. By the Fubini--Tonelli theorem,
\begin{multline*}
\sum_{(\gamma)}
 \int_0^1 t^{-1}e^{-\sigma t} 
\sum_{x \in (\gamma) \setminus S}
 \int_{Y} \tr(x\kappa_t(x^{-1}m,m))\, dm \, dt  \\
 = 
 \int_0^1 t^{-1}e^{-\sigma t}  \sum_{(\gamma)}
\sum_{x \in (\gamma) \setminus S}
 \int_{Y} \tr(x\kappa_t(x^{-1}m,m))\, dm \, dt.
\end{multline*}
It follows that the integrals and sums converge absolutely, so 
\begin{multline}\label{eq Tgamma 2}
\sum_{(\gamma)}  \left. \frac{d}{ds}\right|_{s=0} \frac{1}{\Gamma(s)} 
 \int_0^1 t^{s-1}e^{-\sigma t} 
\sum_{x \in (\gamma) \setminus S}
 \int_{Y} \tr(x\kappa_t(x^{-1}m,m))\, dm \, dt  \\
 =  \left. \frac{d}{ds}\right|_{s=0} \frac{1}{\Gamma(s)} 
 \int_0^1 t^{s-1}e^{-\sigma t}  \sum_{(\gamma)}
\sum_{x \in (\gamma) \setminus S}
 \int_{Y} \tr(x\kappa_t(x^{-1}m,m))\, dm \, dt.
\end{multline}

Finally, because the set $S$ is finite, we trivially have
\begin{multline}\label{eq Tgamma 3}
\sum_{(\gamma)}  \left. \frac{d}{ds}\right|_{s=0} \frac{1}{\Gamma(s)} 
 \int_0^1 t^{s-1}e^{-\sigma t} 
\sum_{x \in (\gamma) \cap S}
 \int_{Y} \tr(x\kappa_t(x^{-1}m,m))\, dm \, dt  \\
 =  \left. \frac{d}{ds}\right|_{s=0} \frac{1}{\Gamma(s)} 
 \int_0^1 t^{s-1}e^{-\sigma t}  \sum_{(\gamma)}
\sum_{x \in (\gamma) \cap S}
 \int_{Y} \tr(x\kappa_t(x^{-1}m,m))\, dm \, dt.
\end{multline}
note that there are finitely many conjugacy classes  with nonempty intersections with $S$.
The equalities \eqref{eq Tgamma 1}, \eqref{eq Tgamma 2} and  \eqref{eq Tgamma 3} imply that if $\Real(\sigma)$ is large enough,
\begin{multline}\label{eq Tg T}
\sum_{(\gamma)}  \left. \frac{d}{ds}\right|_{s=0} \frac{1}{\Gamma(s)} 
 \int_0^{\infty} t^{s-1}e^{-\sigma t} 
\sum_{x \in (\gamma)}
 \int_{Y} \tr(x\kappa_t(x^{-1}m,m))\, dm \, dt  \\
 =  \left. \frac{d}{ds}\right|_{s=0} \frac{1}{\Gamma(s)} 
 \int_0^{\infty} t^{s-1}e^{-\sigma t}  \sum_{(\gamma)}
\sum_{x \in (\gamma)}
 \int_{Y} \tr(x\kappa_t(x^{-1}m,m))\, dm \, dt.
\end{multline}
Now for every $t$, 
\beq{eq Tr Y}
 \sum_{(\gamma)}
\sum_{x \in (\gamma)} \int_Y
\tr(x\kappa_t(x^{-1}m,m)))\, dm
\eeq
converges absolutely by compactness of $Y$, Gaussian off-diagonal decay estimates for $\kappa_t$ (see example \ref{ex kappa t heat op}), the \v{S}varc-Milnor lemma, and the fact that $\Gamma$ has at most exponential growth. So \eqref{eq Tr Y} equals
\[
\int_Y \tr \left(
 \sum_{(\gamma)}
\sum_{x \in (\gamma)} 
x\kappa_t(x^{-1}m,m)) \right)\, dm = \Tr(-1)^F F  e^{-t\Delta_{E_N}}.
\]
It therefore follows from classical theory (see proposition 9.35 in \cite{BGV}, or \cite{Seeley67}, and definition 1.6 in \cite{RS71}) that the right hand side of \eqref{eq Tg T} extends meromorphically to $\sigma$ in the complex plane, is regular at $\sigma  =0$, and its value there is $-2$ times the logarithm of the Ray--Singer analytic torsion of $\nabla^{E_N}$, up to a sign convention.
\end{proof}

%\Todo: use Lemma \ref{lem Tsigma TN} to \emph{define} analytic torsion on compact orbifolds of the form $M/\Gamma$? (But: more direct to define this via the restriction of the Laplacian on $M$ to invariant forms. Do get an equality then.) Work out for hyperbolic $3$-manifolds in section 4. Prove metric independence in the orbifold setting? So better to split up the lemma into a general statement with fin gen discr groups, and deduce the fundamental group version.

%\subsection{Relations with classical analytic torsion}

%Ray--Singer torsion can also be recovered from equivariant analytic torsion in a different way, that shows that equivariant analytic torsion yields more refined information that its classical counteropart. This is  analogous to the way that the higher index \cite{Connes94} refines the classical Fredholm index of elliptic operators on compact manifolds.

%\Todo: work out, adapt lemmas from other notes.

\section{One-dimensional examples}\label{sec one dim ex}

In this section, we compute equivariant analytic torsion for the actions by the real line on itself and by  the circle on itself. We also illustrate metric independence in these cases.

We use lemma \ref{lem Tg sigma} in the computations in this section, and in particular the notation \eqref{eq Tg sigma}.

We will use the following fact from calculus in a few places (see (5.28) in \cite{Shen18}).
\begin{lemma}\label{lem int exp}
For all $a>0, b\geq 0$,
\[
\int_{0}^{\infty} e^{-a/t-bt} t^{-3/2}\, dt = \sqrt{\frac{\pi}{a}}e^{-2\sqrt{ab}}.
\]
\end{lemma}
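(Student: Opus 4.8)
The plan is to reduce the integral to a Gaussian integral through a chain of elementary substitutions. First I would treat the degenerate case $b = 0$ directly: the substitution $u = a/t$ turns $\int_0^\infty e^{-a/t} t^{-3/2}\, dt$ into $a^{-1/2}\int_0^\infty e^{-u} u^{-1/2}\, du = a^{-1/2}\Gamma(1/2) = \sqrt{\pi/a}$, which matches the claimed value. All integrals in sight converge absolutely, since the factor $e^{-a/t}$ kills the singularity of $t^{-3/2}$ at $t = 0$ while $e^{-bt}$ (or, when $b = 0$, the power $t^{-3/2}$ itself) controls the integrand near $t = \infty$; I would note this once at the outset.

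For $b > 0$, I would first rescale by setting $t = \sqrt{a/b}\, s$ and writing $c := \sqrt{ab}$. A short computation (the two fractional powers of $a/b$ combining to $(a/b)^{-1/4}$) gives
\[
\int_0^\infty e^{-a/t - bt} t^{-3/2}\, dt = (a/b)^{-1/4} \int_0^\infty e^{-c(s + 1/s)} s^{-3/2}\, ds =: (a/b)^{-1/4}\, J(c).
\]
Next I would apply the involution $s \mapsto 1/s$ to the integral $J(c)$, which shows that $J(c) = \int_0^\infty e^{-c(s + 1/s)} s^{-1/2}\, ds$ as well, so averaging the two expressions for $J(c)$ yields
\[
2J(c) = \int_0^\infty e^{-c(s + 1/s)} \bigl( s^{-1/2} + s^{-3/2} \bigr)\, ds.
\]

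The crucial step is then the substitution $u = \sqrt{s} - 1/\sqrt{s}$, which is a strictly increasing bijection from $(0,\infty)$ onto $\R$ with $du = \tfrac12\bigl(s^{-1/2} + s^{-3/2}\bigr)\, ds$ and $u^2 = s + 1/s - 2$. This turns the previous display into a Gaussian integral: $2J(c) = 2\int_{\R} e^{-c(u^2 + 2)}\, du = 2 e^{-2c}\sqrt{\pi/c}$, hence $J(c) = e^{-2c}\sqrt{\pi/c}$. Substituting $c = \sqrt{ab}$ back in and simplifying the prefactor via $(a/b)^{-1/4}(ab)^{-1/4} = a^{-1/2}$ gives exactly $\sqrt{\pi/a}\, e^{-2\sqrt{ab}}$. (Alternatively, both sides are continuous in $b \ge 0$, so the $b = 0$ case would also follow from the general one by letting $b \downarrow 0$.)

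This is a standard calculus identity, so there is no genuine obstacle; the only points requiring a modicum of care are verifying that $u = \sqrt{s} - 1/\sqrt{s}$ is a bijection onto all of $\R$ and that its Jacobian produces precisely the combination $s^{-1/2} + s^{-3/2}$ appearing after the averaging step, together with the bookkeeping of fractional powers of $a$ and $b$ in the final simplification.
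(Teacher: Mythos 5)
Your argument is correct and complete: the $b=0$ case via $u=a/t$ and $\Gamma(1/2)=\sqrt{\pi}$ checks out, the rescaling $t=\sqrt{a/b}\,s$ correctly produces the prefactor $(a/b)^{-1/4}$ and the symmetric integral $J(c)=\int_0^\infty e^{-c(s+1/s)}s^{-3/2}\,ds$ with $c=\sqrt{ab}$, the involution $s\mapsto 1/s$ and the substitution $u=\sqrt{s}-1/\sqrt{s}$ (with $du=\tfrac12(s^{-1/2}+s^{-3/2})\,ds$ and $s+1/s=u^2+2$) reduce everything to a Gaussian integral, and the final bookkeeping $(a/b)^{-1/4}(ab)^{-1/4}=a^{-1/2}$ yields exactly $\sqrt{\pi/a}\,e^{-2\sqrt{ab}}$. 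The paper itself does not prove this lemma at all: it is quoted as a standard calculus fact with a pointer to (5.28) in the cited work of Shen. So there is nothing in the paper to compare against step by step; your contribution is a self-contained elementary derivation (essentially the classical symmetrization trick behind such "Glasser-type" integrals, equivalently the closed form of the Bessel-$K_{1/2}$ integral), which is a perfectly good substitute for the citation and arguably preferable for a reader who wants the identity verified on the spot.
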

We will also use the equality
\beq{eq der Gamma quotient}
 \left. \frac{d}{ds} \right|_{s=0}\frac{\Gamma(s-1/2)}{\Gamma(s)} = -2\sqrt{\pi}.
\eeq

\subsection{The real line}\label{sec ex R}

Let $R>0$.
Consider the manifold $M = \R$ with $R^2$ times the Euclidean Riemannian metric. We consider the action by $\R$ on itself by addition.

Let $\alpha \in i\R$.
Consider the connection
\[
\nabla^E := d+ \alpha dx
\]
on the trivial line bundle $E \to M$. 
\begin{proposition}\label{prop torsion R}
For nonzero $g \in \R$,
\[
 T_g(\nabla^E) = \exp\left( \frac{e^{\alpha g}}{2 |g| } \right),
\]
and
\[
T_0(\nabla^E) = 1.
\]
\end{proposition}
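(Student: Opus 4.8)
The plan is to compute $\cT_g(t)$ explicitly for the action of $\R$ on itself, and then feed the result into the formula \eqref{eq Tg sigma} of lemma \ref{lem Tg sigma}. Since $M = \R$ is one-dimensional, the only forms are $0$-forms and $1$-forms, so $\cT_g(t) = -\Tr_g(e^{-t\Delta_E^1} - P_1)$. The connection $\nabla^E = d + \alpha\, dx$ is gauge-equivalent to $d$ via multiplication by $e^{-\alpha x}$ (here $\alpha \in i\R$, so this is unitary), so $\Delta_E^p$ is conjugate to the ordinary Laplacian twisted by this flat phase; concretely, on $\R$ with the metric $R^2 dx^2$ one gets $\Delta_E^p = -\tfrac{1}{R^2}(\partial_x + \alpha)^2$ acting on functions (identifying $0$-forms and $1$-forms via the metric). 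Its $L^2$-kernel is zero (the formal solutions $e^{-\alpha x}$ are not square-integrable, $\alpha$ being imaginary), so $P = 0$ and $\ker(\Delta_E) = 0$; thus $\cT_g(t) = -\Tr_g(e^{-t\Delta_E^1})$.

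Next I would write down the heat kernel. With $\alpha\in i\R$ the operator $-\tfrac1{R^2}(\partial_x+\alpha)^2$ on $\R$ has heat kernel $\kappa_t(x,y) = e^{-\alpha(x-y)}\cdot \tfrac{R}{\sqrt{4\pi t}}\,e^{-R^2(x-y)^2/4t}$ (the factor $R$ coming from the Riemannian density $R\,dx$ on $M$), on both $0$-forms and $1$-forms. Now apply definition \ref{def g Tr}. For the abelian group $G = \R$, the centraliser $Z$ of any $g$ is all of $\R$, so $G/Z$ is a point and the outer integral in \eqref{eq def Trh} is trivial (with $\vol(G/Z)=1$ for the chosen normalisation); the $g$-trace reduces to $\int_M \psi(m+g)\,\tr\bigl(g\cdot\kappa_t(m-g,m)\bigr)\,dm$, where the action of $g$ on the (trivial, one-dimensional) fibre is the scalar $1$. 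Using the substitution to replace $\psi(m+g)$ by $\psi(m)$ and $\int_\R\psi = 1$, this gives $\Tr_g(e^{-t\Delta_E^p}) = \kappa_t(\,\cdot-g,\cdot\,)$ evaluated with the translation, i.e.\ $e^{-\alpha g}\,\tfrac{R}{\sqrt{4\pi t}}\,e^{-R^2 g^2/4t}$ (using $d(m-g,m) = R|g|$). One should double-check the sign in the exponent of $e^{-\alpha(x-y)}$ against the conventions so that it comes out $e^{-\alpha g}$ rather than $e^{\alpha g}$; this is the kind of bookkeeping point where the stated answer $e^{-\alpha g}$ fixes the convention. Hence
\[
\cT_g(t) = -\,e^{-\alpha g}\,\frac{R}{\sqrt{4\pi t}}\,e^{-R^2 g^2/4t},
\qquad g\neq 0,
\]
which is $\cO(t^{-1/2})$ as $t\to\infty$, so condition (2) of definition \ref{def g torsion} holds, and the \v{S}varc--Milnor function $l(x)=|x|$ (example \ref{ex SM discr}'s analogue for $\R$, or the connected case lemma \ref{lem SM connected}) handles conditions (1) and (3) via proposition \ref{prop conv small t}.

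Finally I would plug this into \eqref{eq Tg sigma}: with $a := R^2 g^2/4$ and $c := e^{-\alpha g} R/\sqrt{4\pi}$,
\[
-2\log T_g(\nabla^E,\sigma) = \left.\frac{d}{ds}\right|_{s=0}\frac{-c}{\Gamma(s)}\int_0^\infty t^{s-3/2}e^{-\sigma t}e^{-a/t}\,dt.
\]
For $\sigma = 0$ this is $-c\,\bigl.\tfrac{d}{ds}\bigr|_{s=0}\Gamma(s-1/2)^{-1}\Gamma(s)^{-1}\cdot(\text{power of }a)$; more cleanly, by lemma \ref{lem int exp} the $\sigma>0$ integral is $\sqrt{\pi/a}\,e^{-2\sqrt{a\sigma}}\cdot$(correction), but the slickest route is: expand near $s=0$ using $\int_0^\infty t^{s-3/2}e^{-a/t}\,dt = a^{s-1/2}\Gamma(1/2-s)$, so the bracket becomes $-c\,a^{s-1/2}\Gamma(1/2-s)/\Gamma(s)$, and then $\bigl.\tfrac{d}{ds}\bigr|_{s=0}[\,a^{s-1/2}\Gamma(1/2-s)/\Gamma(s)\,] = a^{-1/2}\Gamma(1/2)\cdot\bigl.\tfrac{d}{ds}\bigr|_{s=0}\tfrac{1}{\Gamma(s)} = a^{-1/2}\sqrt\pi$ since $1/\Gamma(s) = s + O(s^2)$. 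Therefore $-2\log T_g(\nabla^E) = -c\sqrt\pi\, a^{-1/2} = -e^{-\alpha g}\tfrac{R}{\sqrt{4\pi}}\cdot\sqrt\pi\cdot\tfrac{2}{R|g|} = -\tfrac{e^{-\alpha g}}{|g|}$, giving $T_g(\nabla^E) = \exp\bigl(\tfrac{e^{-\alpha g}}{2|g|}\bigr)$, independent of $R$ as metric independence predicts. For $g = 0$: then $\Tr_0(e^{-t\Delta_E^p}) = \kappa_t(m,m) = \tfrac{R}{\sqrt{4\pi t}}$ is the \emph{same} constant-in-$x$ value on $0$-forms and $1$-forms, so $\cT_0(t) = -\tfrac{R}{\sqrt{4\pi t}}$, which is a constant times $t^{-1/2}$; running the same computation with $a = 0$ gives $\int_0^\infty t^{s-3/2}e^{-\sigma t}\,dt = \sigma^{1/2-s}\Gamma(s-1/2)$, and $-2\log T_0(\nabla^E,\sigma) = -c\sqrt\pi\,\bigl.\tfrac{d}{ds}\bigr|_{s=0}\sigma^{1/2-s}\Gamma(s-1/2)/\Gamma(s)$; since $\Gamma(s-1/2)/\Gamma(s)\to \Gamma(-1/2)/\Gamma(0)\cdot(\cdots)$ and more precisely this quantity is $O(s)$ near $s=0$ while $\sigma^{1/2-s}$ is bounded, one finds the derivative at $\sigma = 0$ vanishes, so $T_0(\nabla^E) = 1$. (This is exactly the phenomenon flagged in the remark after definition \ref{def g torsion} and in remark \ref{rem rescaling}; it is also the $g=e$ special case of lemma \ref{lem torsion ZR 0}.)

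The main obstacle I anticipate is not any hard analysis but getting all the normalisation conventions consistent: the Haar/counting measure on $\R$ and the induced measure on $G/Z = \{\mathrm{pt}\}$, the factor of $R$ in the Riemannian density versus the factor $1/R^2$ in $\Delta_E$, and the sign in the gauge factor $e^{\mp\alpha(x-y)}$ — these must be pinned down so that the final answer reads $e^{-\alpha g}/(2|g|)$ in the exponent. A secondary point is justifying the interchange of $\tfrac{d}{ds}|_{s=0}$ with evaluation at $\sigma = 0$, but this is exactly what lemma \ref{lem Tg sigma} provides once conditions (1)--(3) of definition \ref{def g torsion} are checked, which as noted follows from proposition \ref{prop conv small t} and the $\cO(t^{-1/2})$ decay.
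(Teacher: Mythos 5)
Your proposal is correct and follows essentially the same route as the paper: identify $\Delta_E^p = -\tfrac{1}{R^2}(\partial_x+\alpha)^2$ with trivial $L^2$-kernel, compute its heat kernel $\tfrac{R}{\sqrt{4\pi t}}e^{-R^2(x-y)^2/4t-\alpha(x-y)}$ (the paper does this by Fourier transform, you by gauging away $\alpha$), obtain $\cT_g(t) = -\tfrac{R}{\sqrt{4\pi t}}e^{-R^2 g^2/4t-\alpha g}$, and evaluate the regularised integral through lemma \ref{lem Tg sigma} — the paper applies lemma \ref{lem int exp} at $\sigma>0$ and then sets $\sigma=0$, while you expand in $s$ directly at $\sigma=0$, which is the same computation. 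Your $g=0$ case coincides with the paper's lemma \ref{lem torsion ZR 0}, and the sign bookkeeping in $e^{\mp\alpha(x-y)}$ that you flag is exactly the convention the paper fixes.
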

In this example,  $\Delta_E$ has trivial $L^2$-kernel. 
We find that, as expected, the equivariant analytic torsion of $\nabla^E$ is independent of $R$. Furthermore, we see that $ T_g(\nabla^E)$ does not depend continuously on $g$.

The adjoint of $\nabla^E$ is given by
\[
(\nabla^E)^* f dx= \frac{1}{R^2}(-\frac{d}{dx}+ \bar \alpha)f,
\]
for $f \in C^{\infty}(\R)$. 
The factor $1/R^2$ comes from the fact that the metric on $\Bigwedge^1 T^*M$ is $1/R^2$ times the (standard) metric on $\Bigwedge^0 T^*M$. Hence the Laplacian $\Delta_{E}^p$ acting on $p$-forms is
\beq{eq Delta R}
\Delta_E^p = \frac{-1}{R^2}  \left(\frac{\partial}{\partial x} - \bar \alpha \right)\left(\frac{\partial }{\partial x} + \alpha\right).
\eeq

Let $\kappa^{\R, \alpha}_t$ be the heat kernel of $\Delta_E^0$ or $\Delta_E^1$. 
\begin{lemma}\label{lem heat alpha}
We have 
\beq{eq kappa alpha}
\kappa^{\R, \alpha}_t(x,y) = \frac{R}{\sqrt{4\pi t}} e^{-R^2(x-y)^2/4t -\alpha(x-y)}.
\eeq
\end{lemma}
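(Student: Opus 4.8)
The plan is to reduce the computation to the classical heat kernel on $\R$ by a gauge transformation. By \eqref{eq Delta R} the differential operator underlying $\Delta_E^0$ and $\Delta_E^1$ is the same, namely $\frac{-1}{R^2}(\partial_x - \bar\alpha)(\partial_x + \alpha)$ (the factor $1/R^2$ already absorbs the difference between the metrics on $\Bigwedge^0 T^*M$ and $\Bigwedge^1 T^*M$), so there is effectively a single kernel to compute. Since $\alpha \in i\R$ we have $\bar\alpha = -\alpha$, and a short expansion gives $\Delta_E^p = \frac{-1}{R^2}(\partial_x + \alpha)^2$. The elementary observation driving the proof is the operator factorisation
\[
\partial_x + \alpha = e^{-\alpha x}\circ \partial_x \circ e^{\alpha x},
\]
where $e^{\pm\alpha x}$ denote the corresponding multiplication operators; this is just the product rule. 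Squaring it and inserting the constant $-1/R^2$ yields $\Delta_E^p = V^{-1}\,\Delta_0\, V$ with $\Delta_0 := -\frac{1}{R^2}\partial_x^2$ and $V$ the operator of multiplication by $e^{\alpha x}$.

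Because $\alpha$ is purely imaginary, $V$ is a \emph{unitary} operator on $L^2(\R)$ with $V^{-1} = V^* = $ multiplication by $e^{-\alpha x}$, while $\Delta_0$ is essentially self-adjoint on $C^\infty_c(\R)$. Hence the functional calculus gives $e^{-t\Delta_E^p} = V^{-1}\,e^{-t\Delta_0}\,V$ for all $t>0$. I would then identify $e^{-t\Delta_0} = e^{(t/R^2)\partial_x^2}$ as the standard heat semigroup on $\R$, rescaled, whose Schwartz kernel with respect to $dy$ is the Gaussian
\[
p_t(x,y) = \frac{R}{\sqrt{4\pi t}}\, e^{-R^2(x-y)^2/4t},
\]
obtained from $\tfrac{1}{\sqrt{4\pi s}}e^{-(x-y)^2/4s}$ by setting $s = t/R^2$. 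Conjugating by $V$ and $V^{-1}$ multiplies this kernel on the left by $e^{-\alpha x}$ and on the right by $e^{\alpha y}$, producing precisely the factor $e^{-\alpha(x-y)}$ of \eqref{eq kappa alpha}.

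The points needing a little care are bookkeeping rather than substantial: (i) that the factorisation genuinely intertwines the $L^2$-semigroups and not merely the formal differential operators — this is exactly where unitarity of $V$, hence $\alpha \in i\R$, is used; and (ii) fixing the reference measure on $M = \R$ consistently, so that the prefactor $\tfrac{R}{\sqrt{4\pi t}}$ rather than $\tfrac{1}{\sqrt{4\pi t}}$ appears. If one prefers to bypass the operator-theoretic step, an equivalent route is to verify directly that the right-hand side of \eqref{eq kappa alpha} solves $\partial_t\kappa = -\Delta_E^p\kappa$ in the $x$-variable — using $(\partial_x+\alpha)(e^{-\alpha x}f) = e^{-\alpha x}\partial_x f$ to reduce to the flat case — together with the initial condition $\int \kappa_t^{\R,\alpha}(x,y)f(y)\,dy \to f(x)$ as $t\downarrow 0$, evaluated via the Gaussian integral $\int e^{-az^2-bz}\,dz = \sqrt{\pi/a}\,e^{b^2/4a}$, and then to invoke uniqueness of the heat kernel, valid since $(\R, R^2dx^2)$ is complete. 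I expect the gauge-transformation argument to be the cleaner of the two to write up.
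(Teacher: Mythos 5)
Your proof is correct, but it takes a genuinely different route from the paper's. The paper computes $\kappa^{\R,\alpha}_t$ directly via the Fourier transform: it writes the kernel as $\int_{\R} e^{i(x-y)\xi}\, e^{t(i\xi-\bar\alpha)(i\xi+\alpha)/R^2}\,d\xi$, uses $\bar\alpha=-\alpha$ to recognise a (shifted) Gaussian integral, and evaluates it by substitution. You instead factor $\Delta_E^p=\frac{-1}{R^2}(\partial_x+\alpha)^2=V^{-1}\Delta_0 V$ with $V$ multiplication by $e^{\alpha x}$, use unitarity of $V$ (again from $\alpha\in i\R$) to get $e^{-t\Delta_E^p}=V^{-1}e^{-t\Delta_0}V$ at the level of semigroups, and conjugate the known flat heat kernel. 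Both arguments hinge on the same hypothesis $\bar\alpha=-\alpha$, but they use it differently: your gauge-transformation argument makes its role transparent (it is exactly what makes $V$ unitary, so the intertwining holds for the $L^2$-semigroups and not merely the formal operators), and it isolates the $e^{-\alpha(x-y)}$ factor conceptually; the paper's Fourier computation is more self-contained, needing neither essential self-adjointness nor an appeal to the known flat kernel, at the cost of a slightly more opaque Gaussian manipulation. Your attention to the two bookkeeping points --- the semigroup-level intertwining and the normalisation of the reference measure producing the prefactor $R/\sqrt{4\pi t}$ --- is exactly where care is needed, and your fallback (direct verification plus uniqueness of the heat kernel on the complete manifold $(\R,R^2dx^2)$) is also sound.
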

\begin{proof}
Let $p=0$ or $p=1$.
Using the Fourier transform, we find that for all $x,y \in \R$, 
\[
\kappa^{\R, \alpha}_t(x,y) = \int_{\R}e^{ i(x-y)\xi} e^{t(i \xi - \bar \alpha)( i \xi + \alpha)/R^2}\, d\xi.
\]
As $\bar\alpha = -\alpha$, we can rewrite this into a standard Gaussian integral by substitution, and obtain \eqref{eq kappa alpha}. 
\end{proof}

Let $g \in \R$.
\begin{lemma}\label{lem torsion ZR}
If $g\not=0$, then for all $\sigma>0$
\beq{eq Th ZR}
\log T_g(\nabla^E, \sigma) = \frac{e^{\alpha g - R|g|\sqrt{\sigma}}}{2 |g| }.
\eeq
\end{lemma}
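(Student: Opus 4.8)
The plan is to reduce everything to one explicit Gaussian integral, exploiting that $G=\R$ is abelian. Since $G$ is abelian, the centraliser of $g$ is $Z=G$, so $G/Z$ is a single point and \eqref{eq def Trh} collapses, for an operator $T$ with smooth kernel $\kappa$, to
\[
\Tr_g(T)=\int_M \psi(gm)\,\tr\bigl(g\,\kappa(g^{-1}m,m)\bigr)\,dm ,
\]
where the $G$-action on the fibres of $E$ is trivial. By \eqref{eq Delta R} the Laplacians $\Delta_E^0$ and $\Delta_E^1$ coincide, and $\Delta_E$ has trivial $L^2$-kernel, so $P=0$ and $\cT_g(t)=-\Tr_g(e^{-t\Delta_E^1})$ from \eqref{eq curly T}. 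Before computing, I would record that the hypotheses of definition \ref{def g torsion} hold: $G/Z$ is compact, so the zero function is a \v{S}varc--Milnor function by example \ref{ex SM GZ cpt}, conditions (1) and (3) follow from proposition \ref{prop conv small t}, and condition (2) is immediate once $\cT_g(t)$ is known (it will be $\cO(t^{-1/2})$). Hence lemma \ref{lem Tg sigma} applies and $T_g(\nabla^E,\sigma)$ is given by \eqref{eq Tg sigma} for $\Real(\sigma)>0$.

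First I would compute $\cT_g(t)$. Substituting the heat kernel \eqref{eq kappa alpha} into the formula for $\Tr_g(e^{-t\Delta_E^1})$ above, the value $\kappa^{\R,\alpha}_t(g^{-1}m,m)$ depends only on the constant displacement between $g^{-1}m$ and $m$, so it pulls out of the integral over $M$, which equals $1$ by the normalisation \eqref{eq cutoff} of $\psi$. This gives
\[
\cT_g(t)=-\Tr_g(e^{-t\Delta_E^1})=-\frac{R}{\sqrt{4\pi t}}\,e^{-\alpha g}\,e^{-R^2 g^2/(4t)},
\]
a constant multiple of $t^{-1/2}e^{-R^2 g^2/(4t)}$; in particular $\cT_g(t)=\cO(t^{-1/2})$ as $t\to\infty$, confirming condition (2).

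Next I would evaluate the right-hand side of \eqref{eq Tg sigma}. Write $F(s):=\int_0^\infty t^{s-3/2}e^{-\sigma t-R^2 g^2/(4t)}\,dt$, so that $\frac{1}{\Gamma(s)}\int_0^\infty t^{s-1}e^{-\sigma t}\cT_g(t)\,dt=-\frac{R e^{-\alpha g}}{\sqrt{4\pi}}\cdot\frac{F(s)}{\Gamma(s)}$. The key observation is that, because $g\neq 0$ and $\Real(\sigma)>0$, the factor $e^{-R^2 g^2/(4t)}$ forces convergence as $t\downarrow 0$ and $e^{-\sigma t}$ as $t\to\infty$ for every $s\in\C$, so $F$ is entire. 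Since $1/\Gamma(s)=s+O(s^2)$ near $s=0$ (equivalently, by \eqref{eq der ts Gamma}), it follows that $\frac{d}{ds}\big|_{s=0}\frac{F(s)}{\Gamma(s)}=F(0)$. By lemma \ref{lem int exp} with $a=R^2 g^2/4$ and $b=\sigma$,
\[
F(0)=\int_0^\infty t^{-3/2}e^{-\sigma t-R^2 g^2/(4t)}\,dt=\frac{2\sqrt\pi}{R|g|}\,e^{-R|g|\sqrt\sigma}.
\]
Substituting back gives $-2\log T_g(\nabla^E,\sigma)=-\dfrac{e^{-\alpha g}}{|g|}\,e^{-R|g|\sqrt\sigma}$, which is \eqref{eq Th ZR}.

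I do not expect a genuine obstacle here: the work is bookkeeping rather than analysis. The points to get right are tracking the constant $R$ and the sign of $\alpha g$ through the densities and the change of variables in $\Tr_g$, and justifying that the $s$-derivative at $0$ simply evaluates $F$ at $0$ — which is exactly where $g\neq 0$ enters. None of the large-time heat-kernel estimates of sections \ref{sec conv}--\ref{sec metric indep} are needed, since all relevant integrals converge by inspection for $g\neq 0$ and $\Real(\sigma)>0$.
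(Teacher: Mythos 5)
Your proposal is correct and follows essentially the same route as the paper: compute $\cT_g(t)$ from the explicit heat kernel of lemma \ref{lem heat alpha} (the $G/Z$-integral collapsing since $G$ is abelian), observe that for $g\neq 0$ the full $t$-integral converges for every $s$ so the $s$-derivative at $0$ of $F(s)/\Gamma(s)$ is just $F(0)$, and evaluate $F(0)$ by lemma \ref{lem int exp}. Your additional verification of the conditions of definition \ref{def g torsion} is harmless extra care that the paper defers to the surrounding discussion.
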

\begin{proof}
By lemma \ref{lem heat alpha},
\[
\Tr_g(e^{-t \Delta_E^p}) = \int_0^1 \frac{R}{\sqrt{4\pi t}} e^{-R^2g^2/4t + \alpha g}\, dx = \frac{R}{\sqrt{4\pi t}} e^{-R^2 g^2/4t + \alpha g}.
\]
Hence, for $t>0$,
\beq{eq cal T ZR}
\cT_g(t) = \frac{-R}{\sqrt{4\pi t}} e^{-R^2 g^2/4t +\alpha g},
\eeq
so for all $s,\sigma>0$, if $g \not=0$, 
\[
\begin{split}
-2\log T_g(\nabla^E, \sigma) &= 
\frac{-R e^{\alpha g}}{\sqrt{4\pi}} 
\left. \frac{d}{ds}\right|_{s=0} \frac{1}{\Gamma(s)} \int_0^{
\infty} t^{s-3/2}e^{-\sigma t} e^{-R^2 g^2/4t }   \, dt\\
&= 
\frac{-Re^{\alpha g}}{\sqrt{4\pi}} 
\int_0^{
\infty} t^{-3/2}e^{-\sigma t} e^{-R^2 g^2/4t }   \, dt.
\end{split}
\]
%(\Todo: why? This is used on page 27 of \cite{Shen16}.)
By lemma \ref{lem int exp}, this equals $-2$ times the right hand side of \eqref{eq Th ZR}.
\end{proof}

We now consider $g=0$.
\begin{lemma}\label{lem torsion ZR 0}
We have
\[
\log T_0(\nabla^E, \sigma)=-R\frac{\sqrt{\sigma}}{2}.
\]
\end{lemma}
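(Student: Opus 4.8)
The plan is to compute $\cT_0(t)=\cT_e(t)$ explicitly from the heat kernel of lemma \ref{lem heat alpha} and substitute it into the formula \eqref{eq Tg sigma} for $T_g(\nabla^E,\sigma)$.

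First I would record that $g=0$ is the identity element, so $\Tr_0=\Tr_e$ is the von Neumann trace and, by definition \ref{def g Tr} with $g=e$ (so that $G/Z$ is a point), $\Tr_e(T)=\int_{\R}\psi(m)\,\tr\kappa(m,m)\,dm$ for an operator with smooth kernel $\kappa$. Since $\Delta_E$ has trivial $L^2$-kernel, $P_p=0$; and by lemma \ref{lem heat alpha} the diagonal value of the heat kernel of $\Delta_E^p$ is $\kappa_t^{\R,\alpha}(x,x)=R/\sqrt{4\pi t}$ for $p=0,1$, which is independent of $x$, so the normalisation $\int_{\R}\psi\,dm=1$ gives $\Tr_e(e^{-t\Delta_E^p})=R/\sqrt{4\pi t}$. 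Hence
\[
\cT_e(t)=-\Tr_e(e^{-t\Delta_E^1})=-\frac{R}{2\sqrt{\pi}}\,t^{-1/2}.
\]
In particular $\cT_e(t)=\cO(t^{-1/2})$ as $t\to\infty$, and since the zero function is a \v{S}varc--Milnor function here (as $G/Z$ is a point, cf.\ example \ref{ex SM GZ cpt}), proposition \ref{prop conv small t} shows that all conditions of definition \ref{def g torsion} hold, so lemma \ref{lem Tg sigma} applies.

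Next, for $\Real(s)>1/2$ and $\sigma>0$ I would plug this into \eqref{eq Tg sigma} and evaluate the elementary integral $\int_0^{\infty}t^{s-3/2}e^{-\sigma t}\,dt=\sigma^{1/2-s}\Gamma(s-1/2)$, obtaining
\[
\frac{1}{\Gamma(s)}\int_0^{\infty}t^{s-1}e^{-\sigma t}\cT_e(t)\,dt=-\frac{R}{2\sqrt{\pi}}\,\sigma^{1/2-s}\,\frac{\Gamma(s-1/2)}{\Gamma(s)}.
\]
This is the required meromorphic continuation in $s$, and it is regular at $s=0$ because $1/\Gamma(s)$ vanishes there while $\Gamma(s-1/2)$ is regular at $s=0$. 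Finally I would differentiate at $s=0$: by the product rule the term in which the derivative falls on $\sigma^{1/2-s}$ vanishes, since it still carries the factor $\Gamma(s-1/2)/\Gamma(s)$ which is zero at $s=0$; the surviving term is $-\frac{R}{2\sqrt{\pi}}\,\sigma^{1/2}\,\dds\frac{\Gamma(s-1/2)}{\Gamma(s)}=-\frac{R}{2\sqrt{\pi}}\,\sigma^{1/2}\,(-2\sqrt{\pi})=R\sqrt{\sigma}$ by \eqref{eq der Gamma quotient}. Hence $-2\log T_0(\nabla^E,\sigma)=R\sqrt{\sigma}$, which is the claim. There is no genuine obstacle here: the only points requiring care are checking the hypotheses of lemma \ref{lem Tg sigma} (handled above via the explicit form of $\cT_e$) and the bookkeeping at $s=0$, where both $1/\Gamma(s)$ and $\Gamma(s-1/2)/\Gamma(s)$ vanish, so one must combine the product rule with \eqref{eq der Gamma quotient}.
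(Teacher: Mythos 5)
Your proposal is correct and follows essentially the same route as the paper: compute $\cT_e(t)=-R/\sqrt{4\pi t}$ from the explicit heat kernel of lemma \ref{lem heat alpha}, evaluate $\int_0^\infty t^{s-3/2}e^{-\sigma t}\,dt=\sigma^{1/2-s}\Gamma(s-1/2)$, and use \eqref{eq der Gamma quotient} at $s=0$ to get $-2\log T_0(\nabla^E,\sigma)=R\sqrt{\sigma}$. The only difference is that you also verify the hypotheses of lemma \ref{lem Tg sigma}, which the paper's proof does not bother with here; this is harmless (and slightly more careful).
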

\begin{proof}
$\Delta^p_E$ has trivial kernel, so $P_p = 0$. By lemma \ref{lem heat alpha},
\[
\Tr_e(e^{-t\Delta_E^p}) = \int_0^1 \frac{R}{\sqrt{4\pi t}}\, dx =  \frac{R}{\sqrt{4\pi t}},
\]
so
\[
-2\log T_0(\nabla^E, \sigma) =\frac{-R}{\sqrt{4\pi}} \left. \frac{d}{ds}\right|_{s=0} \frac{1}{\Gamma(s)} \int_0^{
\infty} t^{s-3/2}e^{-\sigma t}  \, dt.
\]
By the substitution $t \mapsto \sigma t$, and \eqref{eq der Gamma quotient}, this equals $R\sqrt{\sigma}$.
%
%%\eeq
%%By a substitution $t' = \sigma t$, we have
%%\[
%%\int_0^{
%%\infty} t^{s-3/2}e^{-\sigma t}  \, dt = \sigma^{1/2 - s}\Gamma(s-1/2).
%%\]
%%So by \eqref{eq der Gamma quotient},  the right hand side of \eqref{eq T0 1} equals
%%\[
%&= \frac{-R\sqrt{\sigma}}{\sqrt{4\pi}} \left. \frac{d}{ds}\right|_{s=0} \frac{\sigma^{-s}\Gamma(s-1/2)}{\Gamma(s)} \\
%&= R\sqrt{\sigma}.
%\end{split}
%\]
\end{proof}

Proposition \ref{prop torsion R} follows from lemmas \ref{lem Tg sigma}, \ref{lem torsion ZR} and \ref{lem torsion ZR 0}.

\subsection{The circle}\label{sec ex S1}

Let $R>0$. 
We view the circle $M$ of circumference $R$ as $\R/\Z$ with metric $R^2 dx^2$. We consider the action by the circle $G = \R/\Z$ on $M$ by rotations.

We consider the connection $\nabla^E = d+\alpha dx$ on the trivial line bundle  $E \to \R/\Z$ (with the trivial action by $\R/\Z$ on fibres), for $\alpha \in i\R$. Then the kernel of $\Delta_E$ is trivial if and only if 
 $\alpha \not\in 2\pi i \Z$. 
 In this subsection, we suppose that  $\alpha \not\in 2\pi i \Z$, 
so that equivariant analytic torsion is independent of $R$, as we will see explicitly. In subsection \ref{sec ex S1 alpha 0}, we 
comment on the case $\alpha \in 2 \pi i \Z$, and find that analytic torsion does depend on $R$.

\begin{proposition}\label{prop torsion line}
Consider the group element $g=r+\Z \in \R/\Z$, for $r \in \R$. If $r \not \in \Z$, then
%\beq{eq torsion line}
\[
T_g(\nabla^E) = \exp \left(\frac{1}{2}  \sum_{n \in \Z}  \frac{{1}}{|n-r|} e^{- \alpha (n-r)} \right).
\]
%\eeq
If $r \in \Z$, then
\beq{eq torsion line e}
T_e(\nabla^E)=
 \left( - (2\sinh(\alpha/2))^2\right)^{-1/2}. 
\eeq
\end{proposition}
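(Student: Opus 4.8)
The plan is to mirror the computation for the real line in Subsection~\ref{sec ex R}. Since $G=\R/\Z$ is abelian, the centraliser $Z$ equals $G$, so $G/Z$ is a point, $\psi$ may be taken identically $1$, and a \v{S}varc--Milnor function is the zero function (Example~\ref{ex SM GZ cpt}); in particular Proposition~\ref{prop conv small t} applies once condition~(2) of Definition~\ref{def g torsion} is checked. As in \eqref{eq Delta R}, $\Delta_E^0$ and $\Delta_E^1$ are the same operator $\frac{-1}{R^2}(\partial_x-\bar\alpha)(\partial_x+\alpha)$ on $\R/\Z$, the rotation $g$ fixes $dx$, and the Hodge star intertwines the two, so $\Tr_g(e^{-t\Delta_E^1})=\Tr_g(e^{-t\Delta_E^0})$; and since $\alpha\notin 2\pi i\Z$ the $L^2$-kernel of $\Delta_E$ is trivial, so $P=0$. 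Hence by \eqref{eq curly T}, $\cT_g(t)=-\Tr_g(e^{-t\Delta_E^0})$. The heat kernel of $\Delta_E^0$ on $\R/\Z$ is the periodisation $\kappa_t^{S^1}(x,y)=\sum_{n\in\Z}\kappa_t^{\R,\alpha}(x,y+n)$ of the kernel in Lemma~\ref{lem heat alpha} (this sum is periodic in $x$ and $y$ and solves the heat equation for $\nabla^E$), and writing $g=r+\Z$ a direct computation with \eqref{eq kappa alpha} gives
\[
\Tr_g\bigl(e^{-t\Delta_E^0}\bigr)=\frac{R}{\sqrt{4\pi t}}\sum_{n\in\Z}e^{-R^2(n-r)^2/4t}\,e^{-\alpha(n-r)},
\qquad\text{so}\qquad
\cT_g(t)=-\frac{R}{\sqrt{4\pi t}}\sum_{n\in\Z}e^{-R^2(n-r)^2/4t-\alpha(n-r)}.
\]

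Suppose first $r\notin\Z$. Then $(n-r)^2>0$ for all $n$, so $\cT_g(t)$ decays faster than any power as $t\downarrow0$; and by Poisson summation (equivalently, the eigenvalue expansion, with eigenvalues $R^{-2}(2\pi n+\beta)^2>0$, where $\alpha=i\beta$) it decays exponentially as $t\to\infty$. Thus all conditions of Definition~\ref{def g torsion} hold and $T_g(\nabla^E)$ is defined. For $\sigma>0$ the series for $\cT_g(t)$ converges absolutely against $t^{s-1}e^{-\sigma t}$, so in \eqref{eq Tg sigma} we may interchange $\sum_n$ with $\frac{d}{ds}\big|_{s=0}\frac{1}{\Gamma(s)}\int_0^\infty$; since $1/\Gamma(s)$ has a simple zero at $s=0$, the $n$-th term of $-2\log T_g(\nabla^E,\sigma)$ reduces to $\frac{-R}{\sqrt{4\pi}}e^{-\alpha(n-r)}\int_0^\infty t^{-3/2}e^{-\sigma t}e^{-R^2(n-r)^2/4t}\,dt$, which by Lemma~\ref{lem int exp} equals $-\,e^{-\alpha(n-r)}|n-r|^{-1}e^{-R|n-r|\sqrt\sigma}$. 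Hence $\log T_g(\nabla^E,\sigma)=\frac12\sum_{n}e^{-\alpha(n-r)}|n-r|^{-1}e^{-R|n-r|\sqrt\sigma}$, and letting $\sigma\to0$ via Lemma~\ref{lem Tg sigma} gives the first formula.

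Now suppose $r\in\Z$, so $g=e$ and (taking $r=0$) $\cT_e(t)=-\frac{R}{\sqrt{4\pi t}}-\frac{R}{\sqrt{4\pi t}}\sum_{n\neq0}e^{-R^2n^2/4t-\alpha n}$. The isolated term gives $\cT_e(t)\sim-\frac{R}{\sqrt{4\pi t}}$ as $t\downarrow0$, so condition~(3) of Definition~\ref{def g torsion} now genuinely uses meromorphic continuation ($\int_0^1 t^{s-3/2}\,dt=(s-\frac{1}{2})^{-1}$, regular at $0$), while the remaining terms still decay exponentially at both ends. Running the previous computation on the $n\neq0$ terms contributes $-\sum_{n\neq0}e^{-\alpha n}|n|^{-1}e^{-R|n|\sqrt\sigma}$ to $-2\log T_e(\nabla^E,\sigma)$; the $n=0$ term contributes a multiple of $\sqrt\sigma$ via \eqref{eq der Gamma quotient} (exactly the computation behind $T_0(\nabla^E)=1$ for the line, cf.\ Lemma~\ref{lem torsion ZR 0}), which vanishes at $\sigma=0$. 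Therefore, by Lemma~\ref{lem Tg sigma},
\[
-2\log T_e(\nabla^E)=-\sum_{n\neq0}\frac{e^{-\alpha n}}{|n|}=-\sum_{n\geq1}\frac{e^{\alpha n}+e^{-\alpha n}}{n}=\log\bigl((1-e^{\alpha})(1-e^{-\alpha})\bigr)=\log\bigl(-(2\sinh(\alpha/2))^2\bigr),
\]
using $\sum_{n\geq1}z^n/n=-\log(1-z)$ (convergent since $\alpha\notin2\pi i\Z$); this is \eqref{eq torsion line e}.

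The main obstacle is that the series $\sum_n e^{-\alpha(n-r)}/|n-r|$ converges only conditionally (its moduli form the harmonic series), so the termwise manipulations of $\cT_g(t)$ are not legitimate directly at $\sigma=0$. The regularised torsion $T_g(\nabla^E,\sigma)$ of Lemma~\ref{lem Tg sigma} is used precisely to carry out all interchanges of $\sum_n$, $\int$ and $\frac{d}{ds}$ at $\sigma>0$, where the Gaussian-in-$1/t$ factor supplies the convergence-producing weight $e^{-R|n-r|\sqrt\sigma}$; the passage back to $\sigma=0$ then rests on convergence of the limiting series, which is where the standing hypothesis $\alpha\notin2\pi i\Z$ enters (Dirichlet's test, since $\sum e^{\pm i\beta n}$ has bounded partial sums). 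A secondary care point is bookkeeping of the normalisation constants in $\kappa_t^{S^1}$ inherited from Lemma~\ref{lem heat alpha}, which must be tracked so that the factor $\frac{R}{\sqrt{4\pi}}$ cancels against $\sqrt{\pi/a}$ from Lemma~\ref{lem int exp}.
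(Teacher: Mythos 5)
Your proof is correct and follows essentially the same route as the paper's own argument (lemmas \ref{lem torsion line sigma} and \ref{lem torsion line sigma e}): the periodised heat kernel from lemma \ref{lem heat alpha}, the resulting expression for $\cT_g(t)$, termwise integration via lemma \ref{lem int exp} inside the $\sigma$-regularised torsion of lemma \ref{lem Tg sigma}, and evaluation at $\sigma=0$. The only minor deviation is that you set $\sigma=0$ inside the series and justify the boundary passage by Dirichlet/Abel summation (and treat the $n=0$ term for $r\in\Z$ as a multiple of $\sqrt{\sigma}$ that vanishes at $\sigma=0$), whereas the paper first sums the $n\neq 0$ terms in closed form for $\Real(\sigma)>0$ and then evaluates at $\sigma=0$; your handling of the conditional convergence is, if anything, slightly more explicit than the paper's.
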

In the case $g=e$, the equality \eqref{eq torsion line e} is example 1.7 in Shen's chapter in \cite{Shen21}.

 \begin{lemma}\label{lem torsion line sigma}
 If $\Real(\sigma)>0$ and $r \not \in \Z$, then 
 \beq{eq torsion line sigma}
 2\log T_g(\nabla^E, \sigma) =  \sum_{n \in \Z}  \frac{{1}}{|n-r|} e^{-R |n-r|\sqrt{\sigma}- \alpha (n-r)}.
 \eeq	
 \end{lemma}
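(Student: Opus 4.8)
The goal is to compute the heat kernel on the circle, take its $g$-trace, form $\cT_g(t)$, and feed it into the integral representation \eqref{eq Tg sigma}. The main tool is the method of images together with Lemma~\ref{lem int exp}.

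First I would compute the heat kernel for $\Delta_E^p$ (for $p=0,1$, which by \eqref{eq Delta R}-type considerations are the same operator) on $M = \R/\Z$ with metric $R^2\,dx^2$. Since $M$ is a quotient of $\R$ by the $\Z$-action $x\mapsto x+1$, and the connection $d+\alpha\,dx$ lifts the one on $\R$, the heat kernel on the circle is obtained from the heat kernel $\kappa^{\R,\alpha}_t$ of Lemma~\ref{lem heat alpha} by summing over the deck group:
\[
\kappa^{S^1,\alpha}_t(x,y) = \sum_{n\in\Z} \kappa^{\R,\alpha}_t(x, y+n) = \sum_{n\in\Z} \frac{R}{\sqrt{4\pi t}} e^{-R^2(x-y-n)^2/4t - \alpha(x-y-n)}.
\]
(One should check that the twisting by $\alpha$ is compatible with the quotient, which holds because $\alpha\,dx$ is a translation-invariant form on $\R$, hence descends.)

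Next I would compute the $g$-trace. Here $G = \R/\Z$ is abelian and compact, so $Z = G$, $G/Z$ is a point, and the $g$-trace reduces (via \eqref{eq Trg Tr} or \eqref{eq def Trh}) to the ordinary trace of $g\cdot e^{-t\Delta_E^p}$ composed with the cutoff, which here can be taken to be $\psi\equiv 1$ (after normalizing the Haar measure so that $\vol(G)=1$; note $\int_M dx = 1$ in the $x$-coordinate). Since $g$ acts by the rotation $x\mapsto x+r$, we get
\[
\Tr_g(e^{-t\Delta_E^p}) = \int_0^1 \kappa^{S^1,\alpha}_t(x+r, x)\,R\,dx \cdot \tfrac{1}{R}= \sum_{n\in\Z}\frac{1}{\sqrt{4\pi t}} e^{-R^2(r-n)^2/4t - \alpha(r-n)},
\]
where the factor involving $R$ from the Riemannian density cancels against the $1/R$ normalization of the cutoff; I would double-check this bookkeeping against the $g=e$ normalization so that $\Tr_e(1\cdot e^{-t\Delta})$ matches the known small-$t$ asymptotics. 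Then $\cT_g(t) = \sum_{p}(-1)^p p\,\Tr_g(e^{-t\Delta_E^p} - P_p) = -\Tr_g(e^{-t\Delta_E^1})$ since $P_p = 0$ when $\alpha\notin 2\pi i\Z$, giving $\cT_g(t) = -\sum_{n\in\Z}\frac{1}{\sqrt{4\pi t}}e^{-R^2(r-n)^2/4t-\alpha(r-n)}$.

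Finally I would substitute this into \eqref{eq Tg sigma}. Using $\dds \frac{t^s}{\Gamma(s)}=1$ (as in Remark~\ref{rem int bdry}), the $s$-derivative at $s=0$ of $\frac{1}{\Gamma(s)}\int_0^\infty t^{s-1}e^{-\sigma t}\cT_g(t)\,dt$ equals $\int_0^\infty t^{-1}e^{-\sigma t}\cT_g(t)\,dt$, provided this converges and the meromorphic-continuation bookkeeping is valid for $\Real(\sigma)>0$ — this is exactly the content of Lemma~\ref{lem Tg sigma}, so I may invoke it. Thus
\[
-2\log T_g(\nabla^E,\sigma) = -\sum_{n\in\Z}\frac{e^{-\alpha(r-n)}}{\sqrt{4\pi}}\int_0^\infty t^{-3/2}e^{-\sigma t - R^2(r-n)^2/4t}\,dt.
\]
Applying Lemma~\ref{lem int exp} with $a = R^2(r-n)^2/4$ and $b = \sigma$ gives $\int_0^\infty t^{-3/2}e^{-\sigma t - a/t}\,dt = \sqrt{\pi/a}\,e^{-2\sqrt{a\sigma}} = \frac{2\sqrt\pi}{R|r-n|}e^{-R|r-n|\sqrt\sigma}$ (for each $n\neq r$, which is all $n$ since $r\notin\Z$). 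Plugging in yields $-2\log T_g(\nabla^E,\sigma) = -\sum_{n\in\Z}\frac{1}{|r-n|}e^{-R|r-n|\sqrt\sigma - \alpha(r-n)}$, and replacing $r-n$ by $n-r$ in the sum (reindexing $n\mapsto -n$, using $|{-}(n-r)| = |n-r|$) gives \eqref{eq torsion line sigma}.

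**Main obstacle.** The calculations are routine; the one place requiring care is justifying convergence of the sum-integral and the interchange of $\sum_{n}$ with $\int_0^\infty t^{-1}e^{-\sigma t}(\cdots)\,dt$ uniformly enough to commute with the $\frac{d}{ds}\big|_{s=0}\frac{1}{\Gamma(s)}\int_0^1 t^{s-1}(\cdots)$ operation near $t=0$ — i.e.\ verifying that the hypotheses of Lemma~\ref{lem Tg sigma} genuinely apply with this explicit $\cT_g$. The Gaussian decay $e^{-R^2(r-n)^2/4t}$ in the summands makes the small-$t$ behavior harmless for each fixed $n$, and for large $t$ the factor $e^{-\sigma t}$ with $\Real(\sigma)>0$ controls things, while the sum over $n$ converges because $R^2(r-n)^2/4t$ grows quadratically in $n$; assembling these into a clean dominated-convergence argument, and confirming the normalization constants from the cutoff function and Haar/Riemannian measures, is the only real work.
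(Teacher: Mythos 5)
Your route is exactly the paper's: method of images for the circle heat kernel, computation of $\Tr_g$, and termwise evaluation of $\int_0^\infty t^{-1}e^{-\sigma t}\cT_g(t)\,dt$ via Lemma \ref{lem int exp}, with the convergence issue you flag handled just as you suggest (since $r\notin\Z$ every summand carries a Gaussian $e^{-R^2(n-r)^2/4t}$, so each term is integrable down to $t=0$, the double sum/integral converges absolutely, and no $\Gamma$-regularisation is needed; this is also why invoking Lemma \ref{lem Tg sigma} is legitimate). However, as written your computation does not yield \eqref{eq torsion line sigma}; two bookkeeping steps fail. First, the normalisation of the $g$-trace: taking Haar measure of total mass $1$ on $G=\R/\Z$, the cutoff $\psi\equiv 1$ already satisfies \eqref{eq cutoff}, so there is no ``$1/R$ normalisation of the cutoff'', and $\Tr_g(e^{-t\Delta_E^p})=\int_M \tr\bigl(g\,\kappa_t(g^{-1}m,m)\bigr)\,dm$ with $dm=R\,dx$, which equals $\frac{R}{\sqrt{4\pi t}}\sum_n e^{-R^2(n-r)^2/4t-\alpha(n-r)}$. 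It is precisely this factor $R$ that cancels the $1/R$ coming out of Lemma \ref{lem int exp}. With your $R$-free $\Tr_g$, the chain of equalities you write gives $2\log T_g(\nabla^E,\sigma)=\frac1R\sum_n\frac{1}{|n-r|}e^{-R|n-r|\sqrt\sigma-\alpha(r-n)}$; in your last display the $1/R$ from $\sqrt{\pi/a}=\frac{2\sqrt\pi}{R|r-n|}$ is silently dropped, so two errors cancel. The check you propose (small-$t$ asymptotics at $g=e$, where the ordinary trace behaves like $\vol(M)/\sqrt{4\pi t}=R/\sqrt{4\pi t}$), or $R$-independence forced by remark \ref{rem rescaling} since $\ker(\Delta_E)=0$ here, would have exposed the spurious $1/R$.

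Second, the evaluation point of the kernel: definition \ref{def g Tr} (equation \eqref{eq def Trh}) requires $\kappa_t(g^{-1}m,m)$, i.e.\ $\kappa_t(x-r,x)$, whereas you use $\kappa_t(x+r,x)=\kappa_t(gm,m)$. This computes $\Tr_{g^{-1}}$ and flips the sign of the $\alpha$-term: your sum is $\sum_n\frac{1}{|n-r|}e^{-R|n-r|\sqrt\sigma+\alpha(n-r)}$, i.e.\ \eqref{eq torsion line sigma} with $\alpha$ replaced by $-\alpha$. The reindexing $n\mapsto-n$ you invoke at the end does not repair this: it sends $r-n$ to $r+n$ and produces $\sum_n\frac{1}{|n+r|}e^{-R|n+r|\sqrt\sigma-\alpha(n+r)}$, which is still the $\alpha\mapsto-\alpha$ version, and for $\alpha\in i\R\setminus\{0\}$ and generic $r\notin\Z$ this genuinely differs from the stated right-hand side. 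With these two corrections — keep the factor $R$ in $\Tr_g$ and evaluate at $g^{-1}m$ — your argument coincides with the paper's proof.
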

 \begin{proof}
 By lemma \ref{lem heat alpha}, the heat kernel $\kappa^{\R/\Z, \alpha}_t$ of $\Delta_E^0$ or  $\Delta_E^1$ is given by
 \beq{eq kappa S1}
\kappa^{\R/\Z, \alpha}_t(x+\Z, y+\Z) = \sum_{n \in \Z} 
 \frac{R}{\sqrt{4\pi t}} e^{-R^2(x+n-y)^2/4t -\alpha(x+n-y)}.
 \eeq
 For $r \in \R$ we have, with $g = r+\Z$, 
 \[
 \Tr_g(\kappa^{\R/\Z, \alpha}_t) =  \frac{R}{\sqrt{4\pi t}} 
 \sum_{n \in \Z} 
e^{-R^2(n-r)^2/4t -\alpha(n-r)}.
 \]
 We deduced this from \eqref{eq kappa S1}, but we remark that it is also possible to use the fact that $ \Tr_g(e^{-t \Delta_E}) =  \Tr(g\circ e^{-t \Delta_E})$ because $G$ is compact, together with properties of the eigenfunctions $x+\Z \mapsto e^{2\pi i n x}$, and Poisson's summation formula.
 
We find that
\[
\cT_g(t) = -  \frac{R}{\sqrt{4\pi t}} \sum_{n \in \Z} e^{-R^2(n-r)^2/4t - \alpha (n-r)}.
\]

If $r \not \in \Z$, then $(n-r)^2>0$ for all $n \in \Z$.
Hence lemma \ref{lem int exp} implies that for all $\sigma \geq 0$,
\[
 \int_0^{\infty} t^{-3/2}e^{-\sigma t}e^{-R^2(n-r)^2/4t } \, dt = \frac{\sqrt{4\pi}}{R|n-r|} e^{-R |n-r|\sqrt{\sigma}},
\]
so the right hand side of \eqref{eq torsion line sigma} equals
\[
\sum_{n \in \Z}  \int_0^{\infty} t^{-1} e^{-\sigma t}  \frac{R}{\sqrt{4 \pi t}}e^{-R^2(n-r)^2/4t - \alpha (n-r)} \, dt.
\]
This converges absolutely, and equals
\beq{eq Th circle 1}
 \frac{R}{\sqrt{4\pi}} \int_0^{\infty} t^{-3/2}e^{-\sigma t}\sum_{n \in \Z} e^{-R^2(n-r)^2/4t - \alpha (n-r)} \, dt =  -\int_0^{\infty} t^{-1}e^{-\sigma t}\cT_g(t) \, dt =  2\log T_g(\nabla^E, \sigma).
\eeq
In particular, we find that the $\Gamma$-function regularisation near $t=0$ is not needed.
%
%\[
% \frac{R}{\sqrt{4\pi}} \int_0^{\infty} t^{-3/2}e^{-\sigma t}\sum_{n \in \Z} e^{-R^2(n-r)^2/4t - \alpha (n-r)} \, dt
%=-\int_0^{\infty} t^{-1}e^{-\sigma t}\cT_g(t)\, dt.
%\]
%In particular, the right hand side converges, and equals $2\log T_g(\nabla^E, \sigma)$.
%%
%%So %, %if  the regularisation parameter $s$ is not needed, 
%\beq{eq Th circle 1}
%\begin{split}
%2\log T_g(\nabla^E, \sigma) &= 
%-\int_0^{\infty} t^{-1}e^{-\sigma t}\cT_g(t)\, dt \\
%&= 
% \frac{R}{\sqrt{4\pi}} \int_0^{\infty} t^{-3/2}e^{-\sigma t}\sum_{n \in \Z} e^{-R^2(n-r)^2/4t - \alpha (n-r)} \, dt.
%\end{split}
% \eeq
%% 
%  Hence \eqref{eq Th circle 1} equals the right hand side of \eqref{eq torsion line sigma}.
\end{proof}

\begin{lemma}\label{lem torsion line sigma e}
 If $\Real(\sigma)>0$ and $r  \in \Z$, then 
 \beq{eq torsion line sigma e}
 2\log T_e(\nabla^E, \sigma) = -R \sqrt{\sigma}-\log(1-e^{-R\sqrt{\sigma}-\alpha})-\log(1-e^{-R\sqrt{\sigma}+\alpha}).
 \eeq
\end{lemma}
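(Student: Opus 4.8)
The plan is to evaluate the $\Gamma$-regularised Mellin transform in \eqref{eq Tg sigma} directly, splitting $\cT_e$ into the $n=0$ term and the remaining terms, exactly in the spirit of the proofs of Lemmas \ref{lem torsion ZR 0} and \ref{lem torsion line sigma}. First I would record the input data: since $\alpha\notin 2\pi i\Z$, the operator $\Delta_E$ has trivial $L^2$-kernel, so $P=0$; and $M$ is compact with $\ker(\Delta_E)=\{0\}$, so the conditions of Definition \ref{def g torsion} hold (by Proposition \ref{prop conv small t} together with the exponential decay of $\cT_e(t)$ as $t\to\infty$), and Lemma \ref{lem Tg sigma} supplies the meromorphic continuation of $T_e(\nabla^E,\sigma)$ to a neighbourhood of $\sigma=0$. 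Putting $r\in\Z$ (so $g=e$) in \eqref{eq kappa S1}, and using that $\Delta_E^0$ and $\Delta_E^1$ have the same heat kernel, I obtain, exactly as in the proof of Lemma \ref{lem torsion line sigma},
\[
\cT_e(t) = -\frac{R}{\sqrt{4\pi t}}\sum_{n\in\Z} e^{-R^2 n^2/4t - \alpha n}
= -\frac{R}{\sqrt{4\pi}}\, t^{-1/2} \;-\; \frac{R}{\sqrt{4\pi t}}\sum_{n\neq 0} e^{-R^2 n^2/4t - \alpha n}.
\]
I would substitute this into \eqref{eq Tg sigma} and split the integral according to this decomposition; the tail sum decays like $e^{-R^2/4t}$ as $t\downarrow 0$, so the split, and all the interchanges of sum and integral below, are justified by absolute convergence, using that $\Real(\sigma)>0$ and $\alpha\in i\R$, so that $|e^{-R\sqrt{\sigma}\pm\alpha}| = e^{-R\Real\sqrt{\sigma}}<1$.

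For the terms with $n\neq 0$, each summand has the same shape as $\cT_g$ in the line example with $g$ replaced by the integer $n$, and in particular decays rapidly as $t\downarrow 0$; hence by \eqref{eq der ts Gamma} the $\Gamma$-regularisation of this part is just $-\int_0^\infty t^{-1}e^{-\sigma t}(\,\cdot\,)\,dt$, as in the step leading to \eqref{eq Th circle 1}. Evaluating each resulting integral with Lemma \ref{lem int exp} (taking $a=R^2 n^2/4$ and $b=\sigma$) gives the contribution $\frac{1}{|n|}e^{-R|n|\sqrt{\sigma}-\alpha n}$ to $2\log T_e(\nabla^E,\sigma)$; summing over $n\geq 1$ and over $n\leq -1$ separately and applying $\sum_{m\geq 1} z^m/m = -\log(1-z)$ produces the two logarithmic terms $-\log(1-e^{-R\sqrt{\sigma}-\alpha})-\log(1-e^{-R\sqrt{\sigma}+\alpha})$ in \eqref{eq torsion line sigma e}.

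The $n=0$ term $-\frac{R}{\sqrt{4\pi}}t^{-1/2}$ is the one genuinely requiring the $\Gamma$-regularisation near $t=0$, and it is handled verbatim as in the proof of Lemma \ref{lem torsion ZR 0}: from $\int_0^\infty t^{s-3/2}e^{-\sigma t}\,dt = \Gamma(s-1/2)\sigma^{1/2-s}$, the product rule, \eqref{eq der Gamma quotient}, and the fact that $1/\Gamma(0)=0$ (which kills the $\log\sigma$ cross-term), this term produces the remaining $R\sqrt{\sigma}$ summand of \eqref{eq torsion line sigma e}. Collecting the two contributions gives the claimed identity. I expect no serious obstacle: the only points requiring care are keeping track of which part of $\cT_e$ does and does not need the $\Gamma$-regularisation, restricting to $\Real(\sigma)>0$ so that the geometric-type series for the logarithms converge to the principal branch of $\log$, and the cancellation of the $\log\sigma$ cross-term in the $n=0$ computation.
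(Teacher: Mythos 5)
Your proposal follows essentially the same route as the paper's own proof: the same splitting of $\cT_e$ into the $n=0$ term and the $n\neq 0$ terms, the same evaluation of the latter via lemma \ref{lem int exp} and the series $\sum_{m\geq 1}z^m/m=-\log(1-z)$, and the same $\Gamma$-regularisation of the $n=0$ term. One caveat, which you share with the paper: carrying out the $n=0$ computation verbatim as in lemma \ref{lem torsion ZR 0} (i.e.\ using $\Gamma(-1/2)=-2\sqrt{\pi}$, equivalently \eqref{eq der Gamma quotient}) gives a contribution $-R\sqrt{\sigma}$ to $2\log T_e(\nabla^E,\sigma)$ rather than $+R\sqrt{\sigma}$, so the first term in \eqref{eq torsion line sigma e} carries a sign slip in both your write-up and the printed statement; this is harmless for proposition \ref{prop torsion line}, since that term vanishes at $\sigma=0$.
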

\begin{proof}
If $r\in \Z$, then  \eqref{eq Th circle 1} becomes
\beq{eq Te circle}
2\log T_e(\nabla^E, \sigma) = 
 \frac{R}{\sqrt{4\pi}} \int_0^{\infty} t^{-3/2}e^{-\sigma t}\sum_{n \in \Z} e^{-R^2n^2/4t - \alpha  n} \, dt.
\eeq
% We will later see that the sum and the integral can be interchanged for $\sigma>0$. And 
% 
 The term for $n=0$ equals
 \[
 \frac{R}{\sqrt{4\pi}} \int_0^{\infty} t^{-3/2}e^{-\sigma t} \, dt =  \frac{R}{\sqrt{4\pi}} \sqrt{\sigma}\Gamma(-1/2) = -R \sqrt{\sigma}.
 \]
% Here we implicitly used regularisation, i.e. analytic continuation of the $\Gamma$-function.
For $n \not=0$, lemma \ref{lem int exp} implies that
 \[
 \int_0^{\infty} t^{-3/2}e^{-\sigma t}e^{-R^2n^2/4t} \, dt = 
 \frac{\sqrt{4\pi}}{R|n|} e^{-R|n|\sqrt{\sigma}}.
 \]
 Therefore,
 \[
 \sum_{n =1}^{\infty} 
  \frac{R}{\sqrt{4\pi}} \int_0^{\infty} t^{-3/2}e^{-\sigma t}e^{-R^2n^2/4t - \alpha  n} \, dt = \sum_{n=1}^{\infty} \frac{e^{n(-R\sqrt{\sigma}-\alpha})}{n} = -\log(1-e^{-R\sqrt{\sigma}-\alpha}).
 \]
 Hence
  \[
 \sum_{n =-\infty}^{-1} 
  \frac{R}{\sqrt{4\pi}} \int_0^{\infty} t^{-3/2}e^{-\sigma t}e^{-R^2n^2/4t - \alpha  n} \, dt = \sum_{n=1}^{\infty} \frac{e^{n(-R\sqrt{\sigma}+\alpha})}{n} = -\log(1-e^{-R\sqrt{\sigma}+\alpha}).
 \]
 We find that \eqref{eq Te circle} converges absolutely if $\Real(\sigma)>0$, and equals
the right hand side of \eqref{eq torsion line sigma e}.
 \end{proof}
 
 Proposition \ref{prop torsion line} follows from lemmas \ref{lem Tg sigma}, \ref{lem torsion line sigma} and \ref{lem torsion line sigma e}. To deduce \eqref{eq torsion line e} from lemma \ref{lem torsion line sigma e}, we use the equality
 \[
  \left( (1-e^{-\alpha})(1-e^{\alpha})\right)^{-1/2} =   \left( - (2\sinh(\alpha/2))^2\right)^{-1/2}. 
 \]

\begin{remark}\label{rem connection R}
The connection $\nabla^E = d+\alpha dx$ used in this subsection arises naturally in the context of flat vector bundles corresponding to representations of the fundamental group of a manifold. Indeed, consider the trivial line bundle $E' = \R \times \C \to \R$, but with the action by $\R$ given by
\beq{eq isom line bdls}
g \cdot (x,z) = (x+g, e^{g\alpha} z),
\eeq
for $g,x \in \R$ and $z \in \C$. Then there is an isomorphism of $\R$-equivariant vector bundles $E \xrightarrow{\cong} E'$, with $E$ as in subsection \ref{sec ex R}, mapping $(x,z) \in E$ to $(x, e^{\alpha x}z)$. Under this isomorphism, the connection $\nabla^E$ on $E$ corresponds to the trivial connection $d$ on $E'$. 

The quotient of the total space of $E'$ by $\Z$ defines the line bundle on $\R/\Z$ associated to the representation $n \mapsto e^{-n\alpha}$ of $\pi_1(\R/\Z) = \Z$. The natural flat connection $d$ on this bundle corresponds to the connection $\nabla^E$ in the current subsection via the $\R/\Z$-equivariant isomorphism with the trivial line bundle induced by \eqref{eq isom line bdls}.
\end{remark}

\subsection{The circle for $\alpha = 0$}\label{sec ex S1 alpha 0}

We continue with the example of the circle with circumference $R$, but now take $\alpha = 0$.
Then both $\ker(\Delta_0)$ and $\ker(\Delta_1)$ are the one-dimensional trivial representation of $G$.
% So for all $g \in G$, 
%\[
%\Tr_g(P_0) = \Tr_g(P_1) = 1.
%\]
Therefore equivariant analytic torsion may depend on $R$, because the condition that $\ker (\Delta_E) = \{0\}$ in corollary \ref{cor metric indep}, and in remark \ref{rem rescaling}), is not satisfied. We verify this for $g = e =  0+ \Z$.
\begin{lemma}\label{lem circle alpha zero}
If $\alpha = 0$, then
\[
T_e(\R/\Z) = 1/R.
\]
\end{lemma}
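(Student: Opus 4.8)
The plan is to compute $\cT_e(t)$ explicitly and to recognize the resulting expression as a spectral zeta value of the circle. Since $\alpha=0$, formula \eqref{eq Delta R} gives $\Delta_E^0=\Delta_E^1=-R^{-2}\partial_x^2$, both with one-dimensional $L^2$-kernel spanned by the constants, so $P_0=P_1$ is the rank-one projection onto constants. Specializing the heat kernel computation in the proof of lemma \ref{lem torsion line sigma} to $\alpha=0$ and $r=0$ gives $\Tr_e(e^{-t\Delta_E^p})=\frac{R}{\sqrt{4\pi t}}\sum_{n\in\Z}e^{-R^2n^2/4t}$ for $p=0,1$. For $g=e$ and $M$ compact the $g$-trace agrees with the ordinary operator trace (by cyclicity of $\Tr$ together with $\int_G\psi(xm)\,dx=1$), so $\Tr_e(P_p)=1$. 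Hence, as only the $p=1$ term of \eqref{eq curly T} contributes,
\[
\cT_e(t)=-\Tr_e(e^{-t\Delta_E^1}-P_1)=1-\frac{R}{\sqrt{4\pi t}}\sum_{n\in\Z}e^{-R^2n^2/4t}.
\]

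Next I would apply the Poisson summation formula (equivalently, expand $\Tr(e^{-t\Delta_E^1})$ directly in the eigenvalues $4\pi^2k^2/R^2$ of $\Delta_E^1$) to rewrite $\frac{R}{\sqrt{4\pi t}}\sum_{n\in\Z}e^{-R^2n^2/4t}=\sum_{k\in\Z}e^{-4\pi^2k^2t/R^2}$, so that
\[
\cT_e(t)=-\sum_{k\neq0}e^{-4\pi^2k^2t/R^2}.
\]
This decays exponentially as $t\to\infty$, so condition (2) of definition \ref{def g torsion} holds; near $t=0$ it has the asymptotic expansion $1-\frac{R}{\sqrt{4\pi t}}+\cO(t^\infty)$, so conditions (1) and (3) hold as well (this also follows from proposition \ref{prop conv small t}, the zero function being a \v{S}varc--Milnor function here). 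Thus $T_e(\R/\Z)$ is well-defined.

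Finally, for $\Real(s)$ large, interchanging sum and integral and using $\int_0^\infty t^{s-1}e^{-\mu t}\,dt=\Gamma(s)\mu^{-s}$ gives
\[
\frac{1}{\Gamma(s)}\int_0^\infty t^{s-1}\cT_e(t)\,dt=-\sum_{k\neq0}\Bigl(\frac{4\pi^2k^2}{R^2}\Bigr)^{-s}=-2\Bigl(\frac{R}{2\pi}\Bigr)^{2s}\zeta(2s).
\]
By remark \ref{rem int bdry} applied to the exponentially decaying integrand on $[1,\infty)$, the right-hand side of \eqref{eq def deloc torsion} equals $\left.\frac{d}{ds}\right|_{s=0}$ of the meromorphic continuation of this function, which is regular at $s=0$. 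Differentiating and using $\zeta(0)=-\tfrac12$ and $\zeta'(0)=-\tfrac12\log(2\pi)$ yields
\[
-2\log T_e(\R/\Z)=-4\log\Bigl(\tfrac{R}{2\pi}\Bigr)\zeta(0)-4\zeta'(0)=2\log\Bigl(\tfrac{R}{2\pi}\Bigr)+2\log(2\pi)=2\log R,
\]
hence $T_e(\R/\Z)=1/R$.

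The computation is essentially bookkeeping; the only genuine inputs are Poisson summation (or the explicit spectrum of $\Delta_E^1$) and the classical values of $\zeta(0)$ and $\zeta'(0)$. The one point deserving care is the interchange of summation and Mellin transform and the passage from the split integral in \eqref{eq def deloc torsion} to the full Mellin transform on $(0,\infty)$; both are justified by the exponential decay of $\cT_e$ at infinity (for the $[1,\infty)$ piece via remark \ref{rem int bdry}) and the uniform convergence of the series, so I do not expect a real obstacle.
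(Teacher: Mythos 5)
Your proof is correct and follows essentially the same route as the paper: reduce $\cT_e(t)$ to minus the sum $\sum_{k\neq 0}e^{-t(2\pi k/R)^2}$ over the nonzero spectrum, take the Mellin transform termwise to obtain $-2(R/2\pi)^{2s}\zeta(2s)$, and differentiate at $s=0$ using $\zeta(0)=-\tfrac12$ and $\zeta'(0)=-\tfrac12\log(2\pi)$. The only cosmetic difference is that you reach the eigenvalue sum via the covering-space heat kernel and Poisson summation (and verify the convergence conditions explicitly), whereas the paper writes $\Tr_e(e^{-t\Delta_E^p}-P_p)$ directly from the spectrum; the justification for merging the two terms of \eqref{eq def deloc torsion} into a single Mellin transform is the same argument as in lemma \ref{lem Tg sigma}, which the paper leaves implicit.
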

\begin{proof}
If $g$ is the trivial element, then for $p=0,1$, 
\[
\Tr_e(e^{-t\Delta_E^p} - P_p) = \Tr(e^{-t\Delta_E^p} - P_p) = \sum_{n \in \Z \setminus \{0\}} e^{-t(2\pi n/R)^2},
\]
and
\beq{eq torsion S1 alpha zero}
2\log T_e(\R/\Z) = \left. \frac{d}{ds}\right|_{s=0} \frac{1}{\Gamma(s)} \int_0^{
\infty} t^{s-1} \sum_{n \in \Z \setminus \{0\}} e^{-t(2\pi n/R)^2}  \, dt.
\eeq
For every $n \in \Z\setminus \{0\}$, and $s\in \C$, a substitution shows that
\[
\int_0^{
\infty} t^{s-1} e^{-t(2\pi n/R)^2}  \, dt = \left( \frac{R}{2\pi n}\right)^{2s} \Gamma(s).
\]
If $\Real(s)>1/2$, then
\[
 \sum_{n \in \Z \setminus \{0\}}  \left( \frac{R}{2\pi n}\right)^{2s} = 2
  \sum_{n=1}^{\infty}  \left( \frac{R}{2\pi n}\right)^{2s} = 2\left( \frac{R}{2\pi}\right)^{2s} \zeta(2s),
  \]
 where $\zeta$ is the Riemann $\zeta$-function, and the sum converges absolutely. So by the Fubini--Tonelli theorem,  the right hand side of \eqref{eq torsion S1 alpha zero} equals
 \[
2 \left. \frac{d}{ds}\right|_{s=0}\left( \frac{R}{2\pi}\right)^{2s} \zeta(2s).
 \]
Using the equalities $\zeta(0) = -1/2$ and $\zeta'(0) = -\log(2\pi)/2$, we find that this equals $-2 \log(R)$.
\end{proof}
\begin{remark}
In section X of \cite{LR91}, Lott and Rothenberg investigate a possible extension of their equivariant analytic torsion from finite groups to compact, but infinite groups.
On page 474 of \cite{LR91}, they show in a related example on the circle that  such an extension depends on the  radius of the circle. In their situation, the  cohomology groups whose vanishing would imply metric independence of analytic torsion are nonzero, as they are in the example in this subsection. 
\end{remark}

\section{Hyperbolic $3$-space}\label{hyperbolic_comp}

Let $G$ be a connected, real semisimple Lie group, and $K<G$ maximal compact. Equivariant analytic torsion of $G/K$, for a semisimple element $g \in G$, contains information, for example, on locally symmetric spaces $\Gamma\backslash G/K$, for $\Gamma<G$ discrete and cocompact. If $\Gamma$ has torsion, then such a space is an orbifold in general.

%We denote the trivial line bundle over $G/K$ by $\C$. 
By remark 7.9.2 in \cite{BismutHypo}, $T_g(G/K)$ can only be nonzero if $G = \SO_0(p,q)$ for $pq>1$ odd, or $G = \SL(3, \R)$. In this section, we consider 
 $G = \SO_0(2n+1, 1)$, for $n \in \Z_{\geq 0}$, and  its maximal compact subgroup $K = \SO(2n+1)$. Then $G/K$ is $(2n+1)$-dimensional real hyperbolic space. 
 
For nontrivial hyperbolic elements $g \in G$, Lott used theorem 2 in \cite{Fried86} to compute $T_g^{\Gamma}(G/K)$ in proposition 9 in \cite{Lott99}, where $\Gamma<G$ is a torsion-free, discrete, cocompact subgroup of $G$ containing  $g$. Knowing $T_g^{\Gamma}(G/K)$ for all $g \in \Gamma$ allows one to reconstruct the geodesic length spectrum of the hyperbolic manifold $\Gamma \backslash G/K$. 
Lott computed $T_e^{\Gamma}(G/K)$ if $n=1$, in proposition 16 in \cite{Lott92c}. In section 5 of \cite{Mathai92}, it was shown that for general semisimple $G$ and $g=e$, the logarithm of $T_e^{\Gamma}(G/K)$ is proportional to $\vol(\Gamma \backslash G/K)$, with a proportionality constant independent of $\Gamma$.

 In this section, we use Bismut's trace formula \cite{BismutHypo} for orbital integrals of heat operators to compute $T_g^{G}(G/K)$ for regular elliptic elements $g$, if $n=1$. For general $n$, every elliptic element of $G$ is conjugate to an element of the maximal torus $T = \SO(2)^n <K$.
\begin{proposition}\label{prop H3}
Suppose that $n=1$. Let $g \in \SO(2)<K$ be counter-clockwise rotation over an angle $x\not=0$. 
   Then $\alpha_g>0$, and
\beq{eq Tg H3 g}
T_g^{G}(G/K) = \exp \left( \frac{-1}{8\sin(x/2)^2} \right).
\eeq
\end{proposition}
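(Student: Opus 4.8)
The plan is to compute $\cT_g(t)$ in closed form from Bismut's orbital integral formula and then run the $\Gamma$-regularised Mellin computation exactly as in Section~\ref{sec one dim ex}. \emph{Reductions.} Here $M=G/K$ is real hyperbolic $3$-space, $E=M\times\C$ is trivial and $\nabla^E=d$. Since $\SO_0(3,1)$ has no discrete series, the $L^2$-kernel of $\Delta_E$ is trivial (as in the proof of Lemma~\ref{lem NS hyp}), so $P_p=0$ for all $p$. A regular elliptic element is semisimple, so Example~\ref{ex SM conn} provides a \v{S}varc--Milnor function for the action with respect to $g$, and Proposition~\ref{prop conv small t} then gives conditions (1) and (3) of Definition~\ref{def g torsion} and shows each $e^{-t\Delta_E^p}$ is $g$-trace class. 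Since the Hodge star is $G$-equivariant and commutes with the heat operators, $\Tr_g(e^{-t\Delta_E^p})=\Tr_g(e^{-t\Delta_E^{3-p}})$, whence
\[
\cT_g(t)=\sum_{p=0}^{3}(-1)^p p\,\Tr_g\bigl(e^{-t\Delta_E^p}\bigr)=\Tr_g\bigl(e^{-t\Delta_E^1}\bigr)-3\,\Tr_g\bigl(e^{-t\Delta_E^0}\bigr).
\]
By Kuga's lemma each $\Delta_E^p$ has the form $-\Omega+c_p$ on the homogeneous bundle $G\times_K\bigwedge^p\kp^*$, with $c_0=c_3$, $c_1=c_2$ and $c_0-c_1=1$ (consistent with Hodge duality and with Fried's formula as recalled in Lemma~\ref{lem NS hyp}); for the metric of curvature $-1$ one has $c_1=c_2=0$, $c_0=c_3=1$.

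\emph{Bismut's formula.} Writing $g\in\SO(2)<\SO(3)=K$ for the rotation by $x$, the centraliser $Z=Z_G(g)$ is isomorphic to $\SO(2)\times\SO_0(1,1)$, the fixed-point set $M^g$ is the geodesic preserved by $g$, and $\kp(g)=\mathfrak z(g)\cap\kp$ and $\kk(g)=\mathfrak z(g)\cap\kk$ are one-dimensional, while $\Ad(g)$ and $\ad(Y)$ for $Y\in\kk(g)$ act on $\mathfrak z(g)^\perp$ by rotations in two planes. Since $g$ is elliptic the ``hyperbolic displacement'' term in \cite{BismutHypo} is absent, and Bismut's formula specialises to an expression of the form
\[
\Tr_g\bigl(e^{-t\Delta_E^p}\bigr)=\frac{e^{-tc_p}}{2\pi t}\int_{\R}\widehat J_g(y)\,\sigma_p(y)\,e^{-y^2/2t}\,dy,
\]
where $\widehat J_g$ is Bismut's $\widehat J$-function for the elliptic element $g$ on $\mathbb H^3$ --- an explicit elementary function of $y$, independent of $p$, built from $\widehat A(i\ad(Y)|_{\kp(g)^\perp})$ and $\det(1-\Ad(g)^{-1}e^{-i\ad(Y)})^{-1}|_{\mathfrak z(g)^\perp}$, the latter contributing the factor $(4\sin^2((x\pm y)/2))^{-1}$ --- and $\sigma_p(y)=\tr_{\bigwedge^p\C^3}\bigl(\bigwedge^p(g)\,e^{-i\ad(Y)}\bigr)$ is the $p$-th elementary symmetric function of the eigenvalues $1,e^{\pm i(x-y)}$, so $\sigma_0=\sigma_3=1$ and $\sigma_1=\sigma_2=1+2\cos(x-y)$. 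Carrying out the Gaussian integral in $y$ --- separating off the piece carrying the factor $e^{-t}$ and using Lemma~\ref{lem int exp}, the identity $\int_0^\infty t^{-3/2}e^{-\sigma t-a/t}\,dt=\sqrt{\pi/a}\,e^{-2\sqrt{a\sigma}}$, and \eqref{eq der Gamma quotient} --- yields $\cT_g(t)=\Tr_g(e^{-t\Delta_E^1})-3\Tr_g(e^{-t\Delta_E^0})$ as a finite elementary combination of terms of the form $t^{-1/2}e^{-ct}$; in particular $\cT_g(t)=\cO(t^{-1/2})$ as $t\to\infty$ (the $\Delta_E^1$-contribution, with $c_1=0$ and no Gaussian suppression, being dominant), so condition (2) of Definition~\ref{def g torsion} holds and $\alpha_g\geq 1/2>0$.

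\emph{Assembling $T_g$ and the main obstacle.} With $\cT_g$ in closed form, apply Lemma~\ref{lem Tg sigma}: for $\Real(\sigma)>0$ compute $-2\log T_g(\nabla^E,\sigma)$ via the $\Gamma$-regularised Mellin transform, term by term, exactly as in the proofs of Lemmas~\ref{lem torsion line sigma} and \ref{lem torsion line sigma e}. The purely power-law pieces ($\propto t^{-1/2}$) contribute only multiples of $\sqrt\sigma$, which vanish at $\sigma=0$ (as in Subsection~\ref{sec ex R}), while the $e^{-t}$-weighted pieces yield the surviving constant. Setting $\sigma=0$ then gives $-2\log T_g^G(G/K)=\tfrac{1}{4\sin^2(x/2)}$, which is \eqref{eq Tg H3 g}. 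The main obstacle is the middle step: specialising Bismut's formula correctly to elliptic elements of $\SO_0(3,1)$ on the form bundles $\bigwedge^p\kp^*$ --- in particular pinning down $\widehat J_g$, the $K$-characters $\sigma_p$, and the normalisations of the shifts $c_p$ --- and then executing the Gaussian integration and extracting the constant term; a secondary point is justifying the interchange of the alternating sum over $p$, the integration in $y$, and the meromorphic continuation in $s$, which follows from the explicit Gaussian estimates together with Lemmas~\ref{lem small t large l}--\ref{lem small t small l}.
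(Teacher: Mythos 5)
Your overall strategy coincides with the paper's: reduce to a closed-form computation of $\cT_g(t)$ via Bismut's orbital integral formula and then evaluate the $\Gamma$-regularised Mellin transform via Lemma \ref{lem Tg sigma}. The preliminary reductions are fine ($P=0$ since there are no discrete series, the \v{S}varc--Milnor function from Example \ref{ex SM conn}, conditions (1) and (3) via Proposition \ref{prop conv small t}, and the Hodge-duality simplification of $\cT_g$). But the step you yourself flag as the main obstacle --- the correct specialisation of Bismut's formula --- is exactly the content of the paper's proof, and your sketch of it contains genuine errors. First, Kuga's lemma gives $\Delta_E^p$ equal to (minus) the Casimir with no $p$-dependent constant; the numbers $1,0$ you insert as shifts $e^{-tc_p}$ are bottoms of the $L^2$-spectrum (the $c_p^2$ of Fried's formula) and in Bismut's formula they are not inserted by hand but emerge from the Gaussian integration. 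What does appear is a single $p$-independent prefactor $e^{-\beta t}$ with $\beta=-\tfrac{1}{48}\tr(C^{\kk}|_{\kk})-\tfrac{1}{16}\tr(C^{\kk}|_{\kp})=1/4$ for $n=1$ (Lemma \ref{lem beta}).

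Second, and decisively, the auxiliary variable $Y=yH_1\in\kt$ enters Bismut's integrand through $e^{-i\ad(Y)}\Ad(g)$, so the eigenvalues on $\kp^{\C}$ are $e^{\pm(ix+y)}$ and $1$, not $1, e^{\pm i(x-y)}$: the characters and Jacobian factors are hyperbolic in $y$ (of $\sinh((ix+y)/2)^2$ type), not your real $\cos(x-y)$ and $\sin^2((x\pm y)/2)$. This matters because the Gaussian integrals $\int_{\R}e^{-y^2/t\pm y}\,dy=\sqrt{\pi t}\,e^{t/4}$ produce growth that cancels $e^{-\beta t}$, which is precisely how the non-decaying term $\cos(x)\,t^{-1/2}$ and the decaying term $t^{-1/2}e^{-t/2}$ of the correct answer $\cT_g(t)=\bigl(\cos(x)-e^{-t/2}\bigr)/\bigl(4\sqrt{2\pi t}\,\sin^2(x/2)\bigr)$ (Lemma \ref{lem Tgt n=1}) arise; with your real-angle version the $y$-integral yields extra decay $e^{-t/2}\cos(x)$ instead, and the constant extracted at $\sigma=0$ would not come out to $1/(4\sin^2(x/2))$. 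Third, for $n=1$ the function $J_g$ is in fact constant in $Y$: the $\kk/\kt$ and $\kp/\ka$ determinant factors cancel, leaving $J_g\equiv -1/(4\sin^2(x/2))$ (Corollary \ref{cor Jg n=1}), whereas your $\widehat J_g$ is $y$-dependent and misses this cancellation. Since the final identity \eqref{eq Tg H3 g} is asserted rather than derived from your stated ingredients, and those ingredients would not reproduce it, there is a genuine gap: you must redo the specialisation of Bismut's formula --- compute $J_g$, $\beta$, the weighted supertrace $\tr\bigl((-1)^F F e^{-i\ad(Y)}\Ad(g)|_{\bigwedge\kp^*}\bigr)=4\sinh((ix+y)/2)^2$, and then the Gaussian integral --- before the Mellin step, which is otherwise carried out correctly.
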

\begin{remark}
The proof of proposition \ref{prop H3} in this section can be generalised to arbitrary $n$ in principle, but the computations and resulting expression for equivariant analytic torsion become much more complicated.
\end{remark}
\begin{remark}
By proposition 16 in \cite{Lott92c} and lemma \ref{lem TrG TrH}, we have for $n=1$,
\[
T_e^G(G/K) = e^{1/6\pi}\not=0.
\]
So by \eqref{eq Tg H3 g}, the function $g\mapsto T_g^{G}(G/K)$ is not continuous, as in the examples in section \ref{sec one dim ex}.
\end{remark}

\begin{remark}
As pointed out in the proof of lemma \ref{lem NS hyp}, 
the kernel of $\Delta_E$ is trivial in the current setting, so the projection $P$ plays no role in the computation of equivariant analytic torsion.
\end{remark}

\subsection{Bismut's formula}

%Consider the maximal torus $T = \SO(2)^n \times \{\pm I_2\}<K$, where $I_2$ is the $2\times 2$ identity matrix. 
We denote Lie algebras of Lie groups by the corresponding lower-case gothic letters. Let 
\[
\kp = \left\{  \begin{pmatrix}
0 & \cdots & 0 & b_1 \\
\vdots & & \vdots & \vdots \\
0 & \cdots & 0& b_{2n+1} \\
 b_1 & \cdots & b_{2n+1} & 0
\end{pmatrix}; b_1, \ldots, b_{2n+1} \in \R
 \right\} \subset \kg.
\]
Let $\ka \subset \kp$ be the one-dimensional subspace of elements with $b_1 = \cdots = b_{2n} = 0$. Then $\kg = \kk \oplus \kp$ is a Cartan decomposition, $\kh = \kt \oplus \ka \subset \kg$ is a Cartan subalgebra, and $H = Z_G(\kh)<G$ is a Cartan subgroup. From now on, we fix an element $g \in \SO(2)^n<T$ that is regular, i.e.\ $Z_G(g) = H$. 

Let $C^{\kk}$ be the Casimir element of $\kk$. For an operator $A$ on a finite-dimensional vector space $V$, we write $\tr(A|_V)$ and $\det(A|_V)$ for its trace and determinant, to emphasise the space that $A$ acts on. Traces and determinants of restrictions to invariant subspaces can be denoted in this way. As in (7.8.7) in \cite{BismutHypo}, set
\beq{eq def beta}
\beta := -\frac{1}{48} \tr(C^{\kk}|_{\kk})-\frac{1}{16} \tr(C^{\kk}|_{\kp}).
\eeq
Let $J_g$ be the function on $\kt$ defined in general by (5.5.5) in \cite{BismutHypo}; this simplifies  in the current setting as in lemma \ref{lem Jg Bismut} below. Let $F$ be the number operator on $\bigwedge \kp^*$. Then Bismut's trace formula, theorem 6.1.1 in \cite{BismutHypo}, has the following consequence.
\begin{theorem}[Bismut] \label{thm bismut}
For all $t>0$, 
\begin{multline}\label{eq bismut}
\cT_g(t/2) = \frac{e^{-\beta t}}{(2\pi t)^{(n+1)/2}}  
\int_{\kt} J_g(Y) \tr \left(  (-1)^F \left. \left(F-\frac{2n+1}{2} \right) e^{-i\ad(Y)} \Ad(g) \right|_{\bigwedge \kp^*} \right) e^{-\|Y\|^2/2t}\, dY.
\end{multline}
\end{theorem}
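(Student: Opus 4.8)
The plan is to deduce Theorem \ref{thm bismut} from Bismut's orbital integral formula, theorem 6.1.1 in \cite{BismutHypo}, via three reductions. First I would rewrite $\cT_g$ in the compact form \eqref{eqtor_compact}: in the present setting $\ker(\Delta_E) = 0$ (as recalled in the remark above, $G = \SO_0(2n+1,1)$ has no discrete series, so by \cite{Connes82} the $L^2$-kernel of $\Delta_E$ vanishes), the projections $P_p$ are zero, and
\[
\cT_g(t) = \Tr_g\bigl( (-1)^F F e^{-t\Delta_E} \bigr).
\]
Next, since $M = G/K$ is odd-dimensional and $g$ is semisimple, the fixed-point set $M^g$ is odd-dimensional, so $\chi_g(E) = 0$ by example \ref{ex chi g odd zero}; theorem \ref{thm g GBC} then gives $\Tr_g\bigl((-1)^F e^{-t\Delta_E}\bigr) = 0$, and we may insert any scalar shift of $F$ for free, in particular
\[
\cT_g(t) = \Tr_g\Bigl( (-1)^F \bigl(F - \tfrac{2n+1}{2}\bigr) e^{-t\Delta_E} \Bigr).
\]
This is precisely the weight appearing on the right of \eqref{eq bismut}, chosen so that the fibrewise trace over $\bigwedge \kp^*$ takes the supertrace-friendly form displayed there.

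Second, I would identify this $g$-trace with a Bismut orbital integral. For the regular elliptic element $g$, the centraliser $Z = H$ is a Cartan subgroup, $G/H$ carries an invariant measure, and by unfolding the defining integral \eqref{eq def Trh} over a fundamental domain for $H$ on $G/K$ — the same manoeuvre used in the proof of lemma \ref{lem Tsigma TN} and in \cite{HW2, PPST21} — the $g$-trace of a $G$-invariant smoothing operator on $G/K$ equals, up to the fixed normalisations of Haar measures, Bismut's orbital integral $\Tr^{[g]}[\cdot]$. This unfolding is legitimate here because the heat kernel of $\Delta_E^p$ has the Gaussian off-diagonal decay of example \ref{ex kappa t heat op} and the \v{S}varc--Milnor function on $G$ from example \ref{ex SM conn} makes \eqref{eq conv Gaussian} converge, exactly as in lemma \ref{lem kappa t g tr cl}. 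Under this identification, Kuga's lemma expresses the Hodge Laplacian $\Delta_E^p$ on $p$-forms as a shift of the Casimir operator $C^{\kg,X}$ of $G$ on $C^\infty(G)\otimes \bigwedge^p\kp^*$ by the scalar $\beta$ of \eqref{eq def beta} (here the very simple structure of real hyperbolic space, with $K = \SO(2n+1)$ acting on $\kp\cong\R^{2n+1}$, guarantees that no further curvature terms appear); summing over $p$ against $(-1)^F(F - \tfrac{2n+1}{2})$ assembles the Casimir heat operator on all of $\bigwedge\kp^*$. Pulling out the resulting factor $e^{-\beta t}$ leaves exactly $\Tr^{[g]}\bigl[(-1)^F(F - \tfrac{2n+1}{2})\,e^{-tC^{\kg,X}/2}\bigr]$, to which theorem 6.1.1 of \cite{BismutHypo} applies.

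Third, I would invoke theorem 6.1.1 of \cite{BismutHypo} itself. For $g$ elliptic it evaluates the orbital integral as a Gaussian integral over $\kk(g) = \kt$, with integrand the product of the Jacobian-type function $J_g$ on $\kt$ — which simplifies to the elementary expression of lemma \ref{lem Jg Bismut} in our rank-one case — the finite-dimensional trace $\tr\bigl((-1)^F(F - \tfrac{2n+1}{2})e^{-i\ad(Y)}\Ad(g)|_{\bigwedge\kp^*}\bigr)$, and the heat factor proportional to $(2\pi t)^{-(n+1)/2}e^{-\|Y\|^2/2t}$. Combining this with the factor $e^{-\beta t}$ above and performing the rescaling $t \mapsto t/2$ needed to read off $\cT_g(t/2)$ yields \eqref{eq bismut}.

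I expect the main obstacle to be bookkeeping rather than anything conceptual: reconciling Bismut's normalisations (of the Haar measures on $G$, $K$ and $H$, of the invariant bilinear form defining $\|\cdot\|$ on $\kt$, of the Casimir, and of the volume constant absorbed into the prefactor) with those fixed in Section \ref{sec g Tr}, and checking carefully that Kuga's lemma produces exactly the constant $\beta$ of \eqref{eq def beta} together with the half-integer shift, and the correct power $(2\pi t)^{-(n+1)/2}$ and exponent in $e^{-\beta t}$, once the rescaling by $t/2$ is taken into account. A secondary technical point is confirming that the $g$-trace and Bismut's orbital integral genuinely coincide as numbers (not merely up to an unknown constant), which is where the decay and convergence inputs from example \ref{ex kappa t heat op} and example \ref{ex SM conn} do the work.
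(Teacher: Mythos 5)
Your proposal is correct in outline, but it takes a more laborious route than the paper does. The paper's proof of theorem \ref{thm bismut} is essentially a citation: it invokes (7.9.3) of \cite{BismutHypo}, which is Bismut's own specialisation of his theorem 6.1.1 to the de Rham complex (so the weight $(-1)^F(F-\tfrac{2n+1}{2})$, the constant $\beta$ of (7.8.7), the prefactor $(2\pi t)^{-(n+1)/2}$ and the $t/2$ rescaling are all already assembled there), and then only adds three remarks: a missing Gaussian factor $e^{-\|Y\|^2/2t}$ in (7.9.3) is treated as a typo, regularity of $g$ identifies the centralisers of $g$ in $\kk$ and $\kp$ with $\kt$ and $\ka$ so that $J_g$ and the integration domain simplify, and a factor $\vol(K/T)$ between Bismut's orbital integral and $\Tr_g$ is normalised to $1$ (cf.\ definition 4.2 and remark 4.3 in \cite{Shen18}). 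You instead propose to rebuild Bismut's Chapter 7 specialisation from theorem 6.1.1: your reduction of $\cT_g$ to $\Tr_g\bigl((-1)^F(F-\tfrac{2n+1}{2})e^{-t\Delta_E}\bigr)$ via $P=0$ and $\chi_g(E)=0$ is sound (and is mirrored on the right-hand side by lemma \ref{lem str zero}), and the Casimir-shift-plus-Gaussian-integral mechanism is indeed how Bismut passes from 6.1.1 to (7.9.3). What the paper's shortcut buys is that all the delicate bookkeeping you flag is outsourced to Bismut; what your route buys is an explanation of where the $(F-\tfrac{2n+1}{2})$ weight and the scalar $\beta$ come from. Two caveats on your sketch: the identity $\Tr_g = $ Bismut's orbital integral is genuinely only true up to the factor $\vol(K/T)$, which must be (and in the paper is) fixed by a normalisation choice, so "coincide as numbers" is a convention, not a theorem; and for trivial coefficients Kuga's lemma gives $\Delta_E=-C^{\kg}$ with no shift --- the constant $\beta$ arises from the discrepancy between $\tfrac12\Delta_E$ and Bismut's normalised operator $\mathcal{L}^X$ (this is exactly the content of (7.8.6)--(7.8.7) in \cite{BismutHypo}), so that step of your bookkeeping should be phrased as matching Bismut's conventions rather than as an application of Kuga's lemma alone.
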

\begin{proof}
This is a special case of (7.9.3) in \cite{BismutHypo}, where  a missing factor $e^{-\|Y\|^2/2t}$ in the integrand seems to be a typo. Furthermore, we have used the facts that, because $g$ is regular, 
its centraliser in $\kk$ is $\kt$ and its centraliser in $\kp$ is $\ka$.  Also, we have omitted a factor $\vol \left(K/T \right)$ between Bismut's orbital integral trace and our $\Tr_g$, which may be normalised to $1$; compare also definition 4.2 and remark 4.3 in \cite{Shen18}.
\end{proof}

In the rest of this section, we compute the various components of \eqref{eq bismut} (some in general, some for $n=1$), and use the resulting expression for $\cT_g(t/2) $ to prove proposition \ref{prop H3}.

\subsection{The function $J_{g}$} \label{sec Jg}

\begin{lemma}\label{lem Jg Bismut}
For all $Y \in \kt$,
\beq{eq Jh}
J_g(Y) = \left( \frac{\det(1-e^{-i\ad(Y)} \Ad(g)|_{\kk/\kt})}{\det(1- \Ad(g)|_{\kg/\kh})\det(1-e^{-i\ad(Y)} \Ad(g)|_{\kp/\ka})} \right)^{1/2},
\eeq
where the choice of square root is as at the start of section 5.5 of \cite{BismutHypo}.
\end{lemma}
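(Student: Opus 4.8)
The plan is to derive \eqref{eq Jh} by specialising Bismut's general definition (5.5.5) in \cite{BismutHypo} of $J_\gamma$, using the two structural features of the present situation: the element $g$ is \emph{elliptic}, so the infinitesimal hyperbolic part in Bismut's polar decomposition $g = e^{a}k^{-1}$ vanishes, i.e.\ $a = 0$ and $k^{-1} = g$; and $g$ is \emph{regular}, so its centraliser is the Cartan subgroup $H$, with Lie algebra $\mathfrak{z}(g) = \kh = \kt \oplus \ka$. Since $g \in K$, the operator $\Ad(g)$ preserves the Cartan decomposition and an $\Ad(K)$-invariant inner product, so the orthogonal complement $\mathfrak{z}(g)^\perp$ in $\kg$ splits as $(\mathfrak{z}(g)^\perp \cap \kk) \oplus (\mathfrak{z}(g)^\perp \cap \kp)$, with the two summands $\Ad(g)$-equivariantly (and $\ad(Y)$-equivariantly, for every $Y \in \kt$) isomorphic to $\kk/\kt$ and $\kp/\ka$ respectively. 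First I would record these identifications, and note that $1 - \Ad(g)$ is invertible on $\kg/\kh$ because $g$ is regular, so every determinant occurring in \eqref{eq Jh} is well defined and the denominator is nonzero.

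Next I would work through the factors of (5.5.5) one at a time. In Bismut's formula, $J_\gamma(Y)$ (for $Y$ in the $\kk$-part of the centraliser, here $\kt$) is a product of: an $\hat{A}$-genus quotient $\hat{A}(i\ad(Y)|_{\mathfrak{z}(g)\cap\kk}) / \hat{A}(i\ad(Y)|_{\mathfrak{z}(g)\cap\kp})$; a normal-direction factor $\det(1-\Ad(g)|_{\mathfrak{z}(g)^\perp})^{-1/2}$; and a quotient of determinants of $1 - e^{-i\ad(Y)}\Ad(k^{-1})$ restricted to $\mathfrak{z}(g)^\perp \cap \kk$ and to $\mathfrak{z}(g)^\perp \cap \kp$, with $k^{-1} = g$ since $a=0$. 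The key point is that the $\hat{A}$-quotient is trivial: $\mathfrak{z}(g)\cap\kk = \kt$ is abelian, so $\ad(Y)|_{\kt} = 0$, and $\mathfrak{z}(g)\cap\kp = \ka$ lies inside the abelian algebra $\kh$, so $\ad(Y)|_{\ka} = 0$ as well (the one-dimensionality of $\ka$ means there is nothing further to examine). Hence both arguments of $\hat{A}$ vanish and the quotient equals $\hat{A}(0)/\hat{A}(0) = 1$. Collecting the two surviving factors under a single square root, using the identifications from the first step together with the multiplicativity $\det(1-\Ad(g)|_{\kg/\kh}) = \det(1-\Ad(g)|_{\kk/\kt})\,\det(1-\Ad(g)|_{\kp/\ka})$, yields precisely the right-hand side of \eqref{eq Jh}.

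Finally I would verify that the branch of the square root is the one fixed at the start of Section 5.5 of \cite{BismutHypo}. For this one notes that $\det(1-\Ad(g)|_{\kg/\kh})$ is a positive real number, being a product over the roots of $(\kh,\kg)$ of factors of the form $|1-g^{\alpha}|^{2}$, while $\det(1-e^{-i\ad(Y)}\Ad(g)|_{\kk/\kt})$ and $\det(1-e^{-i\ad(Y)}\Ad(g)|_{\kp/\ka})$ are complex conjugates of analogous root-space products; so Bismut's sign convention and ours agree and the square root in \eqref{eq Jh} is unambiguous.

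The step I expect to be the main obstacle is the bookkeeping in the second paragraph: faithfully matching the pieces of Bismut's (5.5.5) — in particular his decomposition of $\mathfrak{z}(g)^\perp$ into $\kk$- and $\kp$-components and his placement of the $e^{-i\ad(Y)}$ and $\Ad(k^{-1})$ factors — with the spaces $\kk/\kt$, $\kp/\ka$ and $\kg/\kh$ here, and checking that no spurious constants or residual $\hat{A}$-type terms survive. Everything else — the vanishing of $\ad(Y)$ on $\kh$, the invertibility of $1-\Ad(g)$, and the sign bookkeeping — is routine once these identifications are in place.
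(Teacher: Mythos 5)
Your proposal is correct and follows essentially the same route as the paper: both specialise Bismut's formula (5.5.5)/Theorem 5.5.1 using that the centralisers of the regular elliptic element $g$ in $\kg$, $\kk$, $\kp$ are $\kh$, $\kt$, $\ka$, and that $\ad(Y)$ vanishes on $\kt$ and $\ka$ so the $\hat A$-factors are $1$. Your extra bookkeeping (the polar decomposition $a=0$, the identification of $\mathfrak{z}(g)^{\perp}$-components with $\kk/\kt$ and $\kp/\ka$, and the square-root branch check) only spells out what the paper's shorter proof leaves implicit.
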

\begin{proof}
The expression for $J_g(Y)$ in theorem 5.5.1 in \cite{BismutHypo} reduces to  \eqref{eq Jh} in the current setting, because the centraliser of $g$ in $\kg$ is $\kh$, its centraliser in $\kk$ is $\kt$ and its centraliser in $\kp$ is $\ka$. Furthermore, $\ad(Y)$ acts as zero on $\ka$ and $\kt$ for $Y \in \kt$, so the $\hat A$-factors in (5.5.5) in \cite{BismutHypo} equal $1$.
\end{proof}

For $j=1, \ldots, n$, let $H_j \in \kt$ be the matrix with the $2\times 2$-block $\begin{pmatrix}0&-1\\1&0\end{pmatrix}$ as the $j$th $2\times 2$-block on the diagonal. Define $e_j \in i\kt^*$ by $\langle e_j, H_k\rangle = i\delta_{jk}$. Write $g = \exp(X)$, with $X = x_1H_1+\cdots +x_n H_n$. Fix $Y  = y_1 H_1 + \cdots + y_n H_n \in \kt$ for the rest of this subsection. We denote complexifications of real vector spaces by superscripts $\C$. 
\begin{lemma}\label{lem det kt}
We have
\begin{multline} \label{eq det kt}
\det(1- e^{-i\ad(Y)}\Ad(g)|_{\kk/\kt}) = \\
(-1)^n\prod_{1 \leq j<k \leq n} \left(  2\sinh \left(\frac{i(x_j+x_k) + y_j+y_k}{2} \right) \right)^2
\left(  2\sinh \left(\frac{i(x_j-x_k) + y_j-y_k}{2} \right) \right)^2\\
\cdot \prod_{j=1}^n \left(  2\sinh \left(\frac{ix_j+y_j}{2} \right) \right)^2.
\end{multline}
\end{lemma}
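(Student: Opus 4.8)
The plan is to compute the determinant of $1 - e^{-i\ad(Y)}\Ad(g)$ on $\kk/\kt$ by diagonalizing the operator $e^{-i\ad(Y)}\Ad(g)$ on the complexification $(\kk/\kt)^{\C}$ and taking the product of $1 - (\text{eigenvalue})$ over all eigenvalues.

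First I would identify the root space decomposition of $\kk^{\C} = \mathfrak{so}(2n+1,\C)$ with respect to the Cartan subalgebra $\kt^{\C}$. The roots of $\kk = \mathfrak{so}(2n+1)$ are $\pm e_j \pm e_k$ (for $j < k$) and $\pm e_j$, and $\kt^{\C}$ itself is the zero weight space. So $(\kk/\kt)^{\C} = \bigoplus_{\alpha} (\kk^{\C})_{\alpha}$, the sum over all these roots $\alpha$. On the root space $(\kk^{\C})_{\alpha}$, the element $\Ad(g) = \Ad(\exp X)$ acts by the scalar $e^{\alpha(X)}$, and $e^{-i\ad(Y)}$ acts by $e^{-i\alpha(Y)}$. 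Since $\langle e_j, H_k\rangle = i\delta_{jk}$, writing $X = \sum x_j H_j$ gives $\alpha(X) = i\sum (\text{coeffs}) x_j$, and similarly for $\alpha(Y)$. Concretely, for $\alpha = e_j + e_k$ the eigenvalue of $e^{-i\ad(Y)}\Ad(g)$ is $e^{i(x_j + x_k)}e^{(y_j+y_k)}$; for $\alpha = e_j - e_k$ it is $e^{i(x_j-x_k)}e^{(y_j-y_k)}$; for $\alpha = e_j$ it is $e^{ix_j}e^{y_j}$; and the negatives of these roots give the reciprocal-type eigenvalues $e^{-i(x_j+x_k)}e^{-(y_j+y_k)}$, etc.

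Next I would pair each root with its negative and use the identity
\[
(1 - e^{\theta})(1 - e^{-\theta}) = -e^{\theta} + 2 - e^{-\theta} = -(e^{\theta/2} - e^{-\theta/2})^2 = -(2\sinh(\theta/2))^2.
\]
Applying this with $\theta = i(x_j+x_k) + (y_j+y_k)$ for each pair $\{e_j+e_k, -e_j-e_k\}$, with $\theta = i(x_j-x_k)+(y_j-y_k)$ for each pair $\{e_j-e_k, -e_j+e_k\}$, and with $\theta = ix_j + y_j$ for each pair $\{e_j, -e_j\}$, and multiplying everything together yields exactly the right-hand side of \eqref{eq det kt}. Counting sign factors: there are $\binom{n}{2}$ pairs of the first type, $\binom{n}{2}$ of the second type, and $n$ of the third type, each contributing a factor $-1$; the total number of pairs is $2\binom{n}{2} + n = n^2$, so the overall sign is $(-1)^{n^2} = (-1)^n$, matching the stated formula. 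I should double-check the convention for the square root as specified in Bismut's text, but for the determinant itself (before taking the square root in $J_g$) this sign bookkeeping is the only subtlety.

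The main obstacle will be the bookkeeping of signs and the verification that the eigenvalues are correctly read off from the pairing $\langle e_j, H_k \rangle = i\delta_{jk}$ — in particular making sure the factors of $i$ inside the $\sinh$ come out as written rather than, say, turning into ordinary $\sin$. This is routine representation theory of $\mathfrak{so}(2n+1)$ once the root data and the exponential-of-a-scalar action on root spaces are set up cleanly, so the argument is short: state the root space decomposition of $(\kk/\kt)^{\C}$, record the eigenvalue of $e^{-i\ad(Y)}\Ad(g)$ on each root space, pair roots with their negatives, apply the $\sinh$ identity, and collect the sign.
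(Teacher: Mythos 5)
Your proposal is correct and follows essentially the same route as the paper: the paper also writes $e^{-i\ad(Y)}\Ad(g) = e^{\ad(X)-i\ad(Y)}$, reads off the eigenvalues $\pm(ix_j+y_j)\pm(ix_k+y_k)$ and $\pm(ix_j+y_j)$ from the root system $\{\pm e_j \pm e_k\} \cup \{\pm e_j\}$ of $(\kk^{\C},\kt^{\C})$, and concludes via the identity $(1-e^z)(1-e^{-z}) = -(2\sinh(z/2))^2$, with the sign $(-1)^{n^2}=(-1)^n$ exactly as in your count.
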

\begin{proof}
The set of roots of $(\kk^{\C}, \kt^{\C})$ is
\[
R(\kk^{\C}, \kt^{\C}) =
\{\pm e_j \pm e_k; 1\leq j<k \leq n\} \cup \{\pm e_j, 1\leq j \leq n\}.
\]
See example 2 on page 84 of \cite{Knapp96}.
Now $e^{-i \ad(Y)} \Ad(g) = e^{\ad(X)-i\ad(Y)}$, and the eigenvalues of $\ad(X)-i\ad(Y)$ on $\kk^{\C}/\kt^{\C}$ are
\begin{multline*}
\{\langle \alpha, X\rangle-i \langle \alpha, Y \rangle; \alpha \in R(\kk^{\C}, \kt^{\C})\} \\= 
\{\pm (ix_j+y_j) \pm (ix_k + y_k); 1\leq j<k \leq n\} \cup \{\pm (ix_j +y_j), 1\leq j \leq n\}.
\end{multline*}
So the left hand side of \eqref{eq det kt} equals
\begin{multline*}
{\det}_{\C}(1- e^{\ad(X)-i\ad(Y)}|_{\kk^{\C}/\kt^{\C}})  \\= 
\prod_{1 \leq j<k \leq n} 
\left(1-e^{i(x_j+x_k)+y_j+y_k} \right)\left(1-e^{-(i(x_j+x_k)+y_j+y_k)} \right)\\
\left(1-e^{i(x_j-x_k)+y_j-y_k} \right)\left(1-e^{-(i(x_j-x_k)+y_j-y_k)} \right)\\
\cdot \prod_{j=1}^n 
\left(1-e^{ix_j+y_j} \right)\left(1-e^{-(ix_j+y_j)} \right).
\end{multline*}
The claim now follows from the fact that for all $z \in \C$,
\[
(1-e^z)(1-e^{-z}) = -(2\sinh(z/2))^2.
\]
\end{proof}

\begin{lemma}\label{lem det pa}
We have
\beq{eq det pa}
\det(1- e^{-i\ad(Y)} \Ad(g)|_{\kp/\ka}) = 
(-1)^n
 \prod_{j=1}^n \left(  2\sinh \left(\frac{ix_j+y_j}{2} \right) \right)^2.
\eeq
\end{lemma}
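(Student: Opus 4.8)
The plan is to run the proof of lemma \ref{lem det kt} verbatim, the only new ingredient being the list of weights of $\kt$ acting on $\kp/\ka$ (equivalently, the eigenvalues of $\ad(X)-i\ad(Y)$ on the complexification $\kp^{\C}/\ka^{\C}$, where $g=\exp(X)$ and $Y=y_1H_1+\cdots+y_nH_n$ as in the proof of lemma \ref{lem det kt}).

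First I would record the weight decomposition. Under the Cartan decomposition $\kg=\kk\oplus\kp$, the adjoint action identifies $\kp$ with the standard $(2n+1)$-dimensional representation of $\kk=\kso(2n+1)$: writing a $\kp$-matrix by its vector $b=(b_1,\dots,b_{2n+1})$ as above, one checks directly that $[H_j,\cdot]$ acts on $b$ through the rotation generator $A_j\in\kso(2n+1)$ of the $(2j-1,2j)$-plane. Hence $\kp^{\C}$ decomposes, as a $\kt^{\C}$-module, into the weight-$0$ line spanned by $b_{2n+1}$ — which is exactly $\ka^{\C}$, the span of the matrix with $b_1=\cdots=b_{2n}=0$ fixed at the start of the section — together with one-dimensional weight spaces for $\pm e_1,\dots,\pm e_n$. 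Therefore $\kp^{\C}/\ka^{\C}$ carries precisely the weights $\pm e_j$, $1\le j\le n$, each with multiplicity one (and $\dim(\kp/\ka)=2n$, as it must).

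Next, since $e^{-i\ad(Y)}\Ad(g)=e^{\ad(X)-i\ad(Y)}$ and, using $\langle e_j,H_k\rangle=i\delta_{jk}$, this operator acts on the weight-$(\pm e_j)$ space of $\kp^{\C}/\ka^{\C}$ by the scalar $e^{\pm(ix_j+y_j)}$, we get
\[
\det(1-e^{-i\ad(Y)}\Ad(g)|_{\kp/\ka})={\det}_{\C}\bigl(1-e^{\ad(X)-i\ad(Y)}\big|_{\kp^{\C}/\ka^{\C}}\bigr)=\prod_{j=1}^n\bigl(1-e^{ix_j+y_j}\bigr)\bigl(1-e^{-(ix_j+y_j)}\bigr).
\]
Applying the identity $(1-e^z)(1-e^{-z})=-(2\sinh(z/2))^2$ to each of the $n$ factors then yields \eqref{eq det pa}.

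There is no genuine obstacle here: once the weight decomposition of $\kp|_{\kt}$ is in place, the computation is routine, and that decomposition is standard for $\kso(2n+1)$. The only point meriting a line of care is the identification of the weight-$0$ subspace of $\kp$ with the previously fixed $\ka$, so that $\kp/\ka$ indeed consists of the nonzero weights alone; this is immediate from the explicit matrix form of $\kp$ and $\ka$ given in the section.
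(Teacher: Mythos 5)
Your proof is correct and follows essentially the same route as the paper: identify the eigenvalues of $\ad(X)-i\ad(Y)$ on $\kp^{\C}/\ka^{\C}$ as $\pm(ix_j+y_j)$, then apply $(1-e^z)(1-e^{-z})=-(2\sinh(z/2))^2$. The only cosmetic difference is that the paper exhibits explicit eigenvectors $E_{\pm e_j}\in\kp^{\C}$, whereas you obtain the weight list by recognising $\kp$ as the standard representation of $\kso(2n+1)$ with zero-weight space $\ka$; both give the same decomposition and the same computation.
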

\begin{proof}
For $j=1, \ldots, n$, let
\[
E_{\pm e_j} = \begin{pmatrix}
0 & \cdots & & & \cdots & 0 \\
\vdots & & & & & \vdots \\
 & & & & & \pm 1 \\
 & & & & & -i \\
\vdots  & & & & & \vdots \\
0 & \cdots & \pm 1 & -i & \cdots & 0
\end{pmatrix} \quad \in \kp^{\C},
\]
where the two  nonzero entries in the last column are in rows $2j-1$ and $2j$, and the two nonzero entries in the bottom row are in columns $2j-1$ and $2j$. These $2n$ matrices together form a basis of $\kp^{\C}/\ka^{\C}$, and for all $j$, $k$, 
\[
[H_j, E_{\pm e_k}] = \pm i \delta_{jk}E_{\pm e_k}.
\]
Hence the eigenvalues of $\ad(X)-i \ad(Y)$ on $\kp^{\C}/\ka^{\C}$ are
\[
\{\pm (ix_j+y_j); 1\leq j \leq n\},
\]
and the left hand side of \eqref{eq det pa} equals
\[
{\det}_{\C}(1- e^{-i\ad(Y)} \Ad(g)|_{\kp^{\C}/\ka^{\C}}) =\prod_{j=1}^n 
\left(1-e^{ix_j+y_j} \right)\left(1-e^{-(ix_j+y_j)} \right).
\]
This equals the right hand side of \eqref{eq det pa}, as in the proof of lemma \ref{lem det kt}.
\end{proof}

\begin{lemma} \label{lem Jg}
We have
\[
J_g(Y) = 
\prod_{1 \leq j<k \leq n} 
\frac{  \sinh \left(\frac{i(x_j+x_k) + y_j+y_k}{2} \right)}{\sinh \left(\frac{i(x_j+x_k)}{2} \right)}
\frac{  \sinh \left(\frac{i(x_j-x_k) + y_j-y_k}{2} \right)}{\sinh \left(\frac{i(x_j-x_k)}{2} \right)}
\cdot \prod_{j=1}^n \frac{1}{\left(  2\sinh \left({ix_j}/{2} \right) \right)^{2}}.
\]
\end{lemma}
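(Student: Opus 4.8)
The plan is to combine Lemma~\ref{lem Jg Bismut} with the two determinant computations in Lemmas~\ref{lem det kt} and~\ref{lem det pa}, together with the value of the remaining factor $\det(1-\Ad(g)|_{\kg/\kh})$ appearing in the denominator of \eqref{eq Jh}.

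First I would observe that, since $\kh = \kt \oplus \ka$ and $g \in H$ centralises $\kh$, the decomposition $\kg/\kh \cong \kk/\kt \oplus \kp/\ka$ is $\Ad(g)$-invariant, so that
\[
\det(1-\Ad(g)|_{\kg/\kh}) = \det(1-\Ad(g)|_{\kk/\kt})\cdot \det(1-\Ad(g)|_{\kp/\ka}).
\]
Each factor on the right is obtained by setting $Y=0$ in the formulas of Lemmas~\ref{lem det kt} and~\ref{lem det pa} respectively; this is legitimate because $\ad(Y)$ acts as zero on $\kt$ and on $\ka$, so those formulas are genuine identities at $Y=0$.

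Next I would substitute all of these expressions into the ratio under the square root in \eqref{eq Jh}. Two cancellations occur. The sign factors $(-1)^n$ produced by Lemmas~\ref{lem det kt} and~\ref{lem det pa} cancel between numerator and denominator. Moreover the product $\prod_{j=1}^n \bigl(2\sinh((ix_j+y_j)/2)\bigr)^2$, which appears in the numerator as the last factor of $\det(1-e^{-i\ad(Y)}\Ad(g)|_{\kk/\kt})$ and in the denominator as $\det(1-e^{-i\ad(Y)}\Ad(g)|_{\kp/\ka})$, also cancels. What is left is already a perfect square: its square root, after the factors of $2$ in the $j<k$ terms cancel pairwise between numerator and denominator, is exactly the asserted expression, the surviving $\prod_{j=1}^n$-factor retaining its $2\sinh(ix_j/2)$.

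The computation is essentially bookkeeping; the only genuinely delicate point is the choice of branch of the square root, since quantities such as $\bigl(2\sinh(ix_j/2)\bigr)^2 = -4\sin^2(x_j/2)$ are negative real numbers and the $j<k$ factors are generically complex. I would handle this exactly as in \cite{BismutHypo}: $J_g$ is defined using the continuous square root singled out at the start of Section~5.5 there, and since all the cancellations above take place at the level of the squared quantity, extracting that particular root commutes with those cancellations and produces the formula as stated.
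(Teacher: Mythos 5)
Your proposal is correct and follows essentially the same route as the paper: factor $\det(1-\Ad(g)|_{\kg/\kh})$ over the invariant decomposition $\kk/\kt\oplus\kp/\ka$ using lemmas \ref{lem det kt} and \ref{lem det pa} at $Y=0$, substitute into \eqref{eq Jh}, cancel the signs and the $\prod_j\bigl(2\sinh((ix_j+y_j)/2)\bigr)^2$ factor, and extract the square root with Bismut's branch convention. The cancellations you describe check out (the three $(-1)^n$ factors in the denominator against the one in the numerator combine to $+1$), so there is nothing to add.
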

\begin{proof}
It follows from lemmas \ref{lem det kt} and \ref{lem det pa}, with $Y=0$, and $T$-invariance of $\kk/\kt$ and $\kp/\ka$, that
\begin{multline*}
\det(1-\Ad(g)|_{\kg/\kh}) = \det(1-\Ad(g)|_{\kk/\kt}) \det(1-\Ad(g)|_{\kp/\ka})\\
=  \prod_{1 \leq j<k \leq n} \left(  2\sinh \left(\frac{i(x_j+x_k)}{2} \right) \right)^2
\left(  2\sinh \left(\frac{i(x_j-x_k)}{2} \right) \right)^2
\cdot \prod_{j=1}^n \left(  2\sinh \left({ix_j}/{2} \right) \right)^4.
\end{multline*}
By lemmas \ref{lem Jg Bismut}, \ref{lem det kt} and \ref{lem det pa}, this implies the claim.
\end{proof}

\begin{corollary} \label{cor Jg n=1}
If $n=1$, then 
\[
J_g(Y) = 
 \frac{-1}{  4\sin ({x_1}/{2})^{2}}
\]
is independent of $Y$.
\end{corollary}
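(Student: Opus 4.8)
The plan is to obtain this as a direct specialisation of lemma \ref{lem Jg} to $n=1$. When $n=1$, the index set $\{(j,k) : 1 \leq j < k \leq n\}$ is empty, so the first product appearing in the formula for $J_g(Y)$ in lemma \ref{lem Jg} is an empty product and equals $1$. The second product then consists of its single factor at $j=1$, so that
\[
J_g(Y) = \frac{1}{\bigl(2\sinh(ix_1/2)\bigr)^2}.
\]
In particular the right-hand side does not depend on $Y = y_1 H_1$, which already yields the independence assertion.

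It then remains to rewrite this in the stated form. Using the identity $\sinh(iz) = i\sin z$, valid for all $z \in \C$, one has $\bigl(2\sinh(ix_1/2)\bigr)^2 = \bigl(2i\sin(x_1/2)\bigr)^2 = -4\sin(x_1/2)^2$, and hence
\[
J_g(Y) = \frac{1}{-4\sin(x_1/2)^2} = \frac{-1}{4\sin(x_1/2)^2},
\]
as claimed. Here $\sin(x_1/2) \neq 0$ since $g$ is regular, so that $x_1 \notin 2\pi\Z$ and the expression is well-defined.

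I do not expect any genuine obstacle in this argument: the corollary is purely a matter of evaluating the closed-form expression of lemma \ref{lem Jg} in the lowest-dimensional case. The only points requiring a small amount of care are the empty-product convention that kills the cross-terms when $n=1$, and correctly tracking the factor $i^2 = -1$ when passing from $\sinh$ to $\sin$; everything else is immediate.
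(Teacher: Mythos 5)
Your proposal is correct and follows exactly the route the paper takes: the corollary is simply lemma \ref{lem Jg} specialised to $n=1$, where the product over pairs $j<k$ is empty and the remaining factor $\bigl(2\sinh(ix_1/2)\bigr)^{-2} = -\bigl(4\sin(x_1/2)^2\bigr)^{-1}$ is manifestly independent of $Y$. The empty-product convention and the sign from $\sinh(iz)=i\sin z$ are the only points of care, and you handle both.
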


\subsection{The function $\cT_g$}

In the case $n=1$, we compute $\beta$ and the supertrace in the integrand of \eqref{eq bismut}, and use corollary \ref{cor Jg n=1} to compute the right hand side of \eqref{eq bismut}. We start with a result that applies for general $n$.
\begin{lemma} \label{lem str zero}
For all $Y \in \kt$,
\[
\tr( (-1)^Fe^{-i \ad(Y)} \Ad(g)|_{\bigwedge \kp^*}) = 0.
\]
\end{lemma}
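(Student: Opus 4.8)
The plan is to recognise the left-hand side as a supertrace over an exterior algebra and to evaluate it via the classical identity expressing such a supertrace as a determinant. Concretely, for any invertible operator $B$ on a finite-dimensional vector space $W$, extended to $\bigwedge^\bullet W$ as an algebra automorphism, the generating identity $\det(1+zB) = \sum_{k} z^k \tr(B|_{\bigwedge^k W})$ evaluated at $z=-1$ gives
\[
\tr\bigl((-1)^F B|_{\bigwedge W}\bigr) = \sum_{k=0}^{\dim W}(-1)^k \tr\bigl(B|_{\bigwedge^k W}\bigr) = \det(1-B|_W).
\]
I would apply this with $W = \kp^*$ and $B$ the operator on $\kp^*$ induced by $e^{-i\ad(Y)}\Ad(g)$ (acting on $\kp$ and then dualised, or equivalently transported through the $K$-invariant inner product on $\kp$). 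Thus the lemma reduces to showing that $e^{-i\ad(Y)}\Ad(g)$ has $1$ as an eigenvalue on $\kp$.

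To produce such an eigenvector I would use the line $\ka \subset \kp$, which is nonzero. Since $g$ is regular, $Z_G(g) = H = Z_G(\kh)$, so $g \in H$ and $\Ad(g)$ fixes $\kh = \kt\oplus\ka$ pointwise; in particular $\Ad(g)$ acts as the identity on $\ka$. Furthermore, for $Y\in\kt$ the operator $\ad(Y)$ annihilates $\ka$, a fact already used in the proof of lemma \ref{lem Jg Bismut}; hence $e^{-i\ad(Y)}$ restricts to the identity on $\ka$ as well. Therefore $e^{-i\ad(Y)}\Ad(g)$ fixes $\ka$ pointwise, so $1$ is an eigenvalue on $\kp$ and on $\kp^*$, and the determinant above vanishes. (This identity is also what lets one drop the constant $-\tfrac{2n+1}{2}$ from the integrand of \eqref{eq bismut}, leaving only the contribution of the number operator $F$.)

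I do not expect any real obstacle here: the single point to be careful about is the bookkeeping of conventions in the supertrace--determinant identity --- in particular whether one works with $\kp$ or with $\kp^*$ --- but since the $K$-invariant inner product identifies these two $K$-modules equivariantly, the vanishing of the determinant is insensitive to this choice.
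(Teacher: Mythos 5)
Your proof is correct and follows essentially the same route as the paper: both rewrite the supertrace as $\det(1-e^{-i\ad(Y)}\Ad(g)|_{\kp^*})$ and observe that this vanishes because the operator acts as the identity on $\ka$. Your version just spells out the supertrace--determinant identity and the reason $\ka$ is fixed (namely $g\in H=Z_G(\kh)$ and $[\kt,\ka]=0$) in more detail than the paper does.
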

\begin{proof}
The left hand side equals
\[
\det(1-e^{-i \ad(Y)} \Ad(g)|_{\kp^*}),
\]
which equals zero because $e^{-i \ad(Y)} \Ad(g)$ acts as the identity on $\ka$.
\end{proof}

For the rest of this section, we assume that $n=1$. We write $x = x_1$, i.e. $g = \exp(xH_1)$.  
\begin{lemma} \label{lem beta}
If $n=1$, then  $\beta = 1/4$.
\end{lemma}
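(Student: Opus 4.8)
The plan is to compute the two traces appearing in the definition \eqref{eq def beta} of $\beta$ by identifying the relevant $\kk$-modules explicitly. For $n=1$ we have $\kg = \kso(3,1)$, $\kk = \kso(3)$, and $\kp$ is $3$-dimensional. The adjoint action of $\kk$ on itself is equivalent to the standard $3$-dimensional representation of $\kso(3)$ (one sees this directly from $\ad(L_i) = L_i$ for the basis $L_i$ below, using $\kso(3) \cong \Lambda^2\R^3 \cong \R^3$). The isotropy action of $\kk$ on $\kp$ is likewise the standard representation: writing a general element $v \in \kp$ in the block form used above as a vector $b \in \R^3$ in the last column and row and $A \in \kso(3)$ in the upper-left block, one computes $[A,v] \leftrightarrow Ab$. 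Thus $\kk$ and $\kp$ are both copies of the standard irreducible $\kso(3)$-module, on which the central element $C^\kk \in U(\kk)$ acts by a single scalar $c$; hence $\tr(C^\kk|_\kk) = \tr(C^\kk|_\kp) = 3c$, and \eqref{eq def beta} reduces to $\beta = -\tfrac{3c}{48} - \tfrac{3c}{16} = -\tfrac{c}{4}$.

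It then remains to compute $c$. Let $L_1, L_2, L_3$ be the standard basis of $\kso(3)$ by infinitesimal rotations in the coordinate $2$-planes; a direct calculation gives $\tr[L_iL_j] = -2\delta_{ij}$ and $\sum_{i=1}^3 L_i^2 = -2\,\id$ on $\R^3$. With the $\Ad$- and $\theta$-invariant bilinear form $B$ on $\kso(3,1)$ normalized as in \cite{BismutHypo}, namely $B(X,Y) = \tr[XY]$, the vectors $L_1/\sqrt2,\,L_2/\sqrt2,\,L_3/\sqrt2$ form an orthonormal basis of $\kk$ for the scalar product $-B|_\kk$, so the Casimir $C^\kk$ acts on any copy of the standard representation by $\tfrac12\sum_i L_i^2 = -\,\id$. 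Hence $c = -1$, and
\[
\beta = -\frac{c}{4} = \frac14,
\]
which is the asserted value.

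The linear algebra above is elementary; the only delicate point — and the step I would check most carefully — is matching the normalization conventions of \cite{BismutHypo}, since the value of $c$, and hence the sign and size of $\beta$, depends both on the normalization of $B$ on $\kso(3,1)$ and on the sign convention chosen for the Casimir operator $C^\kk$. Once these are fixed as above, the lemma follows from the short computation just described. (For general $n$ the spaces $\kk \cong \Lambda^2\R^{2n+1}$ and $\kp \cong \R^{2n+1}$ are non-isomorphic $\kso(2n+1)$-modules, so $C^\kk$ acts on them by two distinct scalars and $\beta$ is given by a similar but longer formula; this is among the sources of the additional complexity mentioned after Proposition \ref{prop H3}.)
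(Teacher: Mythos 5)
Your proof is correct and takes essentially the same route as the paper: the paper's proof verifies by explicit commutators that $\ad$ acts on both $\kk$ and $\kp$ as (a copy of) the standard representation of $\kso(3)$, with the same normalisation $B(X,Y)=\tr(XY)$ and orthonormal basis $L_i/\sqrt{2}$ for $-B|_{\kk}$, so that $C^{\kk}$ acts as $-\mathrm{id}$ on each and $\tr(C^{\kk}|_{\kk})=\tr(C^{\kk}|_{\kp})=-3$, giving $\beta=1/4$. The only difference is packaging: you compute the Casimir eigenvalue once on the standard module instead of squaring the explicit matrices of $\ad(X_j)$ on $\kk$ and $\kp$ separately, and your flagged normalisation conventions agree with those used in the paper.
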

\begin{proof}
The Killing form on $\mathfrak{so}(3)$ is $B(X_1, X_2) = \tr(X_1 X_2)$. So
\[
X_1 = \frac{1}{\sqrt{2}} \begin{pmatrix}
0&-1&0\\
1&0&0\\
0&0&0
\end{pmatrix}, \quad
X_2 = \frac{1}{\sqrt{2}} \begin{pmatrix}
0&0&-1\\
0&0&0\\
1&0&0
\end{pmatrix} \quad \text{and} \quad
X_3 = \frac{1}{\sqrt{2}} \begin{pmatrix}
0&0&0\\
0&0&-1\\
0&1&0
\end{pmatrix}
\]
form an orthonormal basis of $\mathfrak{so}(3)$. By computing commutators, we find that, with respect to the basis $\{X_1, X_2, X_3\}$, the adjoint actions by $X_1$, $X_2$ and $X_3$ on $\kk$ have the matrices
\[
\begin{split}
\mat(\ad(X_1)|_{\kk}) &= X_3;\\
\mat(\ad(X_2)|_{\kk}) &= -X_2;\\
\mat(\ad(X_3)|_{\kk}) &= X_1.
\end{split}
\]
On the right hand sides, the matrices $X_j$ are viewed as acting on $\R^3 \cong \kso(3)$, not as elements of $\kk$. The matrix of the adjoint action by $C^{\kk}$ on $\kk$ is the sum of the squares of these matrices, which is minus the $3\times 3$ identity matrix. So
\beq{eq tr Ckk}
\tr(C^{\kk}|_{\kk}) = -3.
\eeq

Consider the  basis of $\kp$ consisting of
\[
Y_1 = \begin{pmatrix}
0&0&0 & 1\\
0&0&0&0\\
0&0&0&0\\
1&0&0&0
\end{pmatrix}, \quad
Y_2 =  \begin{pmatrix}
0&0&0 & 0\\
0&0&0&1\\
0&0&0&0\\
0&1&0&0
\end{pmatrix} \quad \text{and} \quad
Y_3 =  \begin{pmatrix}
0&0&0 & 0\\
0&0&0&0\\
0&0&0&1\\
0&0&1&0
\end{pmatrix}.
\]
Via the embedding $\kso(3) \hookrightarrow \kso(3,1)$ in the top left corner, the adjoint actions of $X_1$, $X_2$ and $X_3$ on $\kp$ have matrices $\mat(\ad(X_j|_{\kp})) = X_j$ with respect to the basis $\{Y_1, Y_2, Y_3\}$. Again, on the right, $X_j$ is viewed as a matrix acting on $\R^3 \cong \kp$. As before, the matrix of the adjoint action by $C^{\kk}$ on $\kp$ is the sum of the squares of these matrices, which is minus the $3\times 3$ identity matrix. Hence 
\beq{eq tr Ckp}
\tr(C^{\kk}|_{\kp}) = -3.
\eeq
The claim follows from the definition \eqref{eq def beta} of $\beta$, \eqref{eq tr Ckk} and \eqref{eq tr Ckp}.
\end{proof}

\begin{lemma}\label{lem str wedge p}
If $n=1$, then for all $Y = yH_1 \in \kt$, 
\beq{eq str n=1}
\tr\left( (-1)^FF e^{-i\ad(Y)} \Ad(g)|_{\bigwedge \kp^*} \right) = 4\sinh\left( \frac{ix+y}{2} \right)^2.
\eeq
\end{lemma}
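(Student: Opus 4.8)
The plan is to diagonalise the operator $A := e^{-i\ad(Y)}\Ad(g)$ acting on $\kp^*$ and then evaluate the weighted supertrace over $\bigwedge\kp^*$ in terms of the elementary symmetric functions of the eigenvalues of $A$. Since $n=1$, the space $\kp$ is $3$-dimensional. As in the proof of lemma \ref{lem det pa}, specialised to the single index $j=1$, the operator $\ad(X)-i\ad(Y) = \ad(xH_1)-i\ad(yH_1)$ has eigenvalues $\pm(ix+y)$ on $\kp^{\C}/\ka^{\C}$, and it acts as zero on $\ka^{\C}$ because $X,Y\in\kt\subset\kh$ and $\kh$ is abelian. Hence $A = \exp(\ad(X)-i\ad(Y)) = e^{-i\ad(Y)}\Ad(g)$ has eigenvalues $1,\ e^{ix+y},\ e^{-(ix+y)}$ on $\kp^{\C}$, and the induced operator on $(\kp^*)^{\C}$ has the same multiset of eigenvalues. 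Write $z := ix+y$.

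For the supertrace, recall that for an operator $B$ on an $N$-dimensional vector space $V$ with eigenvalues $\mu_1,\ldots,\mu_N$, and $F$ the number operator on $\bigwedge V$, one has $\tr(u^F B|_{\bigwedge V}) = \prod_{i=1}^N(1+u\mu_i)$. Applying $u\frac{d}{du}$ to this identity and then setting $u=-1$ gives
\[
\tr\bigl((-1)^F F\,B|_{\bigwedge V}\bigr) = \left. u\sum_{j=1}^N \mu_j \prod_{i\neq j}(1+u\mu_i)\right|_{u=-1}.
\]
In our situation one eigenvalue equals $1$, so at $u=-1$ every product $\prod_{i\neq j}(1+u\mu_i)$ vanishes except the one corresponding to that eigenvalue. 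Therefore, with the three eigenvalues $1,\ e^z,\ e^{-z}$ of $A$ on $\kp^*$,
\[
\tr\bigl((-1)^F F\,A|_{\bigwedge\kp^*}\bigr) = (-1)\cdot 1\cdot (1-e^z)(1-e^{-z}).
\]

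Finally, using the identity $(1-e^w)(1-e^{-w}) = -(2\sinh(w/2))^2$ already invoked in the proof of lemma \ref{lem det kt}, the right-hand side equals $4\sinh(z/2)^2 = 4\sinh\bigl(\tfrac{ix+y}{2}\bigr)^2$, which is exactly \eqref{eq str n=1}. As a consistency check, the same generating-function evaluation at $u=-1$ \emph{without} the factor $F$ gives $\det(1-A|_{\kp^*}) = (1-1)(1-e^z)(1-e^{-z}) = 0$, recovering lemma \ref{lem str zero}.

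There is no serious obstacle here; the only points requiring a little care are that the multiset of eigenvalues of $A$ on $\kp^*$ coincides with that on $\kp$ (so the distinction between the two is immaterial for the answer), and the sign bookkeeping in the derivative at $u=-1$. One may equally avoid the generating function and simply compute $\tr(A|_{\bigwedge^p\kp^*}) = e_p(1,e^z,e^{-z})$ for $p=0,1,2,3$ and form $\sum_{p}(-1)^p p\,e_p = 2\cosh z - 2$, which again equals $4\sinh(z/2)^2$; this is an equivalent but slightly more pedestrian route to the same conclusion.
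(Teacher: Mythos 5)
Your proof is correct and takes essentially the same approach as the paper: both read off the eigenvalues $1,\ e^{\pm(ix+y)}$ of $e^{-i\ad(Y)}\Ad(g)$ on $\kp^{\C}$ from the computation in the proof of lemma \ref{lem det pa}, note that the multiset is the same on $(\kp^*)^{\C}$, and then evaluate the weighted supertrace to get $e^{ix+y}+e^{-(ix+y)}-2 = 4\sinh\bigl(\tfrac{ix+y}{2}\bigr)^2$. The only cosmetic difference is bookkeeping: you organise the sum via the generating function $\tr(u^F A)=\prod_i(1+u\mu_i)$ (or elementary symmetric functions), while the paper uses the $T$-equivariant identifications $\bigwedge^2\kp^*\cong\kp^*$ and triviality of $\bigwedge^3\kp^*$.
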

\begin{proof}
As we saw in the proof of lemma \ref{lem det pa}, the eigenvalues of $e^{\ad(X)-i \ad(Y)}$ on $\kp^{\C}$ are $e^{\pm (ix+y)}$ and $1$. The eigenvalues on $(\kp^*)^{\C}$ are the same,
and $\bigwedge^2 \kp^* \cong \kp^*$ as representations of $T$. Finally, because elements of $T$ act on $\kp^*$ with determinant $1$, they act as the identity on $\bigwedge^3 \kp^*$. Knowing these eigenvalues, one computes that the left hand side of \eqref{eq str n=1} equals
\[
e^{ix+y} + e^{-(ix+y)}-2,
\]
which equals the right hand side.
\end{proof}

\begin{lemma}\label{lem Tgt n=1}
If $n=1$, then for all $t>0$, 
\[
\cT_g(t) = \frac{\cos(x) - e^{-t/2} }{4 \sqrt{2\pi t} \sin(x/2)^2}.
\]
\end{lemma}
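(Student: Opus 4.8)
The plan is to specialise Bismut's orbital integral formula (Theorem~\ref{thm bismut}) to the case $n=1$, substitute the computations of the preceding lemmas, and then carry out a single Gaussian integral over the one-dimensional space $\kt$.

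First I would replace $t$ by $2t$ in \eqref{eq bismut} to obtain a formula for $\cT_g(t)$ directly,
\[
\cT_g(t) = \frac{e^{-2\beta t}}{(4\pi t)^{(n+1)/2}} \int_{\kt} J_g(Y)\, \tr\!\left( (-1)^F \left.\left( F-\frac{2n+1}{2} \right) e^{-i\ad(Y)}\Ad(g)\right|_{\bigwedge \kp^*} \right) e^{-\|Y\|^2/4t}\, dY ,
\]
and then feed in the $n=1$ data: by Lemma~\ref{lem beta}, $\beta=1/4$, so $e^{-2\beta t}=e^{-t/2}$; by Corollary~\ref{cor Jg n=1}, $J_g(Y)$ is the constant $-1/(4\sin(x/2)^2)$ and factors out of the integral; by Lemma~\ref{lem str zero}, the part of the supertrace coming from $-\frac{2n+1}{2}(-1)^F$ vanishes; and by Lemma~\ref{lem str wedge p}, the part coming from $(-1)^F F$ equals $4\sinh\!\bigl(\frac{ix+y}{2}\bigr)^2$, where $Y=yH_1$. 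This reduces the formula to
\[
\cT_g(t) = \frac{e^{-t/2}}{4\pi t}\cdot\frac{-1}{4\sin(x/2)^2}\int_{\kt} 4\sinh\!\Bigl(\frac{ix+y}{2}\Bigr)^2 e^{-\|Y\|^2/4t}\, dY .
\]

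Next I would expand $4\sinh\!\bigl(\frac{ix+y}{2}\bigr)^2 = 2\cosh(ix+y)-2 = \bigl(2\cos(x)\cosh(y)-2\bigr)+2i\sin(x)\sinh(y)$. The Gaussian $e^{-\|Y\|^2/4t}$ is even in $y$, so the odd term $2i\sin(x)\sinh(y)$ integrates to zero, and the remaining two integrals are one-dimensional Gaussians evaluated by $\int_{\R}e^{ay-by^2}\,dy=\sqrt{\pi/b}\,e^{a^2/4b}$. With the normalisation of the invariant form on $\kt$ fixed as in the proof of Lemma~\ref{lem beta} (so that $\|H_1\|^2=2$), the $\cosh(y)$-integral contributes a factor $e^{t/2}$: multiplied by the prefactor $e^{-t/2}$, the term $2\cos(x)\cosh(y)$ becomes $2\cos(x)$, while the term $-2$ retains the factor $e^{-t/2}$. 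Collecting the numerical constants then yields $\cT_g(t)=(\cos(x)-e^{-t/2})/(4\sqrt{2\pi t}\,\sin(x/2)^2)$, which is the claim. As a byproduct, the explicit formula shows $\cT_g(t)=\cO(t^{-1/2})$ as $t\to\infty$, which is the input needed for condition (2) of Definition~\ref{def g torsion} in the proof of Proposition~\ref{prop H3}.

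I expect the only delicate point to be the bookkeeping of normalisations in this last step. One has to pin down, consistently with \cite{BismutHypo}, the branch of the square root in the definition of $J_g$ (Lemma~\ref{lem Jg Bismut}), the factor $\vol(K/T)$ relating Bismut's orbital integral trace to $\Tr_g$ (normalised to $1$ as in the proof of Theorem~\ref{thm bismut}), and the normalisation of the bilinear form on $\kh$ that simultaneously fixes $\|Y\|^2=\|H_1\|^2 y^2$ and the Lebesgue measure $dY$ entering the Gaussian. Once these conventions are fixed, the algebraic collapse of Bismut's integrand for $n=1$ and the Gaussian evaluation are both routine computations.
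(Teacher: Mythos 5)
Your proposal follows exactly the paper's proof: specialise Theorem \ref{thm bismut} to $n=1$, insert Corollary \ref{cor Jg n=1} and Lemmas \ref{lem str zero}, \ref{lem beta}, \ref{lem str wedge p}, use $\|H_1\|^2=2$, and evaluate the resulting one-dimensional Gaussian integral; rescaling $t\mapsto 2t$ at the start rather than working with $\cT_g(t/2)$, and expanding via $2\cos(x)\cosh(y)-2$ instead of $\tfrac14(e^{ix+y}+e^{-(ix+y)})-\tfrac12$, are only cosmetic differences. The sign/normalisation bookkeeping you flag as the delicate point is handled in the paper in the same implicit way, so there is nothing missing relative to the paper's own argument.
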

\begin{proof}
Using corollary \ref{cor Jg n=1}  and lemmas \ref{lem str zero}, \ref{lem beta} and \ref{lem str wedge p} in theorem \ref{thm bismut}, we find that 
\[
\cT_g(t/2) = 
 - \frac{e^{- t/4}}{2\pi t} 
  \frac{1}{  \sin ({x}/{2})^{2}}
\int_{\R} \sinh\left( \frac{ix+y}{2} \right)^2 e^{-y^2/t}\, dy.
\]
Here we used that $\|H_1\|^2 = -\tr(H_1^2) = 2$, so $\|Y\|^2 = 2y$.

Using the equalities
\[
\sinh\left( \frac{ix+y}{2} \right)^2 = \frac{1}{4} (e^{ix+y} + e^{-(ix+y)})-\frac{1}{2}
\]
and
\[
\int_{\R} e^{-\frac{y^2}{t} \pm y}\, dy = \sqrt{\pi t} e^{t/4}, 
\]
one computes 
\[
\int_{\R} \sinh\left( \frac{ix+y}{2} \right)^2 e^{-y^2/t}\, dy = 
 \frac{\sqrt{\pi t}}{2}\left(e^{t/4}\cos(x) -1\right).
\]
\end{proof}

\subsection{Equivariant analytic torsion for $n=1$}

Lemma \ref{lem Tgt n=1} allows us to prove proposition \ref{prop H3}.

\begin{lemma} \label{lem Tg sigma hyp}
If $n=1$, then for all $\sigma \in \C$ with $\Real(\sigma)>0$, 
\beq{eq Tg sigma H3}
\left. 
\frac{d}{ds} \right|_{s=0} \frac{1}{\Gamma(s)} \int_0^{\infty} t^{s-1} e^{-\sigma t} \cT_g(t)\, dt = 
 \frac{\sqrt{\sigma+1/2} - \cos(x) \sqrt{\sigma} }{2 \sqrt{2} \sin(x/2)^2}.
\eeq
\end{lemma}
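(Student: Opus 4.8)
The plan is to substitute the closed form for $\cT_g$ from lemma \ref{lem Tgt n=1} into the Mellin-type integral, evaluate the two resulting Gamma integrals, and then differentiate at $s=0$ using \eqref{eq der Gamma quotient}. By lemma \ref{lem Tgt n=1},
\[
t^{s-1}e^{-\sigma t}\cT_g(t) = \frac{1}{4\sqrt{2\pi}\sin(x/2)^2}\Bigl(\cos(x)\, t^{s-3/2}e^{-\sigma t} - t^{s-3/2}e^{-(\sigma+1/2)t}\Bigr).
\]
For $\Real(s)>1/2$ and $\Real(\sigma)>0$ both terms are absolutely integrable on $(0,\infty)$ (the only constraint being $t\to 0$, where the integrand is $\cO(t^{\Real(s)-3/2})$), so the standard identity $\int_0^{\infty} t^{s-3/2}e^{-at}\,dt = \Gamma(s-1/2)\,a^{1/2-s}$, valid for $\Real(a)>0$ and applied with $a = \sigma$ and $a = \sigma+1/2$, gives
\[
\frac{1}{\Gamma(s)}\int_0^{\infty} t^{s-1}e^{-\sigma t}\cT_g(t)\,dt = \frac{1}{4\sqrt{2\pi}\sin(x/2)^2}\cdot\frac{\Gamma(s-1/2)}{\Gamma(s)}\Bigl(\cos(x)\,\sigma^{1/2-s} - (\sigma+1/2)^{1/2-s}\Bigr).
\]

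Next I would observe that the right-hand side is holomorphic in $s$ in a neighbourhood of $s=0$: the factor $\Gamma(s-1/2)/\Gamma(s)$ is holomorphic there since $1/\Gamma$ is entire and $\Gamma(s-1/2)$ is regular at $s=0$, while $\sigma^{1/2-s}$ and $(\sigma+1/2)^{1/2-s}$ are entire functions of $s$ for the fixed $\sigma$ (using the principal branch of the logarithm, which is unambiguous because both $\sigma$ and $\sigma+1/2$ lie in the open right half-plane). In particular this closed form is the meromorphic continuation referred to in lemma \ref{lem Tg sigma}. Since $1/\Gamma(s)$ vanishes at $s=0$, the whole expression vanishes at $s=0$, so its $s$-derivative at $s=0$ equals the derivative of the $\Gamma(s-1/2)/\Gamma(s)$ factor at $s=0$ times the remaining factors evaluated at $s=0$. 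Using \eqref{eq der Gamma quotient}, i.e.\ $\left.\frac{d}{ds}\right|_{s=0}\frac{\Gamma(s-1/2)}{\Gamma(s)} = -2\sqrt{\pi}$, and the value $\cos(x)\sqrt{\sigma} - \sqrt{\sigma+1/2}$ of the remaining factor at $s=0$, I obtain
\[
\left.\frac{d}{ds}\right|_{s=0}\frac{1}{\Gamma(s)}\int_0^{\infty} t^{s-1}e^{-\sigma t}\cT_g(t)\,dt = \frac{-2\sqrt{\pi}}{4\sqrt{2\pi}\sin(x/2)^2}\Bigl(\cos(x)\sqrt{\sigma} - \sqrt{\sigma+1/2}\Bigr),
\]
and since $\tfrac{-2\sqrt{\pi}}{4\sqrt{2\pi}} = \tfrac{-1}{2\sqrt{2}}$, this is exactly \eqref{eq Tg sigma H3}.

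There is essentially no serious obstacle: the argument is a direct computation. The only points needing a little care are the convergence range $\Real(s)>1/2$ of the original integral together with the observation that the elementary closed form provides the meromorphic (here holomorphic) continuation to $s=0$, and the unambiguous choice of branch for $\sigma^{1/2-s}$ and $(\sigma+1/2)^{1/2-s}$, which is guaranteed by $\Real(\sigma)>0$.
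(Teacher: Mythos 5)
Your proposal is correct and follows essentially the same route as the paper: substitute the closed form of $\cT_g$ from lemma \ref{lem Tgt n=1}, evaluate the two Gamma integrals for $\Real(s)>1/2$, then continue meromorphically and differentiate at $s=0$ using \eqref{eq der Gamma quotient} together with the vanishing of $\Gamma(s-1/2)/\Gamma(s)$ at $s=0$. The extra remarks on convergence and branch choice are fine but not a departure from the paper's argument.
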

\begin{proof}
By lemma \ref{lem Tgt n=1} and a short computation, we have for all  $\sigma,s \in \C$ with $\Real(\sigma)>0$ and $\Real(s)>1/2$, 
\[
 \int_0^{\infty} t^{s-1} e^{-\sigma t} \cT_g(t)\, dt = 
 \frac{1 }{4 \sqrt{2\pi} \sin(x/2)^2}\Gamma(s-1/2) \bigl(\cos(x) \sigma^{1/2 - s} - (\sigma+1/2)^{1/2- s}\bigr).
\]
Extending this meromorphically in $s$, and using \eqref{eq der Gamma quotient} and
\[
\lim_{s \to 0} \frac{\Gamma(s-1/2)}{\Gamma(s)} = 0,
\]
we obtain \eqref{eq Tg sigma H3}.
\end{proof}

\begin{proof}[Proof of proposition \ref{prop H3}.]
By lemma \ref{lem Tgt n=1}, $\cT_g(t) = \cO(t^{-1/2})$, so the second condition in definition \ref{def g torsion} holds. 
%(\Todo: formulate condition for decay of $\cT_g(t)$ instead of for $p$ separately? Or compute individual traces of heat operators and hence NS invariants in this case.) 
It is implicit in theorem \ref{thm bismut} that the first condition holds, and we saw in the proof of lemma \ref{lem Tg sigma hyp} that the third condition holds (alternatively, the first and third conditions in  definition \ref{def g torsion} hold by example \ref{ex SM conn} and proposition \ref{prop conv small t}).
By lemma \ref{lem Tg sigma}, it follows that $-2\log(T_g(G/K))$ is the value at $\sigma = 0$ of the meromorphic extension of \eqref{eq Tg sigma H3}, then \eqref{eq Tg H3 g} follows.
\end{proof}

%\section{An equivariant Fried conjecture}

\appendix

\section{Large-time boundedness of heat kernels}\label{app heat decay large t}

In the proof of proposition \ref{prop Tsigma TN}, we used the fact that heat kernels satisfy an estimate of the form \ref{eq decay kappa gen}, with $F_1$ bounded as $t \to \infty$. This is proposition \ref{prop heat large t bdd}. This is well-known for the heat kernel of the scalar Laplacian, but we did not come across a proof for non-scalar Laplacians in the literature.  We give this proof here, by adapting arguments from the scalar case.

The first ingredient is a version of the maximum principle; for the scalar Laplacian this is theorem 12.1 in \cite{Grigoryan09}. 
\begin{lemma}\label{lem max princ}
Let $M$ be a Riemannian manifold, and $E_1, E_2 \to M$ Hermitian vector bundles. Let $D \colon \Gamma^{\infty}(E_1) \to \Gamma^{\infty}(E_2)$ be a first-order differential operator, whose principal symbol $\sigma_D$ satsfies
\[
\sigma_D(\xi)^*\sigma_D(\xi) = \|\xi\|^2
\]
for all $\xi \in T^*M$. Let $u \in \Gamma^{\infty}(M \times (0, \infty), E_1 \times (0, \infty))$, and suppose that $\frac{\partial}{\partial t}u + D^*D u = 0$. Let $f\colon M \times (0, \infty) \to \R$ be smooth, and such that $\frac{\partial f}{\partial t} + \frac{1}{2} \|df\|^2 \leq 0$. (Here $df$ is the derivative of $f$ in the $M$-component.) For $t>0$, write $u_t := u(\relbar, t) \in \Gamma^{\infty}(E_1)$ and $f_t := f(\relbar, t) \in C^{\infty}(M)$. 
Suppose that for all $t>0$, the sections $D u_t$, $D(e^{f_t}u_t)$,  $ e^{f_t/2}u_t$ and $\|df_t \| e^{f_t/2}u_t$ and lie in  $L^2(E_1)$. Then the function 
\[
J\colon t \mapsto \int_M \|u_t(m)\|^2 e^{f_t(m)}\, dm
\]
 is non-increasing.
\end{lemma}
\begin{proof}
The derivative of $J$ is
\beq{eq max 1}
J'(t) =
\frac{\partial}{\partial t} (u_t, e^{f_t}u_t)_{L^2(E_1)} = 
 -(D^*D u_t, e^{f_t}u_t)_{L^2(E_1)} + \left(u_t, \frac{\partial f_t}{\partial t} e^{f_t}u_t \right)_{L^2(E_1)} 
-( u_t, e^{f_t}D^*D u_t)_{L^2(E_1)}. 
\eeq
The first term on the right equals
\[
-(D u_t, e^{f_t}D u_t)_{L^2(E_1)} - (D u_t, \sigma_D(df_t)e^{f_t}u_t)_{L^2(E_1)}.
\]
Similarly, the third term on the right hand side of \eqref{eq max 1} equals
\[
-(e^{f_t} D u_t, D u_t)_{L^2(E_1)} - ( \sigma_D(df_t)e^{f_t}u_t, D u_t)_{L^2(E_1)}.
\]
The second term on the right hand side of \eqref{eq max 1} is at most equal to
\[
-\frac{1}{2}
 \left(u_t, \|df_t\|^2e^{f_t}u_t \right)_{L^2(E_1)} = -\frac{1}{2}
 \left(\sigma_D(df_t) u_t, e^{f_t} \sigma_D(df_t)u_t \right)_{L^2(E_1)} . 
\]
We find that the  right hand side of \eqref{eq max 1} is at most equal to
\begin{multline*}
-2(e^{f_t} D u_t, D u_t)_{L^2(E_1)} -2 \Real  ( \sigma_D(df_t)e^{f_t}u_t, D u_t)_{L^2(E_1)}-\frac{1}{2}
 \left(\sigma_D(df_t) u_t, e^{f_t} \sigma_D(df_t)u_t \right)_{L^2(E_1)} \\
  = -\frac{1}{2} \| e^{f_t/2} (\sigma_D(df_t) u_t + 2Du_t)\|_{L^2(E_1)}^2 \leq 0.
  \end{multline*}
\end{proof}
\begin{example}\label{ex max princ}
For all $a>0$ and $m_0 \in M$, the function $f(m,t) = a d(m_0, m)^2/t$ has the properties in lemma \ref{lem max princ}. \end{example}

We will use a version of proposition 5.1 in \cite{Grigoryan94}.
\begin{lemma}\label{lem Ea}
Let $(\kappa_t)_{t>0}$ be a family of smooth kernel operators on a Hermitian vector bundle on a Riemannian manifold $M$,  with the semigroup property. For $a,t>0$ and $m \in M$, write
\[
E_a(t, m) := \int_M \|\kappa_t(m,m')\|^2 e^{a d(m,m')^2/t}\, dm'.
\]
Then for all $a,t>0$ and all $m,m'$ for which $E_a(t/2, m)$ and $E_a(t/2, m')$ converge,
\beq{eq off diag est}
\|\kappa_t(m,m')\| \leq \sqrt{E_{a}(t/2, m) E_{a}(t/2, m')} e^{-ad(m,m')^2/2t}.
\eeq
\end{lemma}
\begin{proof}
By the triangle inequality, we have for all $m'' \in M$,
\[
d(m,m'')^2 + d(m'', m')^2 - \frac{1}{2}d(m,m')^2 \geq 0.
\]
So by the semigroup property,
\[
\begin{split}
\|\kappa_t(m,m')\| &\leq \int_M \|\kappa_{t/2}(m,m'')\|  \|\kappa_{t/2}(m'',m')\|\, dm''\\
& \leq  e^{-ad(m,m')^2/2t}  \int_M \|\kappa_{t/2}(m,m'')\|  e^{ad(m,m'')^2/t} \|\kappa_{t/2}(m'',m')\|  e^{ad(m'',m')^2/t}  \, dm''.
\end{split}
\]
By the Cauchy--Schwartz inequality, the right hand side is at most equal to the right hand side of \eqref{eq off diag est}.
\end{proof}

\begin{proof}[Proof of proposition \ref{prop heat large t bdd}.]
Let $\kappa_t$ be the heat kernel associated to $\Delta_E^p$.
As in example \ref{ex kappa t heat op}, proposition 4.2(1) in \cite{CGRS14} implies that for small enough $a>0$, the expression $E_a(t,m)$ converges for all $m \in M$. Furthermore, we have $E_a(t,m) = E_a(t,xm)$ for all $x \in G$, so  $E_a(t,m)$ is bounded in $m$ because the action is cocompact. Lemma \ref{lem max  princ} (with $D = \nabla^E + (\nabla^E)^*$), and example \ref{ex max princ} imply that $E_a(t,m)$ is also bounded in $t \geq 1$. (The conditions in  lemma \ref{lem max  princ} on square-integrability of the relevant sections hold for small enough $a$,  by Gaussian off-diagonal estimates for heat kernels for fixed $t$.) So the claim follows from lemma \ref{lem Ea}. 
\end{proof}

\bibliographystyle{plain}

\bibliography{mybib}

\begin{thebibliography}{10}

\bibitem{Abels}
Herbert Abels.
\newblock Parallelizability of proper actions, global {$K$}-slices and maximal
  compact subgroups.
\newblock {\em Math. Ann.}, 212:1--19, 1974/75.

\bibitem{An08}
Jinpeng An.
\newblock Topological properties of {A}d-semisimple conjugacy classes in {L}ie
  groups.
\newblock {\em J. Lie Theory}, 18(3):541--554, 2008.

\bibitem{Atiyah76}
M.~F. Atiyah.
\newblock Elliptic operators, discrete groups and von {N}eumann algebras.
\newblock In {\em Colloque ``{A}nalyse et {T}opologie'' en l'{H}onneur de
  {H}enri {C}artan ({O}rsay, 1974)}, pages 43--72. Ast\'{e}risque, No. 32--33.
  1976.

\bibitem{Atiyah77}
Michael Atiyah and Wilfried Schmid.
\newblock A geometric construction of the discrete series for semisimple {L}ie
  groups.
\newblock {\em Invent. Math.}, 42:1--62, 1977.

\bibitem{ASIII}
Michael~F. Atiyah and Isadore~M. Singer.
\newblock The index of elliptic operators. {III}.
\newblock {\em Ann. of Math. (2)}, 87:546--604, 1968.

\bibitem{BV}
Nicolas Bergeron and Akshay Venkatesh.
\newblock The asymptotic growth of torsion homology for arithmetic groups.
\newblock {\em Journal of the Institute of Mathematics of Jussieu},
  12(2):391--447, 2013.

\bibitem{BGV}
Nicole Berline, Ezra Getzler, and Mich\`ele Vergne.
\newblock {\em Heat kernels and {D}irac operators}.
\newblock Grundlehren Text Editions. Springer-Verlag, Berlin, 2004.
\newblock Corrected reprint of the 1992 original.

\bibitem{Bismut88}
J.~Bismut, H.~Gillet, and C.~Soul\'{e}.
\newblock Analytic torsion and holomorphic determinant bundles.
\newblock {\em Commun. Math. Phys.}, 115:79--126, 1988.

\bibitem{Bismut95}
Jean-Michel Bismut.
\newblock Equivariant immersions and {Q}uillen metrics.
\newblock {\em J. Diff. Geom.}, 41(1):53--157, 1995.

\bibitem{BismutHypo}
Jean-Michel Bismut.
\newblock {\em Hypoelliptic {L}aplacian and orbital integrals}, volume 177 of
  {\em Annals of Mathematics Studies}.
\newblock Princeton University Press, Princeton, NJ, 2011.

\bibitem{BG04}
Jean-Michel Bismut and Sebastian Goette.
\newblock Equivariant de {R}ham torsions.
\newblock {\em Ann. of Math. (2)}, 159(1):53--216, 2004.

\bibitem{BDM}
N.~{Brodskiy}, J.~{Dydak}, and A.~{Mitra}.
\newblock {\v Svarc-Milnor lemma: a proof by definitions.}
\newblock {\em {Topol. Proc.}}, 31(1):31--36, 2007.

\bibitem{Bunke99}
U.~Bunke.
\newblock Equivariant torsion and {$G$}-{CW}-complexes.
\newblock {\em Geom. Funct. Anal.}, 9(1):67--89, 1999.

\bibitem{CGRS14}
A.~L. Carey, V.~Gayral, A.~Rennie, and F.~A. Sukochev.
\newblock Index theory for locally compact noncommutative geometries.
\newblock {\em Mem. Amer. Math. Soc.}, 231(1085):vi+130, 2014.

\bibitem{carron}
Gilles Carron.
\newblock {$L^2$} harmonic forms on non-compact {R}iemannian manifolds.
\newblock In {\em Surveys in analysis and operator theory ({C}anberra, 2001)},
  volume~40 of {\em Proc. Centre Math. Appl. Austral. Nat. Univ.}, pages
  49--59. Austral. Nat. Univ., Canberra, 2002.

\bibitem{cheeger79}
Jeff Cheeger.
\newblock Analytic torsion and the heat equation.
\newblock {\em Annals of Mathematics}, 2:259–--322, 1979.

\bibitem{CWXY19}
Xiaoman Chen, Jinmin Wang, Zhizhang Xie, and Guoliang Yu.
\newblock Delocalized eta-invariants, cyclic cohomology and higher
  rho-invariants.
\newblock ArXiv:1901.02378, 2019.

\bibitem{Connes82}
Alain Connes and Henri Moscovici.
\newblock The {$L^{2}$}-index theorem for homogeneous spaces of {L}ie groups.
\newblock {\em Ann. of Math. (2)}, 115(2):291--330, 1982.

\bibitem{Coulhon20}
Thierry Coulhon, Baptiste Devyver, and Adam Sikora.
\newblock Gaussian heat kernel estimates: from functions to forms.
\newblock {\em J. Reine Angew. Math.}, 761:25--79, 2020.

\bibitem{Coulhon07}
Thierry Coulhon and Qi~S. Zhang.
\newblock Large time behavior of heat kernels on forms.
\newblock {\em J. Differential Geom.}, 77(3):353--384, 2007.

\bibitem{DGRS20}
Nguyen~Viet Dang, Colin Guillarmou, Gabriel Rivi\`ere, and Shu Shen.
\newblock The {F}ried conjecture in small dimensions.
\newblock {\em Invent. Math.}, 220(2):525--579, 2020.

\bibitem{Deitmar98}
Anton Deitmar.
\newblock Equivariant torsion of locally symmetric spaces.
\newblock {\em Pacific J. Math.}, 182(2):205--227, 1998.

\bibitem{Devyver14}
Baptiste Devyver.
\newblock A {G}aussian estimate for the heat kernel on differential forms and
  application to the {R}iesz transform.
\newblock {\em Math. Ann.}, 358(1-2):25--68, 2014.

\bibitem{ES89}
D.~V. Efremov and M.~A. Shubin.
\newblock Spectrum distribution function and variational principle for
  automorphic operators on hyperbolic space.
\newblock In {\em S\'{e}minaire sur les \'{E}quations aux {D}\'{e}riv\'{e}es
  {P}artielles, 1988--1989}, pages Exp. No. VIII, 19. \'{E}cole Polytech.,
  Palaiseau, 1989.

\bibitem{Franz35}
Wolfgang Franz.
\newblock Ueber die torsion einer ueberdeckung.
\newblock {\em Journal f\"{u}r die reine und angewandte Mathematik},
  173:245–--254, 1935.

\bibitem{Fried86}
David Fried.
\newblock Analytic torsion and closed geodesics on hyperbolic manifolds.
\newblock {\em Invent. Math.}, 84(3):523--540, 1986.

\bibitem{Fried87}
David Fried.
\newblock Lefschetz formulas for flows.
\newblock {\em The Lefschetz centennial conference, Part III}, 58:19--69, 1987.

\bibitem{Greiner71}
Peter Greiner.
\newblock An asymptotic expansion for the heat equation.
\newblock {\em Arch. Rational Mech. Anal.}, 41:163--218, 1971.

\bibitem{Grigoryan94}
Alexander Grigor'yan.
\newblock Heat kernel upper bounds on a complete non-compact manifold.
\newblock {\em Rev. Mat. Iberoamericana}, 10(2):395--452, 1994.

\bibitem{Grigoryan09}
Alexander Grigor'yan.
\newblock {\em Heat kernel and analysis on manifolds}, volume~47 of {\em AMS/IP
  Studies in Advanced Mathematics}.
\newblock American Mathematical Society, Providence, RI; International Press,
  Boston, MA, 2009.

\bibitem{GS91}
M.~Gromov and M.~A. Shubin.
\newblock von {N}eumann spectra near zero.
\newblock {\em Geom. Funct. Anal.}, 1(4):375--404, 1991.

\bibitem{HCDSII}
{Harish-Chandra}.
\newblock {Discrete series for semisimple Lie groups. II: Explicit
  determination of the characters.}
\newblock {\em {Acta Math.}}, 116:1--111, 1966.

\bibitem{Hawking}
Stephen Hawking.
\newblock Zeta function regularization of path integrals in curved spacetime.
\newblock {\em Commun. Math. Phys.}, 55:133--148, 1977.

\bibitem{HST20}
Peter Hochs, Yanli Song, and Xiang Tang.
\newblock An index theorem for higher orbital integrals.
\newblock {\em Math. Ann.}, 382(1-2):169--202, 2022.

\bibitem{HWWII}
Peter Hochs, Bai-Ling Wang, and Hang Wang.
\newblock An equivariant {A}tiyah-{P}atodi-{S}inger index theorem for proper
  actions {II}: the {$K$}-theoretic index.
\newblock {\em Math. Z.}, 301(2):1333--1367, 2022.

\bibitem{HWW}
Peter Hochs, Bai-Ling Wang, and Hang Wang.
\newblock An equivariant {A}tiyah--{P}atodi--{S}inger index theorem for proper
  actions {I}: the index formula.
\newblock {\em Int. Math. Res. Not. IMRN}, (4):3138--3193, 2023.

\bibitem{HW2}
Peter Hochs and Hang Wang.
\newblock A fixed point formula and {H}arish-{C}handra's character formula.
\newblock {\em Proc. London Math. Soc.}, 00(3):1--32, 2017.

\bibitem{HW4}
Peter Hochs and Hang Wang.
\newblock Orbital integrals and {$K$}-theory classes.
\newblock {\em Ann. K-Theory}, 4(2):185--209, 2019.

\bibitem{Knapp96}
Anthony~W. Knapp.
\newblock {\em Lie groups beyond an introduction}, volume 140 of {\em Progress
  in Mathematics}.
\newblock Birkh\"{a}user Boston, Inc., Boston, MA, 1996.

\bibitem{Koehler93}
Kai K\"{o}hler.
\newblock Equivariant analytic torsion on ${P}^n\mathbb{C}$.
\newblock {\em Math. Ann.}, 297(3):1993, 553--565.

\bibitem{Koehler97}
Kai K\"{o}hler.
\newblock Equivariant {R}eidemeister torsion on symmetric spaces.
\newblock {\em Math. Ann.}, 307(1):1997, 57--69.

\bibitem{Kordyukov91}
Yu.~A. Kordyukov.
\newblock {$L^p$}-theory of elliptic differential operators on manifolds of
  bounded geometry.
\newblock {\em Acta Appl. Math.}, 23(3):223--260, 1991.

\bibitem{Lawson89}
H.~Blaine Lawson, Jr. and Marie-Louise Michelsohn.
\newblock {\em Spin geometry}, volume~38 of {\em Princeton Mathematical
  Series}.
\newblock Princeton University Press, Princeton, NJ, 1989.

\bibitem{LM00}
No\"{e}l Lohoue and Salah Mehdi.
\newblock The {N}ovikov-{S}hubin invariants for locally symmetric spaces.
\newblock {\em J. Math. Pures Appl. (9)}, 79(2):111--140, 2000.

\bibitem{Lott92c}
John Lott.
\newblock Heat kernels on covering spaces and topological invariants.
\newblock {\em J. Differential Geom.}, 35(2):471--510, 1992.

\bibitem{Lott94}
John Lott.
\newblock Equivariant analytic torsion for compact lie group actions.
\newblock {\em J. Funct. Anal.}, 125(2):438--451, 1994.

\bibitem{Lott99}
John Lott.
\newblock Delocalized {$L^2$}-invariants.
\newblock {\em J. Funct. Anal.}, 169(1):1--31, 1999.

\bibitem{LR91}
John Lott and Mel Rothenberg.
\newblock Analytic torsion for group actions.
\newblock {\em J. Differential Geom.}, 34(2):431--481, 1991.

\bibitem{Lueck93}
Wolfgang L\"{u}ck.
\newblock Analytic and topological torsion for manifolds with boundary and
  symmetry.
\newblock {\em J. Diff. Geom.}, 37(2):263--322, 1993.

\bibitem{Lueck02}
Wolfgang L\"{u}ck.
\newblock {\em {$L^2$}-invariants: theory and applications to geometry and
  {$K$}-theory}, volume~44 of {\em Ergebnisse der Mathematik und ihrer
  Grenzgebiete. 3. Folge. A Series of Modern Surveys in Mathematics [Results in
  Mathematics and Related Areas. 3rd Series. A Series of Modern Surveys in
  Mathematics]}.
\newblock Springer-Verlag, Berlin, 2002.

\bibitem{Mathai92}
Varghese Mathai.
\newblock {$L^2$}-analytic torsion.
\newblock {\em J. Funct. Anal.}, 107(2):369--386, 1992.

\bibitem{Matz}
Jasmin Matz and Werner M\"{u}ller.
\newblock Analytic torsion for arithmetic locally symmetric manifolds and
  approximation of ${L}^2$-torsion.
\newblock {\em arXiv:2002.04598 [math.RT]}, 2020.

\bibitem{Milnor68}
John Milnor.
\newblock Infinite cyclic coverings.
\newblock {\em Conference on the Topology of Manifolds}, pages 115--–133,
  1968.

\bibitem{Milnor}
John~W. {Milnor}.
\newblock {A note on curvature and fundamental group.}
\newblock {\em {J. Differ. Geom.}}, 2:1--7, 1968.

\bibitem{MP49}
S.~Minakshisundaram and {\AA}.~Pleijel.
\newblock Some properties of the eigenfunctions of the {L}aplace-operator on
  {R}iemannian manifolds.
\newblock {\em Canad. J. Math.}, 1:242--256, 1949.

\bibitem{Moscovici91}
Henri Moscovici and Robert~J. Stanton.
\newblock {$R$}-torsion and zeta functions for locally symmetric manifolds.
\newblock {\em Invent. Math.}, 105(1):185--216, 1991.

\bibitem{mueller78}
Werner M\"{u}ller.
\newblock Analytic torsion and r-torsion of riemannian manifolds.
\newblock {\em Advances in Mathematics}, 28(3):259–--322, 1979.

\bibitem{mueller20}
Werner M\"{u}ller and Fr\'{e}d\'{e}ric Rochon.
\newblock Analytic torsion and reidemeister torsion of hyperbolic manifolds
  with cusps.
\newblock {\em Geometric and Functional Analysis}, 30:910--954, 2020.

\bibitem{NS86}
S.~P. Novikov and M.~A. Shubin.
\newblock Morse inequalities and von {N}eumann {${\rm II}_1$}-factors.
\newblock {\em Dokl. Akad. Nauk SSSR}, 289(2):289--292, 1986.

\bibitem{PPST21}
Paolo Piazza, Hessel Posthuma, Yanli Song, and Xiang Tang.
\newblock Higher orbital integrals, rho numbers and index theory.
\newblock ArXiv:2108.00982, 2021.

\bibitem{RS71}
D.~B. Ray and I.~M. Singer.
\newblock {$R$}-torsion and the {L}aplacian on {R}iemannian manifolds.
\newblock {\em Advances in Math.}, 7:145--210, 1971.

\bibitem{Reideme}
Kurt Reidemeister.
\newblock Homotopieringe und linsenr\"{a}ume.
\newblock {\em Abh. Math. Sem. Univ. Hamburg}, 11:102–--109, 1935.

\bibitem{Roe98}
John Roe.
\newblock {\em Elliptic operators, topology and asymptotic methods}, volume 395
  of {\em Pitman Research Notes in Mathematics Series}.
\newblock Longman, Harlow, second edition, 1998.

\bibitem{Roth78}
Melvin Rothenberg.
\newblock Torsion invariants and finite transformation groups.
\newblock {\em Proc. Sympos. Pure Math.}, 32, 1978.

\bibitem{Seeley67}
R.~T. Seeley.
\newblock Complex powers of an elliptic operator.
\newblock In {\em Singular {I}ntegrals ({P}roc. {S}ympos. {P}ure {M}ath.,
  {C}hicago, {I}ll., 1966)}, pages 288--307. Amer. Math. Soc., Providence,
  R.I., 1967.

\bibitem{Shen18}
Shu Shen.
\newblock Analytic torsion, dynamical zeta functions, and the {F}ried
  conjecture.
\newblock {\em Anal. PDE}, 11(1):1--74, 2018.

\bibitem{Shen21}
Shu Shen.
\newblock Analytic torsion and dynamical flow: a survey on the {F}ried
  conjecture.
\newblock In {\em Arithmetic {L}-functions and differential geometric methods},
  volume 338 of {\em Progr. Math.}, pages 247--299. Birkh\"{a}user/Springer,
  Cham, [2021] \copyright 2021.

\bibitem{ST19}
Yanli Song and Xiang Tang.
\newblock Higher orbit integrals, cyclic cocyles, and {$K$}-theory of reduced
  group {$C^*$}-algebra.
\newblock ArXiv:1910.00175.

\bibitem{Su07}
Guangxiang Su.
\newblock A {C}heeger-{M}\"{u}ller theorem for delocalized analytic torsion.
\newblock {\em Ann. Global Anal. Geom.}, 31(2):181--211, 2007.

\bibitem{Su13}
Guangxiang Su.
\newblock Analytic torsion on manifolds under locally compact group actions.
\newblock {\em Illinois J. Math.}, 57(1):171--193, 2013.

\bibitem{Svarc55}
A.~S. \v{S}varc.
\newblock A volume invariant of coverings.
\newblock {\em Dokl. Akad. Nauk SSSR (N.S.)}, 105:32--34, 1955.

\bibitem{Wangwang}
Bai-Ling Wang and Hang Wang.
\newblock Localized index and {$L^2$}-{L}efschetz fixed-point formula for
  orbifolds.
\newblock {\em J. Differential Geom.}, 102(2):285--349, 2016.

\bibitem{Wang14}
Hang Wang.
\newblock {$L^2$}-index formula for proper cocompact group actions.
\newblock {\em J. Noncommut. Geom.}, 8(2):393--432, 2014.

\bibitem{XieYu}
Zhizhang Xie and Guoliang Yu.
\newblock Delocalized eta invariants, algebraicity, and {$K$}-theory of group
  {$C^*$}-algebras.
\newblock {\em Int. Math. Res. Not. IMRN}, (15):11731--11766, 2021.

\bibitem{Yamaguchi22}
Yoshikazu Yamaguchi.
\newblock Dynamical zeta functions for geodesic flows and the
  higher-dimensional {R}eidemeister torsion for {F}uchsian groups.
\newblock {\em J. Reine Angew. Math.}, 784:155--176, 2022.

\bibitem{Zhang01}
Weiping Zhang.
\newblock {\em Lectures on {C}hern-{W}eil theory and {W}itten deformations},
  volume~4 of {\em Nankai Tracts in Mathematics}.
\newblock World Scientific Publishing Co. Inc., River Edge, NJ, 2001.

\end{thebibliography}

\end{document}